\numberwithin{equation}{section}
\newtheorem{theorem}{Theorem}[section]
\newtheorem{lemma}[theorem]{Lemma}
\newtheorem{proposition}[theorem]{Proposition}
\newtheorem{corollary}[theorem]{Corollary}
\theoremstyle{definition}
\newtheorem{definition}[theorem]{Definition}
\theoremstyle{remark}
\newtheorem{example}[theorem]{Example}
\theoremstyle{remark}
\newtheorem{remark}[theorem]{Remark}
\newcommand\bp{\begin{proof}}
\newcommand\ep{\end{proof}}
\newcommand\C{\mathbb{C}}
\newcommand\FF{\mathbb{F}}
\newcommand\N{\mathbb{N}}
\newcommand\R{\mathbb{R}}
\newcommand\Z{\mathbb{Z}}
\newcommand\B{\mathcal{B}}
\newcommand\F{\mathcal{F}}
\newcommand\K{\mathcal{K}}
\newcommand\LL{\mathcal{L}}
\newcommand\OO{\mathcal{O}}
\newcommand{\TT}{\mathcal{T}}
\newcommand\AN{\mathcal{A}_{\mathrm{a}}}
\newcommand{\NT}{\mathcal{NT}}
\newcommand{\NTr}{\mathcal{T}^r}
\newcommand\NOs{\widetilde{\mathcal{NO}}}
\newcommand\Ad{\operatorname{Ad}}
\newcommand\Tr{\operatorname{Tr}}
\newcommand{\supp}{\operatorname{supp}}
\newcommand\id{\operatorname{id}}
\newcommand{\lsp}{\operatorname{span}}
\newcommand\eps{\varepsilon}
\begin{document}

\title{KMS states on Nica-Toeplitz C$^*$-algebras}

\date{July 16, 2018; small changes August 9, 2018}

\author{Zahra Afsar}
\address{School of Mathematics and Statistics, The University of Sydney, NSW 2006, Australia}
\email{zahra.afsar@sydney.edu.au}

\thanks{Z.A. was supported by grants DP150101595 and DP170101821 of the Australian Research Council.}

\author{Nadia S. Larsen}\address{Department of Mathematics, University of Oslo,
P.O. Box 1053 Blindern, N-0316 Oslo, Norway.}
\email{nadiasl@math.uio.no}

\author{Sergey Neshveyev}\address{Department of Mathematics, University of Oslo,
P.O. Box 1053 Blindern, N-0316 Oslo, Norway.}
\email{sergeyn@math.uio.no}

\begin{abstract}
Given a quasi-lattice ordered group $(G,P)$ and a compactly aligned product system~$X$ of essential C$^*$-correspondences over the monoid $P$, we show that there is a bijection between the gauge-invariant KMS$_\beta$-states on the Nica-Toeplitz algebra $\NT(X)$ of $X$ with respect to a gauge-type dynamics, on one side, and the tracial states on the coefficient algebra~$A$ satisfying a system (in general infinite) of inequalities, on the other. This strengthens and generalizes a number of results in the literature in several directions: we do not make any extra assumptions on $P$ and $X$, and our result can, in principle, be used to study KMS-states at any finite inverse temperature~$\beta$. Under fairly general additional assumptions we show that there is a critical inverse temperature~$\beta_c$ such that for $\beta>\beta_c$ all KMS$_\beta$-states are of Gibbs type, hence gauge-invariant, in which case we have a complete classification of KMS$_\beta$-states in terms of tracial states on $A$, while at $\beta=\beta_c$ we have a phase transition manifesting itself in the appearance of KMS$_\beta$-states that are not of Gibbs type. In the case of right-angled Artin monoids we show also that our system of inequalities for traces on $A$ can be reduced to a much smaller system, a finite one when the monoid is finitely generated. Most of our results generalize to arbitrary quasi-free dynamics on~$\NT(X)$.
\end{abstract}

\maketitle

\section*{Introduction}

Given a  C$^*$-algebra $A$ with a time evolution $\sigma=(\sigma_t)_{t\in\R}$, the Kubo--Martin--Schwinger (KMS) condition on states of $A$ gives a rigorous mathematical characterization of equilibrium of the system $(A,\sigma)$ at inverse temperatures $\beta\in\R$. For example, in the case of a full matrix algebra and the time evolution $\sigma_t=\Ad e^{it H}$, there is a unique KMS$_\beta$-state for every $\beta\in \R$, which is precisely the Gibbs state $\Tr(\cdot\,e^{-\beta H})/\Tr (e^{-\beta H})$. Pimsner's construction~\cite{pim} of C$^*$-algebras~$\TT(X)$ and~$\OO(X)$ associated to a C$^*$-correspondence $X$ provided a new fundament for studying large classes of C$^*$-algebras in a systematic way. The new framework proved fruitful also for studying KMS-states. After a number of examples of systems $(A,\sigma)$ had been analyzed, with ${A}$  a Cuntz-Pimsner algebra $\OO(X)$ or a Toeplitz-Pimsner algebra $\TT(X)$, and $\sigma$ a quasi-free dynamics, a theory of KMS-states for such systems was developed by Pinzari--Watatani--Yonetani~\cite{PWY} for full finite rank correspondences and the gauge dynamics and then by Laca--Neshveyev~\cite{lac-nes} for essential C$^*$-correspondences and general quasi-free dynamics.

In recent years there has been a growing interest in the structure of KMS-states on C$^*$-algebras associated with product systems of C$^*$-correspondences over quasi-lattice ordered monoids, which generalize Cuntz-Pimsner and Toeplitz-Pimsner algebras. The initial motivation came from the work of Laca--Raeburn~\cite{lac-rae} on the semigroup C$^*$-algebra of the monoid $\Z_+\rtimes \Z_+^\times$. The structure of KMS-states on this C$^*$-algebra, revealed in~\cite{lac-rae}, was impossible to explain using general results available at that time. Although such an explanation was soon provided by the groupoid picture~\cite{nes}, it was still natural to try to understand it from the point of view of C$^*$-correspondences. This led Hong--Larsen--Szymanski~\cite{HLS2} and Brownlowe--an~Huef--Laca--Raeburn~\cite{BanHLR} to undertake studies of KMS-states on  C$^*$-algebras of product systems. The approach in~\cite{HLS2} was in principle guided by~\cite{lac-nes}, although in the end it employed more direct methods using special and quite strong hypotheses imposed on the correspondences, involving existence of orthonormal bases.  A relaxation of the conditions on the correspondences to existence of special Parseval frames enabled Afsar--an~Huef--Raeburn~\cite{AaHR} to study equilibrium states on C$^*$-algebras of product systems over $\Z^n_+$ constructed out of families of $*$-commuting local homeomorphisms on a compact Hausdorff space. In a related direction, Kakariadis~\cite{kak-KMS-NP-Znplus} initiated a program to study KMS-states on Pimsner-type C$^*$-algebras arising from multivariable dynamics. C$^*$-algebras associated to $k$-graphs are another large class of examples closely related to product systems whose structure of KMS-states has been extensively studied in the recent years starting with \cite{aHLRS}. Very recently, the analysis of KMS-states on the Toeplitz algebras of finite $k$-graphs has  attained a very satisfactory form in Christensen's work~\cite{C} using the groupoid approach. Interesting results on KMS-states of the Toeplitz algebras of quasi-lattice ordered monoids, which can be considered as C$^*$-algebras associated with the simplest rank one product systems, have been obtained in the work of Bruce--Laca--Ramagge--Sims~\cite{BLRS}.

This project started from the natural question whether it is possible to extend the theory developed in~\cite{lac-nes} to algebras associated with product systems. The payoff of our effort is a powerful machinery that clarifies and strengthens the existing results and, hopefully, can guide future investigations.

In order to formulate our motivation and results more precisely, let us first give some details of the theory developed in~\cite{lac-nes} in the case of the gauge dynamics. Assume we are given an essential C$^*$-correspondence $X$ over a C$^*$-algebra $A$. The induction of traces via $X$ defines a map $F_X$ on the positive traces on $A$, which is an analogue of the dual Ruelle transfer operator in dynamical systems. Then for every $\beta\in \R$, the map $\phi\mapsto\phi|_A$ defines a bijection between the KMS$_\beta$-states on $\TT(X)$ with respect to the gauge dynamics and the tracial states $\tau$ on $A$ such~that
\begin{equation}\label{eq:LN0}
e^{-\beta}F_X\tau\le\tau.\tag{$*$}
\end{equation}

As was shown in~\cite{lac-nes}, similarly to the potential theory in classical probability, any trace~$\tau$ satisfying~\eqref{eq:LN0} decomposes into a \emph{finite} part $\tau_f=\sum^\infty_{n=0}e^{-n\beta}F_X^n\tau_0$ (``potential'') and an \emph{infinite} part $\tau_\infty$ such that $e^{-\beta}F_X\tau_\infty=\tau_\infty$ (``harmonic element''). This can also be thought of as an analogue of the Wold decomposition for isometries. The KMS$_\beta$-states corresponding to traces of finite type are easy to understand. They are obtained by restriction from \emph{generalized Gibbs states} on~$\LL(\F(X))$, which are constructed similarly to the classical Gibbs states on full matrix algebras using induced traces on $\LL(\F(X))$. Therefore the main content of the above description of KMS-states is the statement that the KMS$_\beta$-states of infinite type, that is, those without a Gibbs-type component, are in a one-to-one correspondence with the tracial states such that $e^{-\beta}F_X\tau=\tau$.

Now suppose that $X=(X_p)_{p\in P}$ is a product system of C$^*$-correspondences over a quasi-lattice ordered monoid $P$. Then we obtain a collection of generalized dual transfer operators~$F_p$, one for every correspondence $X_p$. Assuming that $X$ is compactly aligned, we consider the Nica-Toeplitz C$^*$-algebra $\NT(X)$ associated with $X$ and the gauge-type time evolution $\sigma$ on it defined by a homomorphism $N\colon P\to(0,+\infty)$. Similarly to the case of a  single correspondence, it is not difficult to see then that every tracial state $\tau$ on the coefficient algebra $A$ of the form $\tau=\sum_{p\in P}N(p)^{-\beta}F_p\tau_0$ extends to a Gibbs-type state on $\NT(X)$. Therefore the starting point of our investigation can be phrased as the following questions:

\begin{itemize}
\item[(1)] Is it possible to characterize the tracial states on $A$ that are obtained from the KMS-states on $\NT(X)$ having no Gibbs-type components?
\item[(2)] Are there conditions that guarantee that all KMS-states are of Gibbs type? When can other KMS-states appear?
\end{itemize}

In this paper we will give a complete answer to question (1) and obtain quite general results answering the questions in (2).

\smallskip

We now describe in more detail the contents of the paper. In Section~\ref{sec:prelim} we collect a number of general results and definitions on quasi-lattice ordered groups, product systems and their associated C$^*$-algebras, KMS-states and induced traces, and generalized dual Ruelle transfer operators.

In Section~\ref{sec:restriction} we consider a system $(\NT(X), \sigma)$ as described above and show that any
KMS-state on $\NT(X)$, when restricted to the coefficient algebra $A$, gives a trace that satisfies a system of inequalities, see Theorem~\ref{thm:subinvariance}. We then go on to discuss the generalized Gibbs states and explain why our system of inequalities does not give interesting information for them.

In Section~\ref{sec:abelian} we consider the free abelian monoids $\Z^n_+$  for $n\geq 1$ and show that any trace on the coefficient algebra $A$ satisfying the inequalities from Section~\ref{sec:restriction} can be extended to a gauge-invariant KMS-state on $\NT(X)$. The reader interested in general quasi-lattice ordered monoids can safely skip this section, but there are a number of reasons to treat the monoids~$\Z^n_+$ separately: this is a much studied case in the literature, see~\cite{AaHR,aHLRS,kak-KMS-NP-Znplus,kak} (as well as \cite{BanHLR, HLS2}, where the monoid is $\oplus^\infty_{k=1}\Z_+$, identified with $\Z_+^\times$), and at the same time it allows for a very short proof based on a perturbation trick from~\cite{lac-nes}.

In Section~\ref{sec:graded} we prove an auxiliary result on positivity of functionals on the core subalgebra of~$\NT(X)$. The result is formulated and proved in a more abstract setting of \emph{quasi-lattice graded C$^*$-algebras}, which might be useful in other contexts.

In Section~\ref{sec:general} we prove  that a trace satisfying the inequalities from Section~\ref{sec:restriction} can be uniquely extended to a gauge-invariant KMS-state on $\NT(X)$, now without any restrictions on~$P$. The proof relies on the results of Section~\ref{sec:graded}, but otherwise follows the familiar strategy of first extending a trace to the core subalgebra and then using the gauge-invariant conditional expectation to extend the trace to a state on the entire algebra $\NT(X)$. This way we get a classification of the gauge-invariant KMS-states on $\NT(X)$ in terms of traces on $A$, which is the first main result of the paper, see Theorem~\ref{thm:general}.

In Section~\ref{sec:Wold} we show that every KMS-state on $\NT(X)$ uniquely decomposes into a finite and an infinite part, or in other words, into a Gibbs-type functional and a functional which does not dominate any functional of Gibbs type, see Proposition~\ref{prop:Wold-state} and Theorem~\ref{thm:Wold}. We show then that whether a KMS-state is of finite or of infinite type is determined completely by its restriction to the coefficient algebra $A$, see Corollary~\ref{cor:finite-infinite}. Together with our general characterization of the restrictions of KMS-states to $A$ this gives a complete answer to question~(1) above.

In Section~\ref{sec:critical-temp} we prove  that if the restriction $\tau$ of a KMS$_\beta$-state $\phi$ on $\NT(X)$ is such that the trace $\sum_{p\in P}N(p)^{-\beta}F_p\tau$ is finite, then $\phi$ is of Gibbs type, see Theorem~\ref{thm:finite-characterization}. In particular, if the series $\sum_{p\in P}N(p)^{-\beta}F_p\tau$ are convergent for all tracial states $\tau$, then all KMS$_\beta$-states are of Gibbs type and we get a complete classification of KMS$_\beta$-states in terms of traces on $A$, see Corollaries~\ref{cor:finite-class2} and~\ref{cor:finite-class3}. This gives an answer to the first question in (2) and is the second main result of the paper.

These results confirm the general principle, known to many researchers who have studied KMS-states, though perhaps not directly formalized in print, saying that the KMS$_\beta$-states for~$\beta$ in the region of convergence of a suitably defined zeta-function of the time evolution are of Gibbs type. There are many results of this sort in the literature, proved in different contexts, and usually  requiring quite strong assumptions, see, e.g., \cite[Theorem 6.1]{AaHR}, \cite[Theorem 4.10]{HLS2}, \cite[Theorem 6.1]{aHLRS}, \cite[Proposition~1.2]{lln}, \cite[Theorem 7.1]{lac-rae}. By contrast we see that in our setting of product systems over quasi-lattice ordered monoids this principle is valid in great generality.

When $N(p)\ge1$ for all $p$, there is a smallest number $\beta_c$ such that the series $\sum_{p\in P}N(p)^{-\beta}F_p\tau$ are convergent for all positive traces $\tau$ and $\beta>\beta_c$. We show that if $P$ is finitely generated and suitable additional assumptions are satisfied, there is a KMS$_{\beta_c}$-state which is not of Gibbs type, see Proposition~\ref{prop:phase-transition}. Therefore $\beta_c$ is the largest critical inverse temperature at which we have a phase transition. This gives an answer to the second question in (2).

Section~\ref{sec:gauge-invariance} is inspired by the recent work of Bruce--Laca--Ramagge--Sims~\cite{BLRS} on the Toeplitz algebras of quasi-lattice ordered monoids. Generalizing some of their results, we establish sufficient conditions for a KMS-state on~$\NT(X)$ to be gauge-invariant, which can sometimes be applied to states at the critical inverse temperature and even below it.

In Section~\ref{sec:Artin} we consider a particular class of quasi-lattice ordered monoids, the right-angled Artin monoids. We show that for them the system of inequalities from Section~\ref{sec:restriction} reduces to a much smaller system, see Theorem~\ref{thm:Artin}. We leave open the problem to what extent this result is true for other monoids. A solution to this problem would provide a better answer to question~(1).

Section~\ref{sec:products} is motivated by the recent work of Kakariadis~\cite{kak}, which appeared while we were writing up this paper. In this work Kakariadis studies questions similar to (1) and (2) for finite rank product systems over $\Z^n_+$ for $n\geq 1$. His approach is quite different from ours and relies on a more refined decomposition of KMS-states than just into finite/infinite types. Namely, a key idea in~\cite{kak}, partly inspired by~\cite{C}, is that a state can be of finite type with respect to one coordinate of $\Z_+^n$ and of infinite type with respect to another. We explain how this idea works for product-monoids in our approach and as an application show how results in~\cite{kak} and our results for $\Z^n_+$  can be quickly deduced from each other, see Example~\ref{ex:kak}.

We conclude in Section~\ref{sec:examples} by considering some examples that have appeared in the literature.

\smallskip

Finally we would like to add that most results of the paper admit relatively straightforward generalizations to arbitrary quasi-free dynamics on $\NT(X)$ (see the discussion at the end of Section~\ref{ssec:transfer} for a more precise statement), but then the description of KMS-states on $\NT(X)$ is in terms of KMS-states on $A$ instead of traces. There are several reasons why we have chosen to work mostly with the gauge-type dynamics defined by a homomorphism $N\colon P\to(0,+\infty)$ and confine ourselves to a few remarks on the general case. The main one is that the gauge-type dynamics form the main focus of the current research. The general case would also require a few more prerequisites, in particular some familiarity with modular theory and KMS-weights, which together with the length of the paper might discourage the potential reader.

\medskip

\paragraph{\bf Acknowledgement} {\small This research was initiated when Z.A. visited the Department of Mathematics at the University of Oslo. She thanks her colleagues for their hospitality.}

\bigskip

\section{Preliminaries}\label{sec:prelim}

\subsection{Product systems and associated algebras}

For a more thorough introduction into these topics see~\cite{F}.

\smallskip

Assume $G$ is a group, $P\subset G$ is a monoid generating $G$ such that $P\cap P^{-1}=\{e\}$. From time to time we will have to assume that $P$ is finitely generated. By a generating set of $P$ we mean a set $S\subset P$ such that every element of $P\setminus\{e\}$ can be written as a product of elements of $S$.

Define a partial order on $G$ by
$$
g\le h\ \ \text{iff}\ \ g^{-1}h\in P.
$$
Then $G$ is said to be \emph{quasi-lattice ordered} if any two elements $g,h\in G$ with a common upper bound have a least upper bound $g\vee h$, see~\cite{nic}. If $g,h$ do not have a common upper bound, we write $g\vee h=\infty$. Therefore
$$
gP\cap hP=\begin{cases}(g\vee h)P,&\text{if}\ \ g\vee h<\infty,\\
\emptyset,&\text{if}\ \ g\vee h=\infty. \end{cases}
$$

For a finite subset $J$ of $P$ we let
\begin{equation} \label{eq:qK}
q_J=\bigvee_{p\in J}p\in P\cup\{\infty\}.
\end{equation}

We say that a subset $F$ of $P$ is \emph{$\vee$-closed} if for any two elements $p,q\in F$ with a common upper bound in $P$ we have $p\vee q\in F$.

\smallskip

Let $A$ be a C$^*$-algebra and $X=(X_p)_{p\in P}$ be a collection of essential C$^*$-correspondences over~$A$. Therefore each $X_p$ is a right C$^*$-Hilbert $A$-module equipped with a left action of $A$ by adjointable operators such that $\overline{AX_p}=X_p$. The collection $X$ is called a \emph{product system} over~$P$~if

\begin{itemize}
\item[(a)] $\cup_{p\in P}X_p$ has a semigroup structure such that for any $\xi\in X_p$ and $\zeta\in X_q$ we have $\xi\zeta\in X_{pq}$, and the map $\xi\otimes\zeta\mapsto\xi\zeta$ extends to an isometric isomorphism $X_p\otimes_A X_q\cong X_{pq}$ for all $p,q\in P$;
\item[(b)] $X_e=A$ as a C$^*$-correspondence, and the products $X_e\times X_p\to X_p$, $X_p\times X_e\to X_p$ coincide with the structure maps of the $A$-bimodules $X_p$.
\end{itemize}

\begin{remark}
In the literature a more general notion of product systems is often used which does not require the C$^*$-correspondences $X_p$ to be essential, see, e.g., \cite{CLSV,SY}. For some of our results, however, this assumption will be important. In the end it is not very restrictive as we can always pass to the unitization $A^\sim$ of $A$ and consider every C$^*$-correspondence over $A$ as an essential correspondence over $A^\sim$.
\end{remark}

For $p\le q$ we have a natural homomorphism $\iota^q_p\colon \LL(X_p)\to\LL(X_q)$ obtained by identifying $X_q$ with $X_p\otimes_A X_{p^{-1}q}$ and mapping $S\in\LL(X_p)$ into $S\otimes1$. Then $X$ is called \emph{compactly aligned} if for all $p,q\in P$ with $p\vee q<\infty$ we have
$$
\iota^{p\vee q}_p(\K(X_p))\iota^{p\vee q}_q(\K(X_q))\subset\K(X_{p\vee q}).
$$
By~\cite[Proposition~5.8]{F}, this assumption is satisfied if the left actions of $A$ on $X_p$ are by generalized compact operators.

\smallskip

Assume now that $B$ is a C$^*$-algebra and we are given linear maps $\psi_p\colon X_p\to B$. It is often convenient to view such a collection $(\psi_p)_{p\in P}$ as one map $\psi\colon X\to B$. It is called a \emph{Toeplitz representation}~if
$$
\psi_{pq}(\xi\zeta)=\psi_p(\xi)\psi_q(\zeta)\ \ \text{for all}\ \ \xi\in X_p\ \ \text{and}\ \ \zeta\in X_q
$$
and
$$
\psi_e(\langle \xi,\zeta\rangle)=\psi_p(\xi)^*\psi_p(\zeta)\ \ \text{for all}\ \ \xi,\zeta\in X_p.
$$
This in particular implies that $\psi_e\colon A=X_e\to B$ is a $*$-homomorphism.

Given a Toeplitz representation $\psi$ of $X$ into $B$, we can define $*$-homomorphisms
\begin{equation}\label{eq:CPmap}
\psi^{(p)}\colon\K(X_p)\to B\ \ \text{by}\ \ \psi^{(p)}(\theta_{\xi,\zeta})=\psi_p(\xi)\psi_p(\zeta)^*,
\end{equation}
where $\theta_{\xi,\zeta}\in\K(X_p)$ is defined by $\theta_{\xi,\zeta}\eta=\xi\langle\zeta,\eta\rangle$.

The main example of a Toeplitz representation is constructed as follows. Consider the \emph{Fock module}
$$
\F(X)=\bigoplus_{p\in P}X_p
$$
and define a representation $\ell\colon X\to \mathcal{L}(\mathcal{F}(X))$ by
\begin{equation*} \label{eq:Toeplitzr}
\ell(\xi)\zeta=\xi \zeta\ \ \text{for}\ \ \xi\in X_p,\ \zeta\in X_q.
\end{equation*}
We define the \emph{reduced Toeplitz algebra} of $X$ as the C$^*$-algebra $\NTr(X)$ generated by the image of $\ell$ in~$\mathcal{L}(\mathcal{F}(X))$.

\begin{lemma}\label{lem:density}
For any product system $X$, the C$^*$-algebra $\NTr(X)$ is strictly dense in $\LL(\F(X))=M(\K(\F(X)))$.
\end{lemma}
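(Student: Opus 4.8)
The plan is to prove the stronger statement that $\K(\F(X))\subseteq\overline{\NTr(X)}^{\,\mathrm{strict}}$, where the closure is taken in the strict topology of $\LL(\F(X))=M(\K(\F(X)))$. Since $\K(\F(X))$ is strictly dense in its multiplier algebra $\LL(\F(X))$, and a strict closure is strictly, hence norm, closed, this will give $\overline{\NTr(X)}^{\,\mathrm{strict}}=\LL(\F(X))$. Because $\K(\F(X))$ is the closed linear span of the rank-one operators $\theta_{\xi,\zeta}$ with $\xi\in X_p$ and $\zeta\in X_q$ homogeneous, it is enough to show that each such $\theta_{\xi,\zeta}$ lies in $\overline{\NTr(X)}^{\,\mathrm{strict}}$.

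The key observation is the identity $\theta_{\xi,\zeta}=\ell(\xi)\,e_e\,\ell(\zeta)^*$ for $\xi\in X_p$ and $\zeta\in X_q$, where $e_e\in\LL(\F(X))$ is the orthogonal projection of $\F(X)$ onto its summand $X_e=A$. Indeed, $\ell(\zeta)^*$ maps $X_q$ onto $X_e$ by $\eta\mapsto\langle\zeta,\eta\rangle$, maps $X_r$ into the summand $X_{r'}$ when $r=qr'$ with $r'\ne e$, and annihilates $X_r$ for $r\notin qP$; hence $e_e\ell(\zeta)^*$ kills every summand except $X_q$, and then $\ell(\xi)$ carries $X_e$ onto $X_p$ in the expected way. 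Since $\ell(\xi),\ell(\zeta)^*\in\NTr(X)$ and left and right multiplication by a fixed element of $\LL(\F(X))$ is strictly continuous, the problem reduces to showing $e_e\in\overline{\NTr(X)}^{\,\mathrm{strict}}$.

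To produce $e_e$ I would write $e_e=\id_{\F(X)}-\bigvee_{p\in P\setminus\{e\}}E_p$, where $E_p\in\LL(\F(X))$ denotes the projection onto $\bigoplus_{r\in pP}X_r$, and verify that each ingredient lies in the strict closure. First, $\id_{\F(X)}\in\overline{\NTr(X)}^{\,\mathrm{strict}}$: for an approximate unit $(u_\lambda)$ of $A$, essentiality of the $X_p$ gives $\ell(u_\lambda)\to\id_{\F(X)}$ strictly. Second, each $E_p\in\overline{\NTr(X)}^{\,\mathrm{strict}}$: applying $\psi^{(p)}$ from~\eqref{eq:CPmap} (with $\psi=\ell$) to an approximate unit of $\K(X_p)$ yields a net in $\NTr(X)$ which on each summand $X_r\cong X_p\otimes_A X_{r'}$, $r=pr'\in pP$, restricts to $u_\lambda\otimes 1_{X_{r'}}$ and vanishes on the remaining summands, hence converges strictly to $E_p$. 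Third, the $E_p$ pairwise commute and satisfy $E_pE_q=E_{p\vee q}$, with the convention $E_\infty:=0$, because $pP\cap qP=(p\vee q)P$; so for every finite $F\subseteq P$ inclusion--exclusion gives
\[
\id_{\F(X)}-\bigvee_{s\in F}E_s=\prod_{s\in F}\bigl(\id_{\F(X)}-E_s\bigr)=\sum_{J\subseteq F}(-1)^{|J|}E_{q_J},
\]
with $q_J$ as in~\eqref{eq:qK}, a finite linear combination of elements of $\overline{\NTr(X)}^{\,\mathrm{strict}}$, whence $\bigvee_{s\in F}E_s\in\overline{\NTr(X)}^{\,\mathrm{strict}}$. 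Finally, since $\bigcup_{s\in P\setminus\{e\}}sP=P\setminus\{e\}$, the projections $\bigvee_{s\in F}E_s$, for $F\subseteq P\setminus\{e\}$ finite, increase to the projection of $\F(X)$ onto $\bigoplus_{r\ne e}X_r$; as these are coordinate projections in $\F(X)=\bigoplus_r X_r$, they converge to it strictly, because any compact operator is approximated in norm by a finite sum of homogeneous $\theta_{\mu,\nu}$, each of which involves only finitely many summands. Therefore $e_e\in\overline{\NTr(X)}^{\,\mathrm{strict}}$, completing the proof.

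The reductions in the first two paragraphs are essentially formal, relying only on the facts that a C$^*$-algebra is strictly dense in its multiplier algebra and that multiplication by a fixed operator is strictly continuous. The bulk of the work lies in the third paragraph, where one has to check that the several approximate-unit limits --- which a priori converge only in the ``local'' multiplier algebras $\LL(X_p)=M(\K(X_p))$ --- genuinely converge strictly in $\LL(\F(X))=M(\K(\F(X)))$, and to handle the commuting family $\{E_p\}$ together with the $\vee$-relations so that the inclusion--exclusion and the passage to the supremum over finite subsets of $P\setminus\{e\}$ are justified.
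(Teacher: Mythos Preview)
Your proof is correct and follows essentially the same route as the paper: show that the projection $e_e$ (denoted $Q_e$ in the paper) onto $X_e\subset\F(X)$ lies in the strict closure by approximating each $E_p$ (the paper's $f_p$) via $\ell^{(p)}$ of an approximate unit and then passing to the limit over finite $F\subset P\setminus\{e\}$ of the products $\prod_{s\in F}(1-E_s)$, and conclude using $\theta_{\xi,\zeta}=\ell(\xi)\,e_e\,\ell(\zeta)^*$. You supply a few details the paper leaves implicit---expanding the finite product by inclusion--exclusion as a linear combination of the $E_{q_J}$ (so that closure under addition, rather than multiplication, of the strict closure suffices), and spelling out why the increasing net of projections converges strictly---but the architecture is identical.
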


\bp
This is a standard argument. We first observe that if $(u_i)_i$ is an approximate unit in~$\K(X_p)$, then the net $(\ell^{(p)}(u_i))_i\subset\NTr(X)$ converges strictly to the projection $f_p$ onto the submodule $\oplus_{q\in pP}X_q\subset\F(X)$. Next, consider the finite subsets $J\subset P\setminus\{e\}$ and partially order them by inclusion. Then the net $\big(\prod_{p\in J}(1-f_p)\big)_J$ converges strictly to the projection $Q_e$ onto $X_e\subset\F(X)$. Since for any $\xi\in X_p$ and $\zeta\in X_q$ we have $\ell(\xi)Q_e\ell(\zeta)^*=\theta_{\xi,\zeta}$, we can therefore conclude that the strict closure of $\NTr(X)$ contains $\K(\F(X))$. This gives the result.
\ep

For any Toeplitz representation $\psi\colon X\to B$ we have the following useful property: if $p\le q$, $\xi\in X_p$ and $\zeta\in X_q$, then
\begin{equation}\label{eq:Toeplitz}
\psi(\xi)^*\psi(\zeta)=\psi(\ell(\xi)^*\zeta),
\end{equation}
which is easy to check on the elementary tensors $\zeta$ in $X_q\cong X_p\otimes_AX_{p^{-1}q}$. Note that $\ell(\xi)^*\zeta\in X_{p^{-1}q}$.

Any product system admits a universal Toeplitz representation $X\to \mathcal{T}(X)$. In general, though, the \emph{Toeplitz algebra}~$\mathcal{T}(X)$ of $X$ is too large to be interesting.

\smallskip

If $X$ is compactly aligned, then a Toeplitz representation $\psi\colon X\to B$ is called \emph{Nica covariant}~if
$$
\psi^{(p)}(S)\psi^{(q)}(T)=\begin{cases}\psi^{(p\vee q)}(\iota^{p\vee q}_p(S)\iota^{p\vee q}_q(T)),& \text{if}\ p\vee q<\infty,\\
0,&\text{otherwise}. \end{cases}
$$
In practice a more useful form of Nica covariance is often the following.

\begin{lemma}
A Toeplitz representation $\psi\colon X\to B$ of a compactly aligned product system is Nica covariant if and only if the following property holds: for $\xi\in X_p$ and $\zeta\in X_q$, we have
\begin{align}
&\psi(\xi)^*\psi(\zeta)=0 \ \ \text{if}\ \ p\vee q=\infty,\label{eq:Nica0}\\
&\psi(\xi)^*\psi(\zeta)\in\overline{\psi(X_{p^{-1}(p\vee q)})\psi(X_{q^{-1}(p\vee q)})^*}\ \ \text{if}\ \ p\vee q<\infty.\label{eq:Nica}
\end{align}
\end{lemma}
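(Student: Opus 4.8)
The plan is to reduce both implications to an analysis of the single product $\psi(\xi)^*\psi(\zeta)$ with $\xi\in X_p$, $\zeta\in X_q$, carried out with the help of the two identifications $X_{p\vee q}\cong X_p\otimes_A X_{p^{-1}(p\vee q)}$ and $X_{p\vee q}\cong X_q\otimes_A X_{q^{-1}(p\vee q)}$ that are available when $r:=p\vee q<\infty$. Throughout I use only elementary consequences of the Toeplitz relations: that $\psi^{(p)}(k)\psi_p(\eta)=\psi_p(k\eta)$ for $k\in\K(X_p)$, that $\psi_p(\xi)\psi_{p^{-1}r}(\mu)=\psi_r(\xi\mu)$, and that for an approximate unit $(u_i)$ of $\K(X_p)$ one has $\psi^{(p)}(u_i)\psi_p(\eta)\to\psi_p(\eta)$ (because $\overline{\K(X_p)X_p}=X_p$).

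For the forward implication, assume $\psi$ is Nica covariant, and fix approximate units $(u_i)$ in $\K(X_p)$ and $(v_j)$ in $\K(X_q)$. Inserting the approximations one at a time gives
\[
\psi(\xi)^*\psi(\zeta)=\lim_{i,j}\psi(\xi)^*\psi^{(p)}(u_i)\psi^{(q)}(v_j)\psi(\zeta).
\]
If $p\vee q=\infty$, Nica covariance makes $\psi^{(p)}(u_i)\psi^{(q)}(v_j)=0$ for all $i,j$, so $\psi(\xi)^*\psi(\zeta)=0$, which is \eqref{eq:Nica0}. If $r=p\vee q<\infty$, Nica covariance gives $\psi^{(p)}(u_i)\psi^{(q)}(v_j)=\psi^{(r)}(\iota^r_p(u_i)\iota^r_q(v_j))$, and by compact alignment $\iota^r_p(u_i)\iota^r_q(v_j)\in\K(X_r)$, hence it is a norm limit of finite sums of rank-one operators $\theta_{a\otimes b,\,c\otimes d}$, where I decompose the \emph{first} variable according to $X_r\cong X_p\otimes X_{p^{-1}r}$ (so $a\in X_p$, $b\in X_{p^{-1}r}$) and the \emph{second} according to $X_r\cong X_q\otimes X_{q^{-1}r}$ (so $c\in X_q$, $d\in X_{q^{-1}r}$). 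A direct computation using the identities above collapses
\[
\psi(\xi)^*\psi^{(r)}(\theta_{a\otimes b,\,c\otimes d})\psi(\zeta)=\psi_{p^{-1}r}(\langle\xi,a\rangle b)\,\psi_{q^{-1}r}(\langle\zeta,c\rangle d)^*\in\psi(X_{p^{-1}r})\psi(X_{q^{-1}r})^*.
\]
Passing to the limit and using that the target set is closed yields \eqref{eq:Nica}.

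For the converse, assume \eqref{eq:Nica0} and \eqref{eq:Nica}. Since $\psi^{(p)}$ and $\psi^{(q)}$ are continuous $*$-homomorphisms and $\K(X_p),\K(X_q)$ are spanned by rank-one operators, it suffices to check Nica covariance on $S=\theta_{\xi_1,\xi_2}$ and $T=\theta_{\zeta_1,\zeta_2}$, and there $\psi^{(p)}(S)\psi^{(q)}(T)=\psi_p(\xi_1)\bigl(\psi(\xi_2)^*\psi(\zeta_1)\bigr)\psi_q(\zeta_2)^*$. If $p\vee q=\infty$, the middle factor vanishes by \eqref{eq:Nica0} and we are done. If $r=p\vee q<\infty$, then \eqref{eq:Nica} together with $\psi_p(\xi_1)\psi_{p^{-1}r}(\mu)=\psi_r(\xi_1\mu)$ and $\psi_{q^{-1}r}(\nu)^*\psi_q(\zeta_2)^*=\psi_r(\zeta_2\nu)^*$ shows that $\psi^{(p)}(S)\psi^{(q)}(T)$ lies in $\overline{\psi_r(X_r)\psi_r(X_r)^*}\subset\psi^{(r)}(\K(X_r))$. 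To pin it down as $\psi^{(r)}(\iota^r_p(S)\iota^r_q(T))$ I will use the auxiliary identity $\psi^{(p)}(a)\psi_r(\eta)=\psi_r(\iota^r_p(a)\eta)$ for $a\in\K(X_p)$, $\eta\in X_r$ — valid for \emph{any} Toeplitz representation, checked on rank-ones and elementary tensors — and its consequence $\psi^{(p)}(a)\psi^{(r)}(k)=\psi^{(r)}(\iota^r_p(a)k)$ for $a\in\K(X_p)$, $k\in\K(X_r)$, and symmetrically with $q$. Applying this twice to an approximate unit $(w_l)$ of $\K(X_r)$ gives $\psi^{(r)}(\iota^r_p(S)\iota^r_q(T)w_l)=\psi^{(p)}(S)\psi^{(q)}(T)\psi^{(r)}(w_l)$; letting $l\to\infty$, the left side converges to $\psi^{(r)}(\iota^r_p(S)\iota^r_q(T))$ because $\iota^r_p(S)\iota^r_q(T)$ is compact, while the right side converges to $\psi^{(p)}(S)\psi^{(q)}(T)$ because that element already lies in the C$^*$-algebra $\psi^{(r)}(\K(X_r))$, of which $(\psi^{(r)}(w_l))$ is an approximate unit. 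This is the desired equality.

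The difficulty here is organizational rather than conceptual: one must track the two tensor factorizations of $X_r$ at once and juggle several approximate units without circular reasoning. The one genuinely delicate point is in the converse, where \eqref{eq:Nica} only tells us a priori that $\psi^{(p)}(S)\psi^{(q)}(T)\in\psi^{(r)}(\K(X_r))$, so the exact value $\psi^{(r)}(\iota^r_p(S)\iota^r_q(T))$ has to be recovered; the factorization identity $\psi^{(p)}(a)\psi_r(\eta)=\psi_r(\iota^r_p(a)\eta)$, which does not use Nica covariance, is the lever that makes this work and is the step I would set up first.
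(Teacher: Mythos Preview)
Your proof is correct, and both implications are handled cleanly. The approach differs from the paper's in two ways worth noting.

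For the forward implication the paper simply cites \cite[Proposition~5.10]{F}, whereas you supply a self-contained argument by inserting approximate units in $\K(X_p)$ and $\K(X_q)$ and decomposing the resulting compact on $X_r$ using the two tensor factorizations simultaneously. That is a useful thing to have written out.

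For the converse, both proofs reduce to rank-one $S$ and $T$ and use \eqref{eq:Nica} to see that $\psi^{(p)}(S)\psi^{(q)}(T)$ lies in $\psi^{(r)}(\K(X_r))$, but they identify the element differently. The paper inserts approximate units $(u_i)$, $(v_j)$ in $\K(X_{p^{-1}r})$ and $\K(X_{q^{-1}r})$ between the four factors, builds explicit operators $S_i,T_j\in\K(X_r)$ with $\psi^{(p)}(S)\psi^{(q)}(T)=\lim_{i,j}\psi^{(r)}(S_iT_j)$, and then argues that $S_i\to\iota^r_p(S)$ and $T_j\to\iota^r_q(T)$ strictly, so $S_iT_j\to\iota^r_p(S)\iota^r_q(T)$ strictly and hence $\psi^{(r)}(S_iT_j)\to\psi^{(r)}(\iota^r_p(S)\iota^r_q(T))$ in the strict topology on $\psi^{(r)}(\K(X_r))$. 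Your route instead isolates the algebraic identity $\psi^{(p)}(a)\psi^{(r)}(k)=\psi^{(r)}(\iota^r_p(a)k)$, valid for any Toeplitz representation, and uses a single approximate unit in $\K(X_r)$; this avoids the strict-topology bookkeeping entirely and is arguably the cleaner mechanism. The trade-off is that the paper's version makes Remark~\ref{rem:Nica2} (the explicit approximation of $\psi(\xi)^*\psi(\zeta)$) fall out of the same computation, whereas in your approach that would be a separate observation.
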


\bp
The forward implication is proved in~\cite[Proposition~5.10]{F}. For the converse, assume~\eqref{eq:Nica0} and~\eqref{eq:Nica} are satisfied and take $S=\theta_{\nu,\xi}\in\K(X_p)$ and $T=\theta_{\zeta,\eta}\in\K(X_q)$. Then
$$
\psi^{(p)}(S)\psi^{(q)}(T)=\psi(\nu)\psi(\xi)^*\psi(\zeta)\psi(\eta)^*.
$$
If $p\vee q=\infty$, this expression is zero by~\eqref{eq:Nica0}.

Assume now that $p\vee q<\infty$. Since $\psi(\xi)^*\psi(\zeta)\in\overline{\psi(X_{p^{-1}(p\vee q)})\psi(X_{q^{-1}(p\vee q)})^*}$ by~\eqref{eq:Nica},
if we take an approximate unit $(u_i)_i$ in $\K(X_{p^{-1}(p\vee q)})$, we get the norm convergence
$$
\psi^{(p^{-1}(p\vee q))}(u_i)\psi(\xi)^*\psi(\zeta)\to\psi(\xi)^*\psi(\zeta).
$$
We may assume that $u_i=\sum_{\lambda\in J_i}\theta_{\lambda,\lambda}$ for some finite sets $J_i\subset X_{p^{-1}(p\vee q)}$. Then
$$
\psi(\nu)\psi^{(p^{-1}(p\vee q))}(u_i)\psi(\xi)^*=\sum_{\lambda\in J_i}\psi(\nu)\psi(\lambda)\psi(\lambda)^*\psi(\xi)^*=\psi^{(p\vee q)}(S_i),
$$
where $S_i=\sum_{\lambda\in J_i}\theta_{\nu\lambda,\xi\lambda}$. It follows that
$$
\psi^{(p)}(S)\psi^{(q)}(T)=\lim_i \psi(\nu)\psi^{(p^{-1}(p\vee q))}(u_i)\psi(\xi)^*\psi(\zeta)\psi(\eta)^*=\lim_i \psi^{(p\vee q)}(S_i)\psi(\zeta)\psi(\eta)^*.
$$
Observe also that the net $(S_i)_i$ in $\K(X_{p\vee q})$ is bounded and converges to $\iota^{p\vee q}_p(S)$ in the strict topology on $\LL(X_{p\vee q})=M(\K(X_{p\vee q}))$.

In a similar way we can take an approximate unit $(u_j)_j$ in $\K(X_{q^{-1}(p\vee q)})$, insert $\psi^{(q^{-1}(p\vee q))}(u_j)$ between $\psi(\zeta)$ and $\psi(\eta)^*$, and get a bounded net $(T_j)_j$ in $\K(X_{p\vee q})$ such that
$$
\psi^{(p)}(S)\psi^{(q)}(T)=\lim_{i,j} \psi^{(p\vee q)}(S_i)\psi^{(p\vee q)}(T_j)=\lim_{i,j}\psi^{(p\vee q)}(S_iT_j),
$$
with the convergence in norm, and $\iota^{p\vee q}_q(T)=\lim_jT_j$ in the strict topology on $\LL(X_{p\vee q})$.

To finish the proof it remains to show that
$$
\lim_{i,j}\psi^{(p\vee q)}(S_iT_j)=\psi^{(p\vee q)}(\iota^{p\vee q}_p(S)\iota^{p\vee q}_q(T)).
$$
But this is true, since $S_iT_j\to \iota^{p\vee q}_p(S)\iota^{p\vee q}_q(T)$ in the strict topology on $\K(X_{p\vee q})$ and hence $\psi^{(p\vee q)}(S_iT_j)\to \psi^{(p\vee q)}(\iota^{p\vee q}_p(S)\iota^{p\vee q}_q(T))$ in the strict topology on $\psi^{(p\vee q)}(\K(X_{p\vee q}))$.
\ep

\begin{remark}\label{rem:Nica2}
Condition~\eqref{eq:Nica} might not look like a typical relation, but in fact it dictates how $\psi(\xi)^*\psi(\zeta)$ can be approximated by elements of $\psi(X_{p^{-1}(p\vee q)})\psi(X_{q^{-1}(p\vee q)})^*$. Indeed, taking an approximate unit $u_i=\sum_{\lambda\in J_i}\theta_{\lambda,\lambda}$ in  $\K(X_{p^{-1}(p\vee q)})$ as in the previous proof, we get, using~\eqref{eq:Toeplitz},~that
$$
\psi(\xi)^*\psi(\zeta)=\lim_i\sum_{\lambda\in J_i}\psi(\lambda)\psi(\lambda)^*\psi(\xi)^*\psi(\zeta)=\lim_i\sum_{\lambda\in J_i}\psi(\lambda)\psi(\ell(\zeta)^*(\xi\lambda))^*,
$$
which gives the required approximation.
\end{remark}

For every compactly aligned product system $X$ there is a universal Nica covariant Toeplitz representation $i_X\colon X\to \NT(X)$, see~\cite{F}. The C$^*$-algebra $\NT(X)$ is called the \emph{Nica-Toeplitz algebra} of $X$; it is denoted by $\mathcal{T}_{\mathrm{cov}}(X)$ in op.~cit.

For $a\in A=X_e$, we usually identify $a$ with~$i_X(a)$, so we view $A$ as a C$^*$-subalgebra of~$\NT(X)$.
Properties~\eqref{eq:Nica0} and~\eqref{eq:Nica} for $\psi=i_X$ imply that the space spanned by the elements $i_X(\xi)i_X(\zeta)^*$ is dense in $\NT(X)$.

By~\cite[Proposition~3.5]{CLSV} the C$^*$-algebra $\NT(X)$ carries a full coaction $$\delta\colon\NT(X)\to \NT(X)\otimes C^*(G),$$ called the \emph{gauge coaction}, such that
$$
\delta(i_X(\xi))=i_X(\xi)\otimes\lambda_p\ \ \text{for}\ \ \xi\in X_p\ \ \text{and}\ \ p\in P.
$$
For $g\in G$, denote by $\NT(X)_g\subset\NT(X)$ the subspace of elements of degree $g$ with respect to the coaction $\delta$, that is, the set of elements $S\in\NT(X)$ such that $\delta(S)=S\otimes\lambda_g$. The elements $i_X(\xi)i_X(\zeta)^*$, with $\xi\in X_p$, $\zeta\in X_q$ and $pq^{-1}=g$, span a dense subspace of $\NT(X)_g$. The fixed point subalgebra $\NT(X)_e\subset\NT(X)$ is also denoted by $\F$ and called the \emph{core subalgebra} of~$\NT(X)$. We have a unique gauge-invariant conditional expectation $$E=(\iota\otimes\tau_G)\circ\delta\colon\NT(X)\to\F,$$ where~$\tau_G$ is the canonical trace on $C^*(G)$.

\begin{lemma}
For any product system $X$, there is a unique reduced coaction $\delta_r$ of $G$ on $\NTr(X)$ such that
$$
\delta_r(\ell(\xi))=\ell(\xi)\otimes\lambda_p\ \ \text{for all}\ \ \xi\in X_p\ \ \text{and}\ \ p\in P.
$$
In other words, $\delta_r$ is an injective $*$-homomorphism $\NTr(X)\to\NTr(X)\otimes C^*_r(G)$ such that $$(\iota\otimes\Delta)\circ\delta_r=(\delta_r\otimes\iota)\circ\delta_r$$ and the space $(1\otimes C^*_r(G))\delta_r(\NTr(X))$ is dense in $\NTr(X)\otimes C^*_r(G)$.
\end{lemma}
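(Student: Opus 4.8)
The plan is to mimic the construction of the full gauge coaction $\delta$ on $\NT(X)$, but now on the reduced algebra, using the regular representation of $G$ directly. First I would recall that $\NTr(X) \subset \LL(\F(X))$ by definition, where $\F(X) = \bigoplus_{p \in P} X_p$. The group $G$ grades the Fock module in the sense that we can let $V$ be the unitary on $\F(X) \otimes \ell^2(G)$ (or rather, work with a canonical grading) — more precisely, the key observation is that $\F(X)$ is $P$-graded, hence $G$-graded once we embed $P \hookrightarrow G$, and the operators $\ell(\xi)$ for $\xi \in X_p$ shift degrees by $p$. Concretely, let $U$ be the diagonal operator on $\F(X) \otimes \ell^2(G)$ determined by $U(\zeta \otimes \delta_g) = \zeta \otimes \delta_{pg}$ for $\zeta \in X_p$; this is a unitary, and $U(\ell(\xi) \otimes 1)U^* = \ell(\xi) \otimes \lambda_p$ for $\xi \in X_p$. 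Alternatively, and more in line with how reduced coactions are usually presented, one can use that $\delta_r := \Ad(W^*)(\,\cdot\,\otimes 1)$ with an appropriate multiplicative-unitary-type operator.

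The cleanest route, I think, is: define $\delta_r\colon \LL(\F(X)) \to \LL(\F(X)) \otimes \LL(\ell^2 G) = \LL(\F(X) \otimes \ell^2 G)$ by $\delta_r(T) = U(T \otimes 1)U^*$ with $U$ as above, then check that $\delta_r(\NTr(X)) \subset \NTr(X) \otimes C^*_r(G)$. Since $\NTr(X)$ is generated by $\{\ell(\xi) : \xi \in X_p, p \in P\}$ and $\delta_r(\ell(\xi)) = \ell(\xi) \otimes \lambda_p$ lies in $\NTr(X) \otimes C^*_r(G)$, this containment follows from multiplicativity and continuity. Injectivity of $\delta_r$ is immediate because it is the restriction of the inner automorphism $\Ad(U)$ (composed with $T \mapsto T \otimes 1$, which is injective) — so it is an injective $*$-homomorphism onto its image. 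The coassociativity identity $(\iota \otimes \Delta)\circ\delta_r = (\delta_r \otimes \iota)\circ\delta_r$, where $\Delta\colon C^*_r(G) \to C^*_r(G) \otimes C^*_r(G)$ is the comultiplication $\lambda_g \mapsto \lambda_g \otimes \lambda_g$, is verified on the generators $\ell(\xi)$: both sides send $\ell(\xi)$ ($\xi \in X_p$) to $\ell(\xi) \otimes \lambda_p \otimes \lambda_p$, and then extends by density and continuity.

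For the nondegeneracy (or "coaction identity") condition, that $(1 \otimes C^*_r(G))\delta_r(\NTr(X))$ is dense in $\NTr(X) \otimes C^*_r(G)$: I would argue that the span of elements $(1 \otimes \lambda_g)(\ell(\xi)i_X(\cdots)^* \otimes \lambda_h)$ already exhausts a dense subspace. Since the degree-$g$ part $\NTr(X)_g$ is spanned by products $\ell(\xi)\ell(\zeta)^*$ with $pq^{-1} = g$, and such a product satisfies $\delta_r(\ell(\xi)\ell(\zeta)^*) = \ell(\xi)\ell(\zeta)^* \otimes \lambda_g$, multiplying on the left by $1 \otimes \lambda_{g^{-1}}$ and ranging over all such elements and all $g$ gives all elementary tensors $T \otimes \lambda_h$ with $T$ in a homogeneous component; their span is dense in $\NTr(X) \otimes C^*_r(G)$ because finite sums of homogeneous elements are dense in $\NTr(X)$ (which follows from Lemma~\ref{lem:density} together with the grading, or directly from the spanning statement for $\NT(X)_g$, whose analogue holds for $\NTr(X)$ by the same argument). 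The main obstacle I anticipate is being careful about the difference between working inside the von Neumann / multiplier algebra $\LL(\F(X))$ and the C$^*$-algebra $\NTr(X)$ itself — one must ensure the operator $U$ is a genuine unitary on the Hilbert module tensored appropriately and that $\Ad(U)$ does not leave $\NTr(X) \otimes C^*_r(G)$; this is handled by checking on generators, so it reduces to routine verifications, but getting the bookkeeping of degrees and the tensor-leg conventions exactly right is where care is needed. Uniqueness is then clear since $\delta_r$ is prescribed on a generating set.
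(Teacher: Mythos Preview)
Your proposal is correct and follows essentially the same approach as the paper: the unitary $U$ you define on $\F(X)\otimes\ell^2(G)$ by $U(\zeta\otimes\delta_g)=\zeta\otimes\delta_{pg}$ for $\zeta\in X_p$ is precisely the paper's $V=\bigoplus_{p\in P}1_{X_p}\otimes\lambda_p$, and the remaining verifications (injectivity via the inner automorphism, coassociativity on generators, density via homogeneous spanning elements) match the paper's argument. One small slip: in the density paragraph you wrote $i_X(\cdots)^*$ where you meant $\ell(\cdots)^*$, since you are working in $\NTr(X)$.
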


Here $\Delta$ is the standard coproduct on $C^*_r(G)$ given by $\Delta(\lambda_g)=\lambda_g\otimes\lambda_g$. We call $\delta_r$ again the gauge coaction.

\bp
Consider the right C$^*$-Hilbert $A$-module
$$
\F(X)\otimes\ell^2(G)=\bigoplus_{p\in P}X_p\otimes\ell^2(G)
$$
and the unitary operator $V=\bigoplus_{p\in P}1_{X_p}\otimes\lambda_p$ on it. For $S\in\LL(\F(X))$ define
$$
\delta_r(S)=V(S\otimes1)V^*\in\LL(\F(X)\otimes\ell^2(G)).
$$
If we view $\LL(\F(X))\otimes C^*_r(G)$ as a C$^*$-subalgebra of $\LL(\F(X)\otimes\ell^2(G))$, then by definition we immediately get that $\delta_r(\ell(\xi))=\ell(\xi)\otimes\lambda_p$ for $\xi\in X_p$. Therefore the restriction of $\delta_r$ to $\NTr(X)$ gives the required injective homomorphism $\NTr(X)\to\NTr(X)\otimes C^*_r(G)$ such that $(\iota\otimes\Delta)\circ\delta_r=(\delta_r\otimes\iota)\circ\delta_r$. The density of $(1\otimes C^*_r(G))\delta_r(\NTr(X))$ in $\NTr(X)\otimes C^*_r(G)$ is clear, since products of $\ell(\xi)$ and $\ell(\xi)^*$ for $\xi\in X_p$, $p\in P$, span a dense subspace of $\NTr(X)$ and $\delta_r$ maps every such product into the same product tensored with $\lambda_g$ for some $g\in G$. The uniqueness of $\delta_r$ is also obvious.
\ep

The fixed point subalgebra $\NTr(X)_e\subset\NTr(X)$ under the gauge coaction will also be denoted by $\F^r$ and called the core subalgebra of~$\NTr(X)$. We have a faithful conditional expectation $E_r=(\iota\otimes\tau_G)\circ\delta_r\colon \NTr(X)\to\F^r$, cf.~\cite[Proposition 5.4]{kwa-larI}.

\smallskip

If $X$ is compactly aligned, then the Toeplitz representation $\ell\colon X\to \LL(\F(X))$ is Nica covariant, hence we have a surjective $*$-homomorphism $\Lambda\colon\NT(X)\to\NTr(X)$ mapping $i_X(\xi)$ into~$\ell(\xi)$.
Amenability criteria ensuring that $\Lambda$ is injective are discussed in~\cite[Sections~7-8]{F}. In particular, by~\cite[Corollary 8.2]{F}, $\Lambda$ is injective if $(G,P)$ is the free product of quasi-lattice ordered groups $(G_i,P_i)$, $i\in I$, where every group $G_i$ is amenable.

Independently of amenability of $X$ we have the following property, which is a particular case of~\cite[Corollary~6.4]{kwa-larI}.

\begin{lemma}\label{lem:core}
For any compactly aligned product system $X$, the map $\Lambda\colon\NT(X)\to\NTr(X)$ is injective on the core subalgebra $\F$, so it defines an isomorphism $\F\cong\F^r$.
\end{lemma}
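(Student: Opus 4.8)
The plan is to reduce the statement to injectivity of a concrete representation and then argue by induction over a natural filtration of the core. Recall that $\NTr(X)$ is, by construction, a C$^*$-subalgebra of $\LL(\F(X))$, and that $\Lambda\colon\NT(X)\to\NTr(X)$ is the surjection determined by $i_X(\xi)\mapsto\ell(\xi)$; composing $\Lambda$ with the inclusion $\NTr(X)\hookrightarrow\LL(\F(X))$ yields the integrated form $\ell_*\colon\NT(X)\to\LL(\F(X))$ of the Nica covariant representation $\ell$. Since $\NTr(X)\hookrightarrow\LL(\F(X))$ is isometric, $\ker\Lambda=\ker\ell_*$, and since $\Lambda(\F)=\F^r$ (obvious on the spanning elements $i_X(\xi)i_X(\zeta)^*$ with $\xi,\zeta\in X_p$), it suffices to prove that $\ell_*$ is injective on $\F$. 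I would further reduce this: writing $\B_F:=\overline{\sum_{p\in F}i_X^{(p)}(\K(X_p))}$ for a finite $\vee$-closed subset $F\subset P$ containing $e$, Nica covariance together with compact alignment gives $i_X^{(p)}(\K(X_p))\,i_X^{(q)}(\K(X_q))\subseteq i_X^{(p\vee q)}(\K(X_{p\vee q}))$ when $p\vee q<\infty$ and $=0$ otherwise, so each $\B_F$ is a C$^*$-subalgebra; the family of all such $\B_F$ is directed with dense union $\F$, and injectivity of a $*$-homomorphism passes to inductive limits, so it is enough to show that $\ell_*$ is injective on each $\B_F$.

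Next I would analyse $\ell_*|_{\B_F}$. Decomposing $\F(X)=\bigoplus_{q\in P}X_q$, one checks directly that for $\xi,\zeta\in X_p$ the operator $\ell(\xi)\ell(\zeta)^*$ leaves every summand $X_q$ invariant, acting as $\iota^q_p(\theta_{\xi,\zeta})$ if $p\le q$ and as $0$ otherwise. Hence each $X_q$ is invariant under $\ell_*(\F)$, and writing $\Phi_q\colon\F\to\LL(X_q)$ for the corresponding $*$-homomorphism (restriction of $\ell_*(\cdot)$ to $X_q$) we get $\ell_*|_\F=\bigoplus_{q\in P}\Phi_q$, so that $\ker(\ell_*|_{\B_F})=\bigcap_{q}\ker(\Phi_q|_{\B_F})$. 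Moreover $\Phi_q\circ i_X^{(p)}=\iota^q_p$ when $p\le q$ and $=0$ otherwise; in particular $\Phi_q\circ i_X^{(q)}$ is the canonical embedding $\K(X_q)\hookrightarrow\LL(X_q)$, which already shows that each $i_X^{(q)}\colon\K(X_q)\to\NT(X)$ is injective with closed range.

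Now I would induct on $|F|$. For $F=\{e\}$ we have $\B_{\{e\}}=i_X(A)$ and $\Phi_e$ is the faithful representation of $A$ by left multiplication on $X_e=A$, so $\ell_*$ is injective there. For $|F|\ge2$, pick a maximal element $r\in F$ (necessarily $r\ne e$). Maximality forces $p\vee r=r$, i.e.\ $p\le r$, for every $p\in F$ with $p\vee r<\infty$, whence $i_X^{(p)}(\K(X_p))\,i_X^{(r)}(\K(X_r))\subseteq i_X^{(r)}(\K(X_r))$ for all $p\in F$; thus $I_r:=\overline{i_X^{(r)}(\K(X_r))}\cong\K(X_r)$ is an ideal of $\B_F$, on which $\Phi_r$ restricts to an isomorphism onto $\K(X_r)$, so $\ell_*$ is injective on $I_r$. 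Since $\Phi_q|_{I_r}=0$ whenever $r\not\le q$, the subrepresentation $\bigoplus_{q\colon r\not\le q}\Phi_q$ of $\ell_*|_{\B_F}$ factors through the quotient $\B_F/I_r$. If the induced representation of $\B_F/I_r$ is faithful, then the induction closes: for $T\in\B_F$ with $\ell_*(T)=0$ the ``$r\not\le q$'' part forces $T\in I_r$, and then $\Phi_r(T)=0$ forces $T=0$.

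The crux, and the step I expect to be the main obstacle, is therefore to identify $\B_F/I_r$ and to verify that $\bigoplus_{q\colon r\not\le q}\Phi_q$ induces a faithful representation of it that is amenable to the inductive hypothesis. One cannot simply take $F'=F\setminus\{r\}$: if $r$ is a join of two other elements of $F$ — for instance $F=\{e,a,b,a\vee b\}$ with $r=a\vee b$ — then $F'$ fails to be $\vee$-closed and $\B_{F'}$ need not be a subalgebra. What should be true is that $\B_F/I_r$ is isomorphic to a core algebra of the same type attached to a strictly smaller $\vee$-closed configuration, arising from the product system in which the ``direction'' $r$ has been collapsed (in the spirit of passing from $\TT(X)$ to $\OO(X)$ by quotienting out $\K(X)$), so that $|F|$ genuinely drops. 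Controlling the ideal structure of the cores $\B_F$ in this way is precisely the content — in the broader framework of C$^*$-algebras graded by semilattices — of \cite[Corollary~6.4]{kwa-larI}, of which the lemma is the special case at hand, so in practice I would simply invoke that result.
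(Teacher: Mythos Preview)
Your reduction to the C$^*$-subalgebras $\B_F$ and the decomposition $\ell_*|_\F=\bigoplus_{q\in P}\Phi_q$ with $\Phi_q\circ i_X^{(p)}=\iota^q_p$ for $p\le q$ and $=0$ otherwise are exactly the paper's setup. The divergence is in the final step: you pick a \emph{maximal} $r\in F$ and try to induct via the quotient $\B_F/I_r$, which, as you correctly identify, runs into the problem that $F\setminus\{r\}$ need not be $\vee$-closed; you then fall back on \cite[Corollary~6.4]{kwa-larI}. That citation is legitimate (the paper itself notes the lemma is a special case of that result), so your argument is not wrong, but you have made it much harder than necessary.

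The paper instead picks a \emph{minimal} element, and with your own $\Phi_q$'s this finishes in one line, with no induction and no quotients. Given $0\ne S=\sum_{q\in F}i_X^{(q)}(S_q)\in\B_F$, choose $p$ minimal among those $q\in F$ with $S_q\ne0$. For any $q\in F$ your formula gives $\Phi_p\circ i_X^{(q)}=0$ unless $q\le p$; by minimality, the only such $q$ with $S_q\ne0$ is $q=p$ itself. Hence $\Phi_p(S)=S_p\ne0$, so $\ell_*(S)\ne0$. (The same computation shows that the sum $\sum_{q\in F}i_X^{(q)}(\K(X_q))$ is direct, so speaking of ``the $p$-component'' is legitimate.) Going maximal buys you an ideal but costs you the quotient; going minimal buys you a detecting representation and costs nothing.
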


\bp
For every finite subset $F\subset P$, denote by $B_F$ the sum of the subalgebras $i_X^{(p)}(\K(X_p))\subset\NT(X)$, $p\in F$. By~\cite[Lemma~3.6]{CLSV}, if $F$ is $\vee$-closed, then $B_F$ is a C$^*$-algebra. Since $\F$ is the inductive limit of these C$^*$-algebras, it is therefore enough to show that $\Lambda|_{B_F}$ is injective for every finite $\vee$-closed subset $F\subset P$. Take such a set $F$ and a nonzero element $S\in B_F$. By removing some elements of $F$ if necessary, we may assume that there exists a minimal element~$p$ in~$F$ such that the $p$-component of $S$ is nonzero. Let $i_X^{(p)}(S')$ be this component. Now let us look at the action of $\Lambda(S)$ on $X_p\subset\F(X)$.
Since $\Lambda\circ i_X^{(q)}=\ell^{(q)}$ and we have
$$
\ell^{(p)}(S')|_{X_p}=S'\ \ \text{and}\ \ \ell^{(q)}(S'')|_{X_p}=0
$$
for any $q\not\le p$ and $S''\in\K(X_q)$, we conclude that $\Lambda(S)|_{X_p}=S'\ne0$.
\ep

\begin{corollary}\label{cor:gauge-invariance}
Any gauge-invariant state $\phi$ on $\NT(X)$ factors through $\NTr(X)$.
\end{corollary}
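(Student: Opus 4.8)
The plan is to exploit that a gauge-invariant state is recovered from its restriction to the core subalgebra $\F$ via the conditional expectation $E$, together with Lemma~\ref{lem:core}, which identifies $\F$ with $\F^r$ through $\Lambda$, and with the conditional expectation $E_r\colon\NTr(X)\to\F^r$.

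The first step is to record the compatibility $E_r\circ\Lambda=\Lambda\circ E$. This holds because $\Lambda$ intertwines the two gauge coactions: writing $q\colon C^*(G)\to C^*_r(G)$ for the canonical surjection, the $*$-homomorphisms $\delta_r\circ\Lambda$ and $(\Lambda\otimes q)\circ\delta$ agree on the generators $i_X(\xi)$, $\xi\in X_p$, hence everywhere. Applying $\iota\otimes\tau_G$ to both sides, and using that the canonical trace on $C^*(G)$ factors as $\tau_G\circ q$, yields $E_r\circ\Lambda=\Lambda\circ E$.

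Now let $\phi$ be a gauge-invariant state on $\NT(X)$, which means precisely that $\phi=\phi\circ E$. By Lemma~\ref{lem:core} the restriction $\Lambda|_\F\colon\F\to\F^r$ is a $*$-isomorphism, so $\psi:=\phi\circ(\Lambda|_\F)^{-1}$ is a state on $\F^r$ satisfying $\psi(\Lambda(b))=\phi(b)$ for all $b\in\F$. Then $\bar\phi:=\psi\circ E_r$ is a state on $\NTr(X)$, being the composition of a state with a conditional expectation, and for every $a\in\NT(X)$ we have
$$
\bar\phi(\Lambda(a))=\psi(E_r(\Lambda(a)))=\psi(\Lambda(E(a)))=\phi(E(a))=\phi(a),
$$
using in turn the compatibility of the conditional expectations with $\Lambda$, the identity $\psi\circ\Lambda=\phi$ on $\F$ (note that $E(a)\in\F$), and gauge-invariance of $\phi$. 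Hence $\phi=\bar\phi\circ\Lambda$ factors through $\NTr(X)$. I expect no serious obstacle here: the only points requiring a little care are unwinding gauge-invariance to the identity $\phi=\phi\circ E$ and verifying the intertwining relation for $\Lambda$ that gives $E_r\circ\Lambda=\Lambda\circ E$.
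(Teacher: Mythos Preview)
Your proof is correct and follows essentially the same approach as the paper's: both define $\psi$ on $\F^r$ via Lemma~\ref{lem:core}, extend to $\NTr(X)$ using $E_r$, and verify that the composition with $\Lambda$ recovers $\phi$. The only difference is cosmetic: you make the intertwining relation $E_r\circ\Lambda=\Lambda\circ E$ explicit and compute directly, whereas the paper argues that $\psi\circ\Lambda$ vanishes on $\NT(X)_g$ for $g\ne e$ and hence is the gauge-invariant state agreeing with $\phi$ on $\F$.
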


\bp
Let $\psi$ be the unique state on $\F^r$ such that $\psi\circ\Lambda=\phi$ on $\F$. By composing it with the gauge-invariant conditional expectation $E_r\colon \NTr(X)\to\F^r$, we can extend $\psi$ to a gauge-invariant state on $\NTr(X)$, which we continue to denote by $\psi$. Then the state $\psi\circ\Lambda$ vanishes on $\NT(X)_g$ for all $g\ne e$, hence it is gauge-invariant, and since it coincides with $\phi$ on $\F$, we conclude that $\psi\circ\Lambda=\phi$.
\ep

There are several other C$^*$-algebras naturally associated with $X$. We will only consider the following one, closely related to the Cuntz-Pimsner algebra $\OO_X$ defined in~\cite{F}. For a compactly aligned product system $X$, denote by $\NOs(X)$ the quotient of~$\NT(X)$ by the ideal generated by the elements $a-i^{(p)}_X(\varphi_p(a))$ for all $p\in P$ and $a\in\varphi_p^{-1}(\K(X_p))$, where $\varphi_p\colon A\to\LL(X_p)$ is the homomorphism defining the left $A$-module structure on $X_p$.

\begin{remark}\label{re:cuntzalg}
If $P$ is directed (that is, $p\vee q<\infty$ for all $p,q\in P$) and the maps $\varphi_p$ are injective, then the Cuntz-Nica-Pimsner algebra $\mathcal{NO}_X$ defined in~\cite{SY} is a quotient of $\NOs(X)$ (see, e.g., the computation in \cite[Proposition 5.1(1)]{SY} together with \cite[Proposition~5.4]{F}). If, moreover, $\varphi_p(A)\subset\K(X_p)$ for all $p$, then  by \cite[Proposition~5.4]{F} and \cite[Corollary 5.2]{SY} the C$^*$-algebra $\NOs(X)$ coincides with $\mathcal{NO}_X$, as well as with the Cuntz-Pimsner algebra $\OO_X$ from~\cite{F}.
\end{remark}

The gauge coaction $\delta\colon\NT(X)\to\NT(X)\otimes C^*(G)$ descends to a full coaction of~$G$ on~$\NOs(X)$, which we continue to call the gauge coaction.

\subsection{KMS-states}

If $\sigma=(\sigma_t)_{t\in\R}$ is a time evolution on a C$^*$-algebra $A$, recall that an element $a\in A$ is called analytic provided that the $A$-valued function $t\mapsto \sigma_t(a)$ extends to an analytic function on~$\mathbb{C}$. We let  $\AN$ denote the subset of analytic elements, and recall that it is a dense $*$-subalgebra of~$A$, see, for example,  \cite[\S 8.12.1]{ped}.

A state $\phi$ on $A$ is called a $\sigma$-KMS$_\beta$-state if the equality
\begin{equation}\label{eq:simple-condition-forKMS}
\phi(ab)=\phi(b\sigma_{i\beta}(a))
\end{equation}
holds for all $a$ and $b$ in a dense subset of analytic elements. It will be useful to have the following simple lemma.

\begin{lemma}\label{lem:cognoscenti} Let $\sigma$ be a time evolution on a C$^*$-algebra $A$.  Suppose that $\phi$ is a $\sigma$-invariant state on $A$ such that \eqref{eq:simple-condition-forKMS} is satisfied for all $a$ in a set of analytic elements generating $A$ as a C$^*$-algebra and all $b$ spanning a dense subspace of $A$.
Then $\phi$ is a $\sigma$-KMS$_\beta$-state.
\end{lemma}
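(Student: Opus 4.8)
The plan is to bootstrap the KMS identity~\eqref{eq:simple-condition-forKMS} from the given generating set of analytic elements to all analytic elements, using $\sigma$-invariance of $\phi$ to control the passage through multiplication. First I would fix notation: let $\mathcal{S}$ be the given set of analytic elements generating $A$ as a C$^*$-algebra, and let $\mathcal{D}$ be the dense subspace on which~\eqref{eq:simple-condition-forKMS} is known to hold for every $a\in\mathcal{S}$. The key observation is that the set
\[
\mathcal{G}=\{a\in\AN : \phi(ab)=\phi(b\sigma_{i\beta}(a))\ \text{for all}\ b\in\mathcal{D}\}
\]
is a $*$-subalgebra of $\AN$ that contains $\mathcal{S}$; if I can show this, then since $\mathcal{S}$ generates $A$ and $\mathcal{G}$ is norm-closed in $\AN$ (or at least contains a dense subalgebra), I will have~\eqref{eq:simple-condition-forKMS} for all $a$ in a dense set of analytic elements and all $b\in\mathcal{D}$, and then density of $\mathcal{D}$ and continuity of $\phi$ finish the proof.

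The two steps to verify are closure under adjoints and under products. For adjoints: if $a\in\mathcal{G}$, then using $\sigma$-invariance of $\phi$ and the identity $\phi(xy)=\overline{\phi(y^*x^*)}$ one rewrites $\phi(a^*b)$ and $\phi(b\sigma_{i\beta}(a^*))$ in terms of the defining relation for $a$ applied to the element $b^*\in\mathcal{D}$ (note $\mathcal{D}$ may be taken $*$-invariant, or one works with $\mathcal{D}+\mathcal{D}^*$); here the standard manipulation $\sigma_{i\beta}(a^*)=\sigma_{\overline{i\beta}}(a)^*=\sigma_{-i\beta}(a)^*$ enters. For products: given $a_1,a_2\in\mathcal{G}$, one wants $\phi(a_1a_2b)=\phi(b\sigma_{i\beta}(a_1a_2))=\phi(b\sigma_{i\beta}(a_1)\sigma_{i\beta}(a_2))$; apply the relation for $a_1$ with the element $a_2b$ in place of $b$ (legitimate if $a_2\mathcal{D}\subset\mathcal{D}$, which need not hold, so instead one applies it after first replacing $b$ by elements of $\mathcal{D}$ that approximate $a_2b$, or — cleaner — one enlarges $\mathcal{D}$ to the span of $\AN\cdot\mathcal{D}$ once the single-variable extension is in hand). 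Then apply the relation for $a_2$ with $b\sigma_{i\beta}(a_1)$ in place of $b$, and reassemble.

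The main obstacle is precisely this bookkeeping about which elements one is allowed to put in the ``$b$''-slot: the hypothesis only gives a fixed dense subspace $\mathcal{D}$, not an ideal, so multiplying $b$ by an analytic element on the left may leave $\mathcal{D}$. The clean way around it is a two-stage argument: (i) first extend the ``$b$''-variable from $\mathcal{D}$ to all of $A$ while keeping $a\in\mathcal{S}$, by approximating $b\in A$ in norm by elements of $\mathcal{D}$ and using boundedness of both sides of~\eqref{eq:simple-condition-forKMS} in $b$ (this uses only that $a$ and $\sigma_{i\beta}(a)$ are bounded and $\phi$ is a state); (ii) then, with the $b$-variable now ranging over all of $A$, run the $*$-subalgebra argument above for the $a$-variable without any constraint issue, since $a_2b\in A$ automatically. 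Finally, once~\eqref{eq:simple-condition-forKMS} holds for all $a$ in a norm-dense $*$-subalgebra of $\AN$ and all $b\in A$, a standard approximation in $a$ (for fixed $b$, both sides are continuous in $a$ in the norm of $A$, since $t\mapsto\sigma_t(a)$ and hence $\sigma_{i\beta}(a)$ depend norm-continuously on $a$ within $\AN$ — or one simply reduces to a dense subset of analytic elements as in the definition) shows $\phi$ satisfies the KMS condition, i.e.\ $\phi$ is a $\sigma$-KMS$_\beta$-state.
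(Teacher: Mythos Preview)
Your proposal is correct and follows essentially the same approach as the paper. In particular, your two-stage fix---first extend the $b$-variable from $\mathcal{D}$ to all of $A$ by density, then run the $*$-subalgebra argument for the $a$-variable without any slot-constraint issue---is exactly what the paper does (it dispatches stage~(i) in a single sentence and then verifies closure under adjoints and products just as you outline, using $\sigma_z$-invariance of $\phi$ on analytic elements for the adjoint step).
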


\begin{proof}
Let $\mathcal{C}$ be the set of all elements $a\in \AN$ such that \eqref{eq:simple-condition-forKMS} holds for all $b$ spanning a dense subspace of~$A$ (depending on $a$). By linearity and continuity it follows that \eqref{eq:simple-condition-forKMS} holds for all $a\in \mathcal{C}$ and all $b\in A$. By assumption, $\mathcal{C}$ generates $A$ as a C$^*$-algebra.

Taking adjoint in \eqref{eq:simple-condition-forKMS} gives
\[
\phi(b^*a^*)=\phi(\sigma_{i\beta}(a)^*b^*)=\phi(\sigma_{-i\beta}(a^*)b^*)
\]
whenever $a\in \mathcal{C}$ and $b\in A$. Since the adjoint of an analytic element is again in $\AN$, it follows that $b^*a^*$ is analytic for every $a\in \mathcal{C}$ and $b\in \AN$. The assumed $\sigma_t$-invariance for $t\in \mathbb{R}$ implies $\sigma_z$-invariance on analytic elements for all $z\in \mathbb{C}$. Therefore
\[
\phi(\sigma_{i\beta}(b^*)\sigma_{i\beta}(a^*))=\phi(a^*\sigma_{i\beta}(b^*)).
\]
Renaming $\sigma_{i\beta}(b^*)$ as $b$, this shows that $\phi(a^*b)=\phi(b\sigma_{i\beta}(a^*))$ for all $a\in \mathcal{C}$ and $b\in \AN$. Thus, $\mathcal{C}$ is closed under taking adjoint.

The set $\mathcal{C}$ is also closed under multiplication. Indeed, let $a,c\in \mathcal{C}$. For each $b\in \AN$ we have
\[
\phi(acb)=\phi(cb\sigma_{i\beta}(a))=\phi(b\sigma_{i\beta}(a)\sigma_{i\beta}(c))=\phi(b\sigma_{i\beta}(ac)).
\]

We have established that $\mathcal{C}$ is closed under adjoint and multiplication, so it is itself dense in~$A$. Therefore \eqref{eq:simple-condition-forKMS} holds for a dense set of analytic elements $a$ and for all $b\in A$. Hence $\phi$ is a KMS-state.
\end{proof}

Assume now that $(G,P)$ is a quasi-lattice ordered group and $X$ is a compactly aligned product system over $P$. A homomorphism $N\colon P\to (0,+\infty)$, with $(0,+\infty)$ viewed as a group under multiplication, gives rise to a time evolution $\sigma$ on the Nica-Toeplitz algebra $\NT(X)$ that satisfies
\begin{equation}\label{eq:dynamics1}
\sigma_t(i_X(\xi))=N(p)^{it}i_X(\xi)\ \ \text{for}\ \ \xi\in X_p,\ p\in P.
\end{equation}
It descends to time evolutions on $\NOs(X)$ and $\NTr(X)$. This is clear for $\NOs(X)$, and in order to show this for~$\NTr(X)$ consider the (possibly unbounded) operator~$D_N$ on $\F(X)$ such that
\begin{equation}\label{eq:DN}
D_N\xi=N(p)\xi\ \ \text{for}\ \ \xi\in X_p.
\end{equation}
Then $\Ad D_N^{it}\circ\Lambda=\Lambda\circ\sigma$, where $(\Ad D_N^{it})(T)=D_N^{it}TD_N^{-it}$, so the required time evolution on~$\NTr(X)$ is given by the restriction of $\Ad D_N^{it}$.
Note that it is well-defined even when $X$ is not compactly aligned and so the algebras $\NT(X)$ and $\NOs(X)$ are not defined.

We will be interested in the KMS-states on $\NT(X)$ and $\NOs(X)$ with respect to the dynamics defined by \eqref{eq:dynamics1}. The main problem is to understand the KMS-states on the former algebra, the ones on the latter can then be obtained using the following lemma.

\begin{lemma}[cf.~{\cite[Theorem~2.5]{lac-nes}}]\label{lem:TvsP}
A KMS-state $\phi$ on $\NT(X)$ with respect to some dynamics on $\NT(X)$ factors through $\NOs(X)$ if and only if $\phi(a)=\phi(i_X^{(p)}(\varphi_p(a)))$ for all $p\in P$ and $a\in\varphi_p^{-1}(\K(X_p))$.
\end{lemma}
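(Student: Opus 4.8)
The plan is to reduce the ``if'' direction to the fact that the null space of a KMS state is a two-sided ideal. The ``only if'' direction is trivial: if $\phi$ factors as $\bar\phi\circ q$ through the quotient map $q\colon\NT(X)\to\NOs(X)$, then $\phi$ kills $\ker q$, which by construction contains each $z_{p,a}:=a-i_X^{(p)}(\varphi_p(a))$ with $a\in J_p:=\varphi_p^{-1}(\K(X_p))$, so $\phi(a)=\phi(i_X^{(p)}(\varphi_p(a)))$. For the converse, assume this identity for all $p\in P$ and $a\in J_p$ (note that $J_p$ is a closed two-sided ideal of $A$), let $I\subset\NT(X)$ be the ideal generated by the $z_{p,a}$, so that $\NOs(X)=\NT(X)/I$, and set $N_\phi=\{x\in\NT(X):\phi(x^*x)=0\}$. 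I would prove: (1) $z_{p,a}\in N_\phi$ for all $p$ and all $a\in J_p$; (2) $N_\phi$ is a closed two-sided ideal; (3) $\phi$ vanishes on $N_\phi$. Then (1) and (2) give $I\subseteq N_\phi$, (3) gives $\phi|_I=0$, and $\phi$ descends to a linear functional $\bar\phi$ on $\NT(X)/I=\NOs(X)$, which is easily checked to be a state with $\bar\phi\circ q=\phi$.

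For step (1) the main point is the algebraic identity $z_{p,a}^*z_{p,a}=z_{p,a^*a}$. To establish it, I would first check, evaluating on the rank-one operators $\theta_{\eta,\zeta}$ with $\eta,\zeta\in X_p$ and using $i_X(b)i_X(\eta)=i_X(\varphi_p(b)\eta)$, that
\[
i_X(b)\,i_X^{(p)}(T)=i_X^{(p)}(\varphi_p(b)T)\quad\text{and}\quad i_X^{(p)}(T)\,i_X(b)=i_X^{(p)}(T\varphi_p(b))
\]
for all $b\in A$ and $T\in\K(X_p)$. Expanding $(i_X(a^*)-i_X^{(p)}(\varphi_p(a^*)))(i_X(a)-i_X^{(p)}(\varphi_p(a)))$ and using these identities together with the fact that $i_X|_A$ and $i_X^{(p)}\circ\varphi_p|_{J_p}$ are $*$-homomorphisms, three of the four terms equal $i_X^{(p)}(\varphi_p(a^*a))$ up to sign while the fourth is $i_X(a^*a)$, giving $z_{p,a}^*z_{p,a}=i_X(a^*a)-i_X^{(p)}(\varphi_p(a^*a))=z_{p,a^*a}$. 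Since $a^*a\in J_p$, the hypothesis now yields $\phi(z_{p,a}^*z_{p,a})=\phi(z_{p,a^*a})=\phi(a^*a)-\phi(i_X^{(p)}(\varphi_p(a^*a)))=0$.

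Step (2) is where the KMS property, rather than mere positivity of $\phi$, is used, and it is the heart of the argument — it amounts to the standard fact that the GNS vector of a KMS state is separating for the generated von Neumann algebra. I would argue directly that $N_\phi=N_\phi^*$: for $x\in N_\phi$ and any $\sigma$-analytic element $a$, the KMS condition (in its standard form, valid for all analytic $a$ and all $b\in\NT(X)$) gives $\phi(ax^*)=\phi(x^*\sigma_{i\beta}(a))=\overline{\phi(\sigma_{i\beta}(a)^*x)}$, and this vanishes because $|\phi(y^*x)|^2\le\phi(y^*y)\,\phi(x^*x)=0$ for every $y\in\NT(X)$; as the analytic elements are dense in $\NT(X)$ (see \cite[\S 8.12.1]{ped}) and $a\mapsto\phi(ax^*)$ is norm-continuous, it follows that $\phi(ax^*)=0$ for all $a$, in particular $\phi(xx^*)=0$, i.e. $x^*\in N_\phi$. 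Thus $N_\phi=N_\phi^*$ is both a closed left ideal and a closed right ideal, hence a two-sided ideal. Step (3) is then immediate from the Cauchy--Schwarz inequality: $|\phi(w)|^2\le\phi(1)\phi(w^*w)=0$ for $w\in N_\phi$. The only genuinely non-routine ingredient is step (2); conceptually, what makes the lemma work is that the hypothesis, a priori just an equality of scalars, forces $z_{p,a}$ into the null space $N_\phi$, and for a KMS state $N_\phi$ is automatically a two-sided ideal — this is exactly the mechanism behind \cite[Theorem~2.5]{lac-nes}.
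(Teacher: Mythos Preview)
Your proof is correct and follows essentially the same route as the paper's: both establish the identity $z_{p,a}^*z_{p,a}=z_{p,a^*a}$ and then use the KMS property to conclude that $z_{p,a}$ lies in the kernel of the GNS representation. The only difference is presentational --- the paper invokes the standard fact that the GNS cyclic vector of a KMS state is separating, while you unpack this by proving directly that $N_\phi$ is a two-sided ideal, which is precisely the content of that fact.
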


\bp
Clearly, the condition $\phi(a)=\phi(i_X^{(p)}(\varphi_p(a)))$ is necessary for $\phi$ to factor through $\NOs(X)$. In order to see that it is sufficient, observe that
$i_X^{(p)}(\varphi_p(a)S)=a\,i_X^{(p)}(S)$ for all $a\in A$ and $S\in\K(X_p)$. It follows that if $\varphi_p(a)\in \K(X_p)$, then
$$
(a-i_X^{(p)}(\varphi_p(a)))^*(a-i_X^{(p)}(\varphi_p(a)))=a^*a-i_X^{(p)}(\varphi_p(a^*a)).
$$
Hence $\phi$ vanishes on $(a-i_X^{(p)}(\varphi_p(a)))^*(a-i_X^{(p)}(\varphi_p(a)))$. Since $\phi$ is a KMS-state, the corresponding cyclic vector in the Hilbert space of the GNS-representation is separating, so it follows that  $a-i_X^{(p)}(\varphi_p(a))$ is contained in the kernel of the GNS-representation for all $a\in\varphi_p^{-1}(\K(X_p))$. Therefore the GNS-representation of $\NT(X)$ factors through $\NOs(X)$ and hence $\phi$ factors through $\NOs(X)$ as well.
\ep

Our main results will be about gauge-invariant KMS-states on $\NT(X)$. Note that by Corollary~\ref{cor:gauge-invariance} a study of such states on $\NT(X)$ is equivalent to that on $\NTr(X)$.

\smallskip

There is a much larger class of time evolutions on $\NT(X)$ than those defined by homomorphisms $N$, which we will now briefly consider.

Assume $A$ is equipped with a time evolution $\gamma=(\gamma_t)_{t\in\R}$ and every C$^*$-correspondence $X_p$, $p\ne e$, is equivariant, that is, we are given a strongly continuous one-parameter group $U^{(p)}$ of isometries of the Banach space $X_p$ such that
$$
\langle U^{(p)}_t\xi,U^{(p)}_t\zeta\rangle=\gamma_t(\langle\xi,\zeta\rangle)\ \ \text{and}\ \ U^{(p)}_t(a\xi)=\gamma_t(a)U^{(p)}_t\xi,
$$
which implies that we also have $U^{(p)}_t(\xi a)=(U^{(p)}\xi)\gamma_t(a)$. If we let $U^{(e)}=\gamma$ and assume that the collection $U=(U^{(p)})_{p\in P}$ is multiplicative in the sense that
$$
U^{(pq)}_t(\xi\zeta)=(U^{(p)}_t\xi)(U^{(q)}_t\zeta),
$$
then we can define a \emph{quasi-free dynamics} $\sigma^U$ on $\NT(X)$ by
$$
\sigma^U_t(i_X(\xi))=i_X(U^{(p)}_t\xi)
$$
for $\xi\in X_p$, $p\in P$ and $t\in\R$.

\subsection{Induction of traces and KMS-weights}\label{ssec:induction}

If $A$ is a C$^*$-algebra, $\tau$ is a tracial positive linear functional on $A$, and $Y$ is a right C$^*$-Hilbert $A$-module, then by \cite[Theorem 1.1]{lac-nes} we get a unique strictly lower semicontinuous, in general infinite, trace $\Tr^Y_\tau$ on $\LL(Y)$, which we often denote simply by~$\Tr_\tau$, such that
$$
\Tr_\tau(\theta_{\xi,\xi})=\tau(\langle\xi,\xi\rangle) \ \ \text{for all}\ \ \xi\in Y.
$$
Explicitly, $\Tr_\tau$ can be defined as follows. Assume $(u_i)_{i\in I}$ is an approximate unit in $\K(Y)$ such that, for every $i$, $u_i=\sum_{\xi\in J_i}\theta_{\xi,\xi}$ for some finite set $J_i\subset Y$. Then
\begin{equation}\label{eq:Tr}
\Tr_\tau(T)=\sup_i\sum_{\xi\in J_i}\tau(\langle\xi,T\xi\rangle)=\lim_i\sum_{\xi\in J_i}\tau(\langle\xi,T\xi\rangle)\ \ \text{for}\ \ T\in\LL(Y)_+.
\end{equation}
In particular, if $Y$ admits a (possibly infinite) Parseval frame $(\xi_j)_{j\in J}$, meaning that
$$
\xi=\sum_j\xi_j\langle\xi_j,\xi\rangle\ \ \text{for all}\ \ \xi\in Y,
$$
then
\begin{equation}\label{eq:Tr Parseval frame}
\Tr_\tau(T)=\sum_j\tau(\langle\xi_j,T\xi_j\rangle)\ \ \text{for}\ \ T\in\LL(Y)_+.
\end{equation}

Continuity of $\Tr_\tau$ as a function of $\tau$ is a delicate issue, since $\Tr_\tau$ is an infinite trace in general, but the following simple observation will be enough for our purposes.

\begin{lemma}[{cf.~\cite[Propostion~2.4]{lac-nes}}]\label{lem:trace-continuity}
Assume $A$ is a C$^*$-algebra, $Y$ is a right C$^*$-Hilbert $A$-module and $\tau$ is a positive trace on $A$. Then
\begin{itemize}
\item[(i)] for every $T\in\LL(Y)_+$, the function $\omega\mapsto\Tr_\omega(T)$ is weakly$^*$ lower semicontinuous; in particular, if $(\tau_i)_i$ is an increasing net of positive traces on $A$ converging weakly$^*$ to $\tau$, then $\Tr_{\tau_i}(T)\nearrow\Tr_\tau(T)$;
\item[(ii)] if $\Tr_\tau(T)<\infty$ for some $T\in\LL(Y)_+$, then the function $\omega\mapsto\Tr_\omega(T)$ is weakly$^*$ continuous on the set of positive traces~$\omega$ on~$A$ such that $\omega\le\tau$.
\end{itemize}
\end{lemma}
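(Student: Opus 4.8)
The plan is to deduce everything from the explicit formula~\eqref{eq:Tr} for $\Tr_\omega$ together with the uniqueness assertion in \cite[Theorem~1.1]{lac-nes}. For~(i), I would fix $T\in\LL(Y)_+$ and a ``diagonal'' approximate unit $(u_i)_{i\in I}$ in $\K(Y)$ with $u_i=\sum_{\xi\in J_i}\theta_{\xi,\xi}$, $J_i\subset Y$ finite, as in~\eqref{eq:Tr}. For each $i$ the function $\omega\mapsto\sum_{\xi\in J_i}\omega(\langle\xi,T\xi\rangle)$ is a finite sum of point evaluations at the fixed elements $\langle\xi,T\xi\rangle\in A_+$, hence weakly$^*$ continuous, and by~\eqref{eq:Tr} the function $\omega\mapsto\Tr_\omega(T)$ is the pointwise supremum of these, hence weakly$^*$ lower semicontinuous. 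Moreover, since every $\langle\xi,T\xi\rangle$ is positive, each summand, and hence the supremum, is monotone in $\omega$; so if $(\tau_i)_i$ is an increasing net of positive traces converging weakly$^*$ to $\tau$, then $(\Tr_{\tau_i}(T))_i$ is increasing with $\Tr_{\tau_i}(T)\le\Tr_\tau(T)$ (as $\tau_i\le\tau$), and lower semicontinuity gives $\Tr_\tau(T)\le\liminf_i\Tr_{\tau_i}(T)$, whence $\Tr_{\tau_i}(T)\nearrow\Tr_\tau(T)$.

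For~(ii) I would first record additivity of $\omega\mapsto\Tr_\omega$, which follows from uniqueness: for positive traces $\tau_1,\tau_2$ the functionals $\Tr_{\tau_1}+\Tr_{\tau_2}$ and $\Tr_{\tau_1+\tau_2}$ are strictly lower semicontinuous traces on $\LL(Y)$ agreeing with $\theta_{\xi,\xi}\mapsto(\tau_1+\tau_2)(\langle\xi,\xi\rangle)$ on rank-one operators, so they coincide. Now assume $\Tr_\tau(T)<\infty$ and work on the set $K=\{\omega:0\le\omega\le\tau\}$ with the weak$^*$ topology. The map $\omega\mapsto\tau-\omega$ is weakly$^*$ continuous from $K$ to $K$, and additivity gives $\Tr_\omega(T)+\Tr_{\tau-\omega}(T)=\Tr_\tau(T)<\infty$, so $\Tr_\omega(T)=\Tr_\tau(T)-\Tr_{\tau-\omega}(T)$. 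Applying part~(i) to the trace $\tau-\omega$, the map $\omega\mapsto\Tr_{\tau-\omega}(T)$ is weakly$^*$ lower semicontinuous on $K$, hence $\omega\mapsto\Tr_\omega(T)$ is weakly$^*$ upper semicontinuous on $K$; together with the lower semicontinuity from~(i) this yields the desired weakly$^*$ continuity on $K$.

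The whole argument is essentially formal; the only point needing care is the bookkeeping with infinite traces, namely that the subtraction $\Tr_\tau(T)-\Tr_{\tau-\omega}(T)$ is never of the form $\infty-\infty$. This is guaranteed by the hypothesis $\Tr_\tau(T)<\infty$ together with monotonicity, which forces $\Tr_{\tau-\omega}(T)\le\Tr_\tau(T)<\infty$. So I do not expect a real obstacle: the proof reduces to invoking~\eqref{eq:Tr} for the lower semicontinuity and the uniqueness of $\Tr_\tau$ for the additivity, and then combining the two semicontinuity statements.
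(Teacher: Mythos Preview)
Your argument is correct. Part~(i) matches the paper's proof exactly: both observe that $\omega\mapsto\Tr_\omega(T)$ is the pointwise supremum of weakly$^*$ continuous functions.

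For part~(ii) you take a genuinely different route from the paper. The paper argues by an $\eps$-approximation: it chooses a finite-rank operator $S=\sum_{\xi\in J}\theta_{\xi,\xi}\le1$ with $\Tr_\tau(ST)>\Tr_\tau(T)-\eps$, and then uses $\omega\le\tau$ to bound the remainder $\Tr_\omega((1-S)T)\le\Tr_\tau((1-S)T)<\eps$ uniformly; continuity then follows because $\Tr_\omega(ST)=\sum_{\xi\in J}\omega(\langle\xi,T\xi\rangle)$ is manifestly weakly$^*$ continuous. Your approach instead invokes uniqueness from \cite[Theorem~1.1]{lac-nes} to get additivity $\Tr_{\omega}+\Tr_{\tau-\omega}=\Tr_\tau$, and then uses the involution $\omega\mapsto\tau-\omega$ to turn lower semicontinuity into upper semicontinuity. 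This is slicker and more conceptual; the paper's argument is slightly more self-contained in that it only uses formula~\eqref{eq:Tr} and not the uniqueness statement. Both are short, and neither gives more than the other in terms of generality.
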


\bp
Part (i) is clear as the functions $\omega\mapsto\omega(\langle\xi,T\xi\rangle)$ are weakly$^*$ continuous and the function $\omega\mapsto\Tr_\omega(T)$ is the pointwise supremum of finite sums of such functions.

For part (ii), fix $\eps>0$ and take an operator $S=\sum_{\xi\in J}\theta_{\xi,\xi}\le1$, with $J\subset Y$ finite, such that $\Tr_\tau(ST)=\sum_{\xi\in J}\tau(\langle\xi,T\xi\rangle)>\Tr_\tau(T)-\eps$. Then for any positive trace $\omega\le\tau$ and $T\in\LL(Y)_+$ we have
$$
\Tr_\omega(T)=\Tr_\omega(ST)+\Tr_\omega((1-S)T)\ \ \text{and}\ \ 0\le\Tr_\omega((1-S)T)\le \Tr_\tau((1-S)T)<\eps.
$$
It follows that if $\omega'$ is another positive trace such that $\omega'\le\tau$, then
\begin{align*}
|\Tr_\omega(T)-\Tr_{\omega'}(T)|&<|\Tr_\omega(ST)-\Tr_{\omega'}(ST)|+2\eps\\
&\le \sum_{\xi\in J}|\omega(\langle\xi,T\xi\rangle)-\omega'(\langle\xi,T\xi\rangle)|+2\eps.
\end{align*}
When $\omega'$ tends to $\omega$ in the weak$^*$ topology, then the last quantity converges to $2\eps$, and as $\eps>0$ was arbitrary, we conclude that $\Tr_{\omega'}(T)\to\Tr_\omega(T)$.
\ep

The following ``induction in stages'' type property of the traces $\Tr_\tau$ is very useful, see~\cite[Proposition~1.2]{lac-nes}. Assume $Y$ is a right C$^*$-Hilbert $A$-module and $Z$ is a C$^*$-correspondence over~$A$ such that the trace $\tau^Z=\Tr_\tau^Z\circ\,\varphi$ on $A$ is finite, where $\varphi\colon A\to \LL(Z)$ defines the left action of~$A$. Then
\begin{equation}\label{eq:induction}
\Tr_{\tau^Z}^Y(T)=\Tr_\tau^{Y\otimes_AZ}(T\otimes1)\ \ \text{for}\ \ T\in \LL(Y)_+.
\end{equation}

A few times it will be useful to remember that the construction of $\Tr_\tau$ is part of a more general procedure of induction of KMS-weights~\cite{lac-nes}. Namely, assume that $A$ is equipped with a time evolution $\gamma$ and a right C$^*$-Hilbert $A$-module $Y$ is equipped with a strongly continuous one-parameter group $U$ of isometries such that
$$
\langle U_t\xi,U_t\zeta\rangle=\gamma_t(\langle\xi,\zeta\rangle).
$$
Define a time evolution $\sigma^U$ on $\K(Y)$ by $\sigma^U_t(T)=U_tTU_{-t}$. Then for every $\gamma$-KMS$_\beta$-weight $\phi$ on $\overline{\langle Y,Y\rangle}$ there is a unique $\sigma^U$-KMS$_\beta$-weight $\Phi$ on $\K(Y)$ such that
$$
\Phi(\theta_{\xi,\xi})=\phi(\langle U_{i\beta/2}\xi,U_{i\beta/2}\xi\rangle)
$$
for all $\xi$ in the domain of definition of $U_{i\beta/2}$. This weight extends uniquely to a strictly lower semicontinuous weight $\operatorname{Ind}^U_Y\phi$ on $\LL(Y)$; this extension was denoted by $\kappa_\phi$ in~\cite{lac-nes}. Although the dynamics $T\mapsto U_tTU_{-t}$ on $\LL(Y)$ is only strictly continuous, the weight $\operatorname{Ind}^U_Y\phi$ satisfies the KMS$_\beta$ condition with respect to it. When $\gamma$ and $U$ are trivial and $\phi$ is a finite trace, then $\operatorname{Ind}^U_Y\phi$ is exactly the trace $\Tr^Y_\phi$.

By~\cite[Theorem~3.2]{lac-nes}, for every $\beta\in\R$, the map $\phi\mapsto\Phi$ defines a one-to-correspondence between the $\gamma$-KMS$_\beta$-weights on $\overline{\langle Y,Y\rangle}$ and the $\sigma^U$-KMS$_\beta$-weights on $\K(Y)$. A particular (but essentially equivalent) case of this correspondence is the claim that if $B$ is a C$^*$-algebra with time evolution $\sigma$ and $p\in M(B)$ is a $\sigma$-invariant full projection, then the map $\phi\mapsto\phi|_{pBp}$ defines a one-to-one correspondence between the $\sigma$-KMS$_\beta$-weights on $B$ and those on $pBp$. We will only need injectivity of this map, which is a rather simple consequence of the KMS-condition written in the form $\phi(aa^*)=\phi(\sigma_{i\beta/2}(a)^*\sigma_{i\beta/2}(a))$.

\subsection{Generalized dual Ruelle transfer operators}\label{ssec:transfer}

Given a C$^*$-correspondence $Y$ over a C$^*$-algebra $A$, we consider the operator $F_Y$ mapping a positive trace $\tau$ on $A$ into a positive, in general infinite, trace $F_Y\tau$ defined by
\begin{equation}\label{eq:transfer}
(F_Y\tau)(a)=\Tr^Y_\tau(a)\ \ \text{for}\ \ a\in A_+.
\end{equation}
Here, and in many other places, we omit the map $\varphi\colon A\to\LL(Y)$ defining the left action of $A$. To be pedantic, we should have written $(F_Y\tau)(a)=\Tr_{\tau}^Y(\varphi(a))$.

Similarly to~\cite{lac-nes}, the maps $F_Y$ will play an important role in our study of KMS-states. As the following example shows, they can be considered as generalizations of the dual Ruelle transfer operators.

\begin{example}[\cite{E}] \label{ex:Ruelle}
Assume $Z$ is a compact Hausdorff space and $h\colon Z\to Z$ is a surjective local homeomorphism. Then the Ruelle transfer operator $\LL\colon C(Z)\to C(Z)$ (corresponding to the zero Hamiltonian) is defined by
$$
(\LL f)(z)=\sum_{w\in h^{-1}(z)}f(w).
$$

Define a C$^*$-correspondence $Y$ over $A=C(Z)$ as follows. As a space we let $Y=C(Z)$. The bimodule structure and the $A$-valued inner product are defined by
$$
(a \cdot \xi \cdot b)(z) = a(z)\xi(z)b(h(z)),\quad \langle \xi,\zeta\rangle(z)=\sum_{w\in h^{-1}(z)}\overline{\xi(w)}\zeta(w)
$$
for all $a,b,\xi,\zeta\in C(Z)$ and $z\in Z$. We claim that $F_Y=\LL^*$.

Indeed, let  $(\rho_j)^d_{j=1}$ be a  partition of unity  such that $h$ is  injective on each $\supp \rho_j$, and take $\xi_j=\sqrt{\rho_j}$. Then
$\{\xi_j\}_j$ is a  Parseval frame for $Y$ (see, for example, \cite[Lemma~5.2]{AaHR}) and
$$
\sum_j\langle\xi_j,a\cdot\xi_j\rangle=\LL(a).
$$
%since
%$$
%\sum_j\langle \xi_j,a\cdot\xi_j\rangle(z)=\sum_j\sum_{w\in h^{-1}(z)}\overline{\xi_j(w)}a(w)\xi_j(w)=\sum_{w\in h^{-1}(z)}a(w).
%$$
By~\eqref{eq:Tr Parseval frame} we then get
$$
(F_Y\tau)(a)=\sum_j\tau(\langle \xi_j,a\cdot\xi_j\rangle)=\tau(\LL(a)),
$$
which is what we claimed.
\end{example}

Given a product system $X$ over $P$, with $X_e=A$, we will write $F_p$ instead of $F_{X_p}$. In a similar way we will write $\Tr^p_\tau$ instead of $\Tr^{X_p}_\tau$. We thus have
\begin{equation}\label{eq:F}
(F_p\tau)(a)=\Tr_\tau^p(a)\ \ \text{for}\ \ a\in A_+.
\end{equation}
Property~\eqref{eq:induction} implies that
\begin{equation}\label{eq:induction2}
\text{if}\ \ F_q\tau\ \ \text{is finite, then}\ \ F_pF_q\tau=F_{pq}\tau.
\end{equation}

\smallskip

We can now make a more precise statement on the possibility of extending the results of this paper from the time evolution on $\NT(X)$ defined by a homomorphism $N$ to general quasi-free dynamics. The tracial states $\tau$ on $A$ should be replaced by the $\gamma$-KMS$_\beta$-states $\psi$ on $A$, and the expressions $N(p)^{-\beta}\Tr^p_\tau$ and $N(p)^{-\beta}F_p\tau$ should be replaced by $\operatorname{Ind}^{U^{(p)}}_{X_p}\psi$ and $(\operatorname{Ind}^{U^{(p)}}_{X_p}\psi)|_A$, respectively. Then the results involving no conditions on $N$ generalize without much effort to arbitrary quasi-free dynamics; we will only make a few remarks throughout the paper indicating the necessary minimal changes. On the other hand, with the results involving assumptions $N(p)>1$ or $N(p)\ge1$ (and in Sections~\ref{sec:restriction}-\ref{sec:products} these are only Corollary~\ref{cor:finite-class3}, Proposition~\ref{prop:phase-transition}, Corollary~\ref{cor:finite-class4} and Corollary~\ref{cor:monotonicity}) one should be more careful, and we leave it to the interested reader or future work to clarify to what extent they can be generalized.

\bigskip

\section{Restricting KMS-states to the coefficient algebra}\label{sec:restriction}

Aiming to describe KMS-states $\phi$ on $\NT(X)$ in terms of their restrictions $\tau=\phi|_A$ to $A=X_e$, our first goal is to show that such states $\tau$ satisfy a number of inequalities.

\begin{theorem} \label{thm:subinvariance}
Assume $(G,P)$ is a quasi-lattice ordered group and $X$ is a compactly aligned product system of C$^*$-correspondences over $P$, with $X_e=A$. Consider the dynamics \eqref{eq:dynamics1} on the Nica-Toeplitz algebra  $\NT(X)$ of $X$ defined by a homomorphism $N\colon P\to (0,+\infty)$, and assume $\phi$ is a $\sigma$-KMS$_\beta$-state on $\NT(X)$ for some $\beta\in\R$. Then $\tau=\phi|_{A}$ is a tracial state on $A$ satisfying the following condition:
\begin{equation}\label{eq:subinvariance}
\tau(a)+\sum_{\emptyset\neq K\subset J} (-1)^{\vert K\vert} N(q_K)^{-\beta}\Tr_{\tau}^{q_K}(a)\geq 0\ \ \text{for all finite}\ \ J\subset P\setminus\{e\}\ \ \text{and}\ \ a\in A_+,
\end{equation}
where $q_K$ is given by \eqref{eq:qK} and the convention is that the summands corresponding to $q_K=\infty$ are zero.
\end{theorem}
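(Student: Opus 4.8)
The plan is to pass to the GNS representation of $\phi$ and to display the quantity in \eqref{eq:subinvariance} as the squared norm of a vector, so that positivity becomes automatic. Let $(\pi,H,\Omega)$ be the GNS triple of $\phi$, so $\phi=\langle\Omega,\pi(\cdot)\Omega\rangle$. First I would note that $\tau=\phi|_A$ is a tracial state: since $N(e)=1$, the dynamics is trivial on $A=X_e$, so the KMS identity $\phi(ab)=\phi(b\sigma_{i\beta}(a))$ applied to $a,b\in A$ gives $\tau(ab)=\tau(ba)$. The central object is, for each $p\in P$, the projection $f_p\in\pi(\NT(X))''$ defined as the strong-operator limit of $\pi(i_X^{(p)}(u_i))$ over an approximate unit $(u_i)$ of $\K(X_p)$: since $T\mapsto\pi(i_X^{(p)}(T))$ is a $*$-representation of $\K(X_p)$, it annihilates the orthogonal complement of its essential subspace $f_pH:=\overline{\pi(i_X^{(p)}(\K(X_p)))H}$ and on that subspace $(u_i)$ acts as an approximate identity, so the limit exists, equals the projection onto $f_pH$ (in particular it is independent of the approximate unit, and is a projection because $(u_i^2)$ is again one).

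Next I would establish three properties. (a) $f_p$ commutes with $\pi(A)$: from $i_X^{(p)}(T)a=i_X^{(p)}(T\varphi_p(a))$ and $a\,i_X^{(p)}(T)=i_X^{(p)}(\varphi_p(a)T)$ one has $[\pi(i_X^{(p)}(u_i)),\pi(a)]=\pi(i_X^{(p)}([u_i,\varphi_p(a)]))$, and although $[u_i,\varphi_p(a)]$ tends to $0$ only strictly in $\LL(X_p)$, that already forces $\pi(i_X^{(p)}([u_i,\varphi_p(a)]))\to0$ strongly. (b) $f_pf_q=f_{p\vee q}$, with the convention $f_\infty=0$: by Nica covariance and compact alignment, $i_X^{(p)}(u_i)i_X^{(q)}(u_j)=i_X^{(p\vee q)}(\iota^{p\vee q}_p(u_i)\iota^{p\vee q}_q(u_j))$ when $p\vee q<\infty$ (and $0$ otherwise), and the net $(\iota^{p\vee q}_p(u_i)\iota^{p\vee q}_q(u_j))_{i,j}$ is a bounded net in $\K(X_{p\vee q})$ converging strictly to $1$, hence both sides converge strongly to the asserted projection. (c) For $a\in A_+$,
\[
\langle\pi(a^{1/2})\Omega,\,f_p\,\pi(a^{1/2})\Omega\rangle=N(p)^{-\beta}\,\Tr^p_\tau(a).
\]
Here one chooses $(u_i)$ of the form $u_i=\sum_{\xi\in J_i}\theta_{\xi,\xi}$; using (a) the left-hand side equals $\lim_i\phi(a\,i_X^{(p)}(u_i))$, and, applying the KMS condition to the analytic element $i_X(\xi)^*$ (for which $\sigma_{i\beta}(i_X(\xi)^*)=N(p)^{\beta}i_X(\xi)^*$) together with $i_X(\xi)^*a\,i_X(\xi)=\langle\xi,a\xi\rangle\in A$, one gets $\phi(a\,i_X^{(p)}(u_i))=N(p)^{-\beta}\sum_{\xi\in J_i}\tau(\langle\xi,a\xi\rangle)$, which by \eqref{eq:Tr} converges to $N(p)^{-\beta}\Tr^{X_p}_\tau(\varphi_p(a))=N(p)^{-\beta}\Tr^p_\tau(a)$. (For $p=e$ this reads $\langle\pi(a^{1/2})\Omega,f_e\pi(a^{1/2})\Omega\rangle=\tau(a)$, using that $\tau$ is a trace.)

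With these in hand the theorem is immediate. Fix a finite $J\subset P\setminus\{e\}$ and $a\in A_+$. The projections $(f_p)_{p\in J}$ mutually commute by (b), so $Q:=\prod_{p\in J}(1-f_p)$ is a projection, and by (a) it commutes with $\pi(a^{1/2})$; hence
\[
0\le\|Q\,\pi(a^{1/2})\Omega\|^2=\langle\pi(a^{1/2})\Omega,\,Q\,\pi(a^{1/2})\Omega\rangle.
\]
Expanding $Q=1+\sum_{\emptyset\neq K\subset J}(-1)^{|K|}\prod_{p\in K}f_p$ and using (b) iteratively (so $\prod_{p\in K}f_p=f_{q_K}$, which is $0$ when $q_K=\infty$), property (c) turns the right-hand side into
\[
\tau(a)+\sum_{\emptyset\neq K\subset J}(-1)^{|K|}N(q_K)^{-\beta}\,\Tr^{q_K}_\tau(a),
\]
which is exactly the left-hand side of \eqref{eq:subinvariance}. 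Incidentally the case $|J|=1$ shows a posteriori that $\Tr^p_\tau(a)<\infty$ for $p\neq e$, so all terms above are finite.

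I expect the main difficulty to be one of care rather than depth. The operators $f_p$ are genuinely not elements of $\NT(X)$, nor even of $M(\NT(X))$: the bounded increasing net $i_X^{(p)}(u_i)$ need not converge strictly in the multiplier algebra, precisely because $\varphi_p(a)$ need not be compact, so one really must work inside $\pi(\NT(X))''$. Correspondingly, the two strong-operator convergences in (a) and (b) rest on the single observation that strict convergence in $\LL(X_p)$ (resp. $\LL(X_{p\vee q})$) is converted into strong-operator convergence after composing with the possibly degenerate representation $\pi\circ i_X^{(p)}$. Once this is set up properly, the rest is the KMS computation in (c) and an inclusion–exclusion bookkeeping.
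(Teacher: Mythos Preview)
Your proof is correct and follows essentially the same strategy as the paper: pass to the GNS representation, build the commuting projections $f_p$ as strong limits of $\pi(i_X^{(p)}(u_i))$, verify $f_pf_q=f_{p\vee q}$ from Nica covariance, compute $\phi(a\,i_X^{(p)}(u_i))$ via the KMS condition, and read off positivity from the projection $\prod_{p\in J}(1-f_p)$. The only cosmetic difference is that the paper packages the projections as a representation $L\colon B_P\to B(H_\phi)$ (citing Fowler for $f_pf_q=f_{p\vee q}$) and phrases the positivity as $\mu_a(\cap_{p\in J}(P\setminus pP))\ge0$ for the finitely additive measure $\mu_a(\Omega)=(\pi_\phi(a)L(\chi_\Omega)v_\phi,v_\phi)$, which is exactly your $\|Q\,\pi(a^{1/2})\Omega\|^2$.
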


To be clear, our convention for the inclusion sign is that condition $K\subset J$ allows $K=J$. When we want to exclude the case $K=J$, we will write $K\subsetneq J$.

Note that for $J=\{p\}$ condition~\eqref{eq:subinvariance} gives $N(p)^{-\beta}\Tr_\tau^p(a)\le \tau(a)$. Therefore part of this condition is that the summands in~\eqref{eq:subinvariance} are all finite and therefore there is no issue in having summands of different signs.

\smallskip

Before we turn to the proof of the theorem, let us recall a construction from~\cite{F}.
Consider the characteristic functions $\chi_{pP}$ of the sets $pP\subset P$. By the quasi-lattice assumption the set of such functions is closed under product, so the norm closure of the linear span of $\chi_{pP}$, $p\in P$, in~$\ell^\infty(P)$ is a C$^*$-algebra $B_P$. Denote by $\B_P$ the algebra of subsets of $P$ generated by the sets~$pP$, $p\in P$. Then
the set of projections in $B_P$ is precisely the set of characteristic functions of the elements of $\B_P$.

Given a representation $\pi\colon\NT(X)\to B(H)$, for every $p\in P$, denote by $f_p$ the projection onto the closed linear span of the images of $\pi(i_X(\xi))$ for all $\xi\in X_p$. Clearly, $f_p\in\pi(A)'$. By~\cite[Propositions~4.1,~5.6 and~6.1]{F}, we have also
$$
f_p f_q=f_{p\vee q},
$$
with the convention $f_\infty=0$, so we get a representation $L_\pi$ of $B_P$ on $H$ such that $L_\pi(\chi_{pP})=f_p$.

\bp[Proof of Theorem~\ref{thm:subinvariance}]
Since the dynamics is trivial on $A$, the positive linear functional $\tau=\phi|_A$ is tracial. Since the C$^*$-correspondences $X_p$ are essential by our standing assumptions on product systems, an approximate unit in $A$ is an approximate unit in $\NT(X)$, which implies that $\tau$, being the restriction of a state, is a state itself.

Next, consider the GNS-triple $(H_\phi,\pi_\phi,v_\phi)$. By the above discussion we get a representation $L=L_{\pi_\phi}\colon B_P\to B(H_\phi)$ with image in $\pi_\phi(A)'$.
Hence, for every $a\in A_+$, we get a positive (finitely additive) measure $\mu_a$ on $(P,\B_P)$ defined by
\begin{equation}\label{eq:mua0}
\mu_a(\Omega)=(\pi_\phi(a)L(\chi_\Omega)v_\phi,v_\phi) \ \ \text{for}\ \ \Omega\in\B_P.
\end{equation}
We claim that $\mu_a(\Omega)$, for $\Omega=\cap_{p\in J}(P\setminus pP)$, equals the expression on the left in~\eqref{eq:subinvariance}, so condition~\eqref{eq:subinvariance} is simply a consequence of positivity of $\mu_a$. Since
$$
\chi_\Omega=\prod_{p\in J}(1-\chi_{pP})=1+\sum_{\emptyset\neq K\subset J} (-1)^{\vert K\vert}\chi_{q_KP},
$$
our claim is equivalent to
\begin{equation}\label{eq:mua}
\mu_a(pP)=N(p)^{-\beta}\Tr^p_\tau(a)\ \ \text{for all}\ \ a\in A_+\ \ \text{and}\ \ p\in P.
\end{equation}

Observe that by the definition of $f_p=L(\chi_{pP})$, we have $f_pH_\phi=\overline{\pi_\phi(i_X^{(p)}(\K(X_p)))H_\phi}$ (where~$i_X^{(p)}$ is given by \eqref{eq:CPmap} for $\psi=i_X\colon X\to\NT(X)$). It follows that if $(u_i)_i$ is an approximate unit in~$\K(X_p)$, then the operators $\pi_\phi(i_X^{(p)}(u_i))$ converge to $f_p$ in the strong operator topology. Hence
$$
\mu_a(pP)=\lim_i\phi(a\,i_X^{(p)}(u_i)).
$$
Now, by the KMS-condition, for any $\xi,\zeta\in X_p$, we have
\begin{equation} \label{eq:LN}
\phi(a\,i_X^{(p)}(\theta_{\xi,\zeta}))=\phi(a\,i_X(\xi)i_X(\zeta)^*)=N(p)^{-\beta}\phi(i_X(\zeta)^*a\,i_X(\xi))=N(p)^{-\beta}\tau(\langle\zeta,a\xi\rangle).
\end{equation}
So if we choose $(u_i)_i$ such that $u_i=\sum_{\xi\in J_i}\theta_{\xi,\xi}$ for some finite sets $J_i\subset X$, then
$$
\mu_a(pP)=\lim_iN(p)^{-\beta}\sum_{\xi\in J_i}\tau(\langle\xi,a\xi\rangle).
$$
This gives~\eqref{eq:mua} by the definition of~$\Tr^p_\tau$.
\ep

\begin{remark}\label{rem:KMScore}
In general a KMS-state $\phi$ on $\NT(X)$ is not uniquely determined by the trace $\tau=\phi|_A$. But at least the restriction of $\phi$ to the core subalgebra $\F\subset \NT(X)$ is completely determined by $\tau$. Indeed, the elements of the form $i_X(\xi)i_X(\zeta)^*$, with $\xi,\zeta\in X_p$ and $p\in P$, span a dense subspace of $\F$, and on such elements the KMS-condition gives, as we already showed in~\eqref{eq:LN}, that
$$
\phi(i_X(\xi)i_X(\zeta)^*)=N(p)^{-\beta}\tau(\langle \zeta,\xi \rangle).
$$
\end{remark}

\begin{remark}
An analogous result for general quasi-free dynamics is proved in an essentially identical way, but a computation similar to~\eqref{eq:LN} goes smoother if one observes that the projections~$f_p$ lie in the centralizer of the normal state $\bar\phi$ on $M=\pi_\phi(\NT(X))''$ defined by $v_\phi$ and therefore the measure $\mu_a$ can be written as $\mu_a(\Omega)=(\pi_\phi(a)JL(\chi_\Omega)v_\phi,v_\phi)$, where $J$ is the modular involution defined by $\bar\phi$. The reason is that for general KMS-states the bilinear form $\phi(x\sigma_{i\beta/2}(y))=(\pi_\phi(x)J\pi_\phi(y^*)v_\phi,v_\phi)$ has better positivity properties than $\phi(xy)$.
\end{remark}

Notice that formally condition~\eqref{eq:subinvariance} does not fully exploit positivity of the measures $\mu_a$ introduced in the proof of Theorem~\ref{eq:subinvariance}, since we used only particular sets in $\B_P$ instead of all of them. The reason we formulated this condition as it stands, is that the additional positivity conditions we could get are already a consequence of~\eqref{eq:subinvariance}. This is a byproduct of Theorem~\ref{thm:general} below, but let us show this directly. The following considerations will be useful in several subsequent sections.

Recall that we defined generalized dual Ruelle transfer operators $F_p$ by~\eqref{eq:F}.
Denote by $T(A)\subset A^*$ the closed subspace of bounded linear functionals $\phi$ such that $\phi(ab)=\phi(ba)$ for all $a,b\in A$. Equivalently, $T(A)$ is the linear span of tracial states. Then, if $F_p\tau$ is a finite trace for all $p$, we can define a linear operator from the space of functions spanned by the characteristic functions of the sets in $\B_P$ into $T(A)$ by mapping $\chi_{pP}$ into $N(p)^{-\beta}F_p\tau$.
(Here we use that the characteristic functions $\chi_{pP}$ are linearly independent, which can be quickly seen as follows. Assume that $h=\sum_{p\in F} c_p\chi_{pP}=0$ for some finite set $F\subset P$ and complex numbers $c_p$, but the set $K=\{p\mid c_p\ne0\}$ is nonempty. Then by evaluating $h$ at a minimal point $p$ of the set $K$ we get $c_p=0$, which is a contradiction.)

By restricting the above linear operator to the characteristic functions of the sets $\Omega\in\B_P$ we get a finitely additive $T(A)$-valued measure $\mu$ on $(P,\B_P)$ uniquely determined by the property $\mu(pP)=N(p)^{-\beta}F_p\tau$. When $\tau=\phi|_A$ for a $\sigma$-KMS$_\beta$-state $\phi$ on $\NT(X)$, this vector-valued measure is related to the measures $\mu_a$ defined in~\eqref{eq:mua0}~via the identity
\begin{equation}\label{eq:mua1}
\mu_a(\Omega)=\mu(\Omega)(a).
\end{equation}

Condition~\eqref{eq:subinvariance} means precisely that
\begin{equation*}\label{eq:OmegaJ}
\mu(\Omega_J)\ge0,\ \ \text{where}\ \ \Omega_J=\bigcap_{p\in J}(P\setminus pP),
\end{equation*}
for all finite subsets $J\subset P\setminus\{e\}$, and we want to show that this implies that $\mu$ is positive. The key point is that property~\eqref{eq:induction2} immediately gives positivity of $\mu$ on the sets $p\Omega_J$ as well. Indeed, since
$$
\chi_{p\Omega_J}=\chi_{pP}+\sum_{\emptyset\neq K\subset J} (-1)^{\vert K\vert}\chi_{pq_KP},
$$
using property \eqref{eq:induction2} we get
\begin{equation}\label{eq:mu}
\mu(p\Omega_J)=N(p)^{-\beta}F_p\tau+\sum_{\emptyset\neq K\subset J} (-1)^{\vert K\vert}N(pq_K)^{-\beta}F_{pq_K}\tau=N(p)^{-\beta}F_p\mu(\Omega_J),
\end{equation}
from which we see that $\mu(p\Omega_J)\ge0$. Therefore positivity of $\mu$ is a consequence of part (i) of the following lemma. Part (ii) will be useful later.

\begin{lemma}\label{lem:BP-decomposition}
For any quasi-lattice ordered group $(G,P)$, we have:
\begin{itemize}
\item[(i)] every set in $\B_P$ decomposes into a finite disjoint union of the sets $pP$ and $p\Omega_J$, where $p\in P$, $J\subset P\setminus\{e\}$ is a finite nonempty set and $\Omega_J=\cap_{q\in J}(P\setminus qP)$;
\item[(ii)] for every $\Omega\in\B_P$ and $p\in \Omega$, we have $p\in p\Omega_J\subset\Omega$ for some $J$.
\end{itemize}
\end{lemma}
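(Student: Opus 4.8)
The plan is to reduce both parts to one elementary observation: for any $r\in P$ and any finite set $F_0\subset P\setminus\{e\}$, the set $rP\cap\Omega_{F_0}$ is either empty or is itself of one of the two types occurring in the statement. To see this, write $rP\cap\Omega_{F_0}=rP\setminus\bigcup_{q\in F_0}(rP\cap qP)=rP\setminus\bigcup_{q\in F_0}(r\vee q)P$; the indices $q$ with $r\vee q=\infty$ contribute nothing and may be discarded, while if $r\vee q=r$ for some surviving $q$ then $rP$ is among the sets being removed and $rP\cap\Omega_{F_0}=\emptyset$. Otherwise each surviving $q$ satisfies $r\vee q=rt_q$ with $t_q:=r^{-1}(r\vee q)\in P\setminus\{e\}$, and then $rP\cap\Omega_{F_0}=r\bigl(P\setminus\bigcup_q t_qP\bigr)=r\Omega_{\{t_q\}}$, or simply $rP$ if no index survives.

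For part (i), I would first note that any $\Omega\in\B_P$ lies in the Boolean algebra $\B(F)$ generated by the sets $pP$ with $p$ in some finite set $F$, and that one may take $F\subset P\setminus\{e\}$ since $eP=P$ is a redundant generator. For $\eps\in\{0,1\}^F$ put $A_\eps=\bigcap_{\eps(p)=1}pP\cap\bigcap_{\eps(p)=0}(P\setminus pP)$; these atoms are pairwise disjoint with union $P$, each $pP$ with $p\in F$ is a union of atoms, and each $A_\eps$ lies in $\B(F)$, so $\B(F)$ consists precisely of the finite (automatically disjoint) unions of atoms. Now $\bigcap_{\eps(p)=1}pP$ equals $rP$ for $r=\bigvee\{p:\eps(p)=1\}$ (read as $r=e$ over the empty index set) when $r<\infty$, and is empty otherwise; hence $A_\eps=rP\cap\Omega_{F_0}$ with $F_0=\{p:\eps(p)=0\}$. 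The observation above then identifies each nonempty atom as a set $rP$ or $r\Omega_J$, and discarding the empty atoms yields the claimed decomposition.

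For part (ii), by part (i) (or simply because every element of $\B_P$ is a finite union of the sets $pP$ and $p\Omega_J$) the point $p$ lies in some basic set $B\subset\Omega$, so it suffices to treat $\Omega=B$. If $B=qP$, then $q\le p$ gives $pP\subset qP$ and one may take $J$ empty, so that $p\Omega_J=pP$. If $B=q\Omega_J$, then $q\le p$ while $qs\not\le p$ for every $s\in J$; for each $s$ with $p\vee qs<\infty$ set $w_s:=p^{-1}(p\vee qs)$, which lies in $P\setminus\{e\}$ exactly because $qs\not\le p$, and put $J'=\{w_s\}$. Then $p\Omega_{J'}\subset pP\subset qP$, and for $s\in J$ one has $p\Omega_{J'}\cap qsP\subset pP\cap qsP$, which is empty when $p\vee qs=\infty$ and equals $pw_sP$ otherwise; in the latter case $w_s\in J'$ forces $\Omega_{J'}\cap w_sP=\emptyset$, so the intersection is empty as well. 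Hence $p\in p\Omega_{J'}\subset q\Omega_J=\Omega$.

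I do not anticipate a genuine obstacle; the argument is entirely bookkeeping. The two points requiring care are the degenerate cases — empty atoms, the sets $F_0$ or $J'$ being empty, joins equal to $\infty$ — and verifying at each step that the auxiliary elements $t_q$ and $w_s$ are really distinct from $e$, which is precisely where the order relations $q\not\le r$ and $qs\not\le p$ are used; everything else is a direct manipulation of the identities $pP\cap qP=(p\vee q)P$ (or $\emptyset$) and $r\bigl(P\setminus\bigcup_s sP\bigr)=rP\setminus\bigcup_s rsP$.
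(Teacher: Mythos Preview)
Your proof is correct and follows essentially the same approach as the paper: decompose $\Omega$ into atoms of the Boolean algebra generated by finitely many sets $pP$, then identify each nonempty atom as $rP$ or $r\Omega_J$ via the identity $rP\cap qP=(r\vee q)P$; for part~(ii), intersect a basic piece $qP$ or $q\Omega_J$ containing $p$ with $pP$ and read off the new index set $J'$. The only sloppiness is that in part~(ii) you twice allow $J$ to be empty (when $B=qP$, and when every $p\vee qs=\infty$); since the statement, following part~(i), intends $J\subset P\setminus\{e\}$ nonempty, in those cases you should instead observe that $p\in p\Omega_J\subset pP\subset\Omega$ for \emph{any} nonempty $J$, which is exactly what the paper does.
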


\bp
(i) For any $\Omega\in\B_P$ there is a finite set $F\subset P$ such that $\chi_\Omega$ is contained in the linear span of the functions $\chi_{pP}$, $p\in F$. By enlarging $F$ we may assume that $F$ is $\vee$-closed. Then the linear span of the functions $\chi_{pP}$, $p\in F$, forms an algebra. The minimal projections of this algebra are the characteristic functions of sets of the form
\begin{equation}\label{eq:sets}
\Big(\bigcap_{p\in E}pP\Big)\cap\Big(\bigcap_{q\in F\setminus E}(P\setminus qP)\Big),
\end{equation}
where $E\subset F$. Since $pP\cap qP=(p\vee q)P$, such a set is either empty or it has the required form. Indeed, this is clear if $E=\emptyset$ or $E=F$. In the remaining cases we may assume that $q_E=\vee_{p\in E}\,p<\infty$, as otherwise we get the empty set. Then the set in~\eqref{eq:sets} equals $q_EP$, if $q_E\vee q=\infty$ for all $q\in F\setminus E$, or $q_E\Omega_J$,
where $J=\{q_E^{-1}(q_E\vee q)\mid q\in F\setminus E,\ q_E\vee q<\infty\}$.

\smallskip

(ii) By part (i), if $\Omega$ contains $p$, then either $p\in qP\subset\Omega$ or $p\in q\Omega_{J'}\subset\Omega$ for some $q$ and $J'$. In either case we then have $q\le p$. Then in the first case we have $p\in pP\subset\Omega$, so $p\in p\Omega_J\subset\Omega$ for any $J$. In the second case, if $ p\vee qr=\infty$ for all $r\in J'$, then $pP\cap q\Omega_{J'}=pP$, so again $p\Omega_J\subset\Omega$ for any $J$. Otherwise we have $pP\cap q\Omega_{J'}=p\Omega_J$, where $J=\{p^{-1}(p\vee qr)\mid r\in J',\ p\vee qr<\infty\}$, and therefore $p\in p\Omega_J\subset\Omega$.
\ep

We finish the section with a discussion of an important class of KMS-states for which Theorem~\ref{thm:subinvariance} does not give any nontrivial information.

Assume $X$ is a not necessarily compactly aligned product system over $P$ and  $\tau_0$ is a positive trace on $A=X_e$ such that $\sum_{p\in P}N(p)^{-\beta}\Tr^p_{\tau_0}(1)=1$. Recall that on the Fock module level the dynamics $\sigma$ is implemented by the unitaries $D_N^{it}$, where $D_N$ is given by~\eqref{eq:DN}. By our assumption we have $\Tr^{\F(X)}_{\tau_0}(D_N^{-\beta})=1$. Hence, following~\cite{lac-nes}, we can consider the state
$$
\Tr^{\F(X)}_{\tau_0}(\cdot D_N^{-\beta})
$$
on $\LL(\F(X))$, which we call a \emph{generalized Gibbs state}. The tracial property of $\Tr^{\F(X)}_{\tau_0}$ implies that it satisfies the KMS$_\beta$ condition with respect to the dynamics~$\Ad D_N^{it}$. To be more precise, there are no issues with the domain of definition of $\Tr^{\F(X)}_{\tau_0}(\cdot D_N^{-\beta})$ and the KMS-property when the operator $D_N^{-\beta}$ is bounded. In the general case it is more fruitful to think of $\Tr^{\F(X)}_{\tau_0}(\cdot D_N^{-\beta})$ as $\operatorname{Ind}^{(D_N^{it})_t}_{\F(X)}\tau_0$, that is, as the lower semicontinuous extension of the KMS$_\beta$-weight induced from~$\tau_0$ (viewed as a KMS$_\beta$-state with respect to the trivial dynamics) using the Fock module $\F(X)$ and the one-parameter unitary group $(D_N^{it})_t$. But even then we will use the more suggestive notation $\Tr^{\F(X)}_{\tau_0}(\cdot D_N^{-\beta})$.

Since $\sigma_t=\Ad D_N^{it}$ on $\NTr(X)$, by restriction we get a $\sigma$-KMS$_\beta$-state on $\NTr(X)$. If $X$ is compactly aligned, then by composing with $\Lambda\colon\NT(X)\to\NTr(X)$ we get a $\sigma$-KMS$_\beta$-state $\Tr^{\F(X)}_{\tau_0}(\Lambda(\cdot)D_N^{-\beta})$ on $\NT(X)$.
The restriction $\tau$ of either of these states to $A$ is given by
$$
\tau(a)=\sum_{p\in P}N(p)^{-\beta}(F_p\tau_0)(a)\ \ \text{for}\ \ a\in A.
$$
The conclusion is that any tracial state of the form $\tau=\sum_{p\in P}N(p)^{-\beta}F_p\tau_0$ extends to a $\sigma$-KMS$_\beta$-state on $\NTr(X)$ or $\NT(X)$. As we will see later in Corollary~\ref{cor:finite-infinite}(ii), if $X$ is compactly aligned, then these extensions are unique.

It is immediate that tracial states of the form $\tau=\sum_{p\in P}N(p)^{-\beta}F_p\tau_0$ satisfy condition~\eqref{eq:subinvariance}. Indeed, by Lemma~\ref{lem:trace-continuity}(i) and~\eqref{eq:induction2} we have
$$
N(p)^{-\beta}F_p\tau=\sum_{q\in pP}N(q)^{-\beta}F_q\tau_0,
$$
and from this it follows that for the $T(A)$-valued measure $\mu$ defined by $\mu(pP)=N(p)^{-\beta}F_p\tau$ we have
\begin{equation}\label{eq:mu2}
\mu(\Omega)=\sum_{p\in\Omega}N(p)^{-\beta}F_p\tau_0\ \ \text{for all}\ \ \Omega\in\B_P,
\end{equation}
which is obviously positive.

\bigskip

\section{KMS-states on reduced Toeplitz algebras: the case of free abelian monoids}\label{sec:abelian}

In this section we consider the monoid $P=\Z^n_+$ ($\Z_+=\{0,1,2,\dots\}$) for $n\ge1$. Denote by $e_1,\dots,e_n$ the standard generators of $\Z^n_+$.

We will write $F_i$ instead of~$F_{e_i}$ for the operators~\eqref{eq:F}. Then by~\eqref{eq:induction}, for any $i_1,\dots,i_k\in\{1,\dots,n\}$, we have
\begin{equation}\label{eq:freeF}
F_{i_1}\dots F_{i_k}\tau=F_{e_{i_1}+\dots+e_{i_k}}\tau
\end{equation}
whenever the trace $F_{i_j}F_{i_{j+1}}\dots F_{i_k}\tau$ is finite for all $j=2,\dots,k$. In particular, $F_iF_j\tau=F_jF_i\tau$ whenever both sides are well-defined.

\begin{theorem}\label{thm:free}
Assume $X$ is a product system of C$^*$-correspondences over $\Z^n_+$, $X_e=A$. Consider the dynamics \eqref{eq:dynamics1} on the reduced Toeplitz algebra  $\NTr(X)$ of $X$ defined by a homomorphism $N\colon \Z^n_+\to (0,+\infty)$. Assume $\beta\in\R$ and $\tau$ is a tracial state on $A$ satisfying the following condition:
\begin{equation}\label{eq:subinvariance2}
\prod_{i\in J}(1-N(e_i)^{-\beta}F_i)\tau\ge0\ \ \text{for all nonempty subsets}\ \ J\subset\{1,\dots,n\}.
\end{equation}
Then there exists a gauge-invariant $\sigma$-KMS$_\beta$-state $\phi$ on $\NTr(X)$ such that $\phi|_A=\tau$.
\end{theorem}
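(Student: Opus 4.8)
The plan is to adapt the perturbation trick of~\cite{lac-nes}: for $r\in(0,1)$ we deform the dynamics to an auxiliary gauge-type one with respect to which $\tau$ becomes of finite type, so that the generalized Gibbs state construction recalled at the end of Section~\ref{sec:restriction} produces a gauge-invariant KMS-state $\phi^{(r)}$ restricting to $\tau$; we then recover the statement by letting $r\to1$. Set $T_i=N(e_i)^{-\beta}F_i$. By \eqref{eq:subinvariance2} with $J=\{i\}$ we have $T_i\tau\le\tau$, and since each $F_i$ is order-preserving on positive traces while $\Tr^{e_i}_\tau$ is finite, the $T_i$ map the order interval $\{\,0\le\omega\le\tau\,\}$ into itself, they commute, and $\prod_iT_i^{p_i}\tau=N(p)^{-\beta}F_p\tau$ for $p=(p_1,\dots,p_n)$ by \eqref{eq:freeF}. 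Peeling off the factors one at a time, using $(1-T_j)\eta\le\eta$ for $\eta\ge0$ together with \eqref{eq:subinvariance2}, one gets $0\le\prod_{i\in S}(1-T_i)\tau\le\tau$ for every $S\subset\{1,\dots,n\}$. Now fix $r\in(0,1)$ and first suppose $\beta\ne0$; let $N_r$ be the homomorphism $\Z^n_+\to(0,+\infty)$ with $N_r(e_i)=r^{-1/\beta}N(e_i)$, so that $N_r(p)^{-\beta}=r^{|p|}N(p)^{-\beta}$ with $|p|=p_1+\dots+p_n$, and let $\sigma^{(r)}$ be the associated dynamics. Writing $1-rT_i=(1-r)\cdot 1+r(1-T_i)$ and using commutativity,
\[
\prod_{i\in J}(1-rT_i)\tau=\sum_{S\subset J}(1-r)^{\,|J\setminus S|}\,r^{\,|S|}\prod_{i\in S}(1-T_i)\tau\ \ge\ 0
\]
for every nonempty $J\subset\{1,\dots,n\}$; in particular $\tau_0^{(r)}:=\prod_{i=1}^{n}(1-rT_i)\tau$ is a positive trace with $\tau_0^{(r)}\le\tau$.

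The gain is that $\tau$ is now of finite type with respect to $N_r$: since $\|(rT_i)^m\tau\|\le r^m$, the geometric series $\sum_{m\ge0}(rT_i)^m\tau$ converges in norm, and the weak$^*$ continuity in Lemma~\ref{lem:trace-continuity}(ii) gives $(1-rT_i)\sum_{m\ge0}(rT_i)^m\tau=\tau$. Iterating over $i$ and expanding the resulting product of geometric series,
\[
\sum_{p\in\Z^n_+}N_r(p)^{-\beta}F_p\tau_0^{(r)}=\sum_{p\in\Z^n_+}r^{|p|}N(p)^{-\beta}F_p\tau_0^{(r)}=\tau,
\]
the series converging in norm because $\|r^{|p|}N(p)^{-\beta}F_p\tau_0^{(r)}\|\le r^{|p|}$ and $\sum_{p\in\Z^n_+}r^{|p|}=(1-r)^{-n}$. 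In particular $\Tr^{\F(X)}_{\tau_0^{(r)}}(D_{N_r}^{-\beta})=\tau(1)=1$, so restricting the generalized Gibbs state $\Tr^{\F(X)}_{\tau_0^{(r)}}(\,\cdot\,D_{N_r}^{-\beta})$ on $\LL(\F(X))$ to $\NTr(X)$, on which $\sigma^{(r)}_t=\Ad D_{N_r}^{it}$, we obtain a $\sigma^{(r)}$-KMS$_\beta$-state $\phi^{(r)}$ with $\phi^{(r)}|_A=\tau$. It is gauge-invariant, since $\Tr^{\F(X)}_{\tau_0^{(r)}}$ annihilates the grading-shifting operators $\ell(\xi)\ell(\zeta)^*D_{N_r}^{-\beta}$ with $\xi\in X_p$, $\zeta\in X_q$, $p\ne q$.

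Finally, let $\phi$ be a weak$^*$ cluster point of $(\phi^{(r)})_{r\to1}$ in the state space of $\NTr(X)$. Since $\phi^{(r)}$ is gauge-invariant and $\phi^{(r)}|_A=\tau$ for every $r$, so is $\phi$. For the KMS condition, recall that the $*$-algebra generated by the $\ell(\xi)$, $\xi\in X_p$, $p\in\Z^n_+$, is dense in $\NTr(X)$, is $\Z^n$-graded, and consists of entire analytic elements for $\sigma$ and for all $\sigma^{(r)}$; on a homogeneous element $a$ of degree $g$ one has $\sigma^{(r)}_{i\beta}(a)=r^{|g|}N(g)^{-\beta}a$ and $\sigma_{i\beta}(a)=N(g)^{-\beta}a$, so $\sigma^{(r)}_{i\beta}(a)\to\sigma_{i\beta}(a)$ in norm as $r\to1$. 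Passing to the limit in $\phi^{(r)}(ab)=\phi^{(r)}(b\sigma^{(r)}_{i\beta}(a))$ gives $\phi(ab)=\phi(b\sigma_{i\beta}(a))$ for all $a,b$ in this dense $*$-algebra of analytic elements, so $\phi$ is a $\sigma$-KMS$_\beta$-state by Lemma~\ref{lem:cognoscenti}. When $\beta=0$ one runs the same argument with the auxiliary dynamics coming from the homomorphism $e_i\mapsto r^{-1}$ at inverse temperature $1$ (again with transfer-operator factors $r^{|p|}$): the limit $\phi$ is then a gauge-invariant tracial state on $\NTr(X)$, i.e.\ a $\sigma$-KMS$_0$-state, with $\phi|_A=\tau$.

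I expect the only step requiring genuine care to be this last simultaneous passage to the limit, where the state and the dynamics vary together; this is precisely why one must work on the $*$-algebra of polynomials in the $\ell(\xi)$, on which all the convergences are in norm and the grading makes $\sigma^{(r)}_{i\beta}$ explicit. Everything else is bookkeeping with the operators $F_i$, made legitimate by the finiteness encoded in \eqref{eq:subinvariance2} and the weak$^*$ continuity in Lemma~\ref{lem:trace-continuity}(ii).
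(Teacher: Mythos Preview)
Your proof is correct and follows essentially the same perturbation strategy as the paper: deform the dynamics so that $\tau$ becomes a convergent Gibbs-type sum, produce a KMS-state from the associated generalized Gibbs state, and pass to a weak$^*$ cluster point. The only differences are cosmetic --- the paper first normalizes to $\beta=1$ (thereby absorbing your separate $\beta=0$ case) and uses the expansion $1-rT_i=(1-T_i)+(1-r)T_i$ rather than your $(1-r)+r(1-T_i)$ for the positivity of $\tau_0^{(r)}$, while you are more explicit than the paper about why the limiting state inherits the KMS condition.
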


Note that if $i_1,\dots,i_k$ are all different, then
$$
e_{i_1}+\dots+ e_{i_k}=e_{i_1}\vee\dots\vee e_{i_k}.
$$
Together with \eqref{eq:freeF} this shows that condition \eqref{eq:subinvariance2} is nothing else than condition \eqref{eq:subinvariance} applied only to the subsets of $\{e_1,\dots,e_n\}$.

For $n=1$ and the homomorphism $N\colon\Z_+\to(0,+\infty)$ defined by $N(1)=e$, condition~\eqref{eq:subinvariance2} is exactly condition~\eqref{eq:LN0} discussed in the introduction.
For $n\ge 2$ and particular product systems condition~\eqref{eq:subinvariance2} has appeared in~\cite{aHLRS} and~\cite{AaHR} as a necessary condition for a trace to be extendable to a KMS-state. It is called the \emph{subinvariance relation} in these papers. This name does not seem to be suitable for the more general condition~\eqref{eq:subinvariance}.

\bp[Proof of Theorem~\ref{thm:free}]
The case $n=1$ follows from \cite[Theorem~2.1]{lac-nes}. For $n\ge2$ we can use similar arguments.

Replacing $N$ by $N^\beta$ we may assume that $\beta=1$. If we also assume that there exists a positive trace $\tau_0$ on $A$ such that
$$
\tau=\sum_{p\in\Z^n_+}N(p)^{-1}F_p\tau_0=\sum_{k_1,\dots,k_n=0}^\infty N(e_1)^{-k_1}\dots N(e_n)^{-k_n} F_1^{k_1}\dots F^{k_n}_n\tau_0,
$$
then by the discussion at the end of the previous section we get the required KMS-state by restricting the generalized Gibbs state $\Tr^{\F(X)}_{\tau_0}(\cdot D_N^{-1})$ to~$\NTr(X)$.

For an arbitrary tracial state $\tau$ satisfying~\eqref{eq:subinvariance2}, take $\eps>0$ and consider the perturbed dynamics $\sigma^\eps$ defined by the homomorphism $N_\eps\colon\Z^n_+\to(0,\infty)$ such that $N_\eps(e_i)=e^{\eps}N(e_i)$ for $i=1,\dots,n$. Consider the trace
$$
\tau_0^\eps=\prod^n_{i=1}(1-N_\eps(e_i)^{-1}F_i)\tau.
$$
This trace is positive, since by writing $1-N_\eps(e_i)^{-1}F_i$ as
$$
(1-N(e_i)^{-1}F_i)+(1-e^{-\eps})N(e_i)^{-1}F_i,
$$
we see that
$$
\tau_0^\eps=\sum_{J\subset\{1,\dots,n\}}(1-e^{-\eps})^{n-|J|}\Big(\prod_{j\in\{1,\dots,n\}\setminus J}N(e_j)^{-1}F_j\Big)\Big(\prod_{i\in J}(1-N(e_i)^{-1}F_i)\Big)\tau,
$$
which is positive by \eqref{eq:subinvariance2}.

We claim that
$$
\tau=\sum_{p\in\Z^n_+}N_\eps(p)^{-1}F_p\tau^\eps_0.
$$
To see this, take $m\ge1$ and consider the partial sum over all $p\in\Z^n_+$ with coordinates not greater than $m$, which rewrites as
$$
\prod^n_{i=1}\left(\sum^m_{k=1}N_\eps(e_i)^{-k}F^k_i\right)\tau_0^\eps=\prod^n_{i=1}(1-e^{-(m+1)\eps}N(e_i)^{-(m+1)}F^{m+1}_i)\tau.
$$
As $m\to\infty$ these partial sums converge in norm to $\tau$, since $N(e_i)^{-1}F_i\tau\le\tau$ and $e^{-(m+1)\eps}\to0$. This implies our claim.

By the first part of the proof we conclude that there exists a gauge-invariant $\sigma^\eps$-KMS$_1$-state~$\phi^\eps$ on $\NTr(X)$ such that $\phi^\eps|_A=\tau$. Then any weak$^*$ cluster point of the states $\phi^\eps$ as $\eps\to0$ can be taken as the required state $\phi$.
\ep

\begin{remark}\label{rem:criterion}
The above proof shows that, in the setting of Theorem~\ref{thm:free}, if a tracial state~$\tau$ on~$A$ satisfies~\eqref{eq:subinvariance2} and has the property
\begin{equation}\label{eq:criterion}
\lim_{p\to\infty}N(p)^{-\beta}\Tr^p_\tau(1)=0,
\end{equation}
then $\tau=\sum_{p\in \Z^n_+}N(p)^{-\beta}F_p\tau_0$ for the positive trace $\tau_0=\prod^n_{i=1}(1-N(e_i)^{-\beta}F_i)\tau$. Conversely, it is easy to see that if $\tau=\sum_{p\in \Z^n_+}N(p)^{-\beta}F_p\tau_0$ for a positive trace $\tau_0$, then~\eqref{eq:criterion} holds and $\tau_0=\prod^n_{i=1}(1-N(e_i)^{-\beta}F_i)\tau$. Therefore we have a simple criterion of presentability of $\tau$ in the form $\sum_{p\in \Z^n_+}N(p)^{-\beta}F_p\tau_0$, cf.~\cite{lac-nes}. This criterion and the abundance of homomorphisms $\Z^n_+\to(0,\infty)$, allowing us to easily modify the dynamics in order to enforce~\eqref{eq:criterion}, are what make the case $P=\Z^n_+$ special and the above proof work. We will return to the study of traces of the form $\sum_{p\in P}N(p)^{-\beta}F_p\tau_0$ for general quasi-lattice ordered groups in Section~\ref{sec:critical-temp}.
\end{remark}

If $X$ is in addition compactly aligned, then $\NT(X)$ is well-defined and we get the following result.

\begin{corollary}\label{cor:classfree}
Assume $X$ is a compactly aligned product system of C$^*$-correspondences over the monoid~$\Z^n_+$, $X_e=A$, and consider the dynamics \eqref{eq:dynamics1} on $\NT(X)$ defined by a homomorphism $N\colon \Z^n_+\to (0,+\infty)$. Then, for every $\beta\in\R$, the map $\phi\mapsto\phi|_A$ defines a one-to-one correspondence between the gauge-invariant $\sigma$-KMS$_\beta$-states on $\NT(X)$ and the tracial states on $A$ satisfying~\eqref{eq:subinvariance2}.
\end{corollary}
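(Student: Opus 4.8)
The plan is to deduce the corollary by combining the two main results already established --- Theorem~\ref{thm:subinvariance} and Theorem~\ref{thm:free} --- with the structural facts about the gauge coaction from Section~\ref{sec:prelim}. First I would check that $\phi\mapsto\phi|_A$ lands in the claimed set: by Theorem~\ref{thm:subinvariance}, any $\sigma$-KMS$_\beta$-state $\phi$ on $\NT(X)$ gives a tracial state $\tau=\phi|_A$ satisfying \eqref{eq:subinvariance} for every finite $J\subset P\setminus\{e\}$; restricting to the subsets $J\subset\{e_1,\dots,e_n\}$ and using $e_{i_1}\vee\cdots\vee e_{i_k}=e_{i_1}+\cdots+e_{i_k}$ together with \eqref{eq:freeF}, as noted right after Theorem~\ref{thm:free}, this is exactly condition \eqref{eq:subinvariance2}.

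For surjectivity, given a tracial state $\tau$ on $A$ satisfying \eqref{eq:subinvariance2}, Theorem~\ref{thm:free} provides a gauge-invariant $\sigma$-KMS$_\beta$-state $\psi$ on $\NTr(X)$ with $\psi|_A=\tau$, and the idea is to pull it back along the canonical surjection $\Lambda\colon\NT(X)\to\NTr(X)$. Since $\Lambda$ carries an approximate unit to an approximate unit, $\phi:=\psi\circ\Lambda$ is a state; since $\Lambda$ intertwines the dynamics (recall $\Ad D_N^{it}\circ\Lambda=\Lambda\circ\sigma$) and carries analytic elements to analytic elements, $\phi$ is a $\sigma$-KMS$_\beta$-state; since $\Lambda$ is equivariant for the gauge coactions, $\phi$ is gauge-invariant; and $\phi(i_X(a))=\psi(\ell(a))=\tau(a)$ for all $a\in A$, so $\phi|_A=\tau$. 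Hence every admissible $\tau$ is attained.

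For injectivity, suppose $\phi_1,\phi_2$ are gauge-invariant $\sigma$-KMS$_\beta$-states on $\NT(X)$ with $\phi_1|_A=\phi_2|_A=\tau$. Gauge-invariance means that $\phi_i$ vanishes on $\NT(X)_g$ for all $g\neq e$, whence $\phi_i=\phi_i\circ E$ for the gauge-invariant conditional expectation $E\colon\NT(X)\to\F$, so $\phi_i$ is determined by $\phi_i|_\F$; but by Remark~\ref{rem:KMScore} the restriction $\phi_i|_\F$ is completely determined by $\tau$, since on the dense span of the elements $i_X(\xi)i_X(\zeta)^*$ the KMS relation forces $\phi_i(i_X(\xi)i_X(\zeta)^*)=N(p)^{-\beta}\tau(\langle\zeta,\xi\rangle)$. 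Therefore $\phi_1|_\F=\phi_2|_\F$ and $\phi_1=\phi_1\circ E=\phi_2\circ E=\phi_2$. I do not expect a genuine obstacle: the corollary essentially repackages Theorems~\ref{thm:subinvariance} and~\ref{thm:free}, and the only step demanding a little care is injectivity, where one uses both that a gauge-invariant state is reconstructed from its restriction to the core via $E$ and that this restriction is pinned down by $\tau$ through the KMS relation --- alternatively one could route it through Corollary~\ref{cor:gauge-invariance}, Lemma~\ref{lem:core} and faithfulness of $E_r$ on $\NTr(X)$.
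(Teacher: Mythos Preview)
Your proposal is correct and follows essentially the same approach as the paper's proof: use Theorem~\ref{thm:subinvariance} to see that $\phi|_A$ satisfies~\eqref{eq:subinvariance2}, use Remark~\ref{rem:KMScore} together with gauge-invariance for injectivity, and pull back the state from Theorem~\ref{thm:free} along $\Lambda$ for surjectivity. The paper's version is terser---it simply notes that $\NTr(X)$ is a quotient of $\NT(X)$ (indeed equal, by amenability of $\Z^n$) rather than spelling out the verification that $\psi\circ\Lambda$ is a gauge-invariant KMS-state---but the content is the same.
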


\bp
If $\phi$ is a gauge-invariant $\sigma$-KMS$_\beta$-state on $\NT(X)$, then by Theorem~\ref{thm:subinvariance} its restriction~$\tau$ to~$A$ is a tracial state satisfying~\eqref{eq:subinvariance}. By the discussion right after the formulation of Theorem~\ref{thm:free}, condition \eqref{eq:subinvariance2} is formally weaker than \eqref{eq:subinvariance}. By the gauge-invariance, $\phi$ is determined by its restriction to the core subalgebra~$\F$, while by Remark~\ref{rem:KMScore} this restriction is completely determined by~$\tau$. Therefore the map $\phi\mapsto\tau=\phi|_A$ is an injective map from the set of gauge-invariant $\sigma$-KMS$_\beta$-states on~$\NT(X)$ into the set of tracial states on $A$ satisfying~\eqref{eq:subinvariance2}. By Theorem~\ref{thm:free} this map is also surjective, since $\NTr(X)$ is a quotient of $\NT(X)$ (in fact, as we know, the two algebras coincide by amenability of $\Z^n$).
\ep

As a byproduct we see that for $P=\Z^n_+$ and compactly aligned product systems conditions~\eqref{eq:subinvariance} and \eqref{eq:subinvariance2} are equivalent. This can be shown directly and without using the compact alignment assumption. We will return to this conclusion and prove a more general result in Section~\ref{sec:Artin}.

In Example~\ref{ex:optimal} we will show that in general the system of inequalities~\eqref{eq:subinvariance2} cannot be replaced by yet a smaller system.

\bigskip

\section{States on quasi-lattice graded algebras}\label{sec:graded}

In order to prove an analogue of Theorem~\ref{thm:free} for general quasi-lattice ordered groups we will extend a trace $\tau$ satisfying~\eqref{eq:subinvariance} to the core subalgebra. The main problem will be to show that such an extension is positive, and in this section we prove a result which will allow us to do that. It will be convenient to work in a more abstract setting than that of Nica-Toeplitz algebras.

\smallskip

By a \emph{quasi-lattice} we mean a partially ordered set $I$ such that any two elements $p,q\in I$ either have a least upper bound, which we denote $p\vee q$, or do not have a common upper bound at all, in which case we write $p\vee q=\infty$.

\begin{definition}\label{def:qloalgebra}
A \emph{quasi-lattice graded C$^*$-algebra} is a C$^*$-algebra $\F$ together with a dense $*$-subalgebra $\B=\oplus_{p\in I}B_p$, which we call the \emph{algebraic core} of $\F$, such that $I$ is a quasi-lattice and the following conditions are satisfied:
\begin{itemize}
\item[(a)] each space $B_p$, $p\in I$, is a C$^*$-subalgebra of $\F$;

\item[(b)]  for all $p,q\in I$, we have $B_pB_q\subset B_{p\vee q}$, with the convention $B_\infty=0$;

\item[(c)]  for all $q\le p$, we have $\overline{B_qB_p}=B_p$.
\end{itemize}
\end{definition}

The following useful observation is a straightforward generalization of~\cite[Lemma~3.6]{CLSV}.

\begin{lemma}\label{lem:clsv}
If $\F$ is a quasi-lattice graded C$^*$-algebra with algebraic core $\B=\oplus_{p\in I}B_p$, then for every finite $\vee$-closed subset $J$ of $I$, the space $B_J=\oplus_{p\in J} B_p$ is a C$^*$-subalgebra of $\F$.
\end{lemma}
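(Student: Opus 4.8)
The plan is to induct on $|J|$, following the proof of \cite[Lemma~3.6]{CLSV}. The $*$-algebra structure is immediate: each $B_p$ is self-adjoint by~(a), so $B_J^*=B_J$; and since $J$ is $\vee$-closed, property~(b) gives $B_pB_q\subset B_{p\vee q}\subset B_J$ for all $p,q\in J$ (using $B_\infty=0$), so $B_J$ is a $*$-subalgebra of $\F$. The only nontrivial point is that the algebraic direct sum $B_J=\oplus_{p\in J}B_p$ is norm-closed, and this is where the induction is needed.

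For $|J|=1$ the claim is just~(a). Suppose $|J|\ge2$ and the statement holds for all strictly smaller $\vee$-closed sets. Pick a minimal element $p_0$ of $J$ and set $J'=J\setminus\{p_0\}$. Minimality of $p_0$ forces $J'$ to be $\vee$-closed: if $p,q\in J'$ have $p\vee q<\infty$ then $p\vee q\in J$, while $p\vee q=p_0$ would give $p\le p_0$, hence $p=p_0$, a contradiction; so $p\vee q\in J'$. By the induction hypothesis $B_{J'}=\oplus_{p\in J'}B_p$ is a C$^*$-subalgebra of $\F$, in particular norm-closed. The same argument shows that for every $q\in J'$ we have $B_{p_0}B_q\subset B_{p_0\vee q}\subset B_{J'}$ (the product is $0$ if $p_0\vee q=\infty$; otherwise $p_0\vee q\in J$ and $p_0\vee q\ne p_0$, so $p_0\vee q\in J'$), and since $B_{J'}$ is an algebra this yields $B_JB_{J'}\subset B_{J'}$ and, symmetrically, $B_{J'}B_J\subset B_{J'}$.

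Finally, let $C$ denote the norm-closure of $B_J$ in $\F$; it is a C$^*$-subalgebra of $\F$ containing the C$^*$-subalgebra $B_{p_0}$. Passing the two inclusions above to the limit and using that $B_{J'}$ is closed, we obtain that $B_{J'}$ is a closed two-sided ideal of $C$. By the standard fact that the sum of a C$^*$-subalgebra and a closed ideal in a C$^*$-algebra is a C$^*$-subalgebra (apply the quotient map $C\to C/B_{J'}$, which sends $B_{p_0}$ onto a C$^*$-subalgebra of $C/B_{J'}$, and pull back), the subspace $B_{p_0}+B_{J'}$ is norm-closed in $C$. Since $B_{p_0}+B_{J'}=B_J$ is dense in $C$, we conclude that $B_J=C$ is a C$^*$-subalgebra of $\F$, which closes the induction. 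The only genuine difficulty is this closedness argument; what makes it work is that removing a minimal element of $J$ leaves a $\vee$-closed subset whose span is a two-sided ideal in $B_J$, reducing the inductive step to the ``C$^*$-subalgebra plus closed ideal'' principle.
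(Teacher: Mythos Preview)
Your proof is correct and follows essentially the same approach as the paper's: both proceed by induction on $|J|$, remove a minimal element $p_0$ to obtain a $\vee$-closed subset $J'$, and reduce the inductive step to the fact that the sum of a C$^*$-subalgebra and a closed ideal is closed. The paper only sketches this (citing \cite[Lemma~3.6]{CLSV}), whereas you have spelled out the details; in particular your verification that $B_{J'}$ is a closed two-sided ideal in the closure $C$ of $B_J$, and that $p_0\vee q\ne p_0$ for $q\in J'$, is exactly what is needed.
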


\bp
The proof relies only on conditions (a) and (b) and uses induction on the size of $J$ along the same lines as that of \cite[Lemma~3.6]{CLSV}. (The key point is that the sum of a C$^*$-subalgebra and a closed ideal of a C$^*$-algebra is always closed.)
\ep

Assume now that for every $p\in I$ we are given a positive linear
functional $\phi_p$ on $B_p$. Our goal is to show that an analogue of \eqref{eq:subinvariance} provides a sufficient condition for positivity of the linear functional $\phi=\oplus_p\phi_p$ on $\B$.
We need to introduce some notation to formulate the precise result.

Extend $\phi_p$ to a linear functional $\psi_p$ on $\oplus_q B_q$ as follows.
If $q\not\le p$, then we put $\psi_p|_{B_q}=0$. On the other hand,
if $q\le p$, then by assumption (b) we have a $*$-homomorphism
$B_q\to M(B_p)$. Composing the canonical extension of $\phi_p$ to a
positive linear functional on $M(B_p)$ with
this homomorphism, we get a positive linear functional on $B_q$. We
take this functional as $\psi_p|_{B_q}$.

\begin{lemma}\label{lem:positivity}
For any choice of positive linear functionals $\phi_p$ on $B_p$, the linear functionals $\psi_p$ are positive on $\B$, that is, $\psi_p(x^*x)\ge0$ for all $x\in\B$.
\end{lemma}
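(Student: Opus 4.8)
The plan is to fix $p\in I$ and reduce the inequality $\psi_p(x^*x)\ge0$ for $x\in\B$ to the positivity of the canonical extension $\bar\phi_p$ of $\phi_p$ to $M(B_p)$, evaluated at an element of the form $z^*z$. Throughout, let $\iota^p_q\colon B_q\to M(B_p)$ denote the $*$-homomorphism coming from condition (b) for $q\le p$, so that $\psi_p|_{B_q}=\bar\phi_p\circ\iota^p_q$ if $q\le p$ and $\psi_p|_{B_q}=0$ otherwise.

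First I would expand an arbitrary $x\in\B$ as a finite sum $x=\sum_{q\in F}x_q$ with $x_q\in B_q$, so that, by conditions (a) and (b),
\[
x^*x=\sum_{q,r\in F}x_q^*x_r,\qquad x_q^*x_r\in B_{q\vee r}.
\]
By the definition of $\psi_p$, the summand $\psi_p(x_q^*x_r)$ vanishes whenever $q\vee r\not\le p$ (in particular when $q\vee r=\infty$, since then $B_{q\vee r}=0$). As $q,r\le q\vee r$, the condition $q\vee r\le p$ forces $q\le p$ and $r\le p$; conversely, if $q\le p$ and $r\le p$, then $p$ is a common upper bound of $q$ and $r$, so $q\vee r$ exists and $q\vee r\le p$. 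Hence only the indices in $F_p:=\{q\in F\mid q\le p\}$ contribute to $\psi_p(x^*x)$.

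The point I would then isolate is that the maps $\iota^p_q$ are compatible with multiplication: being nothing but multiplication inside $\F$, associativity gives, for $q,r\le p$, $b\in B_q$ and $c\in B_r$, that $bc\in B_{q\vee r}$ and $\iota^p_{q\vee r}(bc)=\iota^p_q(b)\,\iota^p_r(c)$ in $M(B_p)$; indeed both sides act on $B_p$ by $d\mapsto(bc)d$ on the left and by $d\mapsto d(bc)$ on the right. In particular $\iota^p_{q\vee r}(x_q^*x_r)=\iota^p_q(x_q)^*\iota^p_r(x_r)$ for $q,r\in F_p$. Setting $z:=\sum_{q\in F_p}\iota^p_q(x_q)\in M(B_p)$, this gives
\[
\psi_p(x^*x)=\sum_{q,r\in F_p}\bar\phi_p\big(\iota^p_q(x_q)^*\iota^p_r(x_r)\big)=\bar\phi_p(z^*z)\ge0,
\]
since $\bar\phi_p$ is a positive linear functional on $M(B_p)$.

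I do not anticipate a serious difficulty here; the argument is essentially bookkeeping with the partial order and the multiplier algebras. The only places that need care are the verification that $\psi_p$ annihilates $B_{q\vee r}$ unless both $q\le p$ and $r\le p$ (handling the case $q\vee r=\infty$ correctly), and the multiplicativity identity $\iota^p_{q\vee r}(bc)=\iota^p_q(b)\,\iota^p_r(c)$, which is precisely what makes the cross-terms in $\psi_p(x^*x)$ reassemble into $\bar\phi_p(z^*z)$.
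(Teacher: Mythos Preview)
Your proof is correct and follows essentially the same approach as the paper: both arguments split $x$ into its components with index $\le p$ and $\not\le p$, observe that the latter contribute nothing to $\psi_p(x^*x)$, and recognize the remaining sum as the canonical positive extension of $\phi_p$ evaluated at a positive element. The only cosmetic difference is that the paper phrases this via the extension of $\phi_p$ from the ideal $B_p$ to the C$^*$-algebra $\overline{\oplus_{q\le p}B_q}$, whereas you work directly in $M(B_p)$ and verify the multiplicativity $\iota^p_{q\vee r}(bc)=\iota^p_q(b)\iota^p_r(c)$ by hand; these are equivalent packagings of the same idea.
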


\bp Fix $p\in I$. First of all observe that since the closure of
$\oplus_{q\le p}B_q$ in $\F$ is a C$^*$-algebra and $B_p$ is its ideal, the functional $\phi_p$ extends in a canonical way to a positive linear
functional on this C$^*$-algebra. On the dense $*$-subalgebra
$\oplus_{q\le p}B_q$ this positive linear functional coincides with
$\psi_p$ by construction.

Next, observe that if $y\in B_q$ for some $q\not\le p$, then $q\vee
r\not\le p$ for any $r\in I$, and hence $\psi_p(xy)=0$ for all
$x\in\B$.

Now, take any $x\in\oplus_{q\le p}B_p$ and $y\in\oplus_{q\not\le
p}B_q$. Then by the above two observations we have
$$
\psi_p((x+y)^*(x+y))=\psi_p(x^*x)\ge0.
$$
Thus $\psi_p$ is indeed positive on $\B$. \ep

\begin{proposition} \label{prop:positivity}
Assume $\F$ is a quasi-lattice graded C$^*$-algebra with algebraic core $\B=\oplus_{p\in I}B_p$. Assume that we are given positive linear functionals  $\phi_p$ on $B_p$ such that, for every $p\in I$ and every finite subset $J\subset\{q|q> p\}$,
we have
\begin{equation} \label{eq:qlopos}
\phi_p(b)+\sum_{\emptyset\ne K\subset
J}(-1)^{|K|}\psi_{q_K}(b)\ge0\ \ \text{for all positive elements}\ \ b\in B_p,
\end{equation}
where the functionals $\psi_{q_K}$ are defined as explained above, with the convention $\psi_\infty=0$.
Then the linear functional $\phi=\oplus_p\phi_p$ on $\B$ is
positive. Furthermore, if $I$ has a smallest element $e$, and $\phi_e$ is a state on $B_e$, then $\phi$ extends to a state on $\F$.
\end{proposition}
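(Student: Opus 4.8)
The plan is to reduce to a finite, $\vee$-closed index set, recast the desired positivity as the solvability of a linear system for the functionals $\phi_p$, solve that system by downward induction over the support, and finally deduce boundedness for the state statement.

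\emph{Reduction.} Every $x\in\B$ has finite support, and $x^*x$ is then supported on the ($\vee$-closed, finite) hull $\hat F$ of $\supp x$; moreover \eqref{eq:qlopos} for the subfamily $(\phi_p)_{p\in\hat F}$ follows at once from the full hypothesis. Hence, to prove $\phi(x^*x)\ge0$, I may assume $I$ is finite and $\vee$-closed, so that $\B$ is itself a C$^*$-algebra by Lemma~\ref{lem:clsv}. For $p\in I$ put $C_p:=\overline{\sum_{q\le p}B_q}\subseteq\F$; this is a C$^*$-subalgebra containing $B_p$ as an ideal, and the truncation $e_p\colon\B\to C_p$, $\sum_q x_q\mapsto\sum_{q\le p}x_q$, is a $*$-homomorphism, because $B_qB_{q'}\subseteq B_{q\vee q'}$ and $q\vee q'\le p$ is equivalent to $q\le p$ and $q'\le p$. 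I then claim it suffices to produce, for each $p\in I$, a positive linear functional $\omega_p$ on $C_p$ with
\[
\sum_{p\ge q}\omega_p|_{B_q}=\phi_q\qquad\text{on }B_q,\ \text{for all }q\in I.
\]
Indeed, realizing each $\omega_p$ as a vector functional in the universal representation $\pi_p$ of $C_p$, composing with $e_p$, and forming $\Pi:=\bigoplus_p\pi_p\circ e_p$ with the vector $v:=\bigoplus_p v_p$, one computes $(\Pi(x)v,v)=\sum_p\omega_p(e_p(x))=\sum_q\big(\sum_{p\ge q}\omega_p|_{B_q}\big)(x_q)=\phi(x)$ for every $x\in\B$, whence $\phi(x^*x)=\|\Pi(x)v\|^2\ge0$.

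\emph{Solving the system.} I would induct on $n:=\#\{p\in I:\phi_p\ne0\}$; if $n=0$ take all $\omega_p=0$. If $n\ge1$, pick $m$ maximal in $\{p:\phi_p\ne0\}$ and let $\widetilde{\phi_m}$ be the canonical positive extension of $\phi_m$ from the ideal $B_m$ to $C_m$, so that $\widetilde{\phi_m}|_{B_q}=\psi_m|_{B_q}$ for $q\le m$. Replace $(\phi_q)$ by $(\phi'_q)$, where $\phi'_m:=0$ and $\phi'_q:=\phi_q-\psi_m|_{B_q}$ for $q\ne m$ (with $\psi_m|_{B_q}:=0$ when $q\not\le m$). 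The hard part is that $(\phi'_q)$ again satisfies \eqref{eq:qlopos}: this is an inclusion–exclusion computation in which, in the expression of \eqref{eq:qlopos} for $\phi'$, one substitutes $\psi'_s|_{B_r}=\psi_s|_{B_r}-\psi_m|_{B_r}$ whenever $r\le s\le m$ — valid because, by condition~(c), $B_s\to M(B_t)$ is nondegenerate for $s\le t$, so $B_r\to M(B_t)$ factors through $M(B_s)$ — uses that maximality of $m$ \emph{within the support} forces $\psi_{q_K\vee m}=0$ whenever $q_K\not\le m$, and collapses signs via $\sum_{\emptyset\ne K\subseteq J_m}(-1)^{|K|}=-1$ when $J_m:=\{q\in J:q\le m\}\ne\emptyset$ (and $=0$ otherwise), the remaining borderline case being handled by feeding $J\cup\{m\}$ into \eqref{eq:qlopos} for $\phi$ itself. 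In particular each $\phi'_q\ge0$ (the case $J=\{m\}$ reads $\phi_q\ge\psi_m|_{B_q}$), so $\phi_q=0$ forces $\phi'_q=0$, and $n$ strictly drops. By induction there are positive $\omega'_p$ on $C_p$ with $\sum_{p\ge q}\omega'_p|_{B_q}=\phi'_q$; then $\omega_p:=\omega'_p$ for $p\ne m$ and $\omega_m:=\omega'_m+\widetilde{\phi_m}$ solve the system, since $\phi_q=\phi'_q+\psi_m|_{B_q}$ and $\psi_m|_{B_q}=\widetilde{\phi_m}|_{B_q}$ when $q\le m$ (both sides vanishing otherwise). This establishes positivity of $\phi$ on $\B$.

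\emph{Extension to a state.} Suppose finally that $I$ has a least element $e$ and $\phi_e$ is a state on $A:=B_e$. By~(c), $\overline{B_eB_p}=B_p=\overline{B_pB_e}$, so an approximate unit $(u_\lambda)$ of $A$ is a two-sided approximate unit of $\B$, and $\phi(u_\lambda)=\phi_e(u_\lambda)\to1$. Combining this with the Cauchy–Schwarz inequality for the positive functional $\phi$, I would check that $\phi$ is bounded with $\|\phi\|=1$; then $\phi$ extends by continuity to a positive linear functional on $\F$, which is a state since $\|\phi\|=1=\lim_\lambda\phi(u_\lambda)$. The genuinely delicate point of the whole proof is the stability of the system \eqref{eq:qlopos} under the substitution $(\phi_q)\mapsto(\phi'_q)$ above; everything else is bookkeeping once the C$^*$-algebraic scaffolding $C_p$, $e_p$, $\widetilde{\phi_m}$ is in place.
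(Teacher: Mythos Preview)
Your proof is correct, but it takes a genuinely different route from the paper's. The paper gives a non-inductive argument: on a finite $\vee$-closed $J$ it defines, for each $p\in J$, an explicit functional $\eta_p$ on $B_J$ by the inclusion--exclusion formula
\[
\eta_p(x)=\psi_p(x)+\sum_{\emptyset\ne K\subset\{r\in J:r>p\}}(-1)^{|K|}\psi_{q_K}(x)\quad(x\in B_q,\ q\le p),
\]
shows each $\eta_p$ is positive directly from the hypothesis and condition~(c), and then invokes a separate M\"obius-inversion lemma to check $\phi|_{B_J}=\sum_{p\in J}\eta_p$. You instead argue by downward induction on $\#\{p:\phi_p\ne0\}$, peeling off a maximal $m$ and verifying that the residual family $(\phi'_q)$ again satisfies \eqref{eq:qlopos}. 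Both arguments produce the same decomposition $\phi=\sum_p\omega_p\circ e_p$ with $\omega_p\ge0$ on $C_p$; the paper's version makes the M\"obius structure explicit in one shot, while your inductive approach avoids the auxiliary inversion lemma at the cost of the stability check, which---as you note---is the only genuinely non-formal step and is itself an inclusion--exclusion computation of the same flavour. For the state extension, your Cauchy--Schwarz sketch is fine but implicitly relies on the fact (used explicitly in the paper) that each finite $\vee$-closed $B_J$ is already a C$^*$-algebra, so that positivity there gives $\|\phi|_{B_J}\|=\lim_\lambda\phi_e(u_\lambda)=1$.
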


For the proof we need the following lemma.

\begin{lemma}\label{lem:positivity2}
Assume $J$ is a finite quasi-lattice and $f$ is a complex-valued function on~$J$.
Define a new function~$\hat f$ on~$J$~by
$$
\hat f(p)=f(p)+\sum_{\emptyset\ne K\subset \{q\in J\mid q>
p\}}(-1)^{|K|}f(q_K),
$$
with the convention $f(\infty)=0$. Then, for every $p\in J$, we have
$$
f(p)=\sum_{q\ge p}\hat f(q).
$$
\end{lemma}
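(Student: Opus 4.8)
The plan is to read the asserted identity as the statement that two linear endomorphisms of the finite-dimensional vector space $\C^J$ are mutually inverse, and then to verify the composition that is genuinely easy to compute. Introduce $\Phi\colon\C^J\to\C^J$, the ``hat'' operation,
\[
(\Phi f)(p)=f(p)+\sum_{\emptyset\ne K\subset\{r\in J:\,r>p\}}(-1)^{|K|}f(q_K),
\]
where $f$ is extended by $f(\infty)=0$ so that $\Phi f$ is again an honest element of $\C^J$, and $\Psi\colon\C^J\to\C^J$, the ``up-sum'' operation, $(\Psi g)(p)=\sum_{r\in J,\ r\ge p}g(r)$ (with the convention $(\Psi g)(\infty)=0$, i.e.\ an empty sum). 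Both maps are obviously linear, and $\C^J$ is finite-dimensional because $J$ is finite. The assertion of the lemma is precisely that $\Psi\Phi=\id$. I would deduce this from the reverse relation $\Phi\Psi=\id$: once that is established, $\Psi$ is injective, hence bijective, hence invertible with inverse equal to $\Phi$ (from $\Phi=\Phi\Psi\Psi^{-1}=\Psi^{-1}$), so that $\Psi\Phi=\id$ as well.

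So the whole matter reduces to checking $\Phi\Psi=\id$, which I would do by computing the coefficient of each $g(s)$, $s\in J$, in $(\Phi\Psi g)(p)$. Expanding the definitions gives
\[
(\Phi\Psi g)(p)=\sum_{r\ge p}g(r)+\sum_{\emptyset\ne K\subset\{r\in J:\,r>p\}}(-1)^{|K|}\sum_{r\ge q_K}g(r),
\]
where the inner sum is empty when $q_K=\infty$. The one point that needs a remark is that, for a nonempty $K\subset\{r\in J:r>p\}$, the condition ``$q_K$ is finite and $q_K\le s$'' is equivalent to $K\subset(p,s]:=\{r\in J:p<r\le s\}$: from $s\ge q_K=\bigvee K$ one gets $s\ge r$ for all $r\in K$, and conversely if $s$ is an upper bound of $K$ then $\bigvee K$ exists and is $\le s$. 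Hence the coefficient of $g(s)$ equals $\mathbf{1}_{\{s\ge p\}}+\sum_{\emptyset\ne K\subset(p,s]}(-1)^{|K|}$. Using the elementary identity $\sum_{K\subset T}(-1)^{|K|}=\mathbf{1}_{\{T=\emptyset\}}$, this becomes $\mathbf{1}_{\{s\ge p\}}+\mathbf{1}_{\{(p,s]=\emptyset\}}-1$; since $(p,s]$ is empty exactly when $s\not>p$, and in the subcase $s\ne p$ this forces $\mathbf{1}_{\{s\ge p\}}=0$ as well, the coefficient equals $\mathbf{1}_{\{s=p\}}$. Therefore $(\Phi\Psi g)(p)=g(p)$, i.e.\ $\Phi\Psi=\id$, and the lemma follows.

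The only real obstacle here is knowing to compute the composition in this order. Attacking $\Psi\Phi=\id$ head-on means collecting the coefficient of $f(s)$ in $\sum_{q\ge p}\hat f(q)$ under the constraint $q_K=s$, an \emph{equality} of joins; this is awkward, because one is then led to subsets of $\{q\in J:q\ge p\}$ with no least element and must exhibit a sign-reversing involution (or run an inclusion--exclusion over their sets of minimal elements) to see that their contributions cancel. Switching to $\Phi\Psi$ trades the equality $q_K=s$ for the inequality $q_K\le s$, which collapses to the clean membership condition $K\subset(p,s]$ and reduces everything to the identity $\sum_{K\subset T}(-1)^{|K|}=\mathbf{1}_{\{T=\emptyset\}}$. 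Beyond the definitions, the only hypothesis used is that $J$ is finite --- both to make all sums finite and to run the dimension argument.
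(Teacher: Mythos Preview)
Your proof is correct and takes a genuinely different route from the paper's. The paper interprets $f$ as determining a complex measure $\mu$ on $J$ via $\mu(J_p)=f(p)$ (using that the functions $\chi_{J_p}$, $p\in J$, form a basis of $\C^J$), then computes the atoms directly: writing $\{p\}=J_p\setminus\bigcup_{q>p}J_q$ and expanding $\chi_{\{p\}}$ by inclusion--exclusion gives $\mu(\{p\})=\hat f(p)$, whence $f(p)=\mu(J_p)=\sum_{q\ge p}\hat f(q)$. So the paper proves $\Psi\Phi=\id$ head-on, but it sidesteps the awkward ``$q_K=s$'' bookkeeping you warn about by passing to the measure picture, where the inclusion--exclusion is on \emph{sets} rather than on values of $f$.

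Your approach trades this for a linear-algebra trick: verify the easier composition $\Phi\Psi=\id$ (where the constraint becomes $q_K\le s$, equivalently $K\subset(p,s]$, and everything collapses to $\sum_{K\subset T}(-1)^{|K|}=\mathbf 1_{\{T=\emptyset\}}$), then invoke finite-dimensionality. This is clean and self-contained. The paper's argument has the advantage of being conceptually tied to the finitely additive measures on $(P,\B_P)$ that pervade the rest of the paper (cf.\ the measures $\mu_a$ in the proof of Theorem~\ref{thm:subinvariance} and the $T(A)$-valued measure $\mu$ following it), so the lemma appears as one more instance of the same mechanism rather than an isolated combinatorial identity. One small remark on your closing sentence: you do use the quasi-lattice hypothesis, at the step ``if $s$ is an upper bound of $K$ then $\bigvee K$ exists and is $\le s$''---for a general finite poset this could fail.
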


\bp
For every $p\in J$, denote by $J_p$ the set $\{q\mid q\ge p\}$. The characteristic functions $\chi_{J_p}$ of these sets, being linearly independent, form a basis of the space of functions on $J$.
It follows that there exists a unique complex measure $\mu$ on $J$ such that $\mu(J_p)=f(p)$. For every $p\in J$, we have $\{p\}=J_p\setminus\cup_{q>p}J_q$, so that
$$
\chi_{\{p\}}=\prod_{q>p}(\chi_{J_p}-\chi_{J_q})=\chi_{J_p}+\sum_{\emptyset\ne K\subset \{q\in J\mid q>
p\}}(-1)^{|K|}\chi_{J_{q_K}}.
$$
From this we see that $\mu(\{p\})=\hat f(p)$ for all $p\in J$. Then
$$
f(p)=\mu(J_p)=\sum_{q\in J_p}\mu(\{q\})=\sum_{q\in J_p}\hat f(q),
$$
which is what we need.
\ep

\bp[Proof of Proposition~\ref{prop:positivity}]
By Lemma~\ref{lem:clsv}, it suffices to show that $\phi$ is positive on the $*$-subalgebra $$B_J=\oplus_{p\in J}B_p\subset\B$$ for all finite $\vee$-closed subsets $J\subset I$. With such a $J$ fixed, for every $p\in J$ define a linear functional $\eta_p$ on $B_J$ as follows. If $q\not\le p$, then we let $\eta_p|_{B_q}=0$, and if $q\le p$, we put
$$
\eta_p(x)=\psi_p(x)+\sum_{\emptyset\ne K\subset
\{r\in J\mid r>
p\}}(-1)^{|K|}\psi_{q_K}(x)\ \ \text{for}\ \ x\in B_q.
$$

We claim that the functionals $\eta_p$ are positive. Indeed, by our assumption~\eqref{eq:qlopos}, the linear functional $\eta_p|_{B_p}$ is positive. By Lemma~\ref{lem:positivity}, applied to the set $J$ and the functional~$\eta_p|_{B_p}$ instead of $I$ and~$\phi_p$, this functional extends to a positive linear functional $\omega_p$ on~$B_J$. Specifically, we have $\omega_p|_{B_q}=0$ for $q\not\le p$, and
$$
\omega_p(x)=\lim_i\eta_p(xu_i)
$$
for $x\in B_q$ with $q\le p$, where $\{u_i\}_i$ is an approximate unit in $B_p$. By assumption (c) in Definition~\ref{def:qloalgebra}, for every $r\ge p$, the images of $u_i$ in $M(B_r)$ converge strictly to $1$. Hence $\psi_r(x)=\lim_i\psi_r(xu_i)$ for all $x\in B_q$ with $q\le p$. From this we conclude that $\omega_p=\eta_p$, hence $\eta_p$ is indeed positive.

Finally, we claim that $\phi|_{B_J}=\sum_{p\in J}\eta_p$, which clearly implies positivity of $\phi|_{B_J}$. Indeed, take $s\in J$, $x\in B_s$ and consider the function $f$ on $J$ defined by $f(p)=\psi_p(x)$. Using the notation of Lemma~\ref{lem:positivity2}, we have $\eta_p(x)=\hat f(p)$ for all $p\ge s$. Hence, by that lemma,
$$
\sum_{p\in J\colon p\ge s}\eta_p(x)=f(s)=\psi_s(x)=\phi_s(x).
$$
On the other hand, by definition, $\eta_p(x)=0$ for $p\not\ge s$. Therefore $\sum_{p\in J}\eta_p(x)=\phi_s(x)$, and our claim is proved.

\smallskip

It remains to prove the last statement of the proposition. By Lemma~\ref{lem:clsv}, for every finite $\vee$-closed subset $J$ of $I$ containing $e$, the $*$-subalgebra $B_J$ of $\F$ is closed. Since $e$ is assumed to be the smallest element of $I$, by assumption (c) an approximate unit of $B_e$ is an approximate unit of $B_J$. Since we already know that $\phi|_{B_J}$ is a positive linear functional, it follows that
$$
\|\phi|_{B_J}\|=\|\phi|_{B_e}\|=\|\phi_e\|=1,
$$
so $\phi|_{B_J}$ is a state. Since $\F$ is the closure of the union of such C$^*$-subalgebras $B_J$, we conclude that $\phi$ extends by continuity to a state on $\F$.
\ep

It might be useful also to observe the following, although in our applications this will be subsumed by stronger statements.

\begin{proposition}
Assume $\F$ is a quasi-lattice graded C$^*$-algebra with algebraic core $\B=\oplus_{p\in I}B_p$, and we are given tracial positive linear functionals  $\phi_p$ on the C$^*$-algebras $B_p$. Then the linear functional $\phi=\oplus_p\phi_p$ on $\B$ is tracial as well.

More generally, if we are given a one-parameter group $(\sigma_t)_{t\in\R}$ of automorphisms of $\F$ leaving the subalgebras $B_p$ invariant and $\sigma$-KMS$_\beta$ positive linear functionals $\phi_p$ on $B_p$, then $\phi$ is $\sigma$-KMS$_\beta$, meaning that $\phi(ab)=\phi(b\sigma_{i\beta}(a))$ for all $\sigma$-analytic elements $a,b\in\B$.
\end{proposition}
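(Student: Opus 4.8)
The plan is to reduce the assertion, by bilinearity, to the case of homogeneous elements $a\in B_p$ and $b\in B_q$ (taken $\sigma$-analytic in the KMS case). This reduction is legitimate because $\sigma$ preserves the grading, so the homogeneous components of a $\sigma$-analytic element of $\B$ are again $\sigma$-analytic; that the components depend continuously on the element follows by an easy induction on the size of the support, using Lemma~\ref{lem:clsv} together with the fact that for a maximal element $p_0$ of a finite $\vee$-closed set $J$ the summand $B_{p_0}$ is a closed ideal of $B_J$. So fix $a\in B_p$, $b\in B_q$. If $p\vee q=\infty$, then by condition~(b) of Definition~\ref{def:qloalgebra} both $ab\in B_{p\vee q}=0$ and $b\sigma_{i\beta}(a)\in B_{q\vee p}=0$ vanish, and there is nothing to prove.

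The substantive case is $r:=p\vee q<\infty$. Here $\sigma_{i\beta}(a)\in B_p$ (since $B_p$ is closed and $\sigma$-invariant, the entire function $z\mapsto\sigma_z(a)$ stays in $B_p$), and both $ab$ and $b\sigma_{i\beta}(a)$ lie in $B_r$ by condition~(b), where $\phi$ restricts to $\phi_r$. Since $p,q\le r$, condition~(b) shows that left and right multiplication by an element of $B_p$ (resp. $B_q$) preserve $B_r$, so we obtain $*$-homomorphisms $\iota_p\colon B_p\to M(B_r)$ and $\iota_q\colon B_q\to M(B_r)$; writing $\iota_r\colon B_r\hookrightarrow M(B_r)$ for the canonical inclusion, associativity in $\F$ gives $\iota_p(a)\iota_q(b)=\iota_r(ab)$ and $\iota_q(b)\iota_p(c)=\iota_r(bc)$ for any $c\in B_p$. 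Finally $\iota_p$ is equivariant for the canonical extension of $\sigma|_{B_r}$ to $M(B_r)$, so it carries $\sigma$-analytic elements of $B_p$ to $\sigma$-analytic elements of $M(B_r)$ with $\sigma_{i\beta}(\iota_p(a))=\iota_p(\sigma_{i\beta}(a))$.

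With these identifications the whole matter reduces to extending $\phi_r$ from the ideal $\K(B_r)=B_r$ to a $\sigma$-KMS$_\beta$ positive linear functional $\bar\phi_r$ on $\LL(B_r)=M(B_r)$: granting this, and using $\phi|_{B_r}=\phi_r$, $\bar\phi_r\circ\iota_r=\phi_r$, and the multiplier identities above,
\[
\phi(ab)=\bar\phi_r\bigl(\iota_p(a)\iota_q(b)\bigr)=\bar\phi_r\bigl(\iota_q(b)\,\iota_p(\sigma_{i\beta}(a))\bigr)=\phi\bigl(b\,\sigma_{i\beta}(a)\bigr),
\]
the second equality being the KMS property of $\bar\phi_r$ for the $\sigma$-analytic element $\iota_p(a)$ (and, in the purely tracial statement, simply traciality of $\bar\phi_r$). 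To produce $\bar\phi_r$ I would appeal to the induction machinery recalled in Section~\ref{ssec:induction}: view $B_r$ as a right C$^*$-Hilbert module over itself equipped with the one-parameter group $U=\sigma|_{B_r}$, so $\K(B_r)=B_r$ and $\LL(B_r)=M(B_r)$; by \cite[Theorem~3.2]{lac-nes} the $\sigma$-KMS$_\beta$-weight $\phi_r$ induces a $\sigma$-KMS$_\beta$-weight $\operatorname{Ind}^U_{B_r}\phi_r$ on $M(B_r)$ which is KMS$_\beta$ for the extension of $\sigma$; a short computation with the KMS condition and with $\sigma$-invariance of $\phi_r$ (a standard consequence of it) shows this induced weight restricts to $\phi_r$ on $B_r$, and since $\phi_r$ is bounded so is the induced weight. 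This is the desired $\bar\phi_r$.

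I expect the extension step in the KMS case to be the only real obstacle. For the purely tracial statement one can avoid it: for $a\in B_p$ and $x\in B_r$ one proves $\phi_r(ax)=\phi_r(xa)$ directly by inserting an approximate unit of $B_r$ and using traciality of $\phi_r$ on $B_r$, and then finishes by a Cohen factorization $a=a_1a_2$ with $a_1,a_2\in B_p$. In the KMS case such a factorization destroys $\sigma$-analyticity, so one genuinely needs that a $\sigma$-KMS$_\beta$ functional on an ideal extends, with the KMS property intact, to the multiplier algebra — precisely what \cite{lac-nes} supplies.
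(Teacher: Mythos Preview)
Your argument is correct and follows the same overall strategy as the paper: reduce to homogeneous $a\in B_p$, $b\in B_q$, dispose of $p\vee q=\infty$ trivially, and for $r=p\vee q<\infty$ extend $\phi_r$ to a larger algebra containing (images of) $B_p$ and $B_q$ where the trace/KMS identity can be applied. Your discussion of why homogeneous components of $\sigma$-analytic elements remain $\sigma$-analytic also matches the paper's remark (they note that $B_J$ carries the product topology for finite $\vee$-closed $J$, which is equivalent to your inductive argument).

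The one difference worth pointing out is in the extension step. You go to $M(B_r)$ and invoke the induction machinery $\operatorname{Ind}^U_{B_r}$ from \cite{lac-nes} to produce $\bar\phi_r$. The paper instead observes that $B_r$ is a closed ideal of the C$^*$-algebra $\overline{\bigoplus_{s\le r}B_s}$ (this was already used in the proof of Lemma~\ref{lem:positivity}), and takes $\bar\phi_r$ to be the \emph{canonical} extension of $\phi_r$ from this ideal, which is automatically tracial (resp.\ KMS$_\beta$). This is both simpler and avoids the slight circularity of appealing to a theorem whose natural domain is weights: for a bounded KMS functional on an ideal, the canonical extension $\bar\phi_r(x)=\lim_i\phi_r(u_ixu_i)$ via an approximate unit already does the job, no Hilbert-module induction needed. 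Your route is not wrong, just heavier than necessary; you might replace the appeal to \cite[Theorem~3.2]{lac-nes} with the elementary ideal-extension fact.
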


\bp
Take $x\in B_q$ and $y\in B_r$. We want to show that $\phi(xy)=\phi(yx)$. If $q\vee r=\infty$, then $xy=yx=0$ and there is nothing to prove. So assume $p=q\vee r<\infty$. As we already used in the proof of Lemma~\ref{lem:positivity}, $B_p$ is an ideal of the C$^*$-algebra $\overline{\oplus_{s\le p}B_s}$. Hence $\phi_p$ extends in a canonical way to a tracial positive linear functional $\psi_p$ on this C$^*$-algebra. Then
$$
\phi(xy)=\phi_p(xy)=\psi_p(xy)=\psi_p(yx)=\phi_p(yx)=\phi(yx).
$$

The second part of the proposition is proved in a similar way. We just want to remark that since the subspaces $B_J=\oplus_{p\in J}B_p\subset\F$ are closed for finite $\vee$-closed sets $J$ by Lemma~\ref{lem:clsv}, the topology on them coincides with the direct product topology, and this implies that an element of $\B$ is $\sigma$-analytic in~$\F$ if and only if its $B_p$-components are $\sigma$-analytic for all $p$.
\ep

\bigskip

\section{KMS-states on Nica-Toeplitz algebras: the general case}\label{sec:general}

Using the results of the previous section we can now prove an analogue of Corollary~\ref{cor:classfree} for arbitrary quasi-lattice ordered groups.

\begin{theorem} \label{thm:general}
Assume $(G,P)$ is a quasi-lattice ordered group and $X$ is a compactly aligned product system of C$^*$-correspondences over $P$, with $X_e=A$. Consider the dynamics \eqref{eq:dynamics1} on $\NT(X)$ defined by a homomorphism $N\colon P\to (0,+\infty)$. Then, for every $\beta\in\R$, the map $\phi\mapsto\phi|_A$ defines a one-to-one correspondence between the gauge-invariant $\sigma$-KMS$_\beta$-states on $\NT(X)$ and the tracial states on $A$ satisfying~\eqref{eq:subinvariance}.
\end{theorem}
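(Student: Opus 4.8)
The ``only if'' direction is precisely Theorem~\ref{thm:subinvariance}. For injectivity, note that a gauge-invariant state $\phi$ equals $\phi|_\F\circ E$, and by Remark~\ref{rem:KMScore} the restriction $\phi|_\F$ is completely determined by $\tau=\phi|_A$; hence $\phi\mapsto\phi|_A$ is injective on gauge-invariant KMS-states. So the real content is surjectivity: given a tracial state $\tau$ on $A$ satisfying~\eqref{eq:subinvariance}, one must produce a gauge-invariant $\sigma$-KMS$_\beta$-state $\phi$ on $\NT(X)$ with $\phi|_A=\tau$. The plan is the familiar two-step construction: first build a state $\phi_0$ on the core $\F$, then put $\phi=\phi_0\circ E$.

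For the first step I would view $\F$ as a quasi-lattice graded C$^*$-algebra in the sense of Definition~\ref{def:qloalgebra}, with $I=P$ and $B_p=i_X^{(p)}(\K(X_p))$: condition (b) is Nica covariance, condition (c) holds because for $q\le p$ an approximate unit of $\K(X_q)$ maps to the strict unit of $\LL(X_p)$ under $\iota^p_q$, and the $B_p$ are linearly independent (restrict the Fock representation to a minimal component $X_p$), so $\B:=\oplus_{p}B_p$ is a dense algebraic core. By Lemma~\ref{lem:core} the map $i_X^{(p)}$ is injective on $\K(X_p)$, so $B_p\cong\K(X_p)$ and $M(B_p)\cong\LL(X_p)$, and I define $\phi_p$ on $B_p$ by $\phi_p(i_X^{(p)}(T))=N(p)^{-\beta}\Tr^p_\tau(T)$. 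Applying~\eqref{eq:subinvariance} with $J=\{p\}$ gives $N(p)^{-\beta}\Tr^p_\tau(a)\le\tau(a)$ for $a\in A_+$; since $X_p$ is essential this forces $\Tr^p_\tau(1)\le N(p)^\beta<\infty$, so each $\phi_p$ is a finite positive trace, and $\phi_e=\tau$ is a state. Then I would invoke Proposition~\ref{prop:positivity}.

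The main obstacle is verifying the hypothesis~\eqref{eq:qlopos} of that proposition. Fix $p$, a finite $J\subset\{q\mid q>p\}$, and a positive element $b=i_X^{(p)}(T)$ with $T\in\K(X_p)_+$. For any finite nonempty $K\subset J$ with $q_K<\infty$ (so $q_K>p$), the image of $b$ in $M(B_{q_K})\cong\LL(X_{q_K})$ is $\iota^{q_K}_p(T)=T\otimes 1_{X_{p^{-1}q_K}}$ by Nica covariance, hence $\psi_{q_K}(b)=N(q_K)^{-\beta}\Tr^{q_K}_\tau(T\otimes1)$; by the induction-in-stages identity~\eqref{eq:induction}, applicable since $F_{p^{-1}q_K}\tau$ is finite by~\eqref{eq:subinvariance} with $J=\{p^{-1}q_K\}$, this equals $N(q_K)^{-\beta}\Tr^{X_p}_{F_{p^{-1}q_K}\tau}(T)$. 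Using that $N$ is a homomorphism, that $q\mapsto p^{-1}q$ and $K\mapsto\{p^{-1}q\mid q\in K\}$ translate the combinatorics verbatim (with $q_K\mapsto p^{-1}q_K$), and that $\omega\mapsto\Tr^{X_p}_\omega(T)$ is additive and positive-homogeneous on finite positive traces (hence extends linearly to their differences), one sees that the left-hand side of~\eqref{eq:qlopos} equals $N(p)^{-\beta}\Tr^{X_p}_{\nu}(T)$, where $\nu=\tau+\sum_{\emptyset\ne K\subset J}(-1)^{|K|}N(p^{-1}q_K)^{-\beta}F_{p^{-1}q_K}\tau$ is precisely the trace whose positivity is asserted by~\eqref{eq:subinvariance} for the index set $J'=\{p^{-1}q\mid q\in J\}\subset P\setminus\{e\}$ (equivalently $\nu=\mu(\Omega_{J'})$ in the notation introduced after Theorem~\ref{thm:subinvariance}). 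By hypothesis $\nu\ge0$, so $\Tr^{X_p}_{\nu}(T)\ge0$ since $T\ge0$. This is the step where condition~\eqref{eq:subinvariance} enters in full strength.

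Proposition~\ref{prop:positivity} now gives a state $\phi_0$ on $\F$ extending $\oplus_p\phi_p$; in particular $\phi_0|_A=\tau$ and $\phi_0(i_X(\xi)i_X(\zeta)^*)=N(p)^{-\beta}\tau(\langle\zeta,\xi\rangle)$ for $\xi,\zeta\in X_p$. Set $\phi=\phi_0\circ E$, a gauge-invariant state with $\phi|_A=\tau$. To see that $\phi$ is a $\sigma$-KMS$_\beta$-state I would use Lemma~\ref{lem:cognoscenti}: $\sigma$ restricts to the trivial dynamics on $\F$ and commutes with $E$, so $\phi$ is $\sigma$-invariant; and for the generating analytic elements $a=i_X(\xi)$, $\xi\in X_p$, and the spanning family $b=i_X(\eta)i_X(\nu)^*$, both sides of~\eqref{eq:simple-condition-forKMS} vanish unless $i_X(\eta)i_X(\nu)^*i_X(\xi)$ lies in degree $e$, in which case $\nu\in X_s$ with $p\le s$ and relation~\eqref{eq:Toeplitz} reduces the required identity to $\tau(\langle\nu,\xi\eta\rangle)=\tau(\langle\ell(\xi)^*\nu,\eta\rangle)$, which holds by adjointness of $\ell(\xi)$ and $\xi\eta=\ell(\xi)\eta$. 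Hence $\phi$ is the desired KMS-state, completing the proof.
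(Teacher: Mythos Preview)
Your proof is correct and follows the same route as the paper: apply Proposition~\ref{prop:positivity} to $B_p=i_X^{(p)}(\K(X_p))$ with $\phi_p=N(p)^{-\beta}\Tr^p_\tau$, verify~\eqref{eq:qlopos} by reducing via~\eqref{eq:induction} to~\eqref{eq:subinvariance} for the shifted index set $p^{-1}J$, and then check the KMS condition on generators using Lemma~\ref{lem:cognoscenti} and~\eqref{eq:Toeplitz}. The only cosmetic difference is that the paper verifies~\eqref{eq:qlopos} on rank-one elements $i_X(\xi)i_X(\xi)^*$ (so that~\eqref{eq:qlopos} becomes literally~\eqref{eq:subinvariance} for $a=\langle\xi,\xi\rangle$), while you handle general $T\in\K(X_p)_+$ by invoking linearity of $\omega\mapsto\Tr^{X_p}_\omega(T)$.
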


\bp
We only have to prove surjectivity of the map $\phi\mapsto\phi|_A$, as injectivity follows from Remark~\ref{rem:KMScore}. Thus we have to show that every tracial state $\tau$ satisfying \eqref{eq:subinvariance} extends to a gauge-invariant $\sigma$-KMS$_\beta$-state $\phi$ on $\NT(X)$.

In order to define $\phi$ we first extend $\tau$ to the core subalgebra $\F$ and then use the gauge-invariant conditional expectation $\NT(X)\to\F$ to define $\phi$ on the whole algebra $\NT(X)$. By Remark~\ref{rem:KMScore} we know that the required extension on $\F$ must be given by
\begin{equation}\label{eq:KMScore}
\phi(i_X(\xi) i_X(\zeta)^*)=N(p)^{-\beta}\tau(\langle\zeta,\xi\rangle)\ \ \text{for}\ \ \xi,\zeta\in X_p.
\end{equation}
In order to see that such a state indeed exists we apply Proposition~\ref{prop:positivity}. For this let $B_p=i_X^{(p)}(\K(X_p))\subset\F$ for $p\in P$, where, as before, $i_X^{(p)}$ denotes the canonical injective homomorphism $\K(X_p)\to\NT(X)$, see~\eqref{eq:CPmap}. As the positive linear functionals $\phi_p$ we take
$$
\phi_p(i_X^{(p)}(T))=N(p)^{-\beta}\Tr^p_\tau(T)\ \ \text{for}\ \ T\in\K(X_p).
$$

We have to check that condition~\eqref{eq:qlopos} is satisfied for all $p\in P$ and all finite $J\subset\{q|q>p\}$. For $p=e$ this is condition~\eqref{eq:subinvariance}. For general $p$ it suffices to establish~\eqref{eq:qlopos} on all elements of the form $b=i_X(\xi) i_X(\xi)^*\in B_p$, since finite sums of the elements $\theta_{\xi,\xi}$ are dense in $\K(X_p)_+$. We have
\begin{multline*}
\phi_p(i_X(\xi) i_X(\xi)^*)+\sum_{\emptyset\ne K\subset
J}(-1)^{|K|}\psi_{q_K}(i_X(\xi) i_X(\xi)^*)\\
=N(p)^{-\beta}\Tr^p_\tau(\theta_{\xi,\xi})+\sum_{\emptyset\ne K\subset
J}(-1)^{|K|}N(q_K)^{-\beta}\Tr^{X_p\otimes_A X_{p^{-1}q_K}}_\tau(\theta_{\xi,\xi}\otimes1).
\end{multline*}
By definition and property \eqref{eq:induction} of induced traces this equals
$$
N(p)^{-\beta}\tau(\langle\xi,\xi\rangle)+\sum_{\emptyset\ne K\subset
J}(-1)^{|K|}N(q_K)^{-\beta}\Tr^{p^{-1}q_K}_\tau(\langle\xi,\xi\rangle),
$$
and this is positive by condition~\eqref{eq:subinvariance} applied to $a=\langle\xi,\xi\rangle$ and the set $p^{-1}J$. Thus a gauge-invariant state $\phi$ satisfying \eqref{eq:KMScore} indeed exists.

\smallskip

It remains to check the KMS-condition for $\phi$. For this we apply Lemma~\ref{lem:cognoscenti} to the analytic elements $a=i_X(\xi)$, $\xi\in X_p$, generating $\NT(X)$ as a C$^*$-algebra. Taking $b=i_X(\zeta) i_X(\eta)^*$, with $\zeta\in X_q$ and $\eta\in X_r$, we have to check that $\phi(ab)=N(p)^{-\beta}\phi(ba)$. Consider two cases.

\smallskip

\underline{Case 1}: $pq\neq r$. Then $ab$ and $ba$ are of degrees $pqr^{-1}$ and $qr^{-1}p$, respectively, so they are in the kernel of the gauge-invariant conditional expectation $\NT(X)\to\F$ and therefore the equality $\phi(ab)=N(p)^{-\beta}\phi(ba)$ holds for trivial reasons.

\smallskip

\underline{Case 2}: $pq=r$. Then
$$
\phi(ab)=\phi(i_X(\xi\zeta)i_X(\eta)^*)=N(pq)^{-\beta}\tau(\langle\eta,\xi\zeta\rangle).
$$
Using~\eqref{eq:Toeplitz} we also get
\begin{align*}
\phi(ba)&=\phi(i_X(\zeta)i_X(\eta)^*i_X(\xi))=\phi(i_X(\zeta)i_X(\ell(\xi)^*\eta)^*)\\
&=N(q)^{-\beta}\tau(\langle \ell(\xi)^*\eta,\zeta\rangle)=
N(q)^{-\beta}\tau(\langle \eta,\xi\zeta\rangle),
\end{align*}
so we have $\phi(ab)=N(p)^{-\beta}\phi(ba)$ in this case too.
\ep

\begin{remark}
The above theorem describes only gauge-invariant KMS-states. If one is interested in the set of all KMS-states, it is still a good idea to start with the gauge-invariant ones. The reason is that if $\phi$ is a $\sigma$-KMS$_\beta$-state on $\NT(X)$ for some $\beta\in\R$, then by Theorems~\ref{thm:subinvariance} and~\ref{thm:general} there exists a unique gauge-invariant $\sigma$-KMS$_\beta$-state $\psi$ on $\NT(X)$ such that $\psi=\phi$ on~$A$, moreover, we then have $\psi=\phi$ on $\F$. The existence of $\psi$ does not actually require any of the above results, since it is not difficult to check directly that the state $\psi=\phi\circ E$, where $E=(\iota\otimes\tau_G)\circ\delta\colon \NT(X)\to\F$ is the gauge-invariant conditional expectation, satisfies the $\sigma$-KMS$_\beta$-condition.
\end{remark}

Let us now consider the implications of Theorem~\ref{thm:general} for the quotient $\NOs(X)$ of $\NT(X)$.

\begin{corollary}\label{cor:cuntzalg}
In the setting of Theorem~\ref{thm:general}, for every $\beta\in\R$, the map $\phi\mapsto\phi|_A$ defines a one-to-one correspondence between the gauge-invariant $\sigma$-KMS$_\beta$-states on~$\NOs(X)$ and the tracial states on~$A$ such that~\eqref{eq:subinvariance} is satisfied and
$$
N(p)^{-\beta}\Tr^p_\tau(a)=\tau(a)\ \ \text{for all}\ \ a\in\varphi_p^{-1}(\K(X_p)),\ p\in P,
$$
where $\varphi_p\colon A\to\LL(X_p)$ is the map defining the left $A$-module structure on $X_p$.
\end{corollary}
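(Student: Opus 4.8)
The plan is to deduce this from Theorem~\ref{thm:general} together with Lemma~\ref{lem:TvsP}. Since the dynamics $\sigma$ and the gauge coaction both descend to the quotient $\NOs(X)$, pulling a state back along the quotient map $\NT(X)\to\NOs(X)$ gives a bijection between the gauge-invariant $\sigma$-KMS$_\beta$-states on $\NOs(X)$ and those gauge-invariant $\sigma$-KMS$_\beta$-states $\phi$ on $\NT(X)$ that factor through $\NOs(X)$, and this bijection is compatible with restriction to $A$. So it suffices to single out, among the tracial states $\tau$ on $A$ satisfying~\eqref{eq:subinvariance}, the ones for which the corresponding gauge-invariant $\sigma$-KMS$_\beta$-state $\phi$ on $\NT(X)$ produced by Theorem~\ref{thm:general} factors through $\NOs(X)$.

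By Lemma~\ref{lem:TvsP}, $\phi$ factors through $\NOs(X)$ if and only if $\phi(a)=\phi(i_X^{(p)}(\varphi_p(a)))$ for all $p\in P$ and all $a\in\varphi_p^{-1}(\K(X_p))$; as $\varphi_p^{-1}(\K(X_p))$ is an ideal of $A$, hence spanned by its positive elements, it is enough to test positive $a$. The left-hand side equals $\tau(a)$. For the right-hand side, fix an approximate unit $(u_i)_i$ of $\K(X_p)$ with $u_i=\sum_{\xi\in J_i}\theta_{\xi,\xi}$, $J_i\subset X_p$ finite. Then $u_i\varphi_p(a)=\sum_{\xi\in J_i}\theta_{\xi,\varphi_p(a)\xi}\to\varphi_p(a)$ in norm because $\varphi_p(a)\in\K(X_p)$, so by continuity of $\phi$ together with~\eqref{eq:LN} (equivalently Remark~\ref{rem:KMScore}),
$$
\phi(i_X^{(p)}(\varphi_p(a)))=\lim_i N(p)^{-\beta}\sum_{\xi\in J_i}\tau(\langle\xi,\varphi_p(a)\xi\rangle)=N(p)^{-\beta}\Tr^p_\tau(\varphi_p(a)),
$$
where the last equality is the defining formula~\eqref{eq:Tr} for $\Tr^p_\tau$, the limit being finite since~\eqref{eq:subinvariance} with $J=\{p\}$ already gives $N(p)^{-\beta}\Tr^p_\tau\le\tau$ on $A_+$. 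Thus the condition of Lemma~\ref{lem:TvsP} is exactly $N(p)^{-\beta}\Tr^p_\tau(a)=\tau(a)$ for all $p$ and all positive $a\in\varphi_p^{-1}(\K(X_p))$, which by linearity is the displayed condition in the statement.

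Combining this observation with the bijection of Theorem~\ref{thm:general} yields the corollary. I do not anticipate a real obstacle: the only slightly delicate point is the passage to the limit above, i.e.\ checking that the positive functional $T\mapsto\phi(i_X^{(p)}(T))$ on $\K(X_p)$ agrees with $N(p)^{-\beta}\Tr^p_\tau$ not merely on finite-rank operators but on the specific element $\varphi_p(a)$, and this is precisely what the approximate-unit computation settles, using that $\varphi_p(a)$ is a generalized compact operator.
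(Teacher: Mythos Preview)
Your proposal is correct and follows essentially the same route as the paper: reduce to Theorem~\ref{thm:general} and Lemma~\ref{lem:TvsP}, and identify the condition $\phi(a)=\phi(i_X^{(p)}(\varphi_p(a)))$ with $N(p)^{-\beta}\Tr^p_\tau(a)=\tau(a)$ via the KMS computation~\eqref{eq:LN}. The paper states the identity $\phi(i_X^{(p)}(S))=N(p)^{-\beta}\Tr^p_\tau(S)$ for all $S\in\K(X_p)$ in one line, whereas you spell out the approximate-unit argument for the particular element $\varphi_p(a)$; this is the same computation, just unpacked.
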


\bp
If $\phi$ is a $\sigma$-KMS$_\beta$-state on $\NT(X)$ and $\tau=\phi|_A$, then it follows from \eqref{eq:LN} that
$$
N(p)^{-\beta}\Tr^p_\tau(S)=\phi(i_X^{(p)}(S))\ \ \text{for all}\ \ S\in\K(X_p).
$$
Therefore the condition $N(p)^{-\beta}\Tr^p_\tau(a)=\tau(a)$ for $a\in\varphi_p^{-1}(\K(X_p))$ means precisely that $$\phi\big(a-i_X^{(p)}(\varphi_p(a))\big)=0.$$ By Lemma~\ref{lem:TvsP} the latter condition is necessary and sufficient for $\phi$ to factor through $\NOs(X)$. The result now follows from Theorem~\ref{thm:general}.
\ep

Under extra assumptions this corollary takes a particularly simple form.

\begin{corollary}
In the setting of Theorem~\ref{thm:general}, assume in addition that~$P$ is directed and $\varphi_p(A)\subset\K(X_p)$ for all $p\in P$. Fix a generating set $S\subset P\setminus\{e\}$. Then, for every $\beta\in\R$, the map $\phi\mapsto\phi|_A$ defines a one-to-one correspondence between the gauge-invariant $\sigma$-KMS$_\beta$-states on $\NOs(X)$ and the tracial states $\tau$ on $A$ such that
\begin{equation}\label{eq:invariance}
N(p)^{-\beta}\Tr^p_\tau(a)=\tau(a)
\end{equation}
for all $a\in A$ and $p\in S$.
\end{corollary}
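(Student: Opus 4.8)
The plan is to deduce this corollary directly from Corollary~\ref{cor:cuntzalg}, by showing that under the two extra hypotheses the (a priori complicated) description given there collapses to the one stated. First, since $\varphi_p(A)\subset\K(X_p)$ for every $p$, we have $\varphi_p^{-1}(\K(X_p))=A$, so Corollary~\ref{cor:cuntzalg} already says that $\phi\mapsto\phi|_A$ is a bijection onto the set $V$ of tracial states $\tau$ on $A$ satisfying \eqref{eq:subinvariance} together with $N(p)^{-\beta}\Tr^p_\tau(a)=\tau(a)$ for all $a\in A$ and \emph{all} $p\in P$. Let $V'$ be the set of tracial states satisfying that same equality only for $p\in S$. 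Clearly $V\subseteq V'$, so everything comes down to the reverse inclusion.

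So I would fix $\tau\in V'$. In terms of the transfer operators the defining condition reads $N(p)^{-\beta}F_p\tau=\tau$, and in particular $F_p\tau$ is finite, for every $p\in S$. I would then prove $N(p)^{-\beta}F_p\tau=\tau$ for all $p\in P\setminus\{e\}$ by induction on the length of a shortest expression $p=s_1\cdots s_k$ with $s_i\in S$: writing $q=s_2\cdots s_k$, the inductive hypothesis gives $F_q\tau=N(q)^\beta\tau$, hence $F_q\tau$ is finite, so property~\eqref{eq:induction2} applies and yields $F_p\tau=F_{s_1}F_q\tau=N(q)^\beta F_{s_1}\tau=N(s_1)^\beta N(q)^\beta\tau=N(p)^\beta\tau$, using that $\tau\mapsto F_{s_1}\tau$ is positively homogeneous and that $N$ is a homomorphism. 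The only point requiring care in this step is the bookkeeping of finiteness, needed so that the ``induction in stages'' identity \eqref{eq:induction2} is legitimately invoked at each stage; this is exactly what lets a generating set (rather than all of $P$) suffice.

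It then remains to check that such a $\tau$ automatically satisfies \eqref{eq:subinvariance}, and this is where directedness of $P$ is used: for any finite nonempty $J\subset P\setminus\{e\}$ and any $\emptyset\ne K\subset J$ the join $q_K$ lies in $P$, so $N(q_K)^{-\beta}\Tr^{q_K}_\tau(a)=N(q_K)^{-\beta}(F_{q_K}\tau)(a)=\tau(a)$ for every $a\in A_+$. Hence the left-hand side of \eqref{eq:subinvariance} equals $\tau(a)\sum_{K\subset J}(-1)^{|K|}$, which is $0$ when $J\ne\emptyset$ and $\tau(a)\ge0$ when $J=\emptyset$; in either case it is nonnegative. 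Thus $\tau\in V$, giving $V'\subseteq V$ and hence $V=V'$, which combined with Corollary~\ref{cor:cuntzalg} proves the corollary. There is no serious obstacle here; the delicate points are simply the finiteness bookkeeping in the inductive step and remembering to invoke directedness to guarantee $q_K<\infty$ throughout.
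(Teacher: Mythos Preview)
Your proof is correct and follows essentially the same approach as the paper: reduce to Corollary~\ref{cor:cuntzalg}, use~\eqref{eq:induction2} to propagate the invariance from the generating set $S$ to all of $P$, and then use directedness to collapse the alternating sum in~\eqref{eq:subinvariance} to zero. The paper's proof is simply more terse, leaving the inductive application of~\eqref{eq:induction2} and the observation $\varphi_p^{-1}(\K(X_p))=A$ implicit.
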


\bp
By Corollary~\ref{cor:cuntzalg} we just have to check that if $\tau$ is a tracial state such that \eqref{eq:invariance} holds for all $a\in A$ and $p\in S$, then \eqref{eq:invariance} holds for all $p\in P$ and~\eqref{eq:subinvariance} is satisfied as well.

The first assertion follows from~\eqref{eq:induction2}. For the second one, take a finite subset $J\subset P\setminus\{e\}$. Then, using that $q_K<\infty$ for all $\emptyset\neq K\subset J$ by assumption, for every $a\in A$ we get
$$
\tau(a)+\sum_{\emptyset\neq K\subset J} (-1)^{\vert K\vert} N(q_K)^{-\beta}\Tr_{\tau}^{q_K}(a)
=\tau(a)\sum_{K\subset J} (-1)^{\vert K\vert}=0,
$$
so~\eqref{eq:subinvariance} is satisfied.
\ep

%\begin{remark}
%The previous proof does not rely on Theorem~\ref{thm:subinvariance} in any essential way, since condition \eqref{eq:invariance} is an immediate consequence of~\eqref{eq:LN} and the %definition of $\NOs(X)$.
%\end{remark}

\bigskip

\section{A Wold-type decomposition of KMS-states and traces}\label{sec:Wold}

In this section we will clarify and extend the decomposition of KMS-states of Pimsner-Toeplitz algebras into finite and infinite parts from~\cite{lac-nes} to Nica-Toeplitz algebras.

\smallskip

We start with the following surely known result.

\begin{proposition}\label{prop:Wold-rep}
Assume $(G,P)$ is a quasi-lattice ordered group, $X$ is a compactly aligned product system of C$^*$-correspondences over $P$, with $X_e=A$, and $\pi\colon\NT(X)\to B(H)$ is a representation of $\NT(X)$. Consider the von Neumann algebra $M=\pi(\NT(X))''$. Then there exists a central projection $z\in M$ such that $zH$ is the space of the largest subrepresentation of~$\pi$ induced from $A$ by the Fock module $\F(X)$.
\end{proposition}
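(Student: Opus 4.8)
The plan is to produce $z$ as the largest projection in $M$ supporting a subrepresentation that is unitarily equivalent to one induced from $A$ via the Fock module. First I would recall what "induced from $A$ by $\F(X)$" should mean: given a representation $\rho$ of $A$ on a Hilbert space $H_0$, the induced representation $\F(X)\text{-}\operatorname{Ind}\rho$ acts on $\F(X)\otimes_A H_0$, with $\NTr(X)\subset\LL(\F(X))$ acting on the first leg and then passing through $\Lambda\colon\NT(X)\to\NTr(X)$. Because $A=X_e$ sits inside $\NT(X)$ and $Q_e\in\LL(\F(X))$ (the projection onto $X_e\subset\F(X)$) is the natural "vacuum" projection, the key structural fact is that $Q_e$ is a full projection in $\K(\F(X))$ (this is essentially Lemma~\ref{lem:density}: $\ell(\xi)Q_e\ell(\zeta)^*=\theta_{\xi,\zeta}$ for $\xi\in X_p$, $\zeta\in X_q$, so the ideal generated by $Q_e$ is all of $\K(\F(X))$), and correspondingly $f_e$ (in the notation of Section~\ref{sec:restriction}, but here the vacuum projection) generates $\NTr(X)$ as an ideal. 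So a subrepresentation of $\pi$ is induced from $A$ precisely when it is "generated by its vacuum part."

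The main step is therefore to identify, inside $M=\pi(\NT(X))''$, the vacuum projection. I would first pass to the reduced picture: by Corollary~\ref{cor:gauge-invariance} and Lemma~\ref{lem:core}, or more directly because $\Lambda$ and the Fock representation are what define "induced," it is enough to work with $\pi=\tilde\pi\circ\Lambda$ whenever $\pi$ factors through $\NTr(X)$; for the general $\pi$ one restricts attention to the part that does, which is exactly where $z$ will be supported. Inside $\LL(\F(X))$ one has the decreasing net of projections $\prod_{p\in J}(1-f_p)$ over finite $J\subset P\setminus\{e\}$, converging strictly to $Q_e$ (as in the proof of Lemma~\ref{lem:density}). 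Applying $\pi$ (strictly continuously extended to $M$), the net $\prod_{p\in J}(1-\pi(f_p))$ converges strongly to a projection $p_0\in M$, the "vacuum projection of $\pi$." Then I would set $z$ to be the central support of $p_0$ in $M$, i.e.\ $z=\bigvee\{u p_0 u^*\mid u\in M \text{ unitary}\}$, equivalently the smallest central projection dominating $p_0$.

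It then remains to verify the two defining properties of $z$. For the first, that $zH$ carries an induced representation: on $p_0H$ one has a representation $\rho$ of $A$ (namely $\pi(A)$ restricted, which commutes with $p_0$ since $p_0\in\pi(A)'$), and the map $\ell(\xi)Q_e\otimes h\mapsto \pi(i_X(\xi))p_0 h$ extends to an isometry $\F(X)\otimes_A p_0H\to zH$ intertwining the induced representation with $\pi|_{zH}$; surjectivity onto $zH$ is exactly the statement that $zH=\overline{\lsp}\,\pi(\NTr(X))p_0H$, which holds because $f_e$ generates $\NTr(X)$ as an ideal, hence $z$ equals the central support of $p_0$, which is what we arranged. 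For maximality: if $z'\in M$ is central and $z'H$ carries a subrepresentation induced from $A$, then that subrepresentation has its own vacuum projection $p_0'\le z'$ with central support $z'$, and because the vacuum projection is canonically $\prod_{p}(1-\pi(f_p))$ restricted to $z'H$ — which equals $p_0 z'$ — we get $z'=$ central support of $p_0 z'\le z$. The step I expect to be the main obstacle is pinning down precisely that $p_0z'$ is the vacuum projection of an arbitrary induced subrepresentation, i.e.\ that being induced from $A$ forces the vacuum projection to agree with the canonical strong limit $\prod_p(1-\pi(f_p))$; this requires knowing that in the Fock representation $Q_e=\prod_{p}(1-f_p)$ is intrinsic to the $\NTr(X)$-module structure, which is the content of the first half of the proof of Lemma~\ref{lem:density} and should transfer, but needs care with strict-versus-strong continuity when passing to $M$.
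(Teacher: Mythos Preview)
Your overall architecture --- define a vacuum projection in $M$, take its central support, check that this carries an induced subrepresentation and is maximal --- matches the paper's. But there are two real problems.

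First, your construction of $p_0$ is muddled. You write ``Applying $\pi$ (strictly continuously extended to $M$), the net $\prod_{p\in J}(1-\pi(f_p))$\ldots'' --- but $\pi$ is a representation of $\NT(X)$, and the $f_p$ you introduced live in $\LL(\F(X))$, not in $\NT(X)$. There is no map $\pi$ to apply to them, and speaking of a ``strictly continuous extension to $M$'' is circular: by Lemma~\ref{lem:induced}, having such an extension is precisely the property of being induced, which you are trying to isolate. The fix is to define the vacuum projection intrinsically: let $Q\in M$ be the projection onto $\{v\in H: \pi(i_X(\xi))^*v=0\text{ for all }\xi\in X_p,\ p\ne e\}$, or equivalently $Q=\bigwedge_{p\ne e}(1-f_p)$ where now $f_p\in M$ is the projection onto $\overline{\pi(i_X(X_p))H}$ (these are the $f_p$'s from the proof of Theorem~\ref{thm:subinvariance}, which live in $M$, not in $\LL(\F(X))$). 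This is what the paper does, and it makes sense for an arbitrary representation $\pi$.

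Second, and more seriously, you assert without justification that $\xi\otimes h\mapsto\pi(i_X(\xi))Qh$ is an isometry on $\F(X)\otimes_A QH$. This is the heart of the argument and it is exactly where compact alignment enters: for $\xi\in X_p$ and $\zeta\in X_q$ with $p\ne q$, one needs $Q\,\pi(i_X(\xi))^*\pi(i_X(\zeta))\,Q=0$, and this comes from the Nica covariance relations~\eqref{eq:Nica0}--\eqref{eq:Nica}, which force $i_X(\xi)^*i_X(\zeta)$ into $\overline{i_X(X_{p^{-1}(p\vee q)})i_X(X_{q^{-1}(p\vee q)})^*}$ (or zero), and then one of the two indices is $\ne e$ so the corresponding factor kills $Q$. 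You never invoke Nica covariance, so this step is missing.

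Finally, you have the difficulty backwards. You flag maximality as ``the main obstacle,'' but it is one line: any induced subrepresentation has its own vacuum vectors, these are by definition annihilated by all $\pi(i_X(\xi))^*$ for $p\ne e$, hence lie in $QH$, hence the subrepresentation lies in $\overline{MQH}=zH$. No strict-versus-strong subtlety arises, because $Q$ was defined intrinsically from $\pi$ rather than pulled back from $\LL(\F(X))$.
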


\bp
Let $Q$ be the projection onto the space
\begin{equation}\label{eq:vacuum}
\{v\in H\colon \pi(i_X(\xi))^*v=0\ \text{for all}\ \xi\in X_p,\ p\ne e\}
\end{equation}
of ``vacuum vectors''. The space $QH$ is invariant under $M'$, hence $Q\in M$. It is also clear that the space $QH$ is invariant under $\pi(A)$.

By virtue of~\eqref{eq:Nica0} and~\eqref{eq:Nica} (for $\psi=i_X$), if $\xi\in X_p$, $\zeta\in X_q$ and $p\ne q$, then the spaces $i_X(\xi)QH$ and $i_X(\zeta)QH$ are orthogonal. It follows that the map
$$
U\colon\F(X)\otimes_A QH\to H,\ \ \xi\otimes v\mapsto \pi(i_X(\xi))v,
$$
is isometric. Its image is obviously invariant under the operators $\pi(i_X(\xi))$, but using again~\eqref{eq:Nica0}-\eqref{eq:Nica} we see that it is invariant under the operators $\pi(i_X(\xi))^*$ as well. Therefore the image of~$U$ coincides with the invariant subspace $\overline{MQH}\subset H$, so the restriction of $\pi$ to this subspace is unitarily equivalent to the representation induced by the Fock module from the representation of $A$ on $QH$.

Since $Q\in M$, the projection $z$ onto $\overline{MQH}$ is a central projection in $M$. Finally, if a subrepresentation of $\pi$ is induced by the Fock module, then its space of vacuum vectors is contained in $QH$ and hence the entire space of the subrepresentation is contained in $zH$.
\ep

The property of a representation of $\NT(X)$ to be induced by the Fock module can be reformulated as a continuity property as follows.

\begin{lemma} \label{lem:induced}
A representation $\pi\colon\NT(X)\to B(H)$ is induced from a representation of $A$ by the Fock module if and only if it factors through $\NTr(X)$ and the representation of $\NTr(X)$ we thus get is strict-strong continuous, that is, it is continuous with respect to the strict topology on $\NTr(X)\subset\LL(\F(X))$ and the strong operator topology on $B(H)$.
\end{lemma}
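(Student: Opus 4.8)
The plan is to prove both implications by passing to the ambient multiplier algebra $\LL(\F(X))=M(\K(\F(X)))$, exploiting Lemma~\ref{lem:density} together with the fact that $\F(X)$ is a $\K(\F(X))$--$A$ imprimitivity bimodule: it is full over $\K(\F(X))$ by the very definition of the generalized compacts, and it is full over $A$ because $X_e=A\subset\F(X)$ contributes $\overline{A^*A}=A$ to $\overline{\lsp\langle\F(X),\F(X)\rangle}$ (here the axiom $X_e=A$ is used in an essential way). Throughout one may assume $\pi$ is nondegenerate, since the degenerate part of a representation is the zero representation, which is trivially strict-strong continuous and is induced by the Fock module from the zero representation of~$A$. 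I will also use repeatedly that on bounded sets of $\LL(\F(X))$ the strict topology is compatible with multiplication and the $*$-operation, while for unbounded nets this fails --- so the phrase ``strict-strong continuous'' should be read, as is standard for multiplier algebras, as referring to bounded nets.

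For the forward implication, suppose $\pi$ is induced from a representation $\rho$ of $A$ on a Hilbert space $K$; replacing $\rho$ by its nondegenerate part we may take $\rho$ nondegenerate and identify $H=\F(X)\otimes_A K$ so that $\pi(i_X(\xi))=\ell(\xi)\otimes1$. Then $\pi=\bar\rho\circ\iota\circ\Lambda$, where $\iota\colon\NTr(X)\hookrightarrow\LL(\F(X))$ is the inclusion, $\Lambda\colon\NT(X)\to\NTr(X)$ is as in the text, and $\bar\rho$ is the nondegenerate representation $T\mapsto T\otimes1$ of $\LL(\F(X))=M(\K(\F(X)))$ on $\F(X)\otimes_A K$. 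In particular $\pi$ factors through $\NTr(X)$, and since a nondegenerate representation of a multiplier algebra $M(B)$ is automatically continuous from the strict topology on bounded subsets to the strong operator topology, $\bar\rho$ and hence its restriction $\tilde\pi=\bar\rho\circ\iota$ to $\NTr(X)$ is strict-strong continuous.

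For the converse, write $\pi=\tilde\pi\circ\Lambda$ with $\tilde\pi\colon\NTr(X)\to B(H)$ nondegenerate and strict-strong continuous. The crux is to extend $\tilde\pi$ to a nondegenerate representation $\bar\pi$ of $\LL(\F(X))$. By Lemma~\ref{lem:density}, $\NTr(X)$ is strictly dense in $\LL(\F(X))$, and by the Kaplansky density theorem the unit ball of $\NTr(X)$ is strictly dense in the unit ball of $\LL(\F(X))$; thus every $T\in\LL(\F(X))$ is the strict limit of a bounded net $(S_i)$ in $\NTr(X)$. For two such bounded approximating nets the net of differences, indexed by the product directed set, converges strictly to $0$, so by the continuity hypothesis applied with limit $0\in\NTr(X)$ the images have a common strong limit; hence $\bar\pi(T)v:=\lim_i\tilde\pi(S_i)v$ is well defined for all $v\in H$, and using compatibility of the strict topology with products and adjoints on bounded sets one checks that $\bar\pi$ is a $*$-homomorphism extending $\tilde\pi$, nondegenerate because $\tilde\pi$ is. Then $\bar\pi|_{\K(\F(X))}$ is a nondegenerate representation of $\K(\F(X))$, so Rieffel's imprimitivity theorem for the $\K(\F(X))$--$A$ imprimitivity bimodule $\F(X)$ provides a nondegenerate representation $\rho$ of $A$ on some $K$ with $\bar\pi|_{\K(\F(X))}$ unitarily equivalent to $k\mapsto k\otimes1$ on $\F(X)\otimes_A K$; since a nondegenerate representation of $\K(\F(X))$ extends uniquely to $M(\K(\F(X)))$, this identification $H\cong\F(X)\otimes_A K$ forces $\bar\pi(T)=T\otimes1$ for all $T\in\LL(\F(X))$. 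Restricting to $\NTr(X)$ and precomposing with $\Lambda$ gives $\pi(i_X(\xi))=\ell(\xi)\otimes1$, i.e.\ $\pi$ is induced from $\rho$ by the Fock module.

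The main obstacle is the extension step in the converse direction: one must ensure that only bounded strictly convergent nets are used (this is why ``strict-strong continuous'' must be interpreted on bounded sets) and must actually produce such bounded approximants, which is exactly what the Kaplansky density statement for $\NTr(X)\subset\LL(\F(X))$ supplies; once $\tilde\pi$ lives on $\LL(\F(X))$, identifying it as an induced representation is a routine application of the Morita equivalence between $\K(\F(X))$ and $A$ implemented by $\F(X)$.
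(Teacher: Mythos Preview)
Your proof is correct and follows essentially the same route as the paper's own argument: extend the strict-strong continuous representation of $\NTr(X)$ to $\LL(\F(X))$ using Lemma~\ref{lem:density}, then identify the restriction to $\K(\F(X))$ as an induced representation via fullness of the Fock module. You are more explicit than the paper about nondegeneracy, the need for bounded approximating nets (hence the Kaplansky-type density), and the role of the $\K(\F(X))$--$A$ Morita equivalence, whereas the paper simply declares the forward implication ``obvious'' and handles the extension and induction steps in one sentence each.
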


\bp
The ``only if'' implication is obvious. To prove the ``if'' part, assume we have a representation $\pi$ of $\NTr(X)$ which is strict-strong continuous. Since $\NTr(X)$ is strictly dense in $\LL(\F(X))$ by Lemma~\ref{lem:density}, it extends to a strict-strong continuous representation of $\LL(\F(X))$. The latter representation is completely determined by its restriction to $\K(\F(X))$. But since the right C$^*$-Hilbert $A$-module $\F(X)$ is full, any representation of $\K(\F(X))$ is induced from a representation of $A$ by the module $\F(X)$.
\ep

\begin{remark}
This lemma makes it obvious that a subrepresentation of a representation of $\NT(X)$ induced by the Fock module is itself induced by the Fock module. The proof shows more: the induction by $\F(X)$ defines an equivalence of the category of representations of $A=X_e$ onto a full subcategory of the category of representations of $\NT(X)$.
\end{remark}

\begin{definition}
A positive linear functional $\phi$ on $\NT(X)$ is said to be of \emph{finite type} if the associated GNS-representation is induced by the Fock module. It is said to be of \emph{infinite type} if the associated GNS-representation has no nonzero subrepresentations induced by the Fock module.
\end{definition}

Similarly to Lemma~\ref{lem:induced}, these notions can be formulated as continuity/discontinuity properties. To state the precise result, recall that $\Lambda$ denotes the canonical map $\NT(X)\to\NTr(X)$.

\begin{lemma}\label{lem:finite-infinite}
For any positive linear functional $\phi$ on $\NT(X)$ we have:
\begin{itemize}
\item[(i)] $\phi$ is of finite type if and only if $\phi=\psi\circ\Lambda$ for a strictly continuous positive linear functional $\psi$ on $\NTr(X)$;
\item[(ii)] $\phi$ is of infinite type if and only if there is no nonzero strictly continuous positive linear functional $\psi$ on $\NTr(X)$ such that $\phi\ge\psi\circ\Lambda$.
\end{itemize}
\end{lemma}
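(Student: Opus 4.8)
The plan is to reduce both statements to Proposition~\ref{prop:Wold-rep} and Lemma~\ref{lem:induced}, together with the standard dictionary relating positive functionals dominated by $\phi$ to subrepresentations of the GNS representation $(\pi_\phi,H_\phi,v_\phi)$. Throughout I will use that, since $\Lambda$ is surjective, a positive functional of the form $\psi\circ\Lambda$ on $\NT(X)$ has GNS representation $\pi_{\psi\circ\Lambda}\cong\pi_\psi\circ\Lambda$.

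For part (i) the content reduces to two facts: (A) if $\rho$ is a strict-strong continuous representation of $\NTr(X)$ and $v$ is a vector, then $x\mapsto(\rho(x)v,v)$ is strictly continuous on $\NTr(X)$; (B) if $\psi$ is a strictly continuous positive functional on $\NTr(X)$, then $\pi_\psi$ is strict-strong continuous. Granting these, the ``only if'' direction follows by taking, for a $\phi$ of finite type, the factorization $\pi_\phi=\rho\circ\Lambda$ with $\rho$ strict-strong continuous provided by Lemma~\ref{lem:induced}, and setting $\psi(\cdot)=(\rho(\cdot)v_\phi,v_\phi)$; the ``if'' direction follows because $\phi=\psi\circ\Lambda$ with $\psi$ strictly continuous gives $\pi_\phi\cong\pi_\psi\circ\Lambda$ with $\pi_\psi$ strict-strong continuous by (B), so $\pi_\phi$ is induced by the Fock module by Lemma~\ref{lem:induced}. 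Fact (A) is immediate. For (B) I would extend $\psi$ to a strictly continuous positive functional on $\LL(\F(X))$ using the strict density of Lemma~\ref{lem:density}, choose an approximate unit $(u_i)$ of $\K(\F(X))$ with $0\le u_i\le1$, and use that $u_i,u_i^2\to1$ strictly to get
\[
\|\pi_\psi(u_i)v_\psi-v_\psi\|^2=\psi(u_i^2)-2\operatorname{Re}\psi(u_i)+\psi(1)\longrightarrow0 .
\]
Since $v_\psi$ is cyclic, this forces the restriction of $\pi_\psi$ to $\K(\F(X))$ to be nondegenerate; as $\K(\F(X))$ is an ideal in $\LL(\F(X))=M(\K(\F(X)))$, the representation $\pi_\psi$ then agrees on $\NTr(X)$ with the unique strict-strong continuous extension of $\pi_\psi|_{\K(\F(X))}$ to $\LL(\F(X))$, which gives (B).

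For part (ii), suppose first that $\phi$ is of infinite type and that $0\ne\psi$ is strictly continuous positive on $\NTr(X)$ with $\phi\ge\psi\circ\Lambda=:\chi$. Then $\chi\ne0$ since $\Lambda$ is surjective, and the standard correspondence gives a positive $S\in\pi_\phi(\NT(X))'$ with $\chi(\cdot)=(\pi_\phi(\cdot)S^{1/2}v_\phi,S^{1/2}v_\phi)$, so $\pi_\chi$ is unitarily equivalent to the nonzero subrepresentation $\pi_\phi|_{\overline{S^{1/2}H_\phi}}$; but $\chi$ is of finite type by part (i), so this subrepresentation is induced by the Fock module, contradicting infinite type. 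Conversely, assume $\phi$ is not of infinite type, i.e.\ $\pi_\phi$ has a nonzero subrepresentation induced by the Fock module. Then the central projection $z\in\pi_\phi(\NT(X))''$ of Proposition~\ref{prop:Wold-rep} is nonzero, hence $zv_\phi\ne0$ by centrality of $z$ and cyclicity of $v_\phi$. The functional $\chi(\cdot)=(\pi_\phi(\cdot)zv_\phi,zv_\phi)$ then satisfies $0\ne\chi\le\phi$ and has GNS representation $\pi_\phi|_{zH_\phi}$, which is induced by the Fock module; by part (i), $\chi=\psi\circ\Lambda$ for a nonzero strictly continuous positive functional $\psi$ on $\NTr(X)$ with $\phi\ge\psi\circ\Lambda$.

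I expect the main obstacle to be fact (B): identifying strict continuity of a positive functional on $\NTr(X)$ with strict-strong continuity of its GNS representation, which is precisely where the strict density of $\NTr(X)$ in $\LL(\F(X))$ and the ideal property of $\K(\F(X))$ in $\LL(\F(X))$ enter. The remaining points are routine once one is careful, in the converse half of (ii), to detect the Fock-induced part of $\pi_\phi$ by a functional dominated by $\phi$; this is why one works with the central projection $z$ and uses cyclicity of $v_\phi$, rather than an arbitrary invariant subspace, which need not meet the cyclic vector.
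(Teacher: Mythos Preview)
Your proof is correct and follows essentially the same route as the paper's: both extend $\psi$ strictly to $\LL(\F(X))$ and pass through nondegeneracy of the GNS representation on $\K(\F(X))$ for (i), and both reduce (ii) to (i) via a projection in the commutant of the GNS representation. The only cosmetic differences are that the paper organizes the ``if'' half of (i) by first taking the GNS triple of $\bar\psi|_{\K(\F(X))}$ and then extending to $\LL(\F(X))$ (rather than taking the GNS on $\LL(\F(X))$ and verifying nondegeneracy on $\K(\F(X))$ as you do), and in the converse of (ii) uses an arbitrary nonzero $f\in\pi_\phi(\NT(X))'$ rather than the central projection $z$.
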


\bp (i) Assume $\phi$ is of finite type. Consider the associated GNS-triple $(H_\phi,\pi_\phi,v_\phi)$. Since by assumption $\pi_\phi$ is induced by the Fock module, we have $\pi_\phi=\pi\circ\Lambda$ for a strict-strong continuous representation $\pi$ of $\NTr(X)$. Then the positive linear functional $\psi=(\pi(\cdot)v_\phi,v_\phi)$ on $\NTr(X)$ is strictly continuous and $\phi=\psi\circ\Lambda$.

Conversely, assume $\phi=\psi\circ\Lambda$ for a strictly continuous positive linear functional $\psi$ on $\NTr(X)$. Since $\NTr(X)$ is strictly dense in $\LL(\F(X))$, $\psi$ extends to a strictly continuous positive linear functional on $\LL(\F(X))$, which we continue to denote by $\psi$. Let $(H,\pi,v)$ be the GNS-triple associated with $\psi|_{\K(\F(X))}$, and consider the unique extension of $\pi$ to $\LL(\F(X))$, which we also continue to denote by $\pi$. As we already used in the proof of Lemma~\ref{lem:induced}, any representation of $\K(\F(X))$ is induced by the Fock module. It follows that the representation $\pi$ of $\LL(\F(X))$, hence also $\pi\circ\Lambda$, is induced by the Fock module. At the same time we have $\psi=(\pi(\cdot)v,v)$ on $\LL(\F(X))$, since this equality holds on $\K(\F(X))$ and both sides are strictly continuous. It follows that $(H,\pi\circ\Lambda,v)$ is the GNS-triple associated with $\psi\circ\Lambda=\phi$. Hence $\phi$ is of finite type.

\smallskip

(ii) Assume $\phi$ is of infinite type. If $\phi\ge\psi\circ\Lambda$ for some nonzero strictly continuous positive linear functional $\psi$ on $\NTr(X)$, then $\psi\circ\Lambda$ is of finite type by part (i), so the associated GNS-representation is induced by the Fock module. But this representation is a subrepresentation of the GNS-representation associated with $\phi$. This contradicts the assumption that $\phi$ is of infinite type.

For the converse, assume $\phi$ is not of infinite type. Consider the associated GNS-triple $(H_\phi,\pi_\phi,v_\phi)$. By the assumption there exists a nonzero projection $f\in\pi_\phi(\NT(X))'$ such that the restriction on $\pi_\phi$ to $fH_\phi$ is induced by the Fock module. In particular, this restriction factors through $\NTr(X)$ and defines a strict-strong continuous representation~$\pi$ of~$\NTr(X)$ on~$fH_\phi$. Then $\psi=(\pi(\cdot)fv_\phi,v_\phi)$ is a nonzero strictly continuous positive linear functional on~$\NTr(X)$. As $\pi\circ\Lambda=\pi_\phi(\cdot)|_{fH_\phi}$, we have $\psi\circ\Lambda\le\phi$.
\ep

Note that Lemmas~\ref{lem:induced} and~\ref{lem:finite-infinite}, as opposed to Proposition~\ref{prop:Wold-rep}, are quite formal and do not use any properties of $\NT(X)$ apart from strict density of $\NTr(X)$ in $\LL(\F(X))$.

\begin{proposition} \label{prop:Wold-state}
Assume $(G,P)$ is a quasi-lattice ordered group and $X$ is a compactly aligned product system of C$^*$-correspondences over $P$. Then any positive linear functional $\phi$ on $\NT(X)$ has a unique decomposition $\phi=\phi_f+\phi_\infty$ where $\phi_f$ is of finite type and $\phi_\infty$ is of infinite type.
\end{proposition}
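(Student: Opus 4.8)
The plan is to read the decomposition off the GNS construction, with Proposition~\ref{prop:Wold-rep} as the structural input. Let $(H_\phi,\pi_\phi,v_\phi)$ be the GNS triple of $\phi$ and set $M=\pi_\phi(\NT(X))''$. Proposition~\ref{prop:Wold-rep} provides a central projection $z\in M$ such that $zH_\phi$ is the space of the largest subrepresentation of $\pi_\phi$ induced by the Fock module. Since $z$ and $1-z$ lie in $M$, hence commute with $\pi_\phi(\NT(X))$ and are positive, the formulas $\phi_f(\cdot)=(\pi_\phi(\cdot)zv_\phi,v_\phi)$ and $\phi_\infty(\cdot)=(\pi_\phi(\cdot)(1-z)v_\phi,v_\phi)$ define positive linear functionals with $\phi=\phi_f+\phi_\infty$. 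Because $z$ is central, the GNS representation of $\phi_f$ is $\pi_\phi$ restricted to $\overline{\pi_\phi(\NT(X))zv_\phi}=zH_\phi$, which is induced by the Fock module by the choice of $z$; hence $\phi_f$ is of finite type. The GNS representation of $\phi_\infty$ is $\pi_\phi$ restricted to $(1-z)H_\phi$; any nonzero subrepresentation of it induced by the Fock module (the relevant heredity being immediate from Lemma~\ref{lem:induced}) would be a subrepresentation of $\pi_\phi$ induced by the Fock module, hence contained in $zH_\phi$ by maximality, hence zero, so $\phi_\infty$ is of infinite type. This proves existence.

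For uniqueness I would use the standard Radon--Nikodym description of the positive functionals dominated by $\phi$: these are exactly the functionals $\psi_T(\cdot)=(\pi_\phi(\cdot)Tv_\phi,v_\phi)$ with $T\in\pi_\phi(\NT(X))'$ and $0\le T\le 1$, and $\psi_T=0$ iff $Tv_\phi=0$. The crucial remark here is that $z$ and $T$ commute, one lying in $M$ and the other in $M'$. The GNS representation of $\psi_T$ is $\pi_\phi$ restricted to the invariant subspace $K_T:=\overline{\pi_\phi(\NT(X))T^{1/2}v_\phi}$, and since $z$ is central one checks that $zK_T=\overline{\pi_\phi(\NT(X))zT^{1/2}v_\phi}$ is the largest subrepresentation of $\pi_\phi|_{K_T}$ induced by the Fock module: any such subrepresentation lies in $zH_\phi\cap K_T=zK_T$ by maximality of $z$ together with heredity, while $\pi_\phi|_{zK_T}$, being a subrepresentation of $\pi_\phi|_{zH_\phi}$, is itself Fock-induced. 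Consequently $\psi_T$ is of finite type iff $zK_T=K_T$, i.e.\ $(1-z)Tv_\phi=0$, and of infinite type iff $zK_T=0$, i.e.\ $zTv_\phi=0$.

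Now, if $\phi=\phi_f'+\phi_\infty'$ is any decomposition with $\phi_f'$ of finite type and $\phi_\infty'$ of infinite type, then $\phi_f'\le\phi$, so $\phi_f'=\psi_T$ and $\phi_\infty'=\psi_{1-T}$ for some $T\in\pi_\phi(\NT(X))'$ with $0\le T\le 1$. Finiteness of $\phi_f'$ gives $Tv_\phi=zTv_\phi$, and infiniteness of $\phi_\infty'$ gives $z(1-T)v_\phi=0$, i.e.\ $zv_\phi=zTv_\phi$; combining the two, $Tv_\phi=zv_\phi$, so $\phi_f'=\phi_f$ and $\phi_\infty'=\phi_\infty$. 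The only genuinely delicate points are the heredity of the property ``induced by the Fock module'' under passing to invariant subspaces, which I would handle via Lemma~\ref{lem:induced} (strict-strong continuity restricts to invariant subspaces), and keeping careful track of which projections live in $M$ versus $M'$ so that the commutations used are legitimate; I expect this bookkeeping, rather than any deep point, to be the main obstacle.
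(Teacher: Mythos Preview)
Your proof is correct and follows essentially the same approach as the paper: both construct $\phi_f$ and $\phi_\infty$ from the central projection $z$ of Proposition~\ref{prop:Wold-rep} applied to the GNS representation, and both prove uniqueness via the Radon--Nikodym description of positive functionals dominated by $\phi$. The only cosmetic difference is that the paper phrases uniqueness as the maximality statements ``$\psi\le\phi$ finite type $\Rightarrow\psi\le\phi_f$'' and ``$\psi\le\phi$ infinite type $\Rightarrow\psi\le\phi_\infty$'' (using the support projection $f\in M'$ of the Radon--Nikodym derivative), whereas you compute directly that $Tv_\phi=zv_\phi$; these are equivalent formulations of the same argument.
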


\bp
Let $(H_\phi,\pi_\phi,v_\phi)$ be the GNS-triple associated with $\phi$. Consider the von Neumann algebra $M=\pi_\phi(\NT(X))''$ and let $z\in M$ be the central projection given by Proposition~\ref{prop:Wold-rep}. Then letting $\phi_f=(\pi_\phi(\cdot)zv_\phi,v_\phi)$ and $\phi_\infty=(\pi_\phi(\cdot)(1-z)v_\phi,v_\phi)$ we get the required decomposition of~$\phi$.

In order to prove that the decomposition is unique it suffices to establish the following properties of the functionals $\phi_f$ and $\phi_\infty$ defined above: if $\psi\le\phi$ is of finite type, then $\psi\le\phi_f$, and if $\psi\le\phi$ is of infinite type, then $\psi\le\phi_\infty$.

So assume we have a positive linear functional $\psi$ such that $\psi\le\phi$. Let $x\in M'$ be the unique element such that $0\le x\le 1$ and $\psi=(\pi_\phi(\cdot)xv_\phi,v_\phi)$, and let $f\in M'$ be the support projection of $x$. Then as the GNS-triple associated with $\psi$ we can take $(fH_\phi,\pi_\phi|_{fH_\phi},x^{1/2}v_\phi)$.

Now, if $\psi$ is of finite type, then we must have $f\le z$. Hence $x\le z$ and therefore $\psi\le\phi_f$. On the other hand, if $\psi$ is of infinite type, then $zf=0$, since the representation $\pi_\phi|_{zfH_\phi}$ is induced by the Fock module and is contained in the GNS-representation of $\psi$, so it must be zero. Hence $x\le 1-z$ and therefore $\psi\le\phi_\infty$.
\ep

The construction of $\phi_f$ and $\phi_\infty$ implies the following.

\begin{corollary}\label{cor:Wold-KMS}
If in the setting of Proposition~\ref{prop:Wold-state} the positive linear functional $\phi$ satisfies the $\sigma$-KMS$_\beta$ condition for some time evolution $\sigma$ on $\NT(X)$ and $\beta\in\R$, then $\phi_f$ and $\phi_\infty$ also satisfy the $\sigma$-KMS$_\beta$ condition.
\end{corollary}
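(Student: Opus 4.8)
The plan is to work in the GNS representation of $\phi$ and transport the statement to the enveloping von Neumann algebra $M=\pi_\phi(\NT(X))''$, where the central projection produced by Proposition~\ref{prop:Wold-rep} lives. So I would fix notation: let $(H_\phi,\pi_\phi,v_\phi)$ be the GNS triple of $\phi$, let $\omega$ be the normal state $x\mapsto(xv_\phi,v_\phi)$ on $M$, so that $\phi=\omega\circ\pi_\phi$, and let $z\in Z(M)$ be the central projection of Proposition~\ref{prop:Wold-rep}; recall from its proof that $z$ is the projection onto $\overline{MQH_\phi}$, where $Q$ projects onto the space of vacuum vectors~\eqref{eq:vacuum}. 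By the construction in the proof of Proposition~\ref{prop:Wold-state} we then have $\phi_f=\omega(z\,\cdot\,)\circ\pi_\phi$ and $\phi_\infty=\omega((1-z)\,\cdot\,)\circ\pi_\phi$.

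Assume first $\beta\ne0$. Then $\phi$, being a KMS$_\beta$-state, is $\sigma$-invariant, so $\sigma$ lifts to a $\sigma$-weakly continuous one-parameter automorphism group $\bar\sigma$ of $M$, implemented by unitaries $u_t$ on $H_\phi$ with $u_tv_\phi=v_\phi$ and $\pi_\phi\circ\sigma_t=\bar\sigma_t\circ\pi_\phi$. Standard facts about W$^*$-dynamical systems (density of $\pi_\phi(\AN)$ in $M$ and normality of $\omega$) then show that $\omega$ is a normal $\bar\sigma$-KMS$_\beta$ state on $M$ and that $\pi_\phi$ sends $\sigma$-analytic elements to $\bar\sigma$-analytic ones with $\bar\sigma_{i\beta}(\pi_\phi(a))=\pi_\phi(\sigma_{i\beta}(a))$. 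Next I would check that $z$ is $\bar\sigma$-fixed: using $\sigma_t(i_X(\xi))=N(p)^{it}i_X(\xi)$ one gets $\pi_\phi(i_X(\xi))^*u_tv=N(p)^{it}u_t\pi_\phi(i_X(\xi))^*v$ for $\xi\in X_p$, $p\ne e$, so the vacuum space is $u_t$-invariant; hence $Q$, and therefore $z$, commutes with every $u_t$, i.e.\ $\bar\sigma_t(z)=z$. In particular $z$ and $1-z$ are $\bar\sigma$-fixed, hence $\bar\sigma$-analytic with $\bar\sigma_{i\beta}(z)=z$.

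With this in place the conclusion is a one-line computation. For $\sigma$-analytic $a,b\in\NT(X)$, using that $z$ is central in $M$ and the $\bar\sigma$-KMS$_\beta$ condition for $\omega$ applied to the $\bar\sigma$-analytic elements $\pi_\phi(a)$ and $\pi_\phi(b)z$,
\[
\phi_f(ab)=\omega\big(\pi_\phi(a)\,\pi_\phi(b)z\big)=\omega\big(\pi_\phi(b)z\,\bar\sigma_{i\beta}(\pi_\phi(a))\big)=\omega\big(z\,\pi_\phi(b\sigma_{i\beta}(a))\big)=\phi_f(b\sigma_{i\beta}(a)),
\]
and the identical computation with $1-z$ in place of $z$ gives $\phi_\infty(ab)=\phi_\infty(b\sigma_{i\beta}(a))$. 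Since $\AN$ is dense in $\NT(X)$, this shows that $\phi_f$ and $\phi_\infty$ are $\sigma$-KMS$_\beta$. For $\beta=0$ the assertion is just that $\phi_f$ and $\phi_\infty$ are tracial, which is immediate from $\phi=\omega\circ\pi_\phi$: the state $\omega$ is a trace on $M$ and $z$ is central, so $\omega(z\,\cdot\,)$ and $\omega((1-z)\,\cdot\,)$ are traces.

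The argument is almost entirely bookkeeping; the only non-formal ingredients are the standard lift of $\sigma$ to $M$ together with the transfer of the KMS condition to the normal state $\omega$, and the single place where the specific shape of the dynamics enters is the short verification that the vacuum space — hence $z$ — is invariant under the implementing unitaries. I expect that verification to be the point a reader should check most carefully, though it is genuinely a two-line computation. Note in particular that we use nothing about $z$ beyond its centrality in $M$ and its $\bar\sigma$-invariance, so the same proof applies verbatim to an arbitrary quasi-free dynamics $\sigma^U$, since each $U^{(p)}$ preserves $X_p$.
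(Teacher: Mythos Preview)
Your approach is essentially the same as the paper's: pass to the GNS representation, use that $z$ is a central projection in $M=\pi_\phi(\NT(X))''$, and conclude that multiplying the normal extension of $\phi$ by $z$ or $1-z$ preserves the KMS property. The paper compresses this by invoking the modular group: since $\bar\phi$ is faithful (the cyclic vector of a KMS functional is separating), its modular group $\sigma^{\bar\phi}$ satisfies $\pi_\phi\circ\sigma_{-\beta t}=\sigma^{\bar\phi}_t\circ\pi_\phi$, and a central projection is automatically in the centralizer of $\bar\phi$, hence $\bar\phi(\cdot\,z)$ is again $\sigma^{\bar\phi}$-KMS$_{-1}$.

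There is, however, a genuine gap in your write-up. The corollary is stated for an \emph{arbitrary} time evolution $\sigma$ on $\NT(X)$, but your verification that $z$ is $\bar\sigma$-fixed uses the specific identity $\sigma_t(i_X(\xi))=N(p)^{it}i_X(\xi)$ to show the vacuum space is $u_t$-invariant. For a general $\sigma$ there is no reason the vacuum space should be preserved by the implementing unitaries, so this step does not go through, and your closing remark only extends it to quasi-free dynamics. The fix is exactly what the paper does and requires no computation: for $\beta\ne0$ the faithful normal state $\omega$ is $\bar\sigma$-KMS$_\beta$, so by uniqueness of the modular automorphism group $\bar\sigma_t=\sigma^{\omega}_{-t/\beta}$; the modular group of any faithful normal state acts trivially on the center of $M$, hence $\bar\sigma_t(z)=z$ automatically. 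Once you replace your vacuum-space argument by this observation, your proof is complete and coincides with the paper's.
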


\bp
Using the same notation as in the proof of Proposition~\ref{prop:Wold-state}, consider the normal positive linear functional $\bar\phi$ on $M$ defined by the vector $v_\phi$. Then $\bar\phi$ is faithful and its modular group~$\sigma^{\bar\phi}$ satisfies $\pi_\phi\circ\sigma_{-\beta t}=\sigma^{\bar\phi}_t\circ\pi_\phi$. Since $\bar\phi$ satisfies the $\sigma^{\bar\phi}$-KMS$_{-1}$ condition, it follows that the functionals $\phi_f=\bar\phi(\pi_\phi(\cdot)z)$ and $\phi_\infty=\bar\phi(\pi_\phi(\cdot)(1-z))$ satisfy the $\sigma$-KMS$_\beta$ condition.
\ep

Our next goal is to understand what the decomposition $\phi=\phi_f+\phi_\infty$ of a $\sigma$-KMS$_\beta$-state, where $\sigma$ is given by \eqref{eq:dynamics1}, means at the level of the trace $\tau=\phi|_A$.

Recall that we denote by $F_p$ the operator mapping a trace $\tau$ on $A$ into $\Tr^p_\tau|_A$.

\begin{theorem}\label{thm:Wold}
Assume $(G,P)$ is a quasi-lattice ordered group and $X$ is a compactly aligned product system of C$^*$-correspondences over $P$, with $X_e=A$. Consider the dynamics \eqref{eq:dynamics1} on~$\NT(X)$ defined by a homomorphism $N\colon P\to (0,+\infty)$, and fix $\beta\in\R$. Assume $\phi$ is a $\sigma$-KMS$_\beta$-state on $\NT(X)$, consider the finite part $\phi_f$ of $\phi$, and let $\tilde\phi_f$ be the unique strictly continuous positive linear functional on $\LL(\F(X))$ such that $\phi_f=\tilde\phi_f\circ\Lambda$. Define a positive trace~$\tau_0$ on $M(A)\cong\LL(X_e)$ by
$$
\tau_0(a)=\tilde\phi_f(aQ_e),
$$
where $Q_e\in\LL(\F(X))$ is the projection onto $X_e\subset\F(X)$. Then
\begin{itemize}
\item[(i)] for every $a\in M(A)$,
\begin{equation}\label{eq:tau0}
\tau_0(a)=\lim_{J}\Big(\tau(a)+\sum_{\emptyset\neq K\subset J} (-1)^{\vert K\vert} N(q_K)^{-\beta}\Tr_{\tau}^{q_K}(a)\Big),
\end{equation}
where the limit is over the net of finite subsets $J$ (partially ordered by inclusion) of any fixed generating set $S\subset P\setminus\{e\}$ and we consider the canonical extension of $\tau$ to $M(A)$;
\item[(ii)] the trace $\tau_f=\phi_f|_A$ is given by
$$
\tau_f=\sum_{p\in P}N(p)^{-\beta}F_p\tau_0;
$$
\item[(iii)] the functional $\phi_f$ is given by
\begin{equation}\label{eq:Gibbs}
\phi_f=\Tr^{\F(X)}_{\tau_0}(\Lambda(\cdot)D_N^{-\beta}),
\end{equation}
where $D_N$ is the operator on $\F(X)$ defined by~\eqref{eq:DN}.
\end{itemize}
\end{theorem}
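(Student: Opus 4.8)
The plan is to start from the Wold decomposition $\phi=\phi_f+\phi_\infty$ of Proposition~\ref{prop:Wold-state}, noting that both parts are $\sigma$-KMS$_\beta$ by Corollary~\ref{cor:Wold-KMS}. Since $\phi_f$ is of finite type, Lemma~\ref{lem:finite-infinite}(i) writes it as $\psi\circ\Lambda$ for a strictly continuous positive functional $\psi$ on $\NTr(X)$, and Lemma~\ref{lem:density} lets me extend $\psi$ uniquely to the strictly continuous functional $\tilde\phi_f$ on $\LL(\F(X))$ in the statement, with $\phi_f=\tilde\phi_f\circ\Lambda$. Transporting the KMS-condition through $\Lambda$ (which intertwines $\sigma$ with $\Ad D_N^{it}$) and using strict continuity, $\tilde\phi_f$ is an $\Ad D_N^{it}$-KMS$_\beta$ functional on $\LL(\F(X))$. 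Because $D_N\xi=\xi$ for $\xi\in X_e$, the projection $Q_e$ is $\Ad D_N^{it}$-invariant, so for $a,b\in M(A)$ the elements $aQ_e,bQ_e$ are $\sigma$-analytic; applying the KMS-condition to them and using $(aQ_e)(bQ_e)=(ab)Q_e$ gives $\tau_0(ab)=\tilde\phi_f((aQ_e)(bQ_e))=\tilde\phi_f((bQ_e)(aQ_e))=\tau_0(ba)$, so $\tau_0$ is indeed a bounded positive trace.

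For part (i) I would use the finitely additive measures from the proof of Theorem~\ref{thm:subinvariance}. With $(H_\phi,\pi_\phi,v_\phi)$ the GNS-triple of $\phi$, with $Q$ the projection onto the vacuum space \eqref{eq:vacuum}, and with $L\colon B_P\to B(H_\phi)$, $L(\chi_{pP})=f_p$, the representation used there, the expression inside the limit in \eqref{eq:tau0} equals $\mu_a(\Omega_J)=(\pi_\phi(a)L(\chi_{\Omega_J})v_\phi,v_\phi)$ for $\Omega_J=\bigcap_{p\in J}(P\setminus pP)$, first for $a\in A_+$ and then, by the canonical extensions, for $a\in M(A)$. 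Since the $f_p$ mutually commute ($f_pf_q=f_{p\vee q}$), $L(\chi_{\Omega_J})=\prod_{p\in J}(1-f_p)$ is the projection onto $\bigcap_{p\in J}\ker f_p$, and as $J$ exhausts the finite subsets of the generating set $S$ these projections decrease, in the strong operator topology, to the projection onto $\bigcap_{p\in S}\ker f_p$. The crucial observation is that this intersection is exactly the vacuum space: writing $q\in P\setminus\{e\}$ as $q=p_1\cdots p_k$ with $p_i\in S$ and using that products $\xi\eta$ with $\xi\in X_{p_1}$, $\eta\in X_{p_1^{-1}q}$ span a dense subspace of $X_q$, together with $i_X(\xi\eta)^*=i_X(\eta)^*i_X(\xi)^*$, one sees that $f_{p_1}v=0$ already forces $\pi_\phi(i_X(\xi))^*v=0$ for all $\xi\in X_q$. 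Hence $\mu_a(\Omega_J)\to(\pi_\phi(a)Qv_\phi,v_\phi)$ along the net, and transporting through the unitary $U\colon\F(X)\otimes_AQH_\phi\to zH_\phi$ of Proposition~\ref{prop:Wold-rep} (under which the induced representation of $\LL(\F(X))$ becomes $T\mapsto T\otimes1$, the summand $X_e\otimes_AQH_\phi$ becomes $QH_\phi$, and the corner element $aQ_e$ becomes $\pi_\phi(a)Q$) identifies this limit with $\tilde\phi_f(aQ_e)=\tau_0(a)$. This proves \eqref{eq:tau0}.

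For part (iii) the plan is to identify $\tilde\phi_f$ with $\Tr^{\F(X)}_{\tau_0}(\cdot\,D_N^{-\beta})=\operatorname{Ind}^{(D_N^{it})_t}_{\F(X)}\tau_0$ on $\LL(\F(X))$ and then compose with $\Lambda$ to get \eqref{eq:Gibbs}. As recalled in the discussion of generalized Gibbs states, the latter is the unique strictly lower semicontinuous extension to $\LL(\F(X))$ of the $\Ad D_N^{it}$-KMS$_\beta$-weight on $\K(\F(X))$ induced from $\tau_0$, the dynamics on $\overline{\langle\F(X),\F(X)\rangle}=A$ being trivial because $\langle D_N^{it}\xi,D_N^{it}\zeta\rangle=\langle\xi,\zeta\rangle$. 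Since $\tilde\phi_f$ is itself a strictly continuous $\Ad D_N^{it}$-KMS$_\beta$-weight on $\LL(\F(X))$, by the bijection of \cite[Theorem~3.2]{lac-nes} between such weights on $\K(\F(X))$ and traces on $A$ it is enough to check that $\tilde\phi_f$ and $\operatorname{Ind}^{(D_N^{it})_t}_{\F(X)}\tau_0$ induce the same trace on $A$, equivalently that they agree on each $\theta_{\xi,\xi}$ with $\xi\in X_p$. Writing $\theta_{\xi,\xi}=\ell(\xi)Q_e\ell(\xi)^*$, using $\sigma_{i\beta}(\ell(\xi))=N(p)^{-\beta}\ell(\xi)$ and $\ell(\xi)^*\ell(\xi)=\ell(\langle\xi,\xi\rangle)$ (acting as left multiplication, so that $Q_e\ell(\xi)^*\ell(\xi)$ is the operator denoted $\langle\xi,\xi\rangle Q_e$ above), the KMS-condition gives $\tilde\phi_f(\theta_{\xi,\xi})=N(p)^{-\beta}\tilde\phi_f(\langle\xi,\xi\rangle Q_e)=N(p)^{-\beta}\tau_0(\langle\xi,\xi\rangle)$, which equals $\Tr^{\F(X)}_{\tau_0}(\theta_{\xi,\xi}D_N^{-\beta})$ since $\theta_{\xi,\xi}D_N^{-\beta}=N(p)^{-\beta}\theta_{\xi,\xi}$ and $\Tr^{\F(X)}_{\tau_0}(\theta_{\xi,\xi})=\tau_0(\langle\xi,\xi\rangle)$.

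Part (ii) then follows by restricting \eqref{eq:Gibbs} to $A=X_e\subset\NT(X)$: for $a\in A_+$, $\Lambda(a)$ acts on $\F(X)=\bigoplus_{p\in P}X_p$ as $\bigoplus_p\varphi_p(a)$, so $\Lambda(a)D_N^{-\beta}$ is block-diagonal with $p$-th block $N(p)^{-\beta}\varphi_p(a)$; using a block-diagonal approximate unit in formula \eqref{eq:Tr}, $\Tr^{\F(X)}_{\tau_0}$ of a positive block-diagonal operator is the sum of the $\Tr^{X_p}_{\tau_0}$ of its blocks, whence $\tau_f(a)=\phi_f(a)=\sum_{p\in P}N(p)^{-\beta}\Tr^{X_p}_{\tau_0}(\varphi_p(a))=\sum_{p\in P}N(p)^{-\beta}(F_p\tau_0)(a)$, and linearity in $a$ gives $\tau_f=\sum_{p\in P}N(p)^{-\beta}F_p\tau_0$. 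I expect the main obstacle to be the argument in (i): pinning down the strong-operator limit of the projections $\prod_{p\in J}(1-f_p)$ as the vacuum projection (this is where the generating-set hypothesis and the product-system structure genuinely enter) and correctly matching the resulting vector functional $(\pi_\phi(a)Qv_\phi,v_\phi)$ with $\tilde\phi_f(aQ_e)$ through the unitary of Proposition~\ref{prop:Wold-rep}. Once (i) is in place, parts (iii) and (ii) are fairly mechanical applications of the induction-of-KMS-weights apparatus of \cite{lac-nes}.
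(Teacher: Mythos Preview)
Your proposal is correct and follows essentially the same route as the paper: identify $\tau_0(a)$ with $(\pi_\phi(a)Qv_\phi,v_\phi)$ via the induced representation on $zH_\phi$, show $Q$ is the strong limit of $\prod_{p\in J}(1-f_p)$ over finite $J\subset S$ (the paper phrases this via $f_q\le f_p$ for $p\le q$, which is equivalent to your product-system argument), and then identify $\tilde\phi_f$ with the generalized Gibbs functional by comparing two $\Ad D_N^{it}$-KMS$_\beta$ functionals on $\K(\F(X))$. The only cosmetic difference is in (iii): the paper checks agreement on the corner $Q_e\K(\F(X))Q_e=AQ_e$ and invokes fullness of $Q_e$, whereas you compute $\tilde\phi_f(\theta_{\xi,\xi})=N(p)^{-\beta}\tau_0(\langle\xi,\xi\rangle)$ directly for all $\xi\in X_p$; both arguments use the KMS-condition in the same way and yield the same conclusion.
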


\bp (i) Using the notation from the proof of Proposition~\ref{prop:Wold-state}, we have $\phi_f=(\pi_\phi(\cdot)zv_\phi,v_\phi)$. The representation $\pi_\phi|_{zH_\phi}$ factors through $\LL(\F(X))$, and by the construction of the projection~$z$ in the proof of Proposition~\ref{prop:Wold-rep}, the strictly continuous representation of $\LL(\F(X))$ we thus get maps $Q_e$ into the projection $Q$ onto the space~\eqref{eq:vacuum} of vacuum vectors. It follows that
$$
\tau_0(a)=\tilde\phi_f(aQ_e)=(\pi_\phi(a)Qv_\phi,v_\phi)\ \ \text{for}\ \ a\in A.
$$

Next, recall from the proof of Theorem~\ref{thm:subinvariance} that we also have a representation $L\colon B_P\to B(H_\phi)$ such that $f_p=L(\chi_{pP})$ is the projection onto the closed linear span of the images of $\pi_\phi(i_X(\xi))$ for all $\xi\in X_p$. Hence
$1-Q=\vee_{p\ne e}f_p$, and therefore $Q$ is the strong operator limit of the projections
$
\prod_{p\in J}(1-f_p)
$
over the net of finite subsets $J\subset P\setminus\{e\}$. Since $f_p\ge f_q$ for $p\le q$, it suffices to consider subsets~$J$ of any given generating set $S\subset P\setminus\{e\}$. Since
$$
\prod_{p\in J}(1-f_p)=1+\sum_{\emptyset\neq K\subset J} (-1)^{\vert K\vert}f_{q_K},
$$
we therefore get, for all $a\in A$, that
$$
\tau_0(a)=\lim_{J}\Big(\tau(a)+\sum_{\emptyset\neq K\subset J} (-1)^{\vert K\vert} (\pi_\phi(a)f_{q_K}v_\phi,v_\phi)\Big).
$$
In view of equality~\eqref{eq:mua} this is exactly~\eqref{eq:tau0}.

This proves~\eqref{eq:tau0} for the elements of $A$. In order to see that the same is true for $M(A)$ we just have to observe that all the expressions we considered, like $(\pi_\phi(a)Qv_\phi,v_\phi)$, $(\pi_\phi(a)f_{q_K}v_\phi,v_\phi)$ and $\Tr^{q_K}_\tau(a)$, have obvious extensions to $M(A)$ by strict continuity, so the same proof works for~$M(A)$.

\smallskip

(ii),(iii) Consider the generalized Gibbs state $\psi=\Tr^{\F(X)}_{\tau_0}(\cdot D_N^{-\beta})$ on $\LL(\F(X))$ discussed in Section~\ref{sec:restriction}. Then the restrictions of $\psi$ and $\tilde\phi_f$ to $\K(\F(X))$ are both KMS$_\beta$ with respect to the dynamics $\Ad D_N^{it}$, and by the construction of $\psi$ they coincide on $Q_e\K(\F(X))Q_e=AQ_e$. Since~$Q_e$ is a full projection in the multiplier algebra of $\K(\F(X))$, it follows that $\psi=\tilde\phi_f$ on $\K(\F(X))$ by the discussion at the end of Section~\ref{ssec:induction}. By strict continuity we then have $\psi=\tilde\phi_f$ on $\LL(\F(X))$. This proves both (ii) and (iii).
\ep

\begin{remark}\label{rem:norm-limit}
Since the trace $\tau_0$ is the weak$^*$ limit of a decreasing net of positive traces on the unital C$^*$-algebra $M(A)$, it is actually the norm limit of these traces. The above proof makes this particularly transparent, as it hinges on the norm convergence $\prod_{p\in J}(1-f_p)v_\phi\xrightarrow[J]{} Qv_\phi$.
\end{remark}

\begin{corollary}\label{cor:finite-infinite}
In the setting of Theorem~\ref{thm:Wold}, let $\phi$ be a $\sigma$-KMS$_\beta$-state on $\NT(X)$ and $\tau=\phi|_A$. Then
\begin{itemize}
\item[(i)] $\phi$ is of infinite type if and only if the limit in~\eqref{eq:tau0} is zero for all $a\in A$, or equivalently, for $a=1$;
\item[(ii)]  $\phi$ is of finite type if and only if $\tau=\sum_{p\in P}N(p)^{-\beta}F_p\tau_0$ for some positive trace $\tau_0$ on $A$; in this case $\tau_0$ and $\phi$ are uniquely determined by $\tau$: $\tau_0$ is given by~\eqref{eq:tau0} and $\phi$ is given by~\eqref{eq:Gibbs} (with $\phi=\phi_f$).
\end{itemize}
\end{corollary}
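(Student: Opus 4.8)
The plan is to deduce the whole statement formally from the Wold decomposition $\phi=\phi_f+\phi_\infty$ (Proposition~\ref{prop:Wold-state}) together with the explicit description of the finite part in Theorem~\ref{thm:Wold}. Write $\tau_0$ for the trace on $M(A)$ attached to $\phi$ there, so that $\tau_0(a)$ equals the limit in \eqref{eq:tau0}, that $\phi_f|_A=\sum_{p\in P}N(p)^{-\beta}F_p\tau_0$, and that $\phi_f=\Tr^{\F(X)}_{\tau_0}(\Lambda(\cdot)D_N^{-\beta})$. I will use two elementary observations throughout: a positive functional $\psi$ on $\NT(X)$ vanishes as soon as $\psi|_A=0$, since the $X_p$ are essential and hence an approximate unit of $A$ is one of $\NT(X)$; and $F_p\omega$ depends only on the values of the positive trace $\omega$ on $A=X_e$ (through $\langle\xi,\xi\rangle\in A$), so $\sum_{p}N(p)^{-\beta}F_p\omega=0$ forces $\omega|_A=0$. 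Finally, the domination statements established inside the proof of Proposition~\ref{prop:Wold-state} (a finite-type $\psi\le\phi$ satisfies $\psi\le\phi_f$, and dually for infinite type), applied with $\psi=\phi$, give the reductions: $\phi$ is of finite type if and only if $\phi=\phi_f$, and $\phi$ is of infinite type if and only if $\phi_f=0$.

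For part (i): by the last reduction, $\phi$ is of infinite type iff $\phi_f=0$, iff $\phi_f|_A=0$ (first observation), iff $\tau_0|_A=0$ (Theorem~\ref{thm:Wold}(ii) and the second observation), iff the limit in \eqref{eq:tau0} vanishes for all $a\in A$. To include the case $a=1$: $\phi_f=0$ forces $\tilde\phi_f=0$ on all of $\LL(\F(X))$ (strict density of $\NTr(X)$, Lemma~\ref{lem:density}, and strict continuity), hence $\tau_0=\tilde\phi_f(\cdot\,Q_e)=0$ on $M(A)$, so the limit vanishes at $a=1$; conversely the limit vanishing at $a=1$ gives $\tau_0(1)=0$, whence $\tau_0=0$ by Cauchy--Schwarz on the unital C$^*$-algebra $M(A)$. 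This closes the circle.

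For part (ii): the forward implication is immediate, since $\phi$ of finite type means $\phi=\phi_f$, and then $\tau=\phi_f|_A=\sum_{p\in P}N(p)^{-\beta}F_p\tau_0$ by Theorem~\ref{thm:Wold}(ii). For the converse, suppose $\tau=\sum_{p\in P}N(p)^{-\beta}F_p\tau_0'$ for some positive trace $\tau_0'$ on $A$. As $\tau$ is a state, $\sum_{p}N(p)^{-\beta}(F_p\tau_0')(1)=1<\infty$, so each $N(p)^{-\beta}F_p\tau_0'\le\tau$ is finite, and by \eqref{eq:mu2} together with \eqref{eq:mua1} (and the proof of Theorem~\ref{thm:subinvariance}) the bracketed expression in \eqref{eq:tau0}, evaluated at a finite subset $J$ of the fixed generating set $S$, equals $\sum_{p\in\Omega_J}N(p)^{-\beta}(F_p\tau_0')(a)$, where $\Omega_J=\bigcap_{p\in J}(P\setminus pP)$. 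One checks that $e\in\Omega_J$ for every such $J$ while $\bigcap_J\Omega_J=\{e\}$ (because $S$ generates $P$), and that finiteness of $\sum_{p\in P}N(p)^{-\beta}(F_p\tau_0')(1)$ makes the remaining terms vanish along the net; hence the limit is $(F_e\tau_0')(a)=\tau_0'(a)$ for all $a\in A$. Comparing with Theorem~\ref{thm:Wold}(i) yields $\tau_0'=\tau_0|_A$, so $\tau=\phi_f|_A$, forcing $\phi_\infty|_A=0$, hence $\phi_\infty=0$ (first observation) and $\phi=\phi_f$ is of finite type. The same comparison shows $\tau_0'$ is the unique positive trace with this property, namely the limit in \eqref{eq:tau0}, and then $\phi=\phi_f$ is recovered from $\tau_0$ via \eqref{eq:Gibbs}; both depend only on $\tau$.

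The genuinely substantive step is the last computation in the converse of (ii): identifying the limit in \eqref{eq:tau0}, applied to a trace of Gibbs form $\sum_{p}N(p)^{-\beta}F_p\tau_0'$, with $\tau_0'$ itself. This rests on $\Omega_J$ decreasing to $\{e\}$ over the finite subsets of a generating set and on the summability of the series forcing the tail contributions to $0$; everything else is bookkeeping with the Wold decomposition and the two elementary observations.
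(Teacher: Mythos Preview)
Your proof is correct. Part~(i) and the forward direction of~(ii) match the paper's approach essentially verbatim.

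For the converse of~(ii) you take a genuinely different route. You compute the limit in~\eqref{eq:tau0} directly for a trace of Gibbs form $\tau=\sum_p N(p)^{-\beta}F_p\tau_0'$: via~\eqref{eq:mu2} each pre-limit expression equals $\sum_{p\in\Omega_J}N(p)^{-\beta}(F_p\tau_0')(a)$, and since the sets $\Omega_J$ decrease to $\{e\}$ over finite $J\subset S$ while the full series is summable, the limit is $\tau_0'(a)$. The paper instead builds the auxiliary Gibbs state $\psi=\Tr^{\F(X)}_{\tau_0'}(\cdot\,D_N^{-\beta})$ on $\LL(\F(X))$, notes that $\psi\circ\Lambda$ is a finite-type $\sigma$-KMS$_\beta$-state whose restriction to $A$ is the \emph{same} $\tau$, and reads off $\tau_0'=\tau_0$ from Theorem~\ref{thm:Wold}(i) applied to $\psi\circ\Lambda$ (since $\psi(aQ_e)=\tau_0'(a)$ by construction, while the limit formula depends only on $\tau$). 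Your approach is more explicit and shows concretely why the limit recovers $\tau_0'$; the paper's is slicker in that it avoids any new computation by recycling Theorem~\ref{thm:Wold}(i). The endgame also differs slightly: the paper concludes via $\|\phi_f\|=\|\psi\circ\Lambda\|=1$ forcing $\phi_\infty=0$, while you conclude via $\phi_\infty|_A=(\tau-\phi_f|_A)=0$ and the approximate-unit observation; both are fine. A minor point: your appeal to~\eqref{eq:mua1} is not really needed---what you use is just the definition of the $T(A)$-valued measure $\mu$ together with~\eqref{eq:mu2}.
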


\bp
(i) The state $\phi$ is of infinite type if and only if $\phi_f=0$, which by part (iii) of the theorem is equivalent to $\tau_0=0$ on $A$. This proves the ``if and only if'' part of the statement. The last equivalence part follows, since even if $A$ is nonunital, by the strict continuity of $\tilde\phi_f$, the functional $\tau_0=\tilde\phi_f(\cdot\,Q_e)$ is zero on $A$ if and only if $\tilde\phi_f(Q_e)=0$.

\smallskip

(ii) The ``only if'' statement follows from part (ii) of the theorem. To prove the converse, assume $\tau=\sum_{p\in P}N(p)^{-\beta}F_p\tau_0$ for some $\tau_0$. Consider the state  $\psi=\Tr^{\F(X)}_{\tau_0}(\cdot D_N^{-\beta})$ on $\LL(\F(X))$. Then, on the one hand, we have $\psi(aQ_e)=\tau_0(a)$ for $a\in A$ by construction. On the other hand, $\psi\circ\Lambda$ is a $\sigma$-KMS$_\beta$-state on $\NT(X)$ of finite type, so we can compute $\psi(aQ_e)$ using part (i) of the theorem. The conclusion is that $\tau_0$ must be given by~\eqref{eq:tau0}. But then by part (iii) of the theorem we get $\phi_f=\psi\circ\Lambda$. Since $\psi\circ\Lambda$ is a state, it follows that $\phi=\phi_f$, so $\phi$ is of finite type and it is given by~\eqref{eq:Gibbs}.
\ep

We can now give the following definition.

\begin{definition}
With $X$, $N$ and $\beta$ fixed, we say that a positive trace $\tau$ on $A$ satisfying~\eqref{eq:subinvariance} is of \emph{finite type}, if $\tau=\sum_{p\in P}N(p)^{-\beta}F_p\tau_0$ for some positive trace $\tau_0$ on $A$, which is then necessarily given by~\eqref{eq:tau0}. We say that $\tau$ is of \emph{infinite type}, if the limit in~\eqref{eq:tau0} is zero for all $a\in A$, equivalently,  for $a=1$ (with $1\in M(A)$ and $\tau(1)=\|\tau\|$ if $A$ is nonunital).
\end{definition}

Note that even with $X$ and $N$ fixed, the same trace can be of finite type for one $\beta$ and of infinite type for another.

\begin{corollary}\label{cor:Wold-trace}
In the setting of Theorem~\ref{thm:Wold}, any tracial state $\tau$ on $A$ satisfying~\eqref{eq:subinvariance} has a unique decomposition $\tau=\tau_f+\tau_\infty$ where $\tau_f$ is of finite type and $\tau_\infty$ is of infinite type. Explicitly, we first define a positive trace $\tau_0$ on~$A$ by~\eqref{eq:tau0}, then $\tau_f=\sum_{p\in P}N(p)^{-\beta}F_p\tau_0$ and $\tau_\infty=\tau-\tau_f$.
\end{corollary}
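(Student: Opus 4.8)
The plan is to derive everything from the Wold decomposition of states (Proposition~\ref{prop:Wold-state}), its compatibility with the KMS condition (Corollary~\ref{cor:Wold-KMS}), and the explicit description of the finite part at the level of $A$ given by Theorem~\ref{thm:Wold}; the only genuinely new point will be a routine additivity property of the limit in~\eqref{eq:tau0}.

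\emph{Existence.} Given a tracial state $\tau$ on $A$ satisfying~\eqref{eq:subinvariance}, I would first use Theorem~\ref{thm:general} to lift it to the unique gauge-invariant $\sigma$-KMS$_\beta$-state $\phi$ on $\NT(X)$ with $\phi|_A=\tau$. Decompose $\phi=\phi_f+\phi_\infty$ by Proposition~\ref{prop:Wold-state}; by Corollary~\ref{cor:Wold-KMS} both $\phi_f$ and $\phi_\infty$ satisfy the $\sigma$-KMS$_\beta$ condition. Set $\tau_f=\phi_f|_A$ and $\tau_\infty=\phi_\infty|_A$, so that $\tau=\tau_f+\tau_\infty$. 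Theorem~\ref{thm:Wold}, applied to $\phi$, says precisely that the positive trace $\tau_0$ defined by the right-hand side of~\eqref{eq:tau0} is well-defined and that $\tau_f=\sum_{p\in P}N(p)^{-\beta}F_p\tau_0$; since $\tau_f\le\tau$ it is a bounded trace, so it satisfies~\eqref{eq:subinvariance} (discussion at the end of Section~\ref{sec:restriction}) and is of finite type. Likewise $\tau_\infty\le\tau$ is a bounded trace, which satisfies~\eqref{eq:subinvariance} by Theorem~\ref{thm:subinvariance} applied to the positive KMS-functional $\phi_\infty$; that $\tau_\infty$ is of infinite type will drop out of the additivity statement below.

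\emph{The limit functional.} For any positive trace $\omega$ on $A$ satisfying~\eqref{eq:subinvariance}, write $L(\omega)$ for the positive trace on $M(A)$ given by the right-hand side of~\eqref{eq:tau0}. As in the proof of Theorem~\ref{thm:Wold}, the net of approximants indexed by finite $J$ inside a generating set equals the $T(A)$-valued measure of the sets $\Omega_J$, which is decreasing and positive by~\eqref{eq:subinvariance}, so $L(\omega)$ exists as a norm limit (Remark~\ref{rem:norm-limit}); here one uses that all the traces $\Tr^{q_K}_\omega$ occurring are finite by~\eqref{eq:subinvariance}. Each approximant is additive in $\omega$, and the infimum of a decreasing net of additive positive functionals over a directed index set is additive, so $\omega\mapsto L(\omega)$ is additive on the cone of positive traces satisfying~\eqref{eq:subinvariance}. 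Two further facts are immediate: $L(\omega)=0$ exactly when $\omega$ is of infinite type (by definition), and if $\omega=\sum_{p\in P}N(p)^{-\beta}F_p\omega_0$ is of finite type then $L(\omega)=\omega_0$ by Corollary~\ref{cor:finite-infinite}(ii). Applying additivity to $\tau=\tau_f+\tau_\infty$ gives $\tau_0=L(\tau)=L(\tau_f)+L(\tau_\infty)=\tau_0+L(\tau_\infty)$, hence $L(\tau_\infty)=0$ and $\tau_\infty$ is of infinite type, which finishes the existence part.

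\emph{Uniqueness.} Suppose $\tau=\tau_f'+\tau_\infty'$ with $\tau_f'=\sum_{p\in P}N(p)^{-\beta}F_p\tau_0'$ of finite type and $\tau_\infty'$ of infinite type. Both summands satisfy~\eqref{eq:subinvariance} ($\tau_f'$ because it is of that form and $\tau_f'\le\tau$, and $\tau_\infty'$ by the definition of infinite type), so $L$ applies, and additivity gives $\tau_0'=L(\tau_f')=L(\tau_f')+L(\tau_\infty')=L(\tau)=\tau_0$. Therefore $\tau_f'=\sum_{p\in P}N(p)^{-\beta}F_p\tau_0=\tau_f$ and $\tau_\infty'=\tau-\tau_f'=\tau_\infty$.

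\emph{Main obstacle.} There is no serious obstacle: the substance is already contained in Proposition~\ref{prop:Wold-state}, Corollary~\ref{cor:Wold-KMS} and Theorem~\ref{thm:Wold}. The one place needing a little care is the second paragraph — namely that the functional $L$ is well-defined on \emph{all} positive traces obeying~\eqref{eq:subinvariance} (not only on restrictions of states) and is additive there. Well-definedness comes from positivity of the $T(A)$-valued measure established in Section~\ref{sec:restriction}, and additivity is the elementary observation that an infimum of a decreasing net of additive maps is additive; everything else is bookkeeping.
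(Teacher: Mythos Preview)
Your proof is correct and begins exactly as the paper does: lift $\tau$ to the gauge-invariant KMS$_\beta$-state $\phi$ via Theorem~\ref{thm:general}, decompose $\phi=\phi_f+\phi_\infty$ by Proposition~\ref{prop:Wold-state}, and restrict. Where you diverge is in how you handle the infinite-type claim and uniqueness. The paper dispatches both in one line by invoking Corollary~\ref{cor:finite-infinite}: applied to (the normalizations of) $\phi_f$ and $\phi_\infty$, it tells you directly that $\tau_f$ is of finite type and $\tau_\infty$ of infinite type; and for uniqueness, one lifts any competing decomposition $\tau=\tau_f'+\tau_\infty'$ back to gauge-invariant KMS functionals on $\NT(X)$, identifies their types via Corollary~\ref{cor:finite-infinite}, and then appeals to the uniqueness in Proposition~\ref{prop:Wold-state}. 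You instead introduce the limit operator $L$ and prove its additivity on the cone of traces satisfying~\eqref{eq:subinvariance}, then read off both $L(\tau_\infty)=0$ and uniqueness from $L(\tau)=L(\tau_f')+L(\tau_\infty')=\tau_0'$. Your route is self-contained at the trace level and avoids a second passage through $\NT(X)$, at the cost of the (easy) additivity lemma; the paper's route is shorter because the work has already been packaged into Corollary~\ref{cor:finite-infinite} and Proposition~\ref{prop:Wold-state}. Either way the substance is the same.
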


\bp
By considering the unique gauge-invariant $\sigma$-KMS$_\beta$-state $\phi$ on $\NT(X)$ extending $\tau$, this follows from the existence and uniqueness of a decomposition of $\phi$ into finite and infinite type parts and Corollary~\ref{cor:finite-infinite}.
\ep

The following is contained in part (ii) of Corollary~\ref{cor:finite-infinite}, but it is worth stating it explicitly.

\begin{corollary}\label{cor:finite-class}
In the setting of Theorem~\ref{thm:Wold}, we have an affine bijection between the $\sigma$-KMS$_\beta$-states $\phi$ on $\NT(X)$ of finite type and the positive traces $\tau_0$ on $A$ such that $$\sum_{p\in P}N(p)^{-\beta}\Tr^p_{\tau_0}(1)=1.$$ Namely, going from $\phi$ to $\tau_0$ is by~\eqref{eq:tau0}, and back by~\eqref{eq:Gibbs} (with $\phi=\phi_f$). Furthermore, if $\phi$ is such a state and $\psi$ is another $\sigma$-KMS$_\beta$-state (a priori not necessarily of finite type) such that $\psi|_A=\phi|_A$, then $\psi=\phi$.
\end{corollary}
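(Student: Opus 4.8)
The plan is to obtain this corollary by unwinding Theorem~\ref{thm:Wold} and Corollary~\ref{cor:finite-infinite}(ii): the content of the bijection is already there, and what remains is to check that the two prescribed assignments are well defined, mutually inverse, and affine, and to match up the normalizations. Throughout I would, in the nonunital case, pass to the canonical extensions of all traces to $M(A)$ and use that a state $\tau$ on $A$ satisfies $\tau(1)=\|\tau\|=1$; note also that $D_N^{-\beta}$ acts as $N(e)^{-\beta}=1$ on $X_e\subset\F(X)$, so $D_N^{-\beta}Q_e=Q_e$.

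First I would check that $\phi\mapsto\tau_0$, defined by \eqref{eq:tau0}, lands in the indicated set. If $\phi$ is a $\sigma$-KMS$_\beta$-state of finite type, then $\phi=\phi_f$, so Theorem~\ref{thm:Wold}(i)--(ii) produces the positive trace $\tau_0$ of \eqref{eq:tau0} and shows $\tau:=\phi|_A=\tau_f=\sum_{p\in P}N(p)^{-\beta}F_p\tau_0$. Evaluating at $1$ and using $N(e)=1$ gives $1=\tau(1)=\sum_{p\in P}N(p)^{-\beta}\Tr^p_{\tau_0}(1)$, which is the required normalization (equivalently $\Tr^{\F(X)}_{\tau_0}(D_N^{-\beta})=1$).

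Next I would treat the reverse assignment and mutual inverseness. Given a positive trace $\tau_0$ on $A$ with $\sum_{p}N(p)^{-\beta}\Tr^p_{\tau_0}(1)=1$, the generalized Gibbs state $\psi=\Tr^{\F(X)}_{\tau_0}(\cdot\, D_N^{-\beta})$ on $\LL(\F(X))$ is a $\sigma$-KMS$_\beta$-state and, by the computation at the end of Section~\ref{sec:restriction}, $(\psi\circ\Lambda)|_A=\sum_{p}N(p)^{-\beta}F_p\tau_0$; hence $\phi:=\psi\circ\Lambda$ is a $\sigma$-KMS$_\beta$-state on $\NT(X)$ which is of finite type by Corollary~\ref{cor:finite-infinite}(ii). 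Applying \eqref{eq:tau0} to this $\phi$ recovers $\tau_0$: since $\phi=\phi_f$ and $\psi$ is strictly continuous on $\LL(\F(X))$, the unique strictly continuous extension $\tilde\phi_f$ of $\phi_f$ equals $\psi$ (this identification is exactly what is carried out in the proof of Theorem~\ref{thm:Wold}(ii)--(iii)), so $\tilde\phi_f(aQ_e)=\psi(aQ_e)=\Tr^{\F(X)}_{\tau_0}(aQ_e)=\Tr^{X_e}_{\tau_0}(a)=\tau_0(a)$. Conversely, starting from a finite-type $\phi$, passing to $\tau_0$, and then forming $\psi\circ\Lambda$ via \eqref{eq:Gibbs} yields a finite-type state with the same restriction $\tau$ to $A$, hence equal to $\phi$ by the uniqueness clause in Corollary~\ref{cor:finite-infinite}(ii). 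Affinity of both maps is immediate from the formulas.

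Finally, for the ``furthermore'' statement, let $\phi$ be a finite-type $\sigma$-KMS$_\beta$-state and $\psi$ any $\sigma$-KMS$_\beta$-state with $\psi|_A=\phi|_A=:\tau$. Because $\phi$ is of finite type, $\tau=\sum_{p\in P}N(p)^{-\beta}F_p\tau_0$ for a positive trace $\tau_0$ on $A$; then, by Corollary~\ref{cor:finite-infinite}(ii) applied to $\psi$, this shape of $\psi|_A$ forces $\psi$ to be of finite type and determines it uniquely (via \eqref{eq:Gibbs}) in terms of $\tau$, and the same formula determines $\phi$, so $\psi=\phi$. The only mildly delicate points I anticipate are the identification $\tilde\phi_f=\psi$ of strictly continuous functionals on $\LL(\F(X))$ and the nonunital normalization bookkeeping, both of which are already established inside the proof of Theorem~\ref{thm:Wold}; so this corollary introduces no genuinely new obstacle.
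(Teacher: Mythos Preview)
Your proposal is correct and follows precisely the route the paper intends: the corollary is stated immediately after the remark that it ``is contained in part (ii) of Corollary~\ref{cor:finite-infinite},'' and no separate proof is given. Your write-up simply unpacks that reference, using Theorem~\ref{thm:Wold}(i)--(iii) for the forward map and the normalization, the generalized Gibbs construction plus Corollary~\ref{cor:finite-infinite}(ii) for the inverse map and for the uniqueness clause, which is exactly the intended derivation.
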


\bigskip

\section{Traces of finite type and critical temperature}\label{sec:critical-temp}

In practice, checking whether a tracial state is of finite type by definition can be a difficult problem. The following result provides a powerful sufficient condition.

\begin{theorem}\label{thm:finite-characterization}
Assume $(G,P)$ is a quasi-lattice ordered group and $X$ is a compactly aligned product system of C$^*$-correspondences over $P$, with $X_e=A$. Consider the dynamics \eqref{eq:dynamics1} on~$\NT(X)$ defined by a homomorphism $N\colon P\to (0,+\infty)$. Fix $\beta\in\R$ and assume that~$\tau$ is a tracial state on $A$ satisfying~\eqref{eq:subinvariance} such that
$$
\sum_{p\in P}N(p)^{-\beta}\Tr_\tau^p(1)<\infty.
$$
Then $\tau$ is of finite type.
\end{theorem}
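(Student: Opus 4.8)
The plan is to descend to the associated gauge-invariant KMS-state, reduce the claim via the Wold machinery of Section~\ref{sec:Wold} to a single numerical identity, and prove that identity by a Fock-space computation inside the GNS representation.

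\smallskip

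First I would let $\phi$ be the unique gauge-invariant $\sigma$-KMS$_\beta$-state on $\NT(X)$ with $\phi|_A=\tau$ (Theorem~\ref{thm:general}). Since $\Tr^p_\tau(1)<\infty$ for every $p$, each $F_p\tau$ is finite, so the $T(A)$-valued measure $\mu$ with $\mu(pP)=N(p)^{-\beta}F_p\tau$ is defined and positive, the traces $\mu(\Omega_J)$ decrease in norm to the trace $\tau_0$ of~\eqref{eq:tau0} (Remark~\ref{rem:norm-limit}, and $\tau_0\le\tau$), and the series $\tau_f:=\sum_{p\in P}N(p)^{-\beta}F_p\tau_0$ converges in norm because $F_p\tau_0\le F_p\tau$ and $\sum_p N(p)^{-\beta}\Tr^p_\tau(1)<\infty$. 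By Corollary~\ref{cor:Wold-trace} one has $\tau_f\le\tau$, and by the definition of a trace of finite type together with Corollary~\ref{cor:finite-infinite}(ii), $\tau$ is of finite type exactly when $\tau=\tau_f$, that is (both being positive traces with $\tau_f\le\tau$) exactly when $\sum_{p\in P}N(p)^{-\beta}\Tr^p_{\tau_0}(1)=\|\tau\|=1$. Thus the whole theorem reduces to this identity.

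\smallskip

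To prove it I would pass to the GNS triple $(H_\phi,\pi_\phi,v_\phi)$ and use the commuting family of projections $f_p=L(\chi_{pP})$ from the proof of Theorem~\ref{thm:subinvariance}, which satisfy $f_pf_q=f_{p\vee q}$ and $(f_pv_\phi,v_\phi)=N(p)^{-\beta}\Tr^p_\tau(1)$ by~\eqref{eq:mua}. Fixing a generating set $S\subset P\setminus\{e\}$, I would introduce the ``Fock-level'' projections $E_p:=f_p-\bigvee_{s\in S}f_{ps}$ (note $f_{ps}\le f_p$ and $p(a\vee b)=pa\vee pb$, so this makes sense). Using $f_pf_{pq_K}=f_{pq_K}$ and the strong convergence $f_p\prod_{s\in J}(1-f_{ps})\to E_p$ one gets that the $E_p$ are pairwise orthogonal and
\[
(E_pv_\phi,v_\phi)=\lim_J\Big(N(p)^{-\beta}\Tr^p_\tau(1)+\sum_{\emptyset\neq K\subset J}(-1)^{|K|}N(pq_K)^{-\beta}\Tr^{pq_K}_\tau(1)\Big)=N(p)^{-\beta}\Tr^p_{\tau_0}(1),
\]
the second equality because, by~\eqref{eq:induction2}, the bracket equals $N(p)^{-\beta}\Tr^p_{\mu(\Omega_J)}(1)$ and $\Tr^p_{\mu(\Omega_J)}(1)\to\Tr^p_{\tau_0}(1)$ by Lemma~\ref{lem:trace-continuity}(ii) (applicable since $\Tr^p_\tau(1)<\infty$ and $\mu(\Omega_J)\le\tau$). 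Hence, writing $z=\sum_pE_p$, we obtain $\sum_{p\in P}N(p)^{-\beta}\Tr^p_{\tau_0}(1)=\sum_p\|E_pv_\phi\|^2=\|zv_\phi\|^2$, and it remains to show $(1-z)v_\phi=0$.

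\smallskip

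For this I would establish that $1-z\le\bigvee_{p\in D_n}f_p$ for every $n\ge1$, where $D_n$ is the set of products of $n$ elements of $S$. Indeed $1-z\le1-E_e=\bigvee_{s\in S}f_s$, and since $1-z$ commutes with the $f$'s, the relation $E_p=f_p-\bigvee_sf_{ps}$ gives $(1-z)f_p=(1-z)\bigvee_{s\in S}f_{ps}$ for every $p$, and iterating yields $1-z=(1-z)\bigvee_{p\in D_n}f_p$. Therefore $\|(1-z)v_\phi\|^2\le(\bigvee_{p\in D_n}f_pv_\phi,v_\phi)\le\sum_{p\in D_n}N(p)^{-\beta}\Tr^p_\tau(1)$, using commutativity of the $f_p$ for the last inequality. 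Finally these tails tend to $0$: the subinvariance inequality applied to $J=\{p^{-1}q\}$, together with monotonicity of $F_p$ and induction in stages~\eqref{eq:induction}, yields $N(p)^{-\beta}\Tr^p_\tau(1)\ge N(q)^{-\beta}\Tr^q_\tau(1)$ whenever $p\le q$; so if some $q$ with $\Tr^q_\tau(1)>0$ admitted factorizations $q=s_1\cdots s_n\in D_n$ for arbitrarily large $n$, the chains $e<s_1<s_1s_2<\cdots<q$ would produce arbitrarily many distinct $p\le q$ with $N(p)^{-\beta}\Tr^p_\tau(1)\ge N(q)^{-\beta}\Tr^q_\tau(1)>0$, contradicting $\sum_pN(p)^{-\beta}\Tr^p_\tau(1)<\infty$. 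Hence every $p$ with positive weight lies in only finitely many $D_n$, and $\sum_{p\in D_n}N(p)^{-\beta}\Tr^p_\tau(1)\to0$ by dominated convergence, which finishes the proof.

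\smallskip

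The step I expect to be the main obstacle is identifying the correct projections $E_p$ and verifying the inequality $1-z\le\bigvee_{p\in D_n}f_p$ with the right bookkeeping of the commuting family $\{f_p\}$; nothing here is analytically deep, and the summability hypothesis is used only in the final, soft, dominated-convergence step (via the monotonicity $N(p)^{-\beta}\Tr^p_\tau(1)\ge N(q)^{-\beta}\Tr^q_\tau(1)$ for $p\le q$).
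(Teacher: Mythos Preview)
Your proof is correct and complete, but it takes a genuinely different route from the paper's. Both arguments agree on the preliminary step of identifying the ``atom at $p$'' as $N(p)^{-\beta}\Tr^p_{\tau_0}(1)$ (your $(E_pv_\phi,v_\phi)$ is the paper's $w_p$ for $a=1$, and the continuity argument via Lemma~\ref{lem:trace-continuity}(ii) is the same). The divergence is in showing that these atoms add up to~$1$. The paper isolates this as a purely measure-theoretic statement (Lemma~\ref{lem:atoms}): any finitely additive probability measure $\mu$ on $(P,\B_P)$ with $\sum_p\mu(pP)<\infty$ is atomic, proved by a covering argument that uses the structural decomposition of $\B_P$ from Lemma~\ref{lem:BP-decomposition}. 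You instead work spatially in the GNS representation, introduce the orthogonal Fock-level projections $E_p=f_p-\bigvee_{s\in S}f_{ps}$, and show $(1-\sum_pE_p)v_\phi=0$ by the inductive bound $1-z\le\bigvee_{p\in D_n}f_p$ together with the observation that summability plus the monotonicity $p\le q\Rightarrow N(p)^{-\beta}\Tr^p_\tau(1)\ge N(q)^{-\beta}\Tr^q_\tau(1)$ forces each contributing $q$ to have bounded $S$-word length, whence the $D_n$-tails vanish by dominated convergence.

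What each approach buys: the paper's route extracts a clean standalone lemma about $(P,\B_P)$ of independent interest and proves the full identity $\tau(a)=\sum_pN(p)^{-\beta}\Tr^p_{\tau_0}(a)$ for every $a$ in one stroke. Your route avoids the $\B_P$-combinatorics of Lemma~\ref{lem:BP-decomposition} entirely and needs only the single scalar identity at $a=1$, using the shortcut $\tau_f\le\tau$ from Corollary~\ref{cor:Wold-trace} to upgrade $\|\tau_f\|=1$ to $\tau_f=\tau$; this is a nice economy. Your combinatorial input (bounded word length from summability) is different from, and arguably more elementary than, the paper's covering argument.
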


For $P=\Z^n_+$ this theorem is an immediate consequence of the finiteness criterion~\eqref{eq:criterion}. But this case is quite special. For general $P$ the proof is based on the following lemma of independent interest.

\begin{lemma}\label{lem:atoms}
Assume $(G,P)$ is a quasi-lattice ordered group and $\mu$ is a finitely additive probability measure on $(P,\B_P)$ such that
$$
\sum_{p\in P}\mu(pP)<\infty.
$$
For every $p\in P$, define $w_p=\inf\{\mu(\Omega)\mid p\in \Omega\in\B_P\}$. Then, for every $\Omega\in\B_P$, we have
$$
\mu(\Omega)=\sum_{p\in\Omega}w_p.
$$
In particular, $\mu$ extends to a $\sigma$-additive probability measure defined on all subsets of $P$.
\end{lemma}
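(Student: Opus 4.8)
The plan is to write down explicitly the $\sigma$-additive measure that should do the job — the discrete measure $\nu=\sum_{p\in P}w_p\,\delta_p$ on all subsets of $P$ — and then show that $\nu$ coincides with $\mu$ on $\B_P$. Since $w_p\le\mu(pP)$, the hypothesis $\sum_p\mu(pP)<\infty$ makes $\nu$ a well-defined finite positive measure, automatically $\sigma$-additive, with $\nu(P)\le1$. As $\B_P$ is a Boolean algebra and $\mu$, $\nu$ are both finitely additive on it, it will suffice to prove the single inequality $\mu(\Omega)\ge\sum_{p\in\Omega}w_p$ for all $\Omega\in\B_P$ together with $\nu(P)=1$: applying the inequality to $\Omega$ and to $P\setminus\Omega$ and adding then forces equality. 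So I would reduce the lemma to: (A)~$\mu(\Omega)\ge\sum_{p\in\Omega}w_p$ for all $\Omega\in\B_P$; and (B)~$\sum_{p\in P}w_p\ge1$.

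The device for both is the structure of $\B_P$ from Lemma~\ref{lem:BP-decomposition}. For a finite $\vee$-closed set $F\ni e$ (the $\vee$-closure of a finite set is again finite), the nonempty sets \eqref{eq:sets} partition $P$ into finitely many members of $\B_P$; for $p\in F$ I write $C_F(p)$ for the piece containing $p$, which by the proof of Lemma~\ref{lem:BP-decomposition}(i) is of the form $pP$ or $p\Omega_J$, so $p\in C_F(p)\subseteq pP$. Three further facts about these pieces: (i)~distinct $p\in F$ give distinct, hence disjoint, pieces, and in fact $\{C_F(p):p\in F\}$ is exactly the above partition, so $\sum_{p\in F}\mu(C_F(p))=1$; (ii)~if $F$ moreover contains $p$ and all $pr$ with $r\in J'$, for a given finite $J'\subset P\setminus\{e\}$, then $C_F(p)\subseteq p\Omega_{J'}$; (iii)~combining (ii) with Lemma~\ref{lem:BP-decomposition}(ii) (every $\Omega\ni p$ contains some $p\Omega_{J'}\ni p$) yields $w_p=\inf_F\mu(C_F(p))$, the infimum being over such $F$.

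For (A), fix $\Omega\in\B_P$ and a finite $F_0\subseteq\Omega$; by Lemma~\ref{lem:BP-decomposition}(ii) choose, for each $p\in F_0$, a finite $J_p$ with $p\in p\Omega_{J_p}\subseteq\Omega$, and pass to a finite $\vee$-closed $F\supseteq F_0$ containing all $pr$ with $p\in F_0$ and $r\in J_p$. By (ii) the sets $C_F(p)$, $p\in F_0$, are then pairwise disjoint members of $\B_P$ contained in $\Omega$, whence $\mu(\Omega)\ge\sum_{p\in F_0}\mu(C_F(p))\ge\sum_{p\in F_0}w_p$; letting $F_0$ exhaust $\Omega$ gives (A). Taking $\Omega=P$ (or, directly, using (i)) also yields $\sum_{p\in P}w_p\le1$, so only the reverse inequality (B) remains.

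Part (B) is the only place the summability hypothesis is genuinely used — without it (B) is false, as one sees already for the free monoid on infinitely many generators. Given $\eps>0$, I would first use $\sum_p\mu(pP)<\infty$ to pick a finite $\vee$-closed $F_0\ni e$ with $\sum_{b\notin F_0}\mu(bP)<\eps$; then, by the definition of $w_p$ and Lemma~\ref{lem:BP-decomposition}(ii), pick for each $p\in F_0$ a set $p\Omega_{J_p}\ni p$ with $\mu(p\Omega_{J_p})<w_p+\eps/|F_0|$; and enlarge to a finite $\vee$-closed $F\supseteq F_0$ containing all the relevant $pr$, so that $C_F(p)\subseteq p\Omega_{J_p}$ by (ii). Since $C_F(b)\subseteq bP$ for every $b\in F$, fact (i) gives
\[
1=\sum_{b\in F}\mu(C_F(b))=\sum_{p\in F_0}\mu(C_F(p))+\sum_{b\in F\setminus F_0}\mu(C_F(b))<\Big(\sum_{p\in P}w_p+\eps\Big)+\sum_{b\notin F_0}\mu(bP)<\sum_{p\in P}w_p+2\eps,
\]
and letting $\eps\to0$ yields (B). Then $\mu=\nu$ on $\B_P$, and since $\nu$ is a $\sigma$-additive probability measure defined on all subsets of $P$, the last assertion follows. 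I expect the partition bookkeeping around the $C_F(p)$ to be the only technical chore; the genuine obstacle is (B), i.e.\ ruling out that $\mu$ loses mass ``at infinity'', which is exactly what summability is there to prevent.
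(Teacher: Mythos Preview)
Your proof is correct and follows the same overall strategy as the paper: reduce to the two inequalities (A)~$\mu(\Omega)\ge\sum_{p\in\Omega}w_p$ and (B)~$\sum_{p\in P}w_p\ge1$, then combine them via complementation. The implementations differ in packaging. You set up once and for all the partition cells $C_F(p)$ coming from a finite $\vee$-closed $F\ni e$ and use the exact identity $\sum_{p\in F}\mu(C_F(p))=1$ for both halves; this is clean and avoids any overlap bookkeeping. The paper instead handles (B) by a covering argument---decomposing $\bigcap_{p\in F}(P\setminus\Omega_p)$ via Lemma~\ref{lem:BP-decomposition}(i) to see it is covered by finitely many $qP$ with $q\notin F$, whence $1\le\sum_{p\in F}\mu(\Omega_p)+\eps$ for \emph{arbitrary} $\Omega_p\ni p$---and handles (A) simply by noting that $\B_P$ separates points, so one can choose disjoint $\Omega_p\subset\Omega$. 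The paper's route is a little shorter and does not require verifying that the nonempty atoms of the $F$-partition are exactly indexed by $F$; your route is more uniform and makes the finiteness hypothesis enter in a transparently additive way. Substantively they are the same argument.
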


%Note that the infimum in the definition of $w_p$ equals the limit over the sets $\Omega$ in $\B_P$ (such that $p\in\Omega$) ordered by containment.

\bp The key part is to establish that
\begin{equation}\label{eq:sum-one}
\sum_{p\in P}w_p\ge 1.
\end{equation}

In order to prove this inequality, take $\eps>0$ and choose a finite set $F\subset P$ such that $\sum_{p\notin F}\mu(pP)<\eps$. We claim that if for every $p\in F$ we take a set $\Omega_p\in \B_P$ containing $p$, then
\begin{equation}\label{eq:sum-one2}
\sum_{p\in F}\mu(\Omega_p)>1-\eps.
\end{equation}
This implies that $\sum_{p\in F}w_p\ge 1-\eps$, and since $\eps$ could be taken arbitrarily small, proves~\eqref{eq:sum-one}.

To prove~\eqref{eq:sum-one2}, consider the set $\cap_{p\in F}(P\setminus\Omega_p)$. By Lemma~\ref{lem:BP-decomposition}(i) it can be written as the disjoint union of finitely many sets of the form $qP$ or $\cap_{r\in J}q(P\setminus rP)$, with $q\in P$ and $J\subset P\setminus\{e\}$. It is clear that for every such set appearing in the disjoint union the element $q$ does not lie in any of the sets $\Omega_p$, hence $q\notin F$. Therefore if $E$ is the set of such $q$'s, then $E$ is finite, $E\cap F=\emptyset$ and $P$ is covered by the sets $\Omega_p$ ($p\in F$) and $qP$ ($q\in E$). It follows that
$$
1\le \sum_{p\in F}\mu(\Omega_p)+\sum_{q\in E}\mu(qP)<\sum_{p\in F}\mu(\Omega_p)+\eps.
$$
This proves~\eqref{eq:sum-one2} and hence~\eqref{eq:sum-one}.

\smallskip

Next, we claim that
\begin{equation}\label{eq:sum-less}
\sum_{p\in\Omega}w_p\le\mu(\Omega)\ \ \text{for any}\ \ \Omega\in\B_P.
\end{equation}
Indeed, take any finite subset $F\subset\Omega$ and disjoint sets $\Omega_p$ ($p\in F$) such that $p\in\Omega_p\subset\Omega$ for all $p\in F$. This is possible as $\B_P$ separates points. Then
$$
\sum_{p\in F}w_p\le\sum_{p\in F}\mu(\Omega_p)\le\mu(\Omega),
$$
which is what we need.

\smallskip

Now, from~\eqref{eq:sum-one} and~\eqref{eq:sum-less}, for every $\Omega\in\B_P$, we get
$$
1\le\sum_{p\in P}w_p=\sum_{p\in\Omega}w_p+\sum_{p\in P\setminus\Omega}w_p\le\mu(\Omega)+\mu(P\setminus \Omega)=1,
$$
hence $\sum_{p\in\Omega}w_p=\mu(\Omega)$.
\ep

\bp[Proof of Theorem~\ref{thm:finite-characterization}]
Let $\tau_0$ be the trace on $A$ given by~\eqref{eq:tau0}. We have to show that
\begin{equation}\label{eq:finite-tau}
\tau(a)=\sum_{p\in P}N(p)^{-\beta}\Tr^p_{\tau_0}(a)
\end{equation}
for all $a\in A$.

Fix $a\in A_+$ and consider the measure $\mu_a$ on $(P,\B_P)$ defined by~\eqref{eq:mua1}. Since by definition we have $\mu_a(pP)=N(p)^{-\beta}\Tr^p_{\tau}(a)$, we can apply Lemma~\ref{lem:atoms} and conclude that, with $w_p=\inf\{\mu_a(\Omega)\mid p\in \Omega\in\B_P\}$, we have $\sum_{p\in P}w_p=\tau(a)$. We claim that this is exactly identity~\eqref{eq:finite-tau}, that is,
\begin{equation}\label{eq:finite-tau2}
w_p=N(p)^{-\beta}\Tr^p_{\tau_0}(a)\ \ \text{for all}\ \ p\in P.
\end{equation}

In order to prove~\eqref{eq:finite-tau2}, for every finite subset $J\subset P\setminus\{e\}$ consider the positive trace $\tau_J$ on~$A$ defined by
$$
\tau_J=\tau+\sum_{\emptyset\neq K\subset J} (-1)^{\vert K\vert} N(q_K)^{-\beta}F_{q_K}\tau.
$$
With the sets $J$ partially ordered by inclusion, the traces $\tau_J$ converge in the weak$^*$ topology (and even in norm) to~$\tau_0$. By~\eqref{eq:mu} we also have
$$
\mu_a\Big(\bigcap_{q\in J}p(P\setminus qP)\Big)=N(p)^{-\beta}(F_p\tau_J)(a).
$$
On the other hand, by Lemma~\ref{lem:BP-decomposition}(ii) we know that every set in $\B_P$ containing $p$ contains a set of the form $\cap_{q\in J}\,p(P\setminus qP)$, so in computing $w_p$ it suffices to take the limit over such sets. In other words, we have
$$
w_p=\lim_JN(p)^{-\beta}(F_p\tau_J)(a),
$$
Therefore in order to establish~\eqref{eq:finite-tau2} we have to show that $F_p\tau_J\to F_p\tau_0$ in the weak$^*$ topology for all $p\in P$. But this is true by Lemma~\ref{lem:trace-continuity}(ii), since $\tau_J\le\tau$ for all finite $J\subset P\setminus\{e\}$.
\ep

Combining this theorem with Corollary~\ref{cor:finite-class} we get the following.

\begin{corollary}\label{cor:finite-class2}
In the setting of Theorem~\ref{thm:finite-characterization}, assume that for some $\beta\in\R$ we have
\begin{equation}\label{eq:all-finite}
\sum_{p\in P}N(p)^{-\beta}\Tr^p_{\tau}(1)<\infty\ \ \text{for all tracial states}\ \tau\ \text{on} \ A.
\end{equation}
Then all $\sigma$-KMS$_\beta$-states on $\NT(X)$ are of finite type, so we get an affine bijection between the $\sigma$-KMS$_\beta$-states on $\NT(X)$ and the positive traces $\tau_0$ on $A$ such that $\sum_{p\in P}N(p)^{-\beta}\Tr^p_{\tau_0}(1)=1$.
\end{corollary}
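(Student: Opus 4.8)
The plan is to assemble three ingredients that are already in place: Theorem~\ref{thm:subinvariance} (restriction of a KMS-state is a trace satisfying~\eqref{eq:subinvariance}), Theorem~\ref{thm:finite-characterization} (summability of $\sum_{p}N(p)^{-\beta}\Tr^p_\tau(1)$ forces $\tau$ to be of finite type), and the Wold-type results of Section~\ref{sec:Wold}. First I would take an arbitrary $\sigma$-KMS$_\beta$-state $\phi$ on $\NT(X)$ and put $\tau=\phi|_A$. By Theorem~\ref{thm:subinvariance}, $\tau$ is a tracial state on $A$ satisfying~\eqref{eq:subinvariance}. The standing hypothesis~\eqref{eq:all-finite}, applied to this particular $\tau$, gives $\sum_{p\in P}N(p)^{-\beta}\Tr^p_\tau(1)<\infty$, so the hypotheses of Theorem~\ref{thm:finite-characterization} are met and $\tau$ is of finite type.

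Next I would feed this into Corollary~\ref{cor:finite-infinite}(ii): since $\tau=\phi|_A$ is of finite type, $\phi$ itself is of finite type, and in fact $\tau$ determines $\tau_0$ via~\eqref{eq:tau0} and then $\phi$ via~\eqref{eq:Gibbs} (with $\phi=\phi_f$). As $\phi$ was an arbitrary $\sigma$-KMS$_\beta$-state, this shows that every $\sigma$-KMS$_\beta$-state on $\NT(X)$ is of finite type.

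Finally, once the class of $\sigma$-KMS$_\beta$-states is known to coincide with the class of finite-type ones, the asserted affine bijection is literally the statement of Corollary~\ref{cor:finite-class}: the assignment $\phi\mapsto\tau_0$ of~\eqref{eq:tau0} is a bijection onto the positive traces $\tau_0$ on $A$ with $\sum_{p\in P}N(p)^{-\beta}\Tr^p_{\tau_0}(1)=1$, with inverse~\eqref{eq:Gibbs}. Affinity is immediate from these formulas, since~\eqref{eq:tau0} expresses $\tau_0(a)$ as a weak$^*$ limit of expressions that are affine (indeed linear) in $\phi$, and~\eqref{eq:Gibbs} is manifestly affine in $\tau_0$. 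I do not expect a real obstacle: all the substance is already carried by Theorem~\ref{thm:finite-characterization} (resting on the measure-theoretic Lemma~\ref{lem:atoms}) and by the decomposition machinery of Section~\ref{sec:Wold}; the only point worth a word of care is that Theorem~\ref{thm:finite-characterization} needs both that $\tau$ satisfy~\eqref{eq:subinvariance} and that the relevant series converge, and~\eqref{eq:all-finite} is exactly what secures the latter for every trace that can arise as the restriction of a KMS-state, so no case analysis over "which" traces occur is needed.
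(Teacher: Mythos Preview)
Your proposal is correct and follows essentially the same route as the paper, which proves the corollary in one line by combining Theorem~\ref{thm:finite-characterization} with Corollary~\ref{cor:finite-class}. You have simply unpacked that combination explicitly: Theorem~\ref{thm:subinvariance} ensures $\tau=\phi|_A$ satisfies~\eqref{eq:subinvariance}, Theorem~\ref{thm:finite-characterization} then makes $\tau$ of finite type, Corollary~\ref{cor:finite-infinite}(ii) transfers this to $\phi$, and Corollary~\ref{cor:finite-class} supplies the bijection.
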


Note that since by rescaling the traces $\tau_0$ as above we get all tracial states on $A$, we can also say that we have a bijective correspondence between the $\sigma$-KMS$_\beta$-states on $\NT(X)$ and the tracial states on $A$. This correspondence, however, is not affine in general.

\begin{definition}\label{def:critical}
Assuming that $N(p)\ge1$ for all $p\in P$, we denote by $\beta_c$ the infimum of $\beta\in\R$ such that condition~\eqref{eq:all-finite} holds, and call it the \emph{critical inverse temperature}.
\end{definition}

The critical inverse temperature equals the maximum of the abscissas of convergence of the series $\sum_{p\in P}N(p)^{-\beta}\Tr^p_{\tau}(1)$, since if we have a sequence of tracial states~$\tau_n$ and an increasing sequence $(\beta_n)_n$ of inverse temperatures converging to $\beta_c$ such that $\sum_{p\in P}N(p)^{-\beta_n}\Tr^p_{\tau_n}(1)=\infty$ for all $n$, then for the tracial state $\tau=\sum^\infty_{n=1}2^{-n}\tau_n$ we have $\sum_{p\in P}N(p)^{-\beta}\Tr^p_{\tau}(1)=\infty$ for all $\beta<\beta_c$.

In this terminology, Corollary~\ref{cor:finite-class2} contains the following statement.

\begin{corollary}\label{cor:finite-class3}
In the setting of Theorem~\ref{thm:finite-characterization}, assume that $N(p)\ge 1$ for all $p\in P$ and that the critical inverse temperature $\beta_c$ satisfies $\beta_c<+\infty$.
Then, for every $\beta>\beta_c$, all $\sigma$-KMS$_\beta$-states on $\NT(X)$ are of finite type, so we get an affine bijection between the $\sigma$-KMS$_\beta$-states on $\NT(X)$ and the positive traces $\tau_0$ on $A$ such that $\sum_{p\in P}N(p)^{-\beta}\Tr^p_{\tau_0}(1)=1$.
\end{corollary}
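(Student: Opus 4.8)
The plan is to deduce this from Corollary~\ref{cor:finite-class2} once we know that condition~\eqref{eq:all-finite} holds at the given $\beta$. So the only thing to check is that, for $\beta>\beta_c$, the series $\sum_{p\in P}N(p)^{-\beta}\Tr^p_\tau(1)$ is finite for \emph{every} tracial state $\tau$ on $A$, not merely for those arising as restrictions of KMS-states.

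First I would record the monotonicity that makes the set
\[
S=\Bigl\{\gamma\in\R\ :\ \textstyle\sum_{p\in P}N(p)^{-\gamma}\Tr^p_\tau(1)<\infty\ \text{for every tracial state}\ \tau\ \text{on}\ A\Bigr\}
\]
an up-set: if $\gamma_0\in S$ and $\gamma>\gamma_0$, then for a fixed tracial state $\tau$ the finiteness of $\sum_pN(p)^{-\gamma_0}\Tr^p_\tau(1)$ forces each $\Tr^p_\tau(1)$ to be finite, and since $N(p)\ge1$ we have $N(p)^{-\gamma}\Tr^p_\tau(1)=N(p)^{-(\gamma-\gamma_0)}\bigl(N(p)^{-\gamma_0}\Tr^p_\tau(1)\bigr)\le N(p)^{-\gamma_0}\Tr^p_\tau(1)$ termwise, so $\gamma\in S$. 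This is the only place where the standing hypothesis $N(p)\ge1$ is used. By Definition~\ref{def:critical} we have $\beta_c=\inf S$, and $\beta_c<+\infty$ says $S\ne\emptyset$. Hence for $\beta>\beta_c$ there is some $\beta'\in S$ with $\beta'<\beta$, and the up-set property gives $\beta\in S$; that is, \eqref{eq:all-finite} holds at $\beta$.

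Now Corollary~\ref{cor:finite-class2} applies verbatim: it yields that every $\sigma$-KMS$_\beta$-state on $\NT(X)$ is of finite type and that the passage $\phi\mapsto\tau_0$ (given by~\eqref{eq:tau0} applied to $\tau=\phi|_A$, with inverse~\eqref{eq:Gibbs}) is an affine bijection onto the positive traces $\tau_0$ on $A$ with $\sum_{p\in P}N(p)^{-\beta}\Tr^p_{\tau_0}(1)=1$, which is exactly the assertion. I do not expect any genuine obstacle here; the argument is essentially bookkeeping around Corollary~\ref{cor:finite-class2}, the one real input being the termwise monotonicity above. It is worth noting that this argument breaks down at $\beta=\beta_c$ itself, where \eqref{eq:all-finite} may fail, consistently with the phase transition located at $\beta_c$ in Proposition~\ref{prop:phase-transition}.
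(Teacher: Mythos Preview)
Your proof is correct and follows the same approach as the paper, which simply notes that Corollary~\ref{cor:finite-class2} contains this statement once Definition~\ref{def:critical} is unpacked. You make explicit the one non-tautological step---that the set $S$ is an up-set thanks to $N(p)\ge1$---which the paper only addresses implicitly via the remark on abscissas of convergence following the definition of $\beta_c$.
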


Let us now discuss assumption~\eqref{eq:all-finite} in more detail.  Recall from Section~\ref{sec:restriction} that we denote by $T(A)\subset A^*$ the linear span of tracial states. If the trace $F_p\tau$ is finite for any tracial state~$\tau$, then~$F_p$ extends by linearity to an operator on~$T(A)$. Furthermore, since the operator~$F_p$ is positive, standard arguments show that it must then be bounded.

The only realistic general assumption that guarantees boundedness of $F_p$ seems to be that each right C$^*$-Hilbert $A$-module $X_p$ is isomorphic to a direct summand of $A^{m(p)}$ for some $m(p)\in\N$. Then $\|F_p\|\le m(p)$.

\begin{remark}\label{rem:Parseval}
If $A$ is unital, a right C$^*$-Hilbert $A$-module $Y$ is isomorphic to a direct summand of $A^m$ if and only if $Y$ is finitely generated, and the smallest such $m$ is the minimal number of generators, equivalently, the minimal cardinality of a Parseval frame in $Y$. Briefly, this can be seen as follows. Assume $Y$ is generated by $\xi_1,\dots,\xi_m$, so that we have a surjective $A$-module map $S\colon A^m\to Y$, $Se_i=\xi_i$, where $(e_i)^m_{i=1}$ is the standard frame in $A^m$. Then $S$ is adjointable, so by \cite[Theorem~3.2]{lan} the submodule $\ker S\subset A^m$ is complemented. It follows that $Y$ is isomorphic, hence isometrically isomorphic, to a complemented submodule of~$A^m$, and the projection of~$(e_i)^m_{i=1}$ onto this submodule is a Parseval frame of cardinality $m$.
\end{remark}

Therefore, although formally Corollary~\ref{cor:finite-class2} does not require anything special about the product system $X$, in practice it mostly applies to the ``finite rank'' case. For such systems the following result shows that, under additional assumptions, we see a change in the behavior of KMS-states at $\beta_c$, justifying the name ``critical inverse temperature''. This result is partly motivated by~\cite[Theorem~2.5]{PWY} and generalizes~\cite[Proposition~4.5(1)]{BLRS}, and its proof uses ideas from both papers.

\begin{proposition}\label{prop:phase-transition}
Assume $(G,P)$ is a quasi-lattice ordered group and $X$ is a compactly aligned product system of C$^*$-correspondences over $P$, with $X_e=A$. Consider the dynamics \eqref{eq:dynamics1} on~$\NT(X)$ defined by a homomorphism $N\colon P\to (0,+\infty)$. Assume in addition that
\begin{itemize}
\item[(a)] the monoid $P$ has a finite generating set $S\subset P\setminus\{e\}$;
\item[(b)] $N(p)>1$ for all $p\in P\setminus\{e\}$;
\item[(c)] $A$ is unital;
\item[(d)] the right C$^*$-Hilbert $A$-modules $X_p$ are finitely generated.
\end{itemize}
Then
\begin{itemize}
\item[(i)] the critical inverse temperature $\beta_c$ is the smallest number, possibly $-\infty$, such that the operator
$$
1+\sum_{\emptyset\neq K\subset S} (-1)^{\vert K\vert} N(q_K)^{-\beta}F_{q_K}
$$
on $T(A)$ is invertible for all $\beta>\beta_c$;
\item[(ii)] if $\beta_c\ne-\infty$, then there exists a tracial state $\tau$ on $A$ which satisfies~\eqref{eq:subinvariance} for $\beta=\beta_c$ and is of infinite type; in particular, $\sum_{p\in P}N(p)^{-\beta_c}\Tr_\tau^p(1)=\infty$.
\end{itemize}
\end{proposition}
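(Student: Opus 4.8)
The plan is to settle (i) for $\beta>\beta_c$ by writing down $T_\beta^{-1}$ explicitly, then to prove the real content --- that $T_{\beta_c}$ fails to be invertible when $\beta_c\neq-\infty$ --- and finally to produce the critical state of (ii) by a weak$^*$-limit argument. Throughout, write $T_\beta=1+\sum_{\emptyset\neq K\subset S}(-1)^{|K|}N(q_K)^{-\beta}F_{q_K}$ for the operator in the statement and, whenever \eqref{eq:all-finite} holds, $Z_\beta\tau=\sum_{p\in P}N(p)^{-\beta}F_p\tau$ for $\tau\in T(A)$. Since every $X_p$ is finitely generated over the unital algebra $A$, each $F_p$ is a bounded positive operator on $T(A)$ and $F_pF_q=F_{pq}$ by \eqref{eq:induction2}; hence $\beta\mapsto T_\beta$ is norm-analytic, and when \eqref{eq:all-finite} holds $Z_\beta$ is a bounded positive operator with $\|Z_\beta\tau\|=\sum_pN(p)^{-\beta}\Tr^p_\tau(1)$ for tracial states $\tau$. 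As $S$ is finite and $N(s)>1$ for $s\in S$, the set $\{p\mid N(p)\le t\}$ is finite for every $t$, so a crude estimate gives $\beta_c<\infty$; moreover $\Omega_S:=\bigcap_{s\in S}(P\setminus sP)=\{e\}$ because $S$ generates $P$. By positivity, \eqref{eq:all-finite} holds at $\beta$ iff $g(\beta):=\sup_\tau\sum_pN(p)^{-\beta}\Tr^p_\tau(1)<\infty$, and $\beta_c$ is the infimum of the (up-)set $\{\beta\mid g(\beta)<\infty\}$ since $N(p)\ge1$.

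\emph{Invertibility above $\beta_c$.} Fix $\beta>\beta_c$. Applying the finitely additive $T(A)$-valued measure $\mu$ with $\mu(pP)=N(p)^{-\beta}F_p\tau$ from Section~\ref{sec:restriction}, together with $\Omega_S=\{e\}$, identity \eqref{eq:mu}, and Lemma~\ref{lem:atoms} applied to the scalar measures $\mu_a$, one obtains $\mu(\{p\})=N(p)^{-\beta}F_pT_\beta\tau$ and $\mu(P)=\tau$, whence $Z_\beta T_\beta=\id$; a parallel computation, using $N(q_Kp)=N(q_K)N(p)$, that $p\mapsto q_Kp$ is a bijection $P\to q_KP$, and the identity $\{K\subset S\mid q_K\le r\}=2^{\{s\in S\mid s\le r\}}$, gives $T_\beta Z_\beta=\id$. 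Hence $T_\beta$ is invertible for every $\beta>\beta_c$, with $T_\beta^{-1}=Z_\beta\ge0$ and $\|T_\beta^{-1}\|=g(\beta)$; if $\beta_c=-\infty$ this holds for all $\beta\in\R$, which is (i) in that case.

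\emph{The crux, and conclusion of (i).} Assume $\beta_c\neq-\infty$ and, for contradiction, that $g(\beta_c)<\infty$. Then the previous computation applies at $\beta_c$, so $T_{\beta_c}$ is invertible, and by norm-continuity plus a Neumann series argument $T_\beta$ is invertible for $\beta\in(\beta_c-\eps,\beta_c)$ for some $\eps>0$. For such $\beta$ one has $g(\beta)=\infty$, yet I claim that for each tracial state $\tau$ the increasing net of positive partial sums $Z^{(F)}_\beta\tau=\sum_{p\in F}N(p)^{-\beta}F_p\tau$ stays bounded, contradicting this. The mechanism is the identity $Z^{(F)}_\beta T_\beta=\sum_{r\in P}\bigl(\chi_F(r)+\sum_{\emptyset\neq K\subset S}(-1)^{|K|}\chi_{Fq_K}(r)\bigr)N(r)^{-\beta}F_r$, valid for all finite $F\subset P$ (from $F_pF_{q_K}=F_{pq_K}$ and $N(pq_K)=N(p)N(q_K)$); when $F=\{p\mid N(p)\le t\}$, which is downward closed since $N(p)\ge1$, the coefficient equals $\chi_{\{e\}}(r)$ for every $r\in F$, so $Z^{(F)}_\beta T_\beta=\id+E_F$ with $E_F$ supported on a thin shell $\{r\mid t<N(r)\le tM_S\}$, $M_S=\max_{\emptyset\neq K\subset S}N(q_K)$, and coefficients bounded by $2^{|S|}$. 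Writing $Z^{(F)}_\beta=(\id+E_F)T_\beta^{-1}$ and adding up the shell contributions along the exhaustion $F_m=\{N(p)\le M_S^m\}$ (whose boundary shells are pairwise disjoint) yields the desired uniform bound. Turning the shell estimates into this bound --- which needs more than the crude estimate on the coefficients of $E_F$ --- is the main obstacle. Granting it, $g(\beta_c)=\infty$, so \eqref{eq:all-finite} fails at $\beta_c$ and $T_{\beta_c}$ is not invertible; combined with the second paragraph, no $\beta_0<\beta_c$ satisfies ``$T_\beta$ invertible for all $\beta>\beta_0$'', so $\beta_c$ is the smallest such number, which is (i).

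\emph{The critical state.} Since $g(\beta_c)=\infty$, fix a tracial state $\omega$ on $A$ with $\sum_pN(p)^{-\beta_c}\Tr^p_\omega(1)=\infty$. For $\beta>\beta_c$ put $\tau_\beta=\|Z_\beta\omega\|^{-1}Z_\beta\omega$; this is a tracial state of the form $\sum_pN(p)^{-\beta}F_p\tau_0$, hence satisfies \eqref{eq:subinvariance} at $\beta$ by the discussion at the end of Section~\ref{sec:restriction}, and $T_\beta\tau_\beta=\|Z_\beta\omega\|^{-1}\omega$ by the second paragraph. By monotone convergence $\|Z_\beta\omega\|=\sum_pN(p)^{-\beta}\Tr^p_\omega(1)\nearrow\infty$ as $\beta\downarrow\beta_c$. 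As $A$ is unital the tracial states form a weak$^*$-compact set, so along a subnet $\beta\downarrow\beta_c$ we have $\tau_\beta\to\tau$ weak$^*$ for some tracial state $\tau$. Each $X_{q_K}$, being finitely generated over the unital $A$, admits a finite Parseval frame (Remark~\ref{rem:Parseval}), so $\omega'\mapsto\Tr^{q_K}_{\omega'}(a)$ is weak$^*$-continuous; since the $\beta$-dependence in \eqref{eq:subinvariance} is only through the explicit factors $N(q_K)^{-\beta}$, passing to the limit shows that $\tau$ satisfies \eqref{eq:subinvariance} at $\beta_c$. Moreover $T_{\beta_c}$ is weak$^*$-continuous and $T_{\beta_c}\tau_\beta=\|Z_\beta\omega\|^{-1}\omega+(T_{\beta_c}-T_\beta)\tau_\beta\to0$ in norm, because $\|Z_\beta\omega\|^{-1}\to0$ and $\|T_{\beta_c}-T_\beta\|\to0$; hence $T_{\beta_c}\tau=0$. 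As $S$ is finite, the limit in \eqref{eq:tau0} at $\beta_c$ equals $T_{\beta_c}\tau=0$, so $\tau$ is of infinite type. Finally a state cannot be of finite and of infinite type simultaneously, so $\tau$ is not of finite type, and Theorem~\ref{thm:finite-characterization} forces $\sum_pN(p)^{-\beta_c}\Tr^p_\tau(1)=\infty$. This proves (ii).
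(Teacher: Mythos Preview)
Your proof has a genuine gap, which you yourself flag: the ``shell'' argument in the ``crux'' paragraph is incomplete, and this is precisely where the main idea of the proof lives. You want to show that if $T_\beta$ is invertible on some interval $(\beta_c-\eps,\beta_c]$, then $g(\beta)<\infty$ there. Your identity $Z^{(F)}_\beta=(\id+E_F)T_\beta^{-1}$ is correct, but bounding $\|Z^{(F)}_\beta\|$ uniformly in $F$ requires bounding $\|E_F\|$, and $E_F$ is a signed combination of terms $N(r)^{-\beta}F_r$ over a shell. The crude bound $\|E_F\|\le 2^{|S|}\sum_{r\text{ in shell}}N(r)^{-\beta}\|F_r\|$ is useless: summing these over the exhaustion just reproduces the possibly divergent series you are trying to control. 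There is no evident cancellation mechanism that would make the telescoping work, and indeed this step is not a bookkeeping issue but the analytic heart of the proposition.

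The paper resolves this by passing to the complex variable and invoking \emph{Landau's theorem} for Dirichlet series with nonnegative coefficients (see Hardy--Riesz). The point is that $z\mapsto T_z$ is analytic, so on any disc $D$ where $T_z$ is invertible the map $z\mapsto T_z^{-1}$ is analytic as well. For fixed positive $\tau$ and $a\in A_+$, the Dirichlet series $\sum_p N(p)^{-\beta}(F_p\tau)(a)$ has nonnegative terms, converges for large real $\beta$, and on that region equals $(T_\beta^{-1}\tau)(a)$; Landau's theorem then says that the real abscissa of convergence is a genuine singularity of the analytic continuation. Hence wherever $T_z^{-1}$ extends analytically along the real axis, the series must already converge. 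This single classical fact replaces your entire shell argument and gives immediately that the smallest $\beta_1$ with $T_\beta$ invertible for $\beta>\beta_1$ satisfies $\beta_c\le\beta_1$. The remaining parts of your argument (the explicit inverse $Z_\beta$ above $\beta_c$, and the weak$^*$-limit construction of the critical state) are essentially the same as the paper's, though the paper obtains $\|T_{t_n}^{-1}\|\to\infty$ via a short Cauchy-sequence argument and then uses the uniform boundedness principle to find a suitable $\tau_0$, rather than first proving $g(\beta_c)=\infty$ and picking $\omega$ directly.
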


\bp
Consider the operators $S_\beta$ and $T_\beta$ on $T(A)$ defined by
$$
S_\beta=\sum_{p\in P}N(p)^{-\beta}F_p, \ \ T_\beta=1+\sum_{\emptyset\neq K\subset S} (-1)^{\vert K\vert} N(q_K)^{-\beta}F_{q_K}.
$$
The operator $T_\beta$ is well-defined for all real $\beta$, as well as for complex ones, in which case we write $T_z$ to reserve $\beta$ for the real numbers. The operator $S_\beta$ is well-defined for $\beta>\beta_c$, if the convergence of the series above is understood pointwise. Note that the series does converge absolutely for $\beta$ large enough, since $P$ is assumed to be finitely generated and so an upper bound for $\sum_{p\in P}N(p)^{-\beta}\|F_p\|$ can be obtained by looking at the free monoid $\FF^+_S$ with generators $g_s$, $s\in S$, and the series
$$
\sum_{g=g_{s_1}\dots g_{s_n}\in\FF_S^+}N(s_1)^{-\beta}\dots N(s_n)^{-\beta}\|F_{s_1}\|\dots \|F_{s_n}\|.
$$

By Corollary~\ref{cor:finite-infinite}(ii), for $\beta>\beta_c$, we have $T_\beta S_\beta\tau_0=\tau_0$ for any positive trace $\tau_0$ on $A$ (this is also easy to see directly), hence for all $\tau_0\in T(A)$. Since by assumption (b) we have $N(q_K)>1$ for all $\emptyset\neq K\subset S$ with $q_K<\infty$, for large~$\beta$ the operator $T_\beta$ is close to $1$ and hence is invertible. It follows that $S_\beta= T_\beta^{-1}$ for all $\beta$ large enough.

\smallskip

Let now $\beta_1$ be the smallest number, possibly $-\infty$, such that $T_\beta$ is invertible for all $\beta>\beta_1$. Let $\beta_2\ge\beta_1$ be the smallest number such that $S_\beta$ is well-defined and $S_\beta=T_\beta^{-1}$ for all $\beta>\beta_2$. We claim that $\beta_2=\beta_1$.

Assume this is not true. Then there is $\eps>0$ such that $\beta_2-\eps>\beta_1$ and $T_z$ is invertible for~$z$ in the disc $D=\{z:|z-\beta_2|<\eps\}$. Since $T_z$ is analytic in $z$, the operator-valued map $D\ni z\mapsto T_z^{-1}$ is analytic. Take a positive trace~$\tau$ on~$A$ and $a\in A_+$. Then the Dirichlet series $\sum_{p\in P}N(p)^{-\beta}(F_p\tau)(a)$ converges for all $\beta>\beta_2$ and extends by analyticity to the function $D\ni z\mapsto  (T_z^{-1}\tau)(a)$. By the proof of Landau's theorem, see~\cite[Theorem~10]{HR}, it follows that the Dirichlet series converges to this function for all $\beta\in D\cap \R$. Therefore the series $\sum_p N(p)^{-\beta}F_p\tau$ converges in the weak$^*$ topology, hence in norm, to $T_\beta^{-1}\tau$ for all $\beta>\beta_2-\eps$. Since this is true for all $\tau$, this contradicts the definition of $\beta_2$. Thus $\beta_2=\beta_1$. In particular, $\beta_c\le\beta_1$.

\smallskip

If $\beta_1=-\infty$, there is nothing left to prove. So assume $\beta_1$ is finite. Then $T_{\beta_1}$ is not invertible, as otherwise $T_\beta$ would also be invertible for $\beta$ in a neighbourhood of $\beta_1$, which is not possible. Take a sequence of real numbers $t_n$ such that $t_n\downarrow\beta_1$. We claim that $\|T_{t_n}^{-1}\|\to\infty$.

Assume this is not true. Then by passing to a subsequence we may assume that the sequence $(T_{t_n}^{-1})_n$ is bounded. But then the identity
$$
T_{t_n}^{-1}-T_{t_m}^{-1}=T_{t_n}^{-1}(T_{t_m}-T_{t_n})T_{t_m}^{-1}
$$
shows that $(T_{t_n}^{-1})_n$ is a Cauchy sequence converging to an operator $S$, and by passing to the limit in the identities $T_{t_n}T_{t_n}^{-1}=T_{t_n}^{-1}T_{t_n}=1$ we get $T_{\beta_1}S=ST_{\beta_1}=1$, which is a contradiction.

By the uniform boundedness principle it follows then that there exists $\tau_0\in T(A)$ such that the sequence $(\|T_{t_n}^{-1}\tau_0\|)_n$ is unbounded. Since $T(A)$ is spanned by positive traces, we may assume that $\tau_0$ is a tracial state. By passing to a subsequence we may also assume that $\|T_{t_n}^{-1}\tau_0\|\to\infty$.

Consider the tracial states
$$
\tau_n=\frac{T_{t_n}^{-1}\tau_0}{\|T_{t_n}^{-1}\tau_0\|}.
$$
Let $\tau$ be a weak$^*$ cluster point of $(\tau_n)_n$. Since $A$ is unital, $\tau$ is a tracial state. Next, since $T_{t_n}^{-1}=S_{t_n}=\sum_{p\in P}N(p)^{-t_n}F_p$, the trace $\tau_n$ satisfies condition~\eqref{eq:subinvariance} for $\beta=t_n$. Since the right C$^*$-Hilbert $A$-modules are finitely generated and therefore admit finite Parseval frames, the operators $F_p$ are obviously weakly$^*$ continuous. From this we may conclude that $\tau$ satisfies condition~\eqref{eq:subinvariance} for $\beta=\beta_1$.

Since $T_{t_n}\tau_n=\|T_{t_n}^{-1}\tau_0\|^{-1}\tau_0$,
by passing to the limit we see also that $T_{\beta_1}\tau=0$, that is, the trace $\tau$ is of infinite type. By Theorem~\ref{thm:finite-characterization} it follows that $\sum_{p\in P}N(p)^{-\beta_1}\Tr_\tau^p(1)=\infty$. Since we already know that $\beta_c\le\beta_1$, this shows that $\beta_c=\beta_1$ and completes the proof of the proposition.
\ep

From the proof of the proposition we get the following result complementing Corollary~\ref{cor:finite-class3}.

\begin{corollary}\label{cor:finite-class4}
In the setting of Proposition~\ref{prop:phase-transition}, for every $\beta>\beta_c$, the map $\phi\mapsto \phi|_A$ defines a one-to-one correspondence between the $\sigma$-KMS$_\beta$-states on $\NT(X)$ and the tracial states~$\tau$ on~$A$ such that
\begin{equation}\label{eq:subinvariance2.5}
\tau+\sum_{\emptyset\neq K\subset S} (-1)^{\vert K\vert} N(q_K)^{-\beta}F_{q_K}\tau\ge0.
\end{equation}
\end{corollary}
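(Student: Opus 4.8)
The plan is to derive Corollary~\ref{cor:finite-class4} from the finite-type classification of Corollary~\ref{cor:finite-class3} together with the analytic facts established inside the proof of Proposition~\ref{prop:phase-transition}. Injectivity of $\phi\mapsto\phi|_A$ is the easy half: since $\beta>\beta_c$, every $\sigma$-KMS$_\beta$-state on $\NT(X)$ is of finite type by Corollary~\ref{cor:finite-class3}, and by the last assertion of Corollary~\ref{cor:finite-class} a $\sigma$-KMS$_\beta$-state of finite type is completely determined by its restriction to $A$. Moreover, if $\phi$ is a $\sigma$-KMS$_\beta$-state and $\tau=\phi|_A$, then by Theorem~\ref{thm:subinvariance} $\tau$ is a tracial state satisfying the whole system~\eqref{eq:subinvariance}; specializing that system to the finite subset $J=S$ and rewriting $\Tr^{q_K}_\tau$ as $F_{q_K}\tau$ gives exactly~\eqref{eq:subinvariance2.5}. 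Hence the map $\phi\mapsto\phi|_A$ is a well-defined injection into the set of tracial states satisfying~\eqref{eq:subinvariance2.5}, and everything comes down to proving surjectivity.

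For surjectivity I would reuse the operators $S_\beta=\sum_{p\in P}N(p)^{-\beta}F_p$ and $T_\beta=1+\sum_{\emptyset\ne K\subset S}(-1)^{|K|}N(q_K)^{-\beta}F_{q_K}$ on $T(A)$ from the proof of Proposition~\ref{prop:phase-transition}. The crucial input extracted there is that for every $\beta>\beta_c$ the series defining $S_\beta$ converges (pointwise on $A_+$, and on $1$ since $\beta>\beta_c$, hence everywhere on $T(A)$), $T_\beta$ is invertible on $T(A)$, and $S_\beta=T_\beta^{-1}$ — this is precisely the content of the equalities $\beta_c=\beta_1=\beta_2$ in that proof. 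Now given a tracial state $\tau$ satisfying~\eqref{eq:subinvariance2.5}, set $\tau_0:=T_\beta\tau$; condition~\eqref{eq:subinvariance2.5} says exactly that $\tau_0$ is a positive trace on $A$. Then $\tau=T_\beta^{-1}\tau_0=S_\beta\tau_0=\sum_{p\in P}N(p)^{-\beta}F_p\tau_0$, so $\tau$ is of finite type, and evaluating at $1$ (using that $A$ is unital and $\tau(1)=1$) gives $\sum_{p\in P}N(p)^{-\beta}\Tr^p_{\tau_0}(1)=1$. By Corollary~\ref{cor:finite-class3} (equivalently, by Corollary~\ref{cor:finite-infinite}(ii) combined with the generalized Gibbs state construction of Section~\ref{sec:restriction}) there is then a $\sigma$-KMS$_\beta$-state $\phi=\Tr^{\F(X)}_{\tau_0}(\Lambda(\cdot)D_N^{-\beta})$ on $\NT(X)$ with $\phi|_A=\tau$, which finishes the proof.

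I expect the only real obstacle to be bookkeeping rather than a new idea: one must be careful that the identity $S_\beta=T_\beta^{-1}$ on all of $T(A)$ holds for \emph{every} $\beta>\beta_c$, not just for $\beta$ large, and this is exactly the point of the Landau-theorem argument in the proof of Proposition~\ref{prop:phase-transition} establishing $\beta_2=\beta_1=\beta_c$; once that is invoked, the rest is formal. As a byproduct the corollary shows that for $\beta>\beta_c$ the single inequality~\eqref{eq:subinvariance2.5} over the fixed finite generating set $S$ is equivalent to the full system~\eqref{eq:subinvariance}, since any trace of the form $\sum_{p\in P}N(p)^{-\beta}F_p\tau_0$ with $\tau_0\ge0$ automatically satisfies~\eqref{eq:subinvariance} by the discussion following Theorem~\ref{thm:subinvariance}.
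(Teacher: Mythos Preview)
Your proposal is correct and follows essentially the same route as the paper: both arguments hinge on the identity $S_\beta=T_\beta^{-1}$ for $\beta>\beta_c$ established in the proof of Proposition~\ref{prop:phase-transition}, together with the finite-type classification of Corollary~\ref{cor:finite-class3}. The paper's version is slightly more compressed—it simply recasts Corollary~\ref{cor:finite-class3} as saying that $\phi\mapsto\phi|_A$ is a bijection onto tracial states of the form $S_\beta\tau_0$ with $\tau_0\ge0$, and then observes that this is exactly the set of tracial states with $T_\beta\tau\ge0$—whereas you separate injectivity, necessity of~\eqref{eq:subinvariance2.5}, and surjectivity into distinct steps; but the content is the same.
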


In particular, we see that for $\beta>\beta_c$ condition~\eqref{eq:subinvariance} reduces to one inequality corresponding to $J=S$.

\bp
In the notation from the proof of Proposition~\ref{prop:phase-transition}, Corollary~\ref{cor:finite-class3} states that the map $\phi\mapsto\phi|_A$ defines a one-to-one correspondence between the $\sigma$-KMS$_\beta$-states on $\NT(X)$ and the tracial states on $A$ of the form $S_\beta\tau_0$, with $\tau_0\ge0$.
On the other hand, condition~\eqref{eq:subinvariance2.5} means that $T_\beta\tau\ge0$. Therefore we have to prove that a tracial state $\tau$ equals $S_\beta\tau_0$ for a positive trace~$\tau_0$ if and only if the trace $T_\beta\tau$ is positive. But this is clear, since $S_\beta=T_\beta^{-1}$.
\ep

\begin{example}\label{ex:free-abelian-critical}
In the setting of Proposition~\ref{prop:phase-transition} consider $P=\Z^n_+$. In this case the operator from part (i) of the proposition is
$$
\prod^n_{i=1}(1-N(e_i)^{-\beta}F_i),
$$
where, as before, $F_i=F_{e_i}$. Therefore $\beta_c$ is the largest number such that $N(e_i)^{\beta_c}$ belongs to the spectrum of $F_i$ for some $i$. By~\cite[Theorem~2.5]{PWY} we know that the largest positive eigenvalue of~$F_i$ coincides with the spectral radius of $F_i$. (To be more precise, the result in~\cite{PWY} is formulated for a more restricted class of operators, but its proof works for any weakly$^*$ continuous positive operator on $T(A)$.) We conclude that
\begin{equation}\label{eq:critical-beta}
\beta_c=\max_{1\le i\le n}\frac{\log r(F_i)}{\log N(e_i)},
\end{equation}
where $r(F_i)$ denotes the spectral radius of $F_i$.

This equality shows that whenever $r(F_i)>1$ for all $i$, the most natural dynamics on $\NT(X)$ is given by $N(e_i)=r(F_i)$. This type of dynamics was first singled out in~\cite{Y} and is now called the \emph{preferred dynamics}~\cite{aHLRS}.
\end{example}

Returning to the more general setting of Corollary~\ref{cor:finite-class2}, observe that the uniform boundedness principle implies that, under the assumptions of that corollary, the partial sums $\sum_{p\in F}N(p)^{-\beta}F_p$ are uniformly bounded when $F$ runs over finite subsets of $P$. We do not know whether this implies that the series
\begin{equation}\label{eq:abszeta}
\sum_{p\in P}N(p)^{-\beta}\|F_p\|
\end{equation}
is convergent. Similarly, we do not know whether the critical inverse temperature (defined if $N(p)\ge1$ for all $p$) coincides with the abscissa of convergence of the series~\eqref{eq:abszeta} in general. Equality~\eqref{eq:critical-beta} implies that these two numbers do coincide for the monoids $P=\Z^n_+$ under the assumptions of Proposition~\ref{prop:phase-transition}.

\begin{remark}\label{rem:finite-intervals}
Series~\eqref{eq:abszeta} can be convergent for nontrivial reasons only under some additional assumptions on $P$. For instance, if we assume that $F_p\ne0$ for all $p$ (and this is the case, e.g., if every right C$^*$-Hilbert module $X_p$ is full and $T(A)\ne0$), then finiteness of \eqref{eq:abszeta} for some $\beta\in\R$ implies that $P$ has finite initial intervals $\{q\mid q\le p\}$. Indeed, otherwise there exist $p$, $q_n$ and $r_n$, such that $p=q_nr_n$ and the elements $q_n$ are all different. Then
$$
N(p)^{-\beta}\|F_p\|\le N(q_n)^{-\beta}\|F_{q_n}\|N(r_n)^{-\beta}\|F_{r_n}\|,
$$
and since both $N(q_n)^{-\beta}\|F_{q_n}\|$ and $N(r_n)^{-\beta}\|F_{r_n}\|$ must converge to zero, we get a contradiction.
\end{remark}

\bigskip

\section{Gauge-invariance}\label{sec:gauge-invariance}

One of the obvious shortcomings of Theorem~\ref{thm:general} is that it deals only with gauge-invariant KMS-states. By the results of the previous section we know, however, that for large $\beta$ all KMS$_\beta$-states can happen to be of finite type and, in particular, gauge-invariant. The following generalization of~\cite[Proposition~2.2]{BLRS} can sometimes be used to establish gauge-invariance of KMS-states of infinite type.

\begin{proposition}\label{prop:automatic-gauge}
Assume $(G,P)$ is a quasi-lattice ordered group and $X$ is a compactly aligned product system of C$^*$-correspondences over $P$, with $X_e=A$. Consider the dynamics \eqref{eq:dynamics1} on~$\NT(X)$ defined by a homomorphism $N\colon P\to (0,+\infty)$. Given a $\sigma$-KMS$_\beta$-state $\phi$ for some $\beta\in\R$, consider the trace $\tau=\phi|_A$ and assume that it satisfies the following property: for any strictly increasing sequence $\{p_n\}^\infty_{n=1}$ in $P$, we have
\begin{equation} \label{eq:sup}
\lim_{n\to\infty}N(p_n)^{-\beta}\Tr_\tau^{p_n}(1)=0.
\end{equation}
Then $\phi$ is gauge-invariant.
\end{proposition}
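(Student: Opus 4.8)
The plan is to show that the value $\phi(i_X(\xi)i_X(\eta)^*)$ vanishes whenever $\xi\in X_p$, $\eta\in X_q$ and $p\ne q$; since such elements span a dense subspace of $\NT(X)_{pq^{-1}}$, this forces $\phi$ to vanish on $\NT(X)_g$ for every $g\ne e$, hence $\phi=\phi\circ E$, i.e. $\phi$ is gauge-invariant. By the KMS-condition and equation~\eqref{eq:Toeplitz}-type manipulations it suffices to treat the case $p\le q$ (the case $q\le p$ is symmetric via adjoints, and if $p,q$ are incomparable one reduces to a comparable pair after multiplying by an approximate unit of $\K(X_p)$ and using Nica covariance~\eqref{eq:Nica0}-\eqref{eq:Nica}; when $p\vee q=\infty$ the element is already $0$). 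So assume $p<q$, write $r=p^{-1}q\ne e$, and by~\eqref{eq:Toeplitz} we have $i_X(\xi)^*i_X(\eta)=i_X(\ell(\xi)^*\eta)$ with $\ell(\xi)^*\eta\in X_r$; thus, modulo a left multiplication by $i_X(\xi)$ and the KMS-relation, the task reduces to showing $\phi(i_X(\zeta))=0$ for all $\zeta\in X_r$, $r\ne e$.

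To estimate $\phi(i_X(\zeta))$ for $\zeta\in X_r$ with $r\ne e$, the key is a Cauchy--Schwarz estimate against the projections $f_r$. Using the representation $L\colon B_P\to B(H_\phi)$ from the proof of Theorem~\ref{thm:subinvariance}, note that $\pi_\phi(i_X(\zeta))v_\phi\in f_rH_\phi$, so
\[
|\phi(i_X(\zeta))|=|(\pi_\phi(i_X(\zeta))v_\phi,v_\phi)|=|(\pi_\phi(i_X(\zeta))v_\phi,f_rv_\phi)|\le\|i_X(\zeta)\|\,\mu_1(rP)^{1/2},
\]
where $\mu_1(rP)=(\pi_\phi(1)f_rv_\phi,v_\phi)=N(r)^{-\beta}\Tr^r_\tau(1)$ by~\eqref{eq:mua}. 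Hence it is enough to show that $\inf_{r\ne e}N(r)^{-\beta}\Tr^r_\tau(1)$ applied along suitable cofinal families is zero — more precisely, that for every $\zeta\in X_r$ we can replace $r$ by larger and larger elements and push the estimate to $0$. Concretely, given $\zeta\in X_r$ and any $s\in P$, one has $i_X(\zeta)=i_X(\zeta)\,i_X(\eta_i)i_X(\eta_i)^*+o(1)$-type approximations coming from an approximate unit of $\K(X_s)$ (using essentiality of $X_s$ and~\eqref{eq:Toeplitz}), which rewrites $\phi(i_X(\zeta))$ in terms of $\phi$ evaluated on elements supported on $rsP$; iterating this along a strictly increasing chain $r<rs_1<rs_1s_2<\cdots$ in $P$ (possible since $r\ne e$, so we may keep multiplying by generators) and applying the bound above at each stage, the controlling quantity becomes $N(p_n)^{-\beta}\Tr^{p_n}_\tau(1)$ for a strictly increasing sequence $\{p_n\}$, which tends to $0$ by hypothesis~\eqref{eq:sup}. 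This yields $\phi(i_X(\zeta))=0$.

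The main obstacle is making the iteration step clean: one must verify that multiplying by approximate units of $\K(X_{s_n})$ and regrouping genuinely expresses $\phi(i_X(\zeta))$ (or rather a quantity bounded below by $|\phi(i_X(\zeta))|$ up to a fixed constant) as something dominated by $N(p_n)^{-\beta}\Tr^{p_n}_\tau(1)^{1/2}$, uniformly in the choices made. The honest way to organize this is to work directly in the GNS space: show $\|f_rv_\phi-f_{rs}v_\phi\|$-type quantities are controlled, or better, that $\pi_\phi(i_X(\zeta))v_\phi$ lies in $f_{p_n}H_\phi$ for every $p_n\ge r$ in the chain — which is immediate from $f_{p_n}\le f_r$ together with $f_{p_n}\pi_\phi(i_X(\zeta))v_\phi=\pi_\phi(i_X(\zeta))v_\phi$ once one checks $\pi_\phi(i_X(\zeta))v_\phi\in f_{r}H_\phi$ is actually in $f_{p}H_\phi$ for all $p$ with $r\le p$; the latter needs that $i_X(\zeta)=\lim_i i_X(\zeta)i_X^{(s)}(u_i)$ for $s=r^{-1}p$, which is exactly essentiality of $X_s$ combined with~\eqref{eq:Toeplitz}. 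Granting this, $|\phi(i_X(\zeta))|\le\|\zeta\|\,(\pi_\phi(1)f_{p_n}v_\phi,v_\phi)^{1/2}=\|\zeta\|\,(N(p_n)^{-\beta}\Tr^{p_n}_\tau(1))^{1/2}\to0$, and we are done. (For general quasi-free dynamics the same argument runs with the modular-involution correction noted after Theorem~\ref{thm:subinvariance}.)
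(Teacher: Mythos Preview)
Your argument has a genuine gap at the iteration step. The claim that $\pi_\phi(i_X(\zeta))v_\phi\in f_{p}H_\phi$ for every $p\ge r$, which you justify via $i_X(\zeta)=\lim_i i_X(\zeta)\,i_X^{(s)}(u_i)$ for $s=r^{-1}p$, is false. An approximate unit $(u_i)_i$ of $\K(X_s)$ satisfies $i_X^{(s)}(u_i)\,i_X(\xi)\to i_X(\xi)$ for $\xi\in X_s$, but there is no reason for it to act as a \emph{right} approximate unit on $i_X(X_r)$; in the Fock picture $\ell(\zeta)\,\ell^{(s)}(u_i)$ converges strongly to $\ell(\zeta)f_s$, not to $\ell(\zeta)$, since $\ell^{(s)}(u_i)$ annihilates every $X_q$ with $q\not\ge s$. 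Concretely, take $P=\Z_+$, $A=X_1=\C$, $N(1)=\lambda>1$, $\beta=1$: for the Gibbs $\sigma$-KMS$_1$-state one checks directly that $\pi_\phi(S)v_\phi$ has a nonzero component in $(f_1-f_2)H_\phi$. Consequently your Cauchy--Schwarz bound never improves beyond the single estimate $|\phi(i_X(\zeta))|\le\|\zeta\|\,(N(r)^{-\beta}\Tr^r_\tau(1))^{1/2}$, and the hypothesis~\eqref{eq:sup} is never actually invoked. A secondary issue: your reduction to $p\le q$ is incomplete, since after one application of Nica covariance to $i_X(\eta)^*i_X(\xi)$ with $p\vee q<\infty$ the new indices $q^{-1}(p\vee q)$ and $p^{-1}(p\vee q)$ need not be comparable either.

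The paper's proof does not use Cauchy--Schwarz. It introduces completely positive maps $S_r(x)=\sum_i i_X(\lambda_i^{(r)})^*x\,i_X(\lambda_i^{(r)})$ built from Parseval frames and shows, via the KMS condition together with Nica covariance, the exact scaling identity: for $x\in i_X(X_p)^*i_X(X_q)$ with $p\vee q<\infty$ one has $\phi(x)=N(p^{-1}(p\vee q))^{-\beta}\phi(S_{p^{-1}(p\vee q)}(x))$, and $S_{p^{-1}(p\vee q)}(x)$ lands in $i_X(X_{q^{-1}(p\vee q)})^*i_X(X_{p^{-1}(p\vee q)})$. Iterating (at each step choosing which side to act on so that the increment $r_k\ne e$) yields $\phi(x)=N(r_1\cdots r_n)^{-\beta}\phi((S_{r_n}\circ\cdots\circ S_{r_1})(x))$ with $\{r_1\cdots r_n\}_n$ strictly increasing, and then $|\phi(x)|\le N(r_1\cdots r_n)^{-\beta}\Tr^{r_1\cdots r_n}_\tau(1)\,\|x\|$ because the positive functional $\phi\circ S_{r_n}\circ\cdots\circ S_{r_1}$ has norm at most $\Tr^{r_1\cdots r_n}_\tau(1)$. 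This KMS-driven scaling is precisely the mechanism your approach lacks.
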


\bp In order to make the argument more transparent, let us assume first that $A$ is unital and the right C$^*$-Hilbert $A$-modules $X_p$ admit finite Parseval frames. For every $p\in P$, we fix such a frame $(\lambda^{(p)}_i)_i$ and define a completely positive map
$$
S_p\colon\NT(X)\to\NT(X)\ \ \text{by}\ \ S_p(x)=\sum_i i_X(\lambda^{(p)}_i)^*x\, i_X(\lambda^{(p)}_i).
$$
Then the KMS-state $\phi$ has the following property:
\begin{align}
\text{if}\ p\vee q<\infty\ \text{and}\ x\in{i_X(X_p)^*i_X(X_q)},&\ \text{then}\ \ S_{p^{-1}(p\vee q)}(x)\in{i_X(X_{q^{-1}(p\vee q)})^*i_X(X_{p^{-1}(p\vee q)})}\nonumber\\
&\ \text{and}\ \ \phi(x)=N(p^{-1}(p\vee q))^{-\beta}\phi(S_{p^{-1}(p\vee q)}(x)).\label{eq:scaling}
\end{align}

Indeed, it suffices to consider $x=i_X(\xi)^*i_X(\zeta)$, with $\xi\in X_p$ and $\zeta\in X_q$. Then the first statement is clear as, writing~$\lambda_i$ instead of~$\lambda^{(p^{-1}(p\vee q))}_i$ for simplicity, we have, using~\eqref{eq:Toeplitz}, that
$$
i_X(\lambda_i)^*i_X(\xi)^*i_X(\zeta)=i_X(\ell(\zeta)^*(\xi\lambda_i))^*\ \ \text{and}\ \ \ell(\zeta)^*(\xi\lambda_i)\in X_{q^{-1}(p\vee q)}.
$$
For the second statement note that since by~\eqref{eq:Nica} the element $i_X(\xi)^*i_X(\zeta)$ can be written as the sum of elements $i_X({\eta})i_X({\mu})^*$, with $\eta\in X_{p^{-1}(p\vee q)}$ and $\mu\in X_{q^{-1}(p\vee q)}$, and
$$
\sum_ii_X(\lambda_i)i_X(\lambda_i)^*i_X({\eta})=i_X({\eta}),
$$
we have $x=\sum_ii_X(\lambda_i)i_X(\lambda_i)^*x$. Therefore by applying $\phi$ and the KMS-condition we get
$$
\phi(x)=\sum_i\phi(i_X(\lambda_i)i_X(\lambda_i)^*x)=N(p^{-1}(p\vee q))^{-\beta}\sum_i\phi(i_X(\lambda_i)^*x\,i_X(\lambda_i)),
$$
which proves~\eqref{eq:scaling}.

Now, in order to prove that $\phi$ is gauge-invariant, we have to show that given $p\ne q$ and $\xi\in X_p$, $\zeta\in X_q$, we have $\phi(i_X(\zeta)i_X(\xi)^*)=0$. Applying the KMS-condition we get
$$
\phi(i_X(\zeta)i_X(\xi)^*)=N(q)^{-\beta}\phi(i_X(\xi)^*i_X(\zeta)).
$$
If $p\vee q=\infty$, we are done, since then $i_X(\xi)^*i_X(\zeta)=0$ by Nica covariance. Otherwise we let $x=i_X(\xi)^*i_X(\zeta)$ and proceed as follows.

Put $p_1=q^{-1}(p\vee q)$ and $q_1=p^{-1}(p\vee q)$. If $p\ne p\vee q$, we define $r_1=q_1$ and by~\eqref{eq:scaling} get
\begin{equation}\label{eq:scaling2}
S_{r_1}(x)\in i_X(X_{p_1})^*i_X(X_{q_1})\ \ \text{and}\ \ \phi(x)=N(r_1)^{-\beta}\phi(S_{r_1}(x)).
\end{equation}
If $p=p\vee q$, then we must have $q\ne p\vee q$. In this case we define $r_1=p_1$, and applying~\eqref{eq:scaling} to $x^*=i_X(\zeta)^*i_X(\xi)$ instead of $x$ we again get~\eqref{eq:scaling2}. Then we apply the same procedure to $S_{r_1}(x)$ instead of $x$, and so on.

There are two possibilities. One is that this process continues indefinitely. Then after $n$ steps we get elements $p_1,\dots,p_n,q_1,\dots,q_n,r_1,\dots,r_n\in P$, with $p_k\ne q_k$, $r_k\in\{p_k,q_k\}\setminus\{e\}$ and $p_k\vee q_k<\infty$ for all $k$, such that $(S_{r_n}\circ\dots\circ S_{r_1})(x)$ lies in $i_X(X_{p_n})^*i_X(X_{q_n})$ and
\begin{equation} \label{eq:scaling3}
\phi(x)=N(r_1)^{-\beta}\dots N(r_n)^{-\beta}\phi((S_{r_n}\circ\dots\circ S_{r_1})(x)).
\end{equation}
Since the positive linear functional $\phi\circ S_{r_n}\circ\dots\circ S_{r_1}$ has norm
$$
\phi((S_{r_n}\circ\dots\circ S_{r_1})(1))=\Tr^{r_1\dots r_n}_\tau(1),
$$
we then get
$$
|\phi(x)|\le N(r_1\dots r_n)^{-\beta}\Tr^{r_1\dots r_n}_\tau(1)\|x\|.
$$
Since the sequence $\{r_1\dots r_n\}^\infty_{n=1}$ is strictly increasing, letting $n\to\infty$, by the assumption of the proposition we obtain $\phi(i_X(\xi)^*i_X(\zeta))=\phi(x)=0$, hence also $\phi(i_X(\zeta)i_X(\xi)^*)=0$.

The other possibility is that the process stops after $n$ steps for some $n\ge1$, that is, we get $p_n\vee q_n=\infty$. Then $i_X(X_{p_n})^*i_X(X_{q_n})=0$ by Nica covariance, hence $(S_{r_n}\circ\dots\circ S_{r_1})(x)=0$, so we again get $\phi(x)=0$. This finishes the proof of the proposition under the additional assumption that $A$ is unital and the right $A$-modules $X_p$ are finitely generated.

\smallskip

In the general case the proof is basically the same, but at every step instead of exact identities we get approximate ones. Namely, for every $p\in P$, choose an approximate unit $(u^{(p)}_i)_{i\in I_p}$ in $\K(X_p)$ of the form $u^{(p)}_i=\sum_{\xi\in J^{(p)}_i}\theta_{\xi,\xi}$ for some finite sets $J^{(p)}_i\subset X_p$. Then define
$$
S^{(i)}_p\colon\NT(X)\to\NT(X)\ \ \text{by}\ \ S^{(i)}_p(x)=\sum_{\xi\in J^{(p)}_i} i_X(\xi)^*x\, i_X(\xi).
$$
Following the proof of~\eqref{eq:scaling} we then get, for $x\in\overline{i_X(X_p)^*i_X(X_q)}$, that $$S^{(i)}_{p^{-1}(p\vee q)}(x)\in\overline{i_X(X_{q^{-1}(p\vee q)})^*i_X(X_{p^{-1}(p\vee q)})},$$
$$
\phi(i_X^{(p^{-1}(p\vee q))}(u^{(p^{-1}(p\vee q))}_i)x)=N(p^{-1}(p\vee q))^{-\beta}\phi(S^{(i)}_{p^{-1}(p\vee q)}(x)),
$$
and $i_X^{(p^{-1}(p\vee q))}(u^{(p^{-1}(p\vee q))}_i)x\xrightarrow[i]{} x$. Correspondingly, instead of~\eqref{eq:scaling2} we get
$$
S^{(i_1)}_{r_1}(x)\in\overline{i_X(X_{p_1})^*i_X(X_{q_1})}\ \ \text{and}\ \ |\phi(x)-N(r_1)^{-\beta}\phi(S^{(i_1)}_{r_1}(x))|<\frac{\eps}{2},
$$
where $\eps>0$ is any fixed number and $i_1$ is a sufficiently large index in $I_{r_1}$. At the next step we replace~$\eps$ by~$\eps/2$, and so on. Then after $n$ steps instead of~\eqref{eq:scaling3} we get in addition to $p_k,q_k,r_k$ indices~$i_k$ such that
$$
|\phi(x)-N(r_1)^{-\beta}\dots N(r_n)^{-\beta}\phi((S^{(i_n)}_{r_n}\circ\dots\circ S^{(i_1)}_{r_1})(x))|<\sum^{n}_{k=1}\frac{\eps}{2^k}.
$$
Since $\|\phi\circ S^{(i_n)}_{r_n}\circ\dots\circ S^{(i)}_{r_1}||\le\Tr^{r_1\dots r_n}_\tau(1)$, this is still enough to conclude that $|\phi(x)|\le\eps$, and as $\eps>0$ was arbitrary, we get $\phi(x)=0$.
\ep

\begin{remark}
Since any strictly increasing sequence in $P$ tends to infinity (in the usual sense, that is, it eventually leaves every finite subset of $P$), the condition
\begin{equation} \label{eq:sup2}
\lim_{p\to\infty}N(p)^{-\beta}\Tr^p_\tau(1)=0
\end{equation}
is stronger than~\eqref{eq:sup}. Observe next that if $P$ has finite initial intervals (recall that by Remark~\eqref{rem:finite-intervals} this condition is automatically satisfied if $F_p\ne0$ for all $p$ and the series~\eqref{eq:abszeta} is convergent for some $\beta$), then~\eqref{eq:sup2} is satisfied for any trace $\tau$ of finite type. Indeed, when $p\to\infty$, the sets $pP$ eventually do not intersect any given finite subset of $P$, so if
$\tau=\sum_{q\in P}N(q)^{-\beta}F_q\tau_0$, then
$$
N(p)^{-\beta}\Tr^p_\tau(1)=\sum_{q\in pP}N(q)^{-\beta}\Tr^q_{\tau_0}(1)\to0.
$$
Therefore, under a very mild additional assumption, Proposition~\ref{prop:automatic-gauge} gives an alternative proof of the fact that if $\tau$ is of finite type, then it can only be extended to a gauge-invariant KMS-state on $\NT(X)$.
\end{remark}

\begin{corollary}\label{cor:automatic-gauge}
Assume $(G,P)$ is a quasi-lattice ordered group and $X$ is a compactly aligned product system of C$^*$-correspondences over $P$, with $X_e=A$. Suppose in addition that $A$ is unital and $X_p$ is finitely generated as a right $A$-module for all $p\in P$. Consider the dynamics~\eqref{eq:dynamics1} on  $\NT(X)$ defined by a homomorphism $N\colon P\to (0,+\infty)$, and assume that for some $\beta\ne0$ and $\delta>0$ we have
\begin{equation}\label{eq:largebeta}
N(p)^{\beta}\ge(1+\delta)m(p)
\end{equation}
for all $p\in P\setminus\{e\}$, where $m(p)$ is the minimal number of generators of $X_p$ as a right $A$-module. Then every $\sigma$-KMS$_\beta$-state on $\NT(X)$ is gauge-invariant.
\end{corollary}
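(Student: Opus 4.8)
The plan is to reduce this to Proposition~\ref{prop:automatic-gauge}. Let $\phi$ be a $\sigma$-KMS$_\beta$-state and put $\tau=\phi|_A$. By that proposition it suffices to verify condition~\eqref{eq:sup}, that is, that $N(p_n)^{-\beta}\Tr_\tau^{p_n}(1)\to 0$ for every strictly increasing sequence $\{p_n\}^\infty_{n=1}$ in $P$. So everything comes down to a quantitative estimate on $\Tr_\tau^{p}(1)$.

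First I would record a uniform bound $\Tr_\tau^p(1)\le m(p)$. Since $A$ is unital and $X_p$ is finitely generated, by Remark~\ref{rem:Parseval} the module $X_p$ admits a Parseval frame $(\xi^{(p)}_i)^{m(p)}_{i=1}$ of cardinality $m(p)$. From the frame identity $\xi^{(p)}_i=\sum_j\xi^{(p)}_j\langle\xi^{(p)}_j,\xi^{(p)}_i\rangle$ one gets $\langle\xi^{(p)}_i,\xi^{(p)}_i\rangle=\sum_j\langle\xi^{(p)}_j,\xi^{(p)}_i\rangle^*\langle\xi^{(p)}_j,\xi^{(p)}_i\rangle\ge\langle\xi^{(p)}_i,\xi^{(p)}_i\rangle^2$, so each $\langle\xi^{(p)}_i,\xi^{(p)}_i\rangle$ is a positive element of norm at most $1$, hence $\le1_A$. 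As $A$ is unital and $X_p$ is essential, $\varphi_p(1)$ acts as the identity on $X_p$, so by formula~\eqref{eq:Tr Parseval frame} we get $\Tr_\tau^p(1)=\sum_i\tau(\langle\xi^{(p)}_i,\xi^{(p)}_i\rangle)\le m(p)$, using that $\tau$ is a tracial state.

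Next I would exploit multiplicativity. Set $g(p)=N(p)^{-\beta}m(p)$. Hypothesis~\eqref{eq:largebeta} says precisely that $g(p)\le(1+\delta)^{-1}$ for all $p\in P\setminus\{e\}$. Moreover $g$ is submultiplicative: $N$ is a homomorphism, and since $X_{pq}\cong X_p\otimes_A X_q$, the products $\xi^{(p)}_i\xi^{(q)}_j$ of a Parseval frame of $X_p$ and one of $X_q$ form a Parseval frame of $X_{pq}$, whence $m(pq)\le m(p)m(q)$ and therefore $g(pq)\le g(p)g(q)$. Given a strictly increasing sequence $\{p_n\}$, write $p_n=p_1 s_1\cdots s_{n-1}$ with $s_k=p_k^{-1}p_{k+1}\in P\setminus\{e\}$; then $N(p_n)^{-\beta}\Tr_\tau^{p_n}(1)\le g(p_n)\le g(p_1)(1+\delta)^{-(n-1)}\to0$, which is exactly~\eqref{eq:sup}. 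Proposition~\ref{prop:automatic-gauge} then gives that $\phi$ is gauge-invariant.

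The only points needing a bit of care are the identification of the minimal number of generators with the minimal cardinality of a Parseval frame (Remark~\ref{rem:Parseval}) and the stability of Parseval frames under the internal tensor product; both are standard, and once they are available the telescoping estimate above is immediate, so I do not expect any genuine obstacle here. (Note also that the hypothesis $\beta\ne0$ is forced by~\eqref{eq:largebeta}: at $\beta=0$ the inequality would read $1\ge(1+\delta)m(p)$, which is impossible.)
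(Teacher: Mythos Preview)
Your proof is correct and follows essentially the same approach as the paper: bound $\Tr_\tau^p(1)$ by $m(p)$ via a Parseval frame, use submultiplicativity of $m$ together with the hypothesis $N(p)^\beta\ge(1+\delta)m(p)$ to telescope along a strictly increasing sequence, and then invoke Proposition~\ref{prop:automatic-gauge}. The paper is slightly terser (it asserts $\Tr_\tau^Y(1)\le m$ directly and writes the product bound $\Tr_\tau^{r_1\cdots r_n}(1)\le m(r_1)\cdots m(r_n)$ without introducing the auxiliary function $g$), but the argument is the same.
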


Recall that by Remark~\ref{rem:Parseval} the number $m(p)$ equals the minimal cardinality of a Parseval frame in $X_p$.

\bp
We claim that condition~\eqref{eq:sup} is satisfied for any tracial state~$\tau$ on~$A$. It is clear from the definition of induced traces that if a right C$^*$-Hilbert $A$-module $Y$ has a Parseval frame consisting of $m$ elements, then $\Tr^Y_\tau(1)\le m$. Hence, for any $r_1,\dots,r_n\in P\setminus\{e\}$, we have
$$
\Tr_\tau^{r_1\dots r_n}(1)\le m(r_1)\dots m(r_n)\le(1+\delta)^{-n}N(r_1\dots r_n)^\beta,
$$
from which we see that~\eqref{eq:sup} is satisfied.
\ep

\begin{remark}
The function $m$ is submultiplicative on $P$, that is, $m(pq)\le m(p)m(q)$ for $p,q\in P$. It follows that \eqref{eq:largebeta} is satisfied for all $p\in P\setminus\{e\}$ once it is satisfied for all $p$ in a generating set of $P$. In particular, if $P$ is finitely generated and $N(p)>1$ for all $p$ in a finite generating set, then \eqref{eq:largebeta} is satisfied for all $\beta$ large enough. Such $\beta$'s can still be smaller than $\beta_c$, see~\cite{BLRS} and Example~\ref{ex:blrs} below.
\end{remark}

\begin{remark}
Although Corollary~\ref{cor:automatic-gauge} involves assumptions on $N$, it still has a rather straightforward generalization to arbitrary quasi-free dynamics. Namely, the analogues of the maps $N(p)^{-\beta}S^{(i)}_p$ from the proof of Proposition~\ref{prop:automatic-gauge} are
$$
\tilde S^{(i)}_p(x)=\sum_{\xi\in J^{(p)}_i} i_X(U^{(p)}_{i\beta/2}\xi)^*x\, i_X(U^{(p)}_{i\beta/2}\xi),
$$
where $J^{(p)}_i\subset X_p$ is a finite set of vectors analytic with respect to the one-parameter group $(U^{(p)}_t)_t$ (and such that $\sum_{\xi\in J^{(p)}_i}\theta_{\xi,\xi}\le1$). There are two ways of getting an estimate for $\phi(\tilde S^{(i)}_p(x))$. One is to use that by definition we have $(\phi\circ \tilde S^{(i)}_p)(1)\le\big(\operatorname{Ind}^{U^{(p)}}_{X_p}(\phi|_A)\big)(1)$. This leads to an analogue of Proposition~\ref{prop:automatic-gauge}. Another is to use that $\|\tilde S^{(i)}_p\|\le|J^{(p)}_i|\,\|U^{(p)}_{i\beta/2}\|^2$. This leads to an analogue of Corollary~\ref{cor:automatic-gauge} where~\eqref{eq:largebeta} gets replaced by the condition
$\|U^{(p)}_{i\beta/2}\|^{-2}>(1+\delta)m(p)$.
\end{remark}

\bigskip

\section{Right-angled Artin monoids} \label{sec:Artin}

In this section we will show that for a class of groups interpolating between free groups and free abelian groups condition~\eqref{eq:subinvariance} reduces to a much smaller system of inequalities.

\smallskip

Let $\Gamma$ be a simple graph, finite or infinite. The \emph{right-angled Artin group} $G_\Gamma$ associated to $\Gamma$ is a group with generators $s_i$ indexed by the vertices of $\Gamma$ and with relations
$$
s_is_j=s_js_i,\ \ \text{if}\ \ i\ \ \text{and}\ \ j\ \ \text{are connected by an edge}.
$$
We denote by $S_\Gamma$ the set of the generators $s_i$ and call it the standard generating set of $G_\Gamma$. Denote by $P_\Gamma\subset G_\Gamma$ the monoid generated by $S_\Gamma$. It is shown in~\cite{CL} that $(G_\Gamma,P_\Gamma)$ is a quasi-lattice ordered group.

\begin{theorem}\label{thm:Artin}
Consider the quasi-lattice ordered right-angled Artin group $(G_\Gamma,P_\Gamma)$ associated with a simple graph $\Gamma$. Assume we are given a product system $X$ of C$^*$-correspondences over~$P_\Gamma$, with $X_e=A$, a homomorphism $N\colon P_\Gamma\to(0,+\infty)$ and $\beta\in\R$. Then a tracial state $\tau$ on $A$ satisfies condition~\eqref{eq:subinvariance} if and only if
\begin{equation}\label{eq:subinvariance3}
\tau(a)+\sum_{\genfrac{}{}{0pt}{}{K\in C(S_\Gamma):}{\emptyset \ne K\subset J}}(-1)^{|K|}N(s_K)^{-\beta}\Tr^{s_K}_\tau(a)\ge0\ \ \text{for all finite}\ \ J\subset S_\Gamma\ \ \text{and}\ \ a\in A_+,
\end{equation}
where $C(S_\Gamma)$ is the collection of finite sets of pairwise commuting standard generators, or in other words, the collection of sets of generators corresponding to complete finite subgraphs of $\Gamma$, and $s_K=\prod_{s\in K}s$.
\end{theorem}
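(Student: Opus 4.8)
The plan is to reformulate both conditions in terms of the $T(A)$-valued finitely additive measure $\mu$ on $(P_\Gamma,\B_{P_\Gamma})$ from Section~\ref{sec:restriction} and then to reduce everything to a disjointness decomposition in $\B_{P_\Gamma}$ that is special to right-angled Artin monoids.

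The implication \eqref{eq:subinvariance}$\Rightarrow$\eqref{eq:subinvariance3} is immediate: for a set $K$ of standard generators one has $q_K<\infty$ precisely when the generators in $K$ pairwise commute — a common multiple of two non-commuting generators would have two distinct left-divisors among the generators, whose leftmost occurrences would have to move past each other, which forces them to commute; see \cite{CL} — and in that case $q_K=s_K$. Hence, for $J\subset S_\Gamma$, condition \eqref{eq:subinvariance} for this $J$ is literally \eqref{eq:subinvariance3}. For the converse I would first note that \eqref{eq:subinvariance3} applied to singletons $J=\{s\}$, $s\in S_\Gamma$, gives $N(s)^{-\beta}F_s\tau\le\tau$; writing an arbitrary $p\in P_\Gamma$ as a geodesic word in $S_\Gamma$ and iterating \eqref{eq:induction2} together with monotonicity of the operators $F_s$, one gets that $F_p\tau$ is finite for every $p\in P_\Gamma$. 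Thus the measure $\mu$ with $\mu(pP)=N(p)^{-\beta}F_p\tau$ is well-defined, $\mu(pP)\ge0$ automatically, and $\mu(p\Omega)=N(p)^{-\beta}F_p\mu(\Omega)$ by \eqref{eq:mu}, so that $\mu(\Omega)\ge0$ implies $\mu(p\Omega)\ge0$; moreover \eqref{eq:subinvariance} is exactly the assertion that $\mu(\Omega_J)\ge0$ for all finite $J\subset P_\Gamma\setminus\{e\}$, while the hypothesis \eqref{eq:subinvariance3} is exactly the assertion that $\mu(\Omega_J)\ge0$ for all finite $J\subset S_\Gamma$.

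The key reduction is the combinatorial claim that every set $\Omega_J$ with $J\subset P_\Gamma\setminus\{e\}$ finite is a finite disjoint union of sets of the form $pP$ and $p\,\Omega_{J_0}$ with $p\in P_\Gamma$ and $J_0\subset S_\Gamma$ finite. Granting this, for any finite $J\subset P_\Gamma\setminus\{e\}$ we write $\Omega_J=\bigsqcup_i C_i$ with each $C_i$ of one of these forms; each $\mu(C_i)$ is $\ge0$ — trivially if $C_i=p_iP$, and otherwise because $\mu(\Omega_{J_0})\ge0$ by \eqref{eq:subinvariance3} and $\mu$ carries non-negative sets to non-negative sets under left translation — so $\mu(\Omega_J)=\sum_i\mu(C_i)\ge0$, which is \eqref{eq:subinvariance}. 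To prove the claim I would induct on the integer $n(J)=\sum_{q\in J}\ell(q)$, where $\ell$ is the word length on $P_\Gamma$ (well-defined, since the defining relations preserve length). If $n(J)=|J|$, i.e.\ $J\subset S_\Gamma$, there is nothing to do (and $\Omega_\emptyset=P$). Otherwise pick $q_0\in J$ with $\ell(q_0)\ge2$ and a generator $s\le q_0$, and set $w=s^{-1}q_0$, so $\ell(w)=\ell(q_0)-1$. Using $qP\cap sP=(q\vee s)P$ and left cancellation one checks that $s^{-1}(P\setminus qP)=\Omega_{\{s^{-1}(q\vee s)\}}$ when $q\vee s<\infty$ and $=P$ when $q\vee s=\infty$, and hence the disjoint decomposition
\begin{equation*}
\Omega_J=\Omega_{(J\setminus\{q_0\})\cup\{s\}}\ \sqcup\ s\,\Omega_{\{w\}\cup J'''},\qquad J'''=\{s^{-1}(q\vee s)\mid q\in J\setminus\{q_0\},\ q\vee s<\infty\},
\end{equation*}
with the first piece contained in $P\setminus sP$ and the second in $sP$. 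Applying the induction hypothesis to $\Omega_{(J\setminus\{q_0\})\cup\{s\}}$ and to $\Omega_{\{w\}\cup J'''}$ (prepending $s$ to the output of the second) closes the induction, provided $n$ strictly decreases in each piece: for the first because $\ell(q_0)\ge2>\ell(s)=1$, and for the second because $\ell(w)=\ell(q_0)-1$ and $\ell(s^{-1}(q\vee s))\le\ell(q)$ for every $q$.

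The inequality $\ell(q\vee s)\le\ell(q)+1$ (equivalently $q\vee s\in\{q,qs\}$ whenever it is finite) is the one place where the right-angled Artin structure is genuinely used, and I expect it to be the main point requiring care: in $P_\Gamma$, if $q\vee s<\infty$ then either $s\le q$ and $q\vee s=q$, or else $s$ does not occur in $q$ and commutes with every letter of $q$, so that $q\vee s=qs=sq$; in both cases $\ell(q\vee s)\le\ell(q)+1$. This rests in turn on the normal-form combinatorics of right-angled Artin monoids — the description of initial letters and of when two elements admit a common upper bound — as in \cite{CL}. The remaining bookkeeping is routine: the index sets $J_0$ produced stay inside $S_\Gamma$; $w\ne e$, so the sets $\Omega_{\{w\}\cup J'''}$ are of the allowed type; and any occurrence of $e$ among the elements of $J'''$ (which happens only when some $q\in J\setminus\{q_0\}$ equals $s$) simply makes the corresponding piece empty and is discarded.
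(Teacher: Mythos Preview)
Your proposal is correct and follows the same overall strategy as the paper: both reduce \eqref{eq:subinvariance} to positivity of the $T(A)$-valued measure $\mu$, use the translation covariance $\mu(p\Omega_{J_0})=N(p)^{-\beta}F_p\mu(\Omega_{J_0})$ from~\eqref{eq:mu}, and then prove a decomposition result saying that every $\Omega_J$ (or more generally every element of $\B_{P_\Gamma}$) is a disjoint union of sets $pP$ and $p\Omega_{J_0}$ with $J_0\subset S_\Gamma$. Both proofs hinge on the same structural fact about right-angled Artin monoids, namely that $q\vee s\in\{q,\,qs,\,\infty\}$ for any $q\in P_\Gamma$ and $s\in S_\Gamma$ (this is Lemma~\ref{lem:veeArtin} in the paper).

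Where you differ is in the proof of the decomposition. The paper (Lemma~\ref{lem:Artin-decomposition}) first invokes the general Lemma~\ref{lem:BP-decomposition}(i), then ``telescopes'' each $P\setminus qP$ along a chosen word for $q$, expanding $\Omega_J$ into a disjoint union of intersections $\bigcap_j p(q_jP\setminus q_js_{i_j}P)$ with $s_{i_j}\in S_\Gamma$, and finally simplifies each such intersection in one stroke using Lemma~\ref{lem:veeArtin}. You instead run a clean induction on the total word length $n(J)=\sum_{q\in J}\ell(q)$, splitting $\Omega_J$ as $(\Omega_J\cap(P\setminus sP))\sqcup(\Omega_J\cap sP)$ for a single chosen generator $s$ and peeling off one letter at a time. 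Your route is slightly more self-contained (it does not appeal to Lemma~\ref{lem:BP-decomposition}) and arguably tidier; the paper's route yields a somewhat more explicit one-shot description of the pieces and proves the decomposition for all of $\B_{P_\Gamma}$ at once. The essential content is the same.
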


For the proof we need the following partial description of the operation $\vee$ on $P_\Gamma$, which is a simple consequence of~\cite[Proposition~13]{CL}.

\begin{lemma}\label{lem:veeArtin}
For any $p=s_{i_1}\dots s_{i_n}\in P_\Gamma$ and $s_i\in S_\Gamma$, we have
\begin{itemize}
\item[(a)] if $i\in\{i_1,\dots,i_n\}$ and for the lowest index $k$ such that $i=i_k$ the vertex $i$ and each vertex~$i_j$ with $j<k$ are connected by an edge, then $p\vee s_i=p$;
\item[(b)] if $i\notin \{i_1,\dots,i_n\} $, but the vertex $i$ and each vertex $i_j$ are connected by an edge, then $p\vee s_i=ps_i=s_ip$;
\item[(c)] if neither {\rm (a)} nor {\rm (b)} applies, then $p\vee s_i=\infty$.
\end{itemize}
\end{lemma}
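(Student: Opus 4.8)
The plan is to read off the three cases from the combinatorics of reduced words in $P_\Gamma$, as packaged in~\cite[Proposition~13]{CL}. The relevant structural facts are that $P_\Gamma$ is the trace monoid of $\Gamma$: it is cancellative, two words represent the same element of $P_\Gamma$ precisely when one is obtained from the other by successively transposing adjacent commuting generators, and consequently both the multiset of generators occurring in a word $p=s_{i_1}\cdots s_{i_n}$ and the associated ``heap of pieces'' partial order on its letters depend only on $p$. From~\cite[Proposition~13]{CL} we extract the description of divisibility by a single generator: $s_i\le p$ if and only if $p$ has a word beginning with $s_i$, and this holds exactly when, for the least $k$ with $i_k=i$, the vertex $i$ is joined by an edge to every vertex $i_j$ with $j<k$ (so that $s_{i_k}$ can be transported to the front); in particular this condition does not depend on the chosen word for $p$. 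Observe also that the hypothesis of~(b) says precisely that $i$ does not occur in $p$ and $s_i$ commutes with every generator occurring in $p$; the first clause forces $s_i\not\le p$ by invariance of occurrences, and the second gives $ps_i=s_ip$.

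Granting this, parts~(a) and~(b) are immediate. In~(a) the hypothesis is exactly $s_i\le p$, so $p$ is an upper bound of $\{p,s_i\}$, and since every upper bound $q$ of $p$ satisfies $p\le q$, it is the least one: $p\vee s_i=p$. In~(b) the element $ps_i=s_ip$ is an upper bound of $\{p,s_i\}$; to see it is the least, let $q$ be any upper bound, write $q=s_iq'$ using $s_i\le q$, and use $p\le q$ together with the fact that $s_i$ neither occurs in $p$ nor fails to commute with any letter of $p$ to cancel the leading $s_i$ and obtain $p\le q'$; then $ps_i=s_ip\le s_iq'=q$, so $p\vee s_i=ps_i=s_ip$.

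For~(c) one must show that $p$ and $s_i$ have no common upper bound. Negating the hypotheses of~(a) and~(b) leaves two possibilities: either $i$ occurs in $p$ but $s_i\not\le p$, or $i$ does not occur in $p$ but $s_i$ fails to commute with some generator $s_{i_j}$ occurring in $p$. Assume for contradiction that $q$ is a common upper bound, so that $q=s_iq'=pr$ for some $q',r\in P_\Gamma$. In the first case every occurrence of $i$ in $q$ coming from $p$ lies, in the heap of $q=pr$, below every occurrence coming from $r$, so the bottom occurrence of $i$ in $q$ lies inside $p$; but that occurrence is ``blocked'' by an earlier non-commuting letter of $p$, contradicting minimality of the leading $s_i$ of the word $s_iq'$. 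In the second case all occurrences of $i$ in $q$ come from $r$ and hence lie above the $s_{i_j}$-piece of $p$, so no occurrence of $i$ is a minimal piece of $q$, again contradicting $q=s_iq'$. Both reductions are exactly the sort of statement made precise in~\cite[Proposition~13]{CL}, so the proof amounts to aligning its hypotheses with the three regimes. I expect the only genuinely delicate points to be the cancellation step in~(b) and the two non-existence arguments in~(c); these are pure word combinatorics in the trace monoid, which is where~\cite[Proposition~13]{CL} does the work.
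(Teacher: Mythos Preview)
Your proposal is correct and follows exactly the approach indicated in the paper: the paper gives no detailed proof of this lemma, merely stating that it ``is a simple consequence of~\cite[Proposition~13]{CL},'' and your argument is precisely a careful unpacking of that citation via the trace-monoid/heap combinatorics. Your elaboration of the cancellation in~(b) and the two non-existence cases in~(c) is sound and fills in the details the paper omits.
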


An immediate consequence of this lemma is that if $K\subset S_\Gamma$ is a finite set of generators, then
\begin{equation}\label{eq:veeArtin}
q_K=\bigvee_{s\in K}s=\begin{cases}s_K,&\text{if}\ K\in C(S_\Gamma),\\ \infty,&\text{otherwise}.\end{cases}
\end{equation}

\begin{lemma}\label{lem:Artin-decomposition}
Let $(G,P)=(G_\Gamma,P_\Gamma)$. Then every set $\Omega\in\B_P$ can be written as a finite disjoint union of the sets $pP$ and $p(\cap_{s\in J}(P\setminus sP))$, where $p\in P$ and $J\subset S_\Gamma$ is a finite nonempty set.
\end{lemma}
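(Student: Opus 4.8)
The plan is to build on the decomposition of Lemma~\ref{lem:BP-decomposition}(i) and refine it using the combinatorics of $P$ recorded in Lemma~\ref{lem:veeArtin}. By Lemma~\ref{lem:BP-decomposition}(i), $\Omega$ is a finite disjoint union of sets $pP$ and $p\,\Omega_J$ with $p\in P$ and $\Omega_J=\bigcap_{q\in J}(P\setminus qP)$ for a finite nonempty $J\subset P\setminus\{e\}$. Since left multiplication by any $p\in P$ sends a set of the type described in the statement to another such set, and since the pieces above are pairwise disjoint, it suffices to prove the following claim: \emph{for every finite (possibly empty) set $J\subset P\setminus\{e\}$, the set $\Omega_J$ is a finite disjoint union of sets $pP$ and $p\,\Omega_{J'}$ with $p\in P$ and $J'\subset S_\Gamma$ finite nonempty} (where $\Omega_\emptyset=P=eP$).

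I would prove this claim by induction on $M(J):=\sum_{q\in J}|q|$, where $|q|$ is the word length of $q$, which is well defined and additive since the defining relations of $P$ preserve length. If $J\subset S_\Gamma$ there is nothing to do, as $\Omega_J$ is already of the listed form. Otherwise choose $r_0\in J$ with $|r_0|\ge2$ and write $r_0=s\,r_0'$ with $s\in S_\Gamma$ and $r_0'\in P\setminus\{e\}$, and split $P=(P\setminus sP)\sqcup sP$. Because $r_0P\subset sP$, on the first part we get $\Omega_J\cap(P\setminus sP)=\Omega_{J''}$ with $J''=(J\setminus\{r_0\})\cup\{s\}$, and $M(J'')\le M(J)-|r_0|+1<M(J)$. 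On the second part, if $s\in J$ then $\Omega_J\cap sP=\emptyset$; if $s\notin J$, using $rP\cap sP=(r\vee s)P=s\,r_sP$ with $r_s:=s^{-1}(r\vee s)$ (and $rP\cap sP=\emptyset$ when $r\vee s=\infty$) together with injectivity of left multiplication by $s$, we obtain $\Omega_J\cap sP=s\,\Omega_H$ with $H:=\{\,r_s\mid r\in J,\ r\vee s<\infty\,\}\subset P\setminus\{e\}$; here $r_s\ne e$ since $r_s=e$ would force $r\le s$, hence $r=s\notin J$. By Lemma~\ref{lem:veeArtin}, $|r\vee s|\le|r|+1$ whenever $r\vee s<\infty$, while $s\le r_0$ forces $r_0\vee s=r_0$ and so $|(r_0)_s|=|r_0|-1$; summing, $M(H)\le M(J)-1<M(J)$. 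Applying the induction hypothesis to $J''$ and to $H$, and left-translating the resulting decomposition of $\Omega_H$ by $s$, writes each of the two parts, hence $\Omega_J$, in the required form.

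The crux is the choice of the quantity $M$ so that it strictly decreases in \emph{both} halves of the split. In the $sP$-half this is the delicate point: a generator $r\in J$ that commutes with $s$ contributes $r_s=r$ of the same length (case (b) of Lemma~\ref{lem:veeArtin}), so the decrease cannot come from such terms; it is forced by the single element $r_0$ we peel off, which lies in case (a) of Lemma~\ref{lem:veeArtin} precisely because $s\le r_0$ by construction. The remaining ingredients — additivity of word length in $P$, the identity $rP\cap sP=(r\vee s)P$, and the elementary fact that $r\le s$ with $s\in S_\Gamma$ implies $r=s$ — are routine.
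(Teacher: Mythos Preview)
Your proof is correct, and it takes a genuinely different route from the paper's. The paper expands each factor $P\setminus qP$ all at once by writing $q=s_{k_1}\cdots s_{k_m}$ and decomposing $P\setminus qP=\bigsqcup_{i=1}^m (s_{k_1}\cdots s_{k_{i-1}}P\setminus s_{k_1}\cdots s_{k_i}P)$; distributing the intersection in $p\Omega_J$ over these unions yields sets of the form $p(q_1P\setminus q_1s_{i_1}P)\cap\cdots\cap p(q_nP\setminus q_ns_{i_n}P)$, and a single application of Lemma~\ref{lem:veeArtin} then identifies each such intersection as either empty, $pqP$, or $pq\,\Omega_{J'}$ with $J'\subset\{s_{i_1},\dots,s_{i_n}\}$, where $q=q_1\vee\cdots\vee q_n$. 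Your argument instead peels off one generator from a single element of $J$ at a time via the binary split $P=(P\setminus sP)\sqcup sP$, controlling termination by the total-length invariant $M(J)$. Both approaches rely on Lemma~\ref{lem:veeArtin} as the only Artin-specific input; the paper's version has the virtue of producing the decomposition in one explicit step rather than recursively, while yours has a cleaner inductive structure and isolates precisely why the decrease in $M$ is forced (namely, the chosen $r_0$ always falls under case~(a) because $s\le r_0$, while the commuting generators in case~(b) contribute nothing).
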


\bp
By Lemma~\ref{lem:BP-decomposition}(i) we know that $\Omega$ can be written as a finite disjoint union of the sets $pP$ and $p(\cap_{q\in J}(P\setminus qP))$, where $J\subset P\setminus\{e\}$ is finite. In the first case there is nothing to prove. In the second case each set $P\setminus qP$ can be further decomposed as follows. Write $q$ as $s_{k_1}\dots s_{k_m}$. Then $P\setminus qP$ is the disjoint union of the sets $s_{k_1}\dots s_{k_{i-1}}P\setminus s_{k_1}\dots s_{k_i}P$ for $i=1,\dots,m$. It follows that every set $p(\cap_{q\in J}(P\setminus qP))$, where $J\subset P\setminus\{e\}$ is finite, can be written as the disjoint union of sets of the form
$$
p(q_1P\setminus q_1s_{i_1}P)\cap\dots\cap p(q_nP\setminus q_ns_{i_n}P).
$$

With sets of the latter form we proceed as follows. We may assume that $q=q_1\vee\dots\vee q_n$ is finite, as otherwise we get the empty set. Then the above set is contained in $pqP$, so we can write it as
\begin{equation}\label{eq:decomposeArtin}
p(qP\setminus q_1s_{i_1}P)\cap\dots\cap p(qP\setminus q_ns_{i_n}P).
\end{equation}

Now, for every $j=1,\dots,n$, put $r_j=q_j^{-1}q$. Then
$$
p(qP\setminus q_js_{i_j}P)=pq_j(r_jP\setminus s_{i_j}P)=pq_j(r_jP\setminus (r_j\vee s_{i_j})P),
$$
with the convention $\infty P=\emptyset$.
But by Lemma~\ref{lem:veeArtin} we have only three options for the set $(r_j\vee s_{i_j})P$:
$\ r_jP,\ r_js_{i_j} P,\ \emptyset$.
Correspondingly, the only options for the set
$p(qP\setminus q_js_{i_j}P)$ are
$$
\emptyset,\ \ pq(P\setminus s_{i_j}P),\ \ pqP.
$$
Hence the set~\eqref{eq:decomposeArtin} is either empty, $pq(\cap_{s\in J}(P\setminus sP))$, where $J\subset\{s_{i_1},\dots,s_{i_n}\}$, or $pqP$.
\ep

\bp[Proof of Theorem~\ref{thm:Artin}]
Denote $P_\Gamma$ by $P$.
From~\eqref{eq:veeArtin} we see that condition~\eqref{eq:subinvariance3} is nothing other than condition~\eqref{eq:subinvariance} applied to the sets $J\subset S_\Gamma$. Thus~\eqref{eq:subinvariance} implies~\eqref{eq:subinvariance3}.

In order to prove the converse, observe first of all that if~\eqref{eq:subinvariance3} is satisfied, then $F_s\tau$ is finite and dominated by a scalar multiple of $\tau$ for all $s\in S_\Gamma$, hence $\Tr^p_\tau$ is finite for all $p\in P$ by~\eqref{eq:induction2}.

Consider now, as in Section~\ref{sec:restriction}, the $T(A)$-valued finitely additive measure $\mu$ on $(P,\B_P)$ defined by $\mu(pP)=N(p)^{-\beta}F_p\tau$. By assumption we have
$\mu(\Omega_J)\ge0$ for all finite $J\subset S_\Gamma$, where $\Omega_J=\cap_{s\in J}(P\setminus sP)$. But then, by~\eqref{eq:mu}, we have
$$
\mu(p\Omega_J)=N(p)^{-\beta}F_p\mu(\Omega_J)\ge0
$$
for all $p\in P$ and finite $J\subset S_\Gamma$. By Lemma~\ref{lem:Artin-decomposition} we conclude that $\mu$ is positive, that is, condition~\eqref{eq:subinvariance} is satisfied.
\ep

\begin{remark}
When $\Gamma$ is a finite complete graph, so that $P_\Gamma=\Z^n_+$, Theorem~\ref{thm:Artin} gives another proof of equivalence of conditions~\eqref{eq:subinvariance} and~\eqref{eq:subinvariance2}, which was promised in Section~\ref{sec:abelian}.

When~$\Gamma$ is a finite graph with no edges, so that $P_\Gamma$ is a free monoid $\FF^+_n$, then Theorem~\ref{thm:Artin} implies that condition~\eqref{eq:subinvariance} is equivalent to
$$
\sum^n_{i=1}N(s_i)^{-\beta}F_i\tau\le\tau,
$$
where $F_i=F_{s_i}$. However, a product system over a free monoid is determined by one graded correspondence $\oplus_i X_{s_i}$, so Theorem~\ref{thm:general} and the equivalence of the above inequality to~\eqref{eq:subinvariance} in this case follows already from~\cite{lac-nes}.
\end{remark}

The following corollary shows that for the right-angled Artin monoids the set of possible temperatures of KMS-states on $\NT(X)$ is often a half-line.

\begin{corollary}\label{cor:monotonicity}
In the setting of Theorem~\ref{thm:Artin}, assume that $N(p)\ge1$ for all $p\in P_\Gamma$ and $\tau$ is a tracial state on $A$ satisfying~\eqref{eq:subinvariance} for some $\beta=\beta_0$. Then $\tau$ satisfies \eqref{eq:subinvariance} for all $\beta>\beta_0$.
\end{corollary}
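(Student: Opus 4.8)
\bp
The plan is to first invoke Theorem~\ref{thm:Artin} to replace condition~\eqref{eq:subinvariance} by the smaller system~\eqref{eq:subinvariance3}: by that theorem, $\tau$ satisfies~\eqref{eq:subinvariance} at a given inverse temperature if and only if it satisfies~\eqref{eq:subinvariance3} at that temperature, so it suffices to show that if~\eqref{eq:subinvariance3} holds for $\beta_0$ then it holds for every $\beta>\beta_0$. Fix such a $\beta$ and put $t=\beta-\beta_0>0$. Since $N$ is a homomorphism and $s_K=\prod_{s\in K}s$ for $K\in C(S_\Gamma)$, the coefficient $N(s_K)^{-\beta}$ occurring in~\eqref{eq:subinvariance3} at $\beta$ equals $\big(\prod_{s\in K}N(s)^{-t}\big)N(s_K)^{-\beta_0}$, and because $N(s)\ge1$ we have $N(s)^{-t}\in(0,1]$ for every generator $s$. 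So the $\beta$-inequality is obtained from the $\beta_0$-inequality by multiplying the $K$-th summand by the damping factor $\prod_{s\in K}N(s)^{-t}$. (As observed in the proof of Theorem~\ref{thm:Artin}, \eqref{eq:subinvariance3} at $\beta_0$ already forces $\Tr^p_\tau$ to be a finite trace on $A$ for all $p\in P_\Gamma$, so all the finite alternating sums below are unambiguous.)

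The idea, in the spirit of the perturbation trick in the proof of Theorem~\ref{thm:free}, is to introduce the damping factors one generator at a time and to argue by convexity. For a finite set $J\subset S_\Gamma$, a family $\mu=(\mu_s)_{s\in J}$ with $\mu_s\in(0,1]$, and $a\in A_+$, set
$$
L_J^{\mu}(a)=\tau(a)+\sum_{\substack{K\in C(S_\Gamma)\\ \emptyset\ne K\subset J}}(-1)^{|K|}\Big(\prod_{s\in K}\mu_s\Big)N(s_K)^{-\beta_0}\Tr^{s_K}_\tau(a).
$$
Taking all $\mu_s=1$ gives the left-hand side of~\eqref{eq:subinvariance3} at $\beta_0$, which is $\ge0$ by hypothesis, while taking $\mu_s=N(s)^{-t}$ for all $s$ gives the left-hand side of~\eqref{eq:subinvariance3} at $\beta$. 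Hence it is enough to prove $L_J^{\mu}(a)\ge0$ for all finite $J\subset S_\Gamma$, all admissible $\mu$, and all $a\in A_+$, which I plan to do by induction on the number of coordinates $s\in J$ with $\mu_s<1$. If there are none, $L_J^{\mu}$ is the left-hand side of~\eqref{eq:subinvariance3} at $\beta_0$. Otherwise pick $s_0\in J$ with $\mu_{s_0}<1$ and isolate the dependence on $\mu_{s_0}$: writing $L_J^{\mu}(a)=L_{J\setminus\{s_0\}}^{\mu}(a)+\mu_{s_0}B(a)$, where $B(a)$ gathers the summands with $s_0\in K$ after factoring out $\mu_{s_0}$ and so does not involve $\mu_{s_0}$, and noting that $L_{J\setminus\{s_0\}}^{\mu}(a)+B(a)=L_J^{\mu'}(a)$ for the family $\mu'$ obtained from $\mu$ by resetting $\mu_{s_0}$ to $1$, one obtains the convex-combination identity
$$
L_J^{\mu}(a)=\mu_{s_0}L_J^{\mu'}(a)+(1-\mu_{s_0})L_{J\setminus\{s_0\}}^{\mu}(a).
$$
Both $\mu'$ on $J$ and $\mu$ on $J\setminus\{s_0\}$ have strictly fewer coordinates with value $<1$, so the two terms on the right are $\ge0$ by the inductive hypothesis, and since $\mu_{s_0}\in[0,1]$ this gives $L_J^{\mu}(a)\ge0$, completing the induction and the proof.

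The only routine points are the two identities used in the inductive step, which reduce to the elementary facts that a subset $K\subseteq J$ with $s_0\notin K$ is precisely a subset of $J\setminus\{s_0\}$, and that $\prod_{s\in K}\mu_s=\mu_{s_0}\prod_{s\in K\setminus\{s_0\}}\mu_s$ when $s_0\in K$; the membership constraint $K\in C(S_\Gamma)$ is intrinsic to $K$ and plays no role here. I do not expect a genuine obstacle: the real content is simply spotting that, once the inequality is written in terms of the damping parameters $\mu_s$, it is the iterated one-variable convexity of an affine function on $[0,1]$. The hypothesis $N(p)\ge1$ enters only through $N(s)^{-t}\le1$, which is what makes the passage from $\beta_0$ to $\beta$ a genuine damping and hence amenable to this argument.
\ep
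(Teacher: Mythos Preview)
Your argument is correct. The convex-combination identity is exactly right (it is the one-variable interpolation $f(\mu)=\mu f(1)+(1-\mu)f(0)$ applied to the affine dependence on $\mu_{s_0}$), and the induction on the number of damped coordinates goes through cleanly. The appeal to the proof of Theorem~\ref{thm:Artin} for finiteness of all the $\Tr^p_\tau$ is also legitimate.

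Your route is genuinely different from the paper's, and in fact simpler. The paper works with the map $\eta(\chi_{pP})=F_p\tau$ and expands each factor as $(1-N'(s)\chi_{sP})=(1-N(s)\chi_{sP})+(N(s)-N'(s))\chi_{sP}$; this produces cross terms of the form $\big(\prod_{s\in E}\chi_{sP}\big)\big(\prod_{t\in F}(1-N(t)\chi_{tP})\big)$, whose positivity under $\eta$ then requires Lemma~\ref{lem:veeArtin} (to handle non-commuting generators) together with the factorisation $F_{s_E}\eta(\cdots)$ via~\eqref{eq:induction2}. Your decomposition is instead (in unfolded form) $(1-\mu_s N(s)\chi_{sP})=\mu_s(1-N(s)\chi_{sP})+(1-\mu_s)\cdot 1$, so the product expands as a convex combination of the expressions $\prod_{s\in E}(1-N(s)\chi_{sP})$ for $E\subset J$, each of which is nonnegative \emph{directly by hypothesis}. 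Thus you bypass both Lemma~\ref{lem:veeArtin} and the $F_{s_E}$ trick entirely; the constraint $K\in C(S_\Gamma)$ is simply carried along and never has to be analysed. The paper's approach stays closer to the measure-theoretic picture used elsewhere in Section~\ref{sec:Artin}, but yours isolates the purely combinatorial content of the monotonicity statement.
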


\bp
Denote $P_\Gamma$ by $P$.
We will prove the following statement: if $N$ and $N'$ are two homomorphisms $P\to (0,+\infty)$ such that $N'(p)\le N(p)$ for all $p\in P$, and $\tau$ satisfies~\eqref{eq:subinvariance3} for $N$ and $\beta=-1$, then it also satisfies~\eqref{eq:subinvariance3} for $N'$ and $\beta=-1$. The corollary then follows by applying this to the homomorphisms $N^{-\beta_0}$ and $N^{-\beta}$ and using that conditions~\eqref{eq:subinvariance3} and~\eqref{eq:subinvariance} are equivalent.

The argument is analogous to the one used in the proof of Theorem~\ref{thm:free} for a similar statement, but becomes slightly more complicated in the general case of right-angled Artin monoids.

Define a map $\eta$ from the linear span of the characteristic functions $\chi_{pP}$, $p\in P$, into $T(A)$ by letting $\eta(\chi_{pP})=F_p\tau$. In view of~\eqref{eq:veeArtin}, condition~\eqref{eq:subinvariance3} for $N$ and $\beta=-1$ can be written as
$$
\eta\Big(\prod_{s\in J}(1-N(s)\chi_{sP})\Big)\ge0
$$
for all finite $J\subset S_\Gamma$, and we have to show that the same is true with $N$ replaced by $N'$.

Using that
$$
\prod_{s\in J}(1-N'(s)\chi_{sP})=\prod_{s\in J}((1-N(s)\chi_{sP})+(N(s)-N'(s))\chi_{sP}),
$$
we see that we can write the function $\prod_{s\in J}(1-N'(s)\chi_{sP})$ as a linear combination with positive coefficients of functions of the form
$$
\Big(\prod_{s\in E}\chi_{sP}\Big)\Big(\prod_{t\in F}(1-N(t)\chi_{tP})\Big),
$$
where $E$ and $F$ are finite disjoint sets of generators. If $E$ or $F$ are empty, then $\eta$ is positive on such a function by our assumptions.
Otherwise, by~\eqref{eq:veeArtin}, we may assume that the elements of~$E$ commute with each other. Using~\eqref{eq:veeArtin} again, we see also that if an element of $F$ does not commute with an element of $E$, then the factor $1-N(t)\chi_{tP}$ can be replaced by $1$. Therefore we may assume that the elements of $E$ commute with each other and with the elements of $F$. Then the above function equals
$$
\chi_{s_EP}+\sum_{\genfrac{}{}{0pt}{}{K\in C(S_\Gamma):}{\emptyset \ne K\subset F}}(-1)^{|K|}N(s_K)\chi_{s_Es_KP},
$$
from which we see that the value of $\eta$ on this function is
$$
F_{s_E}\tau+\sum_{\genfrac{}{}{0pt}{}{K\in C(S_\Gamma):}{\emptyset \ne K\subset F}}(-1)^{|K|}N(s_K)^{-\beta}F_{s_Es_K}\tau=F_{s_E}\eta\Big(\prod_{t\in F}(1-N(t)\chi_{tP})\Big)\ge0.
$$
This completes the proof of the corollary.
\ep

As an immediate application let us consider the following example.

\begin{example}\label{ex:blrs}
Let $(G,P)$ be a quasi-lattice ordered group such that $P$ has a finite generating set $S\subset P\setminus\{e\}$. Assume we are given a homomorphism $N\colon P\to(0,+\infty)$ such that $N(p)>1$ for all $p\in P\setminus\{e\}$ and the abscissa of convergence of the series $\sum_{p\in P} N(p)^{-\beta}$ satisfies $\beta_c>0$. Consider the full semigroup C$^*$-algebra of $P$, that is, the algebra $\NT(X)$ for the trivial product system $X_p=\C$, and the dynamics $\sigma$ on it defined by $N$. Then, using our terminology, Theorem~3.5 and Proposition~3.5 in~\cite{BLRS} can be formulated as follows:
\begin{itemize}
\item[(i)] for every $\beta>\beta_c$ there is a unique $\sigma$-KMS$_\beta$-state, and it is of finite type;
\item[(ii)] there is a unique $\sigma$-KMS$_{\beta_c}$-state, and it is of infinite type;
\item[(iii)] if for some $\beta\in[0,\beta_c)$ there is a $\sigma$-KMS$_\beta$-state, then it is of infinite type.
\end{itemize}
We remark that parts (i) and (iii) follow also from our results in Section~\ref{sec:Wold}. Indeed, by the definition of $\beta_c$, the unique tracial state on $A=\C$ is of finite type for $\beta>\beta_c$, so we get (i) by Corollary~\ref{cor:finite-class}, and there are no traces of finite type for $\beta<\beta_c$, so we get (iii). Part (ii) also follows from our Proposition~\ref{prop:phase-transition} and Corollary~\ref{cor:automatic-gauge}, but the proofs of those results use ideas from~\cite{BLRS} and do not provide any new perspective here.

By Corollary~\ref{cor:finite-infinite}(i), part (iii) above means that $\beta$ must satisfy the equation
$$
1+\sum_{\emptyset\neq K\subset S} (-1)^{\vert K\vert} N(q_K)^{-\beta}=0,
$$
an observation which was already made in~\cite{BLRS}. This implies that the set of possible $\beta$'s in (iii) is finite.

Consider now the case when $(G,P)=(G_\Gamma,P_\Gamma)$ for a finite simple graph $\Gamma$. The assumption $\beta_c>0$ means precisely that $G$ is not abelian, that is, $\Gamma$ is not a complete graph. By Corollary~\ref{cor:monotonicity} we know that the set of possible inverse temperatures is a half-line. We can therefore conclude that there are no $\sigma$-KMS$_\beta$-states on the full semigroup C$^*$-algebra of $P_\Gamma$ for $\beta<\beta_c$, so for such systems the classification of KMS-states reduces completely to the computation of $\beta_c$.
\end{example}

\bigskip

\section{Direct products of quasi-lattice ordered monoids}\label{sec:products}

Throughout this section we fix a quasi-lattice ordered group $(G,P)$ of the form $(G_1,P_1)\times\dots\times(G_n,P_n)$, a compactly aligned product system $X$ of C$^*$-correspondences over $P$, with $X_e=A$, a homomorphism $N\colon P\to(0,+\infty)$, and $\beta\in\R$. In this setting, developing the ideas in~\cite{kak,C}, we can refine the decomposition of states on $\NT(X)$ into positive functionals of finite and infinite types by considering states that are finite with respect to some factors and infinite with respect to other.

\smallskip

We need to introduce some notation in order to formulate the precise result.
Whenever convenient we view $G_i$ as a subgroup of $G$. For any set $F\subset\{1,\dots,n\}$ we denote by $P_F$ the submonoid of~$P$ generated by the monoids $P_i$ for $i\in F$, with the convention $P_\emptyset=\{e\}$. Denote by~$X^{(F)}$ the product system $(X_p)_{p\in P_F}$. When $F=\{i\}$ we write $X^{(i)}$ instead of $X^{(\{i\})}$, and we use the same convention for various constructions below. The embeddings $X^{(F)}\hookrightarrow X$ induce $*$-homomorphisms $\NT(X^{(F)})\to\NT(X)$.

\smallskip

Assume now that we are given a representation $\pi\colon \NT(X)\to B(H)$. Consider the von Neumann algebra $M=\pi(\NT(X))''$. By restriction we get representations $\pi_F\colon\NT(X^{(F)})\to B(H)$. Put $M_F=\pi_F(\NT(X^{(F)}))''\subset M$. Clearly, $M_F=\vee_{i\in F}M_i$. Following Proposition~\ref{prop:Wold-rep} and its proof, for every $F$ we consider the projection $Q_F\in M_F$ onto the space of vacuum vectors for $\NT(X^{(F)})$ and its central support $z_F\in M_F$.

\begin{proposition}
In the above setting we have:
\begin{itemize}
\item[(i)] for every $i=1,\dots,n$, the projection $Q_i$ commutes with $M_j$ for $j\ne i$ and the projection~$z_i$ is central in $M$;
\item[(ii)] $Q_F=\prod_{i\in F}Q_i$ and $z_F=\prod_{i\in F}z_i$ for every set $F\subset\{1,\dots,n\}$.
\end{itemize}
\end{proposition}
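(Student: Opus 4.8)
The plan is to establish (i) first --- this is where all the work lies --- and then to deduce (ii) by identifying intersections of ranges, together with a short induction on $|F|$ reducing everything to the case of two factors. Recall from Proposition~\ref{prop:Wold-rep} and its proof that $Q_iH$ is the space \eqref{eq:vacuum} of vacuum vectors for $\NT(X^{(i)})$, that $z_iH=\overline{M_iQ_iH}$, and that the isometry $U\colon\F(X^{(i)})\otimes_AQ_iH\to H$, $\xi\otimes v\mapsto\pi(i_X(\xi))v$, identifies $z_iH$ with $\F(X^{(i)})\otimes_AQ_iH$; in particular $z_iH$ is the closed linear span of the subspaces $\pi(i_X(X_{p_i}))Q_iH$ over $p_i\in P_i$. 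The structural fact used throughout is that in a direct product $\vee$ is computed componentwise, so that for $p_i\in P_i\setminus\{e\}$ and $q_j\in P_j\setminus\{e\}$ with $i\ne j$ one has $p_i\vee q_j=p_iq_j=q_jp_i<\infty$, with $p_i^{-1}(p_i\vee q_j)=q_j$ and $q_j^{-1}(p_i\vee q_j)=p_i$ (the conjugation being trivial since $G_i$ and $G_j$ commute), and moreover $X_{p_iq_j}\cong X_{p_i}\otimes_AX_{q_j}$ by product system axiom~(a).

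\emph{Part (i).} To see that $Q_i$ commutes with $M_j$ for $j\ne i$, it suffices to check that $Q_iH$ is invariant under $\pi(i_X(\xi_j))$ and its adjoint for all $\xi_j\in X_{q_j}$, $q_j\in P_j$ (invariance under $\pi(A)$ is already noted in the proof of Proposition~\ref{prop:Wold-rep}); then $Q_iH$ reduces the von Neumann algebra $M_j$, so $Q_i\in M_j'$. Fix $v\in Q_iH$ and $\eta_i\in X_{p_i}$ with $p_i\in P_i\setminus\{e\}$. By \eqref{eq:Nica} one has $i_X(\eta_i)^*i_X(\xi_j)\in\overline{i_X(X_{q_j})i_X(X_{p_i})^*}$, so $\pi(i_X(\eta_i))^*\pi(i_X(\xi_j))v$ is a limit of finite sums of terms $\pi(i_X(\beta_j))\pi(i_X(\alpha_i))^*v$ with $\alpha_i\in X_{p_i}$, which vanish since $v\in Q_iH$; hence $\pi(i_X(\xi_j))v\in Q_iH$. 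Dually, $\pi(i_X(\eta_i))^*\pi(i_X(\xi_j))^*v=\pi(i_X(\xi_j\eta_i))^*v$, and writing $\xi_j\eta_i\in X_{q_jp_i}\cong X_{p_i}\otimes_AX_{q_j}$ as a norm limit of finite sums $\sum_k\mu_i^{(k)}\nu_j^{(k)}$ with $\mu_i^{(k)}\in X_{p_i}$, $\nu_j^{(k)}\in X_{q_j}$, this becomes a limit of sums $\sum_k\pi(i_X(\nu_j^{(k)}))^*\pi(i_X(\mu_i^{(k)}))^*v=0$; hence $\pi(i_X(\xi_j))^*v\in Q_iH$. Next, to prove that $z_i$ is central in $M$ I would show that $z_iH$ is invariant under every $\pi(i_X(\xi_j))$ and its adjoint, $j=1,\dots,n$. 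For $j=i$ this is clear; for $j\ne i$ it follows from what was just shown together with the spanning description of $z_iH$: for $\eta_i\in X_{p_i}$, $p_i\in P_i$, and $v\in Q_iH$ we have $\pi(i_X(\xi_j))\pi(i_X(\eta_i))v=\pi(i_X(\xi_j\eta_i))v$, which as above is a limit of sums $\sum_k\pi(i_X(\mu_i^{(k)}))\pi(i_X(\nu_j^{(k)}))v$ with $\mu_i^{(k)}\in X_{p_i}$ and $\pi(i_X(\nu_j^{(k)}))v\in Q_iH$, hence in $z_iH$; and $\pi(i_X(\xi_j))^*\pi(i_X(\eta_i))v=\pi(i_X(\xi_j)^*i_X(\eta_i))v$ is, by \eqref{eq:Nica} (or by \eqref{eq:Toeplitz} when $p_i=e$ or $q_j=e$), a limit of sums $\sum_k\pi(i_X(\alpha_i^{(k)}))\pi(i_X(\beta_j^{(k)}))^*v$ with $\alpha_i^{(k)}\in X_{p_i}$ and $\pi(i_X(\beta_j^{(k)}))^*v\in Q_iH$, hence again in $z_iH$. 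Since these operators generate $M$ and $M$ is $*$-closed, $z_iH$ reduces $M$, so $z_i\in M'$; as $z_i\in M_i\subseteq M$, this gives $z_i\in Z(M)$.

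\emph{Part (ii).} For the first identity, the projections $Q_i$, $i\in F$, pairwise commute by~(i), so $\prod_{i\in F}Q_i$ is the orthogonal projection onto $\bigcap_{i\in F}Q_iH$; and $\bigcap_{i\in F}Q_iH=Q_FH$, the inclusion $\subseteq$ being immediate since $P_i\setminus\{e\}\subseteq P_F\setminus\{e\}$, and $\supseteq$ following because for $\xi\in X_p$ with $p\in P_F\setminus\{e\}$, writing $i_1<\dots<i_k$ for the indices (all in $F$) where $p$ has a nontrivial component, $i_X(\xi)$ is a norm limit of finite sums of products $i_X(\xi_{i_1})\cdots i_X(\xi_{i_k})$, so that for $v\in\bigcap_{i\in F}Q_iH$ the innermost factor $i_X(\xi_{i_1})^*$ already annihilates $v$. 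For the second identity I would induct on $|F|$: writing $F=F'\sqcup\{i_0\}$ and viewing $X^{(F)}$ as a product system over the direct product $P_{F'}\times P_{i_0}$, the statement for $X^{(F)}$ reduces the claim to the case of two factors, $P=P_1\times P_2$, where one must show $z_{\{1,2\}}=z_1z_2$. Here $M_{\{1,2\}}=M$, so $z_{\{1,2\}}H=\overline{MQH}$; the inclusion $\overline{MQH}\subseteq z_1H\cap z_2H=z_1z_2H$ is immediate since $QH\subseteq Q_iH\subseteq z_iH$ and each $z_iH$ is $M$-invariant by~(i). For the reverse inclusion, observe first that $z_2Q_1H=Q_1z_2H=Q_1\overline{M_2Q_2H}=\overline{M_2Q_1Q_2H}=\overline{M_2QH}$, where we used that $z_2$ commutes with $Q_1$ (as $z_2\in M'$) and that $Q_1$ commutes with $M_2$ (part~(i)); then, since $z_1H=\overline{M_1Q_1H}$ and $z_2\in M'$ commutes with $M_1$, sliding $z_2$ past the elements of $M_1$ gives $z_1z_2H=z_2(z_1H)\subseteq\overline{M_1(z_2Q_1H)}=\overline{M_1M_2QH}\subseteq\overline{MQH}$, closing the argument.

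\emph{The main obstacle} is Part~(i): in contrast to the single-monoid or free-abelian situations, the generators $\pi(i_X(X_{p_i}))$ and $\pi(i_X(X_{q_j}))$ attached to distinct direct factors do not commute, so one cannot separate the variables directly. What makes the argument go through is that the componentwise formula for $\vee$ forces $p_i\vee q_j=p_iq_j$ with trivial conjugation, after which Nica covariance lets one move an $i$-generator past a $j$-generator at the cost of a norm limit; the bookkeeping of these approximations --- in particular in the genuinely non-unital case, where finite Parseval frames are replaced by approximate units of the form $u=\sum_{\xi\in J}\theta_{\xi,\xi}$ exactly as in the proof of Proposition~\ref{prop:automatic-gauge} --- is the part requiring care, though it is routine.
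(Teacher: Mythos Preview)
Your proof is correct and follows essentially the same approach as the paper. Part~(i) is argued identically via Nica covariance and the factorization $X_{p_iq_j}\cong X_{p_i}\otimes_A X_{q_j}$; for the identity $z_F=\prod_{i\in F}z_i$ in Part~(ii) the paper computes $\overline{M_iQ_FH}=\big(\prod_{j\in F\setminus\{i\}}Q_j\big)z_iH$ and then iterates directly over $i\in F$ to generate $\big(\prod_{i\in F}z_i\big)H$, whereas you organize the same computation as an induction on $|F|$ reducing to the two-factor case --- a purely organizational difference resting on the same commutation facts from~(i).
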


\bp (i) The first statement says that the space $Q_iH$ is invariant under $M_j$. In order to prove this we have to show that if $v\in Q_iH$, $\xi\in X_p$ for some $p\in P_i$ and $\zeta\in X_q$ for some $q\in P_j$, then
$$
\pi(i_X(\xi))^*\pi(i_X(\zeta))v=0\ \ \text{and}\ \ \pi(i_X(\xi))^*\pi(i_X(\zeta))^*v=0.
$$
The first equality follows from \eqref{eq:Nica}, since $p\vee q=pq=qp$. The second equality holds since $\zeta\xi\in X_{qp}=X_{pq}$ can be approximated by finite sums of elements of the form $\xi'\zeta'$, with $\xi'\in X_p$ and $\zeta'\in X_q$.

For the second statement we have to show that the space $z_iH$ is invariant under $M_j$ for all $j\ne i$. Since this space is the closed linear span of the vectors $\pi(i_X(\xi))v$, where $v\in Q_iH$ and $\xi\in X_p$ ($p\in P_i$), this can be proved using arguments similar to the ones above.

\smallskip

(ii) The first statement is an immediate consequence of (i), since by definition we have $Q_FH=\cap_{i\in F}\,Q_iH$. For the second statement observe that for each $i\in F$ we have
$$
\overline{M_iQ_FH}=\overline{\Big(\prod_{j\in F\setminus\{i\}}Q_j\Big)M_iQ_iH}=\Big(\prod_{j\in F\setminus\{i\}}Q_j\Big)z_iH.
$$
So by applying $M_i$ ($i\in F$) one by one to the space $Q_FH$ we see that we can generate the entire space $\big(\prod_{i\in F}z_i\big)H$. Since this space is invariant under $M_F$, we conclude that the central support of $Q_F$ in $M_F$ equals $\prod_{i\in F}z_i$.
\ep

\begin{corollary}
Every positive linear functional $\phi$ on $\NT(X)$ uniquely decomposes as
$$
\phi=\sum_{F\subset\{1,\dots,n\}}\phi_F,
$$
where $\phi_F$ is a positive linear functional on $\NT(X)$ defining by restriction a functional on $\NT(X^{(i)})$ which is of finite type for every $i\in F$ and of infinite type for every $i\in F^c$. Furthermore, $\phi_F$ defines a functional of finite type on $\NT(X^{(F)})$.
\end{corollary}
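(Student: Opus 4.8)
The plan is to build the decomposition from the mutually orthogonal central projections attached to the subsets $F$. Fix a GNS triple $(H_\phi,\pi_\phi,v_\phi)$ for $\phi$ and recall from the preceding proposition that $z_1,\dots,z_n$ are central in $M=\pi_\phi(\NT(X))''$ and that $z_F=\prod_{i\in F}z_i$. Since the $z_i$ commute and lie in $Z(M)$, the projections $e_F=\prod_{i\in F}z_i\prod_{j\notin F}(1-z_j)$ (for $F\subset\{1,\dots,n\}$) are mutually orthogonal, lie in $Z(M)$, and sum to $1$. I would then set $\phi_F=(\pi_\phi(\cdot)\,e_Fv_\phi,v_\phi)$; these are positive linear functionals on $\NT(X)$ with $\phi=\sum_F\phi_F$.

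Next I would check the type conditions. The restriction of $\phi_F$ to $\NT(X^{(i)})$ has GNS representation the cyclic subrepresentation of $\pi_i:=\pi_\phi|_{\NT(X^{(i)})}$ generated by $e_Fv_\phi$; its space is a $\pi_i$-invariant subspace of $e_FH_\phi$. If $i\in F$ then $e_F\le z_i$, so by Proposition~\ref{prop:Wold-rep} applied to the product system $X^{(i)}$ this space lies inside the largest subrepresentation of $\pi_i$ induced by $\F(X^{(i)})$, hence is itself induced by $\F(X^{(i)})$ by the remark after Lemma~\ref{lem:induced}; thus $\phi_F|_{\NT(X^{(i)})}$ is of finite type. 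If $i\notin F$ then $e_F\le 1-z_i$, and since no nonzero subrepresentation of $\pi_i$ inside $(1-z_i)H_\phi$ is Fock-induced (again by Proposition~\ref{prop:Wold-rep}), $\phi_F|_{\NT(X^{(i)})}$ is of infinite type. Replacing $z_i$ by $z_F$ and $\F(X^{(i)})$ by $\F(X^{(F)})$ in the first argument gives that $\phi_F|_{\NT(X^{(F)})}$ is of finite type.

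The heart of the proof is uniqueness. Suppose $\phi=\sum_F\psi_F$ is another decomposition of the required kind, and by Radon--Nikodym write $\psi_F=(\pi_\phi(\cdot)\,x_Fv_\phi,v_\phi)$ with $0\le x_F\le 1$ in $M'$. I claim $x_Fz_i=x_F$ for $i\in F$ and $x_Fz_i=0$ for $i\notin F$. For the first claim, set $y=(1-z_i)x_F\in M'$, so $0\le y\le x_F$ and $\psi_y:=(\pi_\phi(\cdot)\,yv_\phi,v_\phi)\le\psi_F$. Since $1-z_i$ is a projection commuting with $x_F$, we have $y^{1/2}=x_F^{1/2}(1-z_i)$, so the GNS space of $\psi_y|_{\NT(X^{(i)})}$ lies in $(1-z_i)H_\phi$, whence $\psi_y|_{\NT(X^{(i)})}$ is of infinite type while $\psi_F|_{\NT(X^{(i)})}$ is of finite type; by the ordering property established in the proof of Proposition~\ref{prop:Wold-state} (a positive functional dominated by one of finite type, if itself of infinite type, must vanish) we get $\psi_y|_{\NT(X^{(i)})}=0$. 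Evaluating on an approximate unit of $A$, which is an approximate unit of $\NT(X^{(i)})$ because the correspondences are essential, gives $\|y^{1/2}v_\phi\|^2=0$; as $y^{1/2}$ commutes with $\pi_\phi(\NT(X))$ and $v_\phi$ is cyclic, this forces $y^{1/2}=0$, i.e.\ $x_Fz_i=x_F$. The second claim is symmetric, with $y=z_ix_F$ and the roles of finite and infinite type interchanged. From these identities one reads off $x_Fe_F=x_F$ and $x_Fe_{F'}=0$ for $F'\ne F$; combining with $\sum_Fx_Fv_\phi=v_\phi$ (which follows from $\sum_F\psi_F=\phi$ and cyclicity of $v_\phi$) and multiplying by $e_F$ yields $e_Fv_\phi=x_Fv_\phi$, hence $\psi_F=\phi_F$.

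The step I expect to require the most care is uniqueness, and within it the use of the fact that each $z_i$ is central in the whole of $M$ rather than merely in $M_i$: this centrality is what allows $z_i$, $x_F$ and $e_F$ to be commuted past one another freely, and it is what makes the Radon--Nikodym operators $x_F$ interact correctly with the projections $e_F$ in the final identification.
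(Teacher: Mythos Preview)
Your proposal is correct and follows essentially the same approach as the paper: both construct $\phi_F$ via the central projections $e_F=\prod_{i\in F}z_i\prod_{j\notin F}(1-z_j)$ (the paper writes $w_F$) and prove uniqueness by a Radon--Nikodym argument showing that the operator $x_F\in M'$ representing any admissible $\psi_F$ is supported under $e_F$. The only cosmetic difference is that the paper argues directly that $x_F^{1/2}v_\phi$ lies in $z_iH_\phi$ or $(1-z_i)H_\phi$, whereas you show the complementary piece $(1-z_i)x_F$ or $z_ix_F$ vanishes and then identify $x_Fv_\phi=e_Fv_\phi$ via $\sum_Fx_Fv_\phi=v_\phi$.
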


\bp
The proof is similar to that of Proposition~\ref{prop:Wold-state}. Consider the GNS-triple $(H_\phi,\pi_\phi,v_\phi)$ defined by $\phi$. Let $z_i$ be the central projections in $M=\pi(\NT(X))''$ as in the above proposition for $\pi=\pi_\phi$. Define central projections
$$
w_F=\Big(\prod_{i\in F}z_i\Big)\Big(\prod_{j\in F^c}(1-z_j)\Big)\in M.
$$
Then the functionals $\phi_F=(\pi(\cdot)w_Fv_\phi,v_\phi)$ give the desired decomposition. Note that since $w_F\le z_F$, the restriction of $\phi_F$ to $\NT(X^{(F)})$ gives a  functional of finite type.

In order to prove the uniqueness, it suffices to show that if $\psi\le\phi$ and $\psi$ defines a functional of finite type on $\NT(X^{(i)})$ for $i\in F$ and of infinite type for $i\in F^c$, then $\psi\le\phi_F$.

Let $x\in M'$, $0\le x\le1$, be the unique element such that $\psi=(\pi(\cdot)xv_\phi,v_\phi)$. For every $i$, put $K_i=\overline{M_ix^{1/2}v_\phi}$. Then as the GNS-triple associated with $\psi|_{\NT(X^{(i)})}$ we can take $(K_i,\pi_i|_{K_i},x^{1/2}v_\phi)$. It follows that if $i\in F$, then $x^{1/2}v_\phi\in K_i\subset z_iH_\phi$. Applying $M$ and using that $z_i$ is a central projection in $M$, we then conclude that $x^{1/2}H_\phi\subset z_iH_\phi$, hence $x\le z_i$. On the other hand, if $i\in F^c$, then the vector $z_ix^{1/2}v_\phi$ defines a functional of finite type on $\NT(X^{(i)})$ dominated by $\psi$, so it must be zero. Hence $x^{1/2}v_\phi\in (1-z_i)H_\phi$ and then $x\le 1-z_i$. It follows that $x\le w_F$ and therefore $\psi\le\phi_F$.
\ep

Note that the finite/infinite decomposition of $\phi$ is then given by
$$
\phi_f=\phi_{\{1,\dots,n\}},\ \ \phi_\infty=\sum_{F\subsetneq\{1,\dots,n\}}\phi_F.
$$

Similarly to Corollary~\ref{cor:Wold-KMS}, from the construction of the functionals $\phi_F$ we see also that if $\phi$ satisfies the $\sigma$-KMS$_\beta$ condition for some dynamics $\sigma$ on $\NT(X)$, then $\phi_F$ satisfy this condition as well.

For the dynamics we are interested in, this leads to the following refinement of Corollary~\ref{cor:Wold-trace}.

\begin{corollary}\label{cor:Wold-trace2}
With $N\colon P\to(0,+\infty)$ and $\beta\in\R$ fixed, every positive trace $\tau$ on $A=X_e$ satisfying~\eqref{eq:subinvariance} decomposes uniquely as
$$
\tau=\sum_{F\subset\{1,\dots,n\}}\tau_F,
$$
where $\tau_F$ is a positive trace satisfying~\eqref{eq:subinvariance} for $P$ such that $\tau_F$ is of finite type with respect to~$P_i$ for every $i\in F$ and of infinite type for every $i\in F^c$. Furthermore, $\tau_F$ is of finite type with respect to $P_F$.
\end{corollary}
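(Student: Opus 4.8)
The plan is to follow the proof of Corollary~\ref{cor:Wold-trace}, with the two-term finite/infinite splitting of Proposition~\ref{prop:Wold-state} replaced by the refined decomposition $\phi=\sum_F\phi_F$ of the previous corollary. We may assume $\tau$ is a tracial state, since the general case follows by homogeneity (all the finite/infinite-type conditions being positively homogeneous) and the zero trace decomposes trivially. First I would let $\phi$ be the unique gauge-invariant $\sigma$-KMS$_\beta$-state on $\NT(X)$ with $\phi|_A=\tau$, which exists by Theorem~\ref{thm:general}. Applying the previous corollary gives $\phi=\sum_{F\subset\{1,\dots,n\}}\phi_F$ with $\phi_F=(\pi_\phi(\cdot)w_Fv_\phi,v_\phi)$ for the central projections $w_F$ of $M=\pi_\phi(\NT(X))''$ appearing in its proof, where the restriction of $\phi_F$ to $\NT(X^{(i)})$ is of finite type for $i\in F$ and of infinite type for $i\in F^c$, and the restriction to $\NT(X^{(F)})$ is of finite type. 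I would then set $\tau_F=\phi_F|_A$; since the dynamics is trivial on $A$ this is a positive trace, and $\sum_F\tau_F=\tau$. Because each $X_p$ is essential, an approximate unit of $A$ is an approximate unit of $\NT(X)$, so $\|\phi_F\|=\|\tau_F\|$ and in particular $\phi_F=0$ exactly when $\tau_F=0$.

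That each $\tau_F$ satisfies~\eqref{eq:subinvariance} for $P$ is immediate when $\tau_F\neq0$: as recalled before the previous corollary, $\phi_F$, hence the state $\phi_F/\|\phi_F\|$, satisfies the $\sigma$-KMS$_\beta$ condition on $\NT(X)$, so Theorem~\ref{thm:subinvariance} applies. The heart of the matter is to convert the finite/infinite-type properties of $\phi_F$ into those of $\tau_F$, which I would do by invoking Corollary~\ref{cor:finite-infinite} for the product systems $X^{(i)}$ and $X^{(F)}$; these are compactly aligned over the quasi-lattice ordered groups $(G_i,P_i)$ and $(G_F,P_F)=\prod_{i\in F}(G_i,P_i)$, with dynamics defined by $N|_{P_i}$ and $N|_{P_F}$. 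To apply that corollary I need the restriction of $\phi_F$ to $\NT(X^{(i)})$, resp.\ to $\NT(X^{(F)})$, to be a $\sigma$-KMS$_\beta$-functional, and here lies the one delicate point, since restricting a KMS functional to a $\sigma$-invariant C$^*$-subalgebra does not preserve the KMS property in general. The remedy is exactly the mechanism of Corollary~\ref{cor:Wold-KMS}: $w_F$ is a projection in the centre of $M$, hence lies in the centralizer of the vector state $\bar\phi$ (the modular group acting trivially on $Z(M)$), and the resulting computation $\bar\phi(\pi_\phi(a)\pi_\phi(b)w_F)=\bar\phi(\pi_\phi(b)\pi_\phi(\sigma_{i\beta}(a))w_F)$ goes through verbatim for analytic $a,b$ in any $\sigma$-invariant C$^*$-subalgebra of $\NT(X)$, in particular $\NT(X^{(i)})$ and $\NT(X^{(F)})$. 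Granting this, for $i\in F$ the restriction $\phi_F|_{\NT(X^{(i)})}$ is KMS and of finite type, so Corollary~\ref{cor:finite-infinite}(ii) for $X^{(i)}$ yields $\tau_F=\sum_{p\in P_i}N(p)^{-\beta}F_p\rho_0$ for a positive trace $\rho_0$, i.e.\ $\tau_F$ is of finite type with respect to $P_i$; for $i\in F^c$, Corollary~\ref{cor:finite-infinite}(i) for $X^{(i)}$ gives infinite type with respect to $P_i$; and Corollary~\ref{cor:finite-infinite}(ii) for $X^{(F)}$ gives finite type with respect to $P_F$.

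For uniqueness, suppose $\tau=\sum_F\rho_F$ is any decomposition with the asserted properties. Each $\rho_F$ satisfies~\eqref{eq:subinvariance} for $P$, so Theorem~\ref{thm:general} (rescaled) supplies a gauge-invariant $\sigma$-KMS$_\beta$-functional $\chi_F$ on $\NT(X)$ with $\chi_F|_A=\rho_F$; then $\sum_F\chi_F$ is a positive gauge-invariant $\sigma$-KMS$_\beta$-functional restricting to $\tau$ on $A$, hence a state of norm $\|\tau\|$, hence equal to $\phi$ by the uniqueness in Theorem~\ref{thm:general}. Moreover, being gauge-invariant and agreeing with $\rho_F$ on $A$, the restriction of $\chi_F$ to $\NT(X^{(i)})$ vanishes on the elements of nontrivial $G_i$-degree and, by Remark~\ref{rem:KMScore}, is determined on $\F^{(i)}$ by $\rho_F$, so it coincides with the gauge-invariant $\sigma$-KMS$_\beta$-state on $\NT(X^{(i)})$ supplied by Theorem~\ref{thm:general} applied to $\rho_F$; since $\rho_F$ is of finite (resp.\ infinite) type with respect to $P_i$, Corollary~\ref{cor:finite-infinite} shows this restriction has finite (resp.\ infinite) type, and likewise $\chi_F|_{\NT(X^{(F)})}$ has finite type. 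Thus $\phi=\sum_F\chi_F$ is a decomposition of $\phi$ of precisely the kind produced by the previous corollary, so its uniqueness gives $\chi_F=\phi_F$, whence $\rho_F=\chi_F|_A=\phi_F|_A=\tau_F$ for all $F$.

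I expect the only real obstacle to be the point flagged above: feeding the GNS-theoretic notion of finite/infinite type carried by the $\phi_F$ into Corollary~\ref{cor:finite-infinite}, which is phrased for KMS-\emph{states}. Once one notices that each $\phi_F$ is cut down by a central projection of $M$ and therefore restricts to a KMS-functional on every $\sigma$-invariant subalgebra, what remains is routine bookkeeping --- passing between positive functionals and states via $\|\phi_F\|=\|\tau_F\|$, checking compact alignment of $X^{(i)}$ and $X^{(F)}$, and unwinding the definitions of finite/infinite type relative to $P_i$ and $P_F$.
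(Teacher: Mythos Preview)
Your approach is essentially the paper's: lift $\tau$ to its unique gauge-invariant $\sigma$-KMS$_\beta$-state $\phi$ on $\NT(X)$, apply the refined decomposition $\phi=\sum_F\phi_F$ of the preceding corollary, restrict to $A$, and translate the finite/infinite-type information via Corollary~\ref{cor:finite-infinite}; uniqueness is then pulled back from uniqueness of the decomposition of $\phi$. The paper leaves the details implicit (``this leads to the following refinement of Corollary~\ref{cor:Wold-trace}''), and your write-up fills them in correctly.

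One remark: the point you flag as ``the one delicate point'' is not actually delicate. If $\psi$ satisfies the $\sigma$-KMS$_\beta$ condition on $\NT(X)$ and $B\subset\NT(X)$ is a $\sigma$-invariant C$^*$-subalgebra, then $\psi|_B$ is automatically $\sigma$-KMS$_\beta$ on $B$: for $\sigma$-analytic $a\in B$ the map $t\mapsto\sigma_t(a)$ takes values in $B$, hence so does its analytic continuation, so $\sigma_{i\beta}(a)\in B$ and the identity $\psi(ab)=\psi(b\sigma_{i\beta}(a))$ restricts. Your detour through centrality of $w_F$ is harmless but unnecessary.
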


Let us also observe the following.

\medskip

\begin{lemma}~\label{lem:product}
In order to verify condition~\eqref{eq:subinvariance} for a positive trace $\tau$ on $A$ it suffices to consider subsets $J\subset P\setminus\{e\}$ of the form $J=\prod_i J_i$, $J_i\subset P_i$.
\end{lemma}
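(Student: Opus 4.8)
The plan is to phrase everything through the $T(A)$-valued finitely additive measure $\mu$ on $(P,\B_P)$ with $\mu(pP)=N(p)^{-\beta}F_p\tau$ introduced in Section~\ref{sec:restriction}; here we write $p=(p_1,\dots,p_n)$ for elements of $P=\prod_iP_i$, and a set $J$ is of product form when $J=\big(\prod_iJ_i\big)\setminus\{e\}$ for some $J_i\subset P_i$. Every singleton $\{p\}$, $p\ne e$, is of product form (take $J_i=\{p_i\}$), so applying the hypothesis to singletons already gives $N(p)^{-\beta}\Tr^p_\tau(1)\le\tau(1)<\infty$ for all $p$; hence $\mu$ is well defined on all of $\B_P$, and the computation in~\eqref{eq:mu} shows more generally that $\mu(p\Omega)=N(p)^{-\beta}F_p\mu(\Omega)$ for every $\Omega\in\B_P$ and $p\in P$ (using $F_{pq}=F_pF_q$, valid by~\eqref{eq:induction2} since $F_q\tau$ is finite).

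The second step is to identify what the product-form instances of~\eqref{eq:subinvariance} assert about $\mu$. Given finite subsets $K_i\subset P_i\setminus\{e\}$, put $J_i=\{e\}\cup K_i$ and let $J=\big(\prod_iJ_i\big)\setminus\{e\}$. A point $x=(x_1,\dots,x_n)\in P$ lies outside $pP$ for every $p\in J$ exactly when one cannot choose $p_i\in J_i$, not all equal to $e$, with $p_i\le x_i$ for all $i$; equivalently, when no $K_i$ contains an element $\le x_i$. Hence
\[
\Omega_J=\prod_i\Omega^{P_i}_{K_i},\qquad\text{where}\quad\Omega^{P_i}_{K_i}=\bigcap_{q\in K_i}(P_i\setminus qP_i).
\]
Since the left-hand side of~\eqref{eq:subinvariance} for $J$ is the functional $\mu(\Omega_J)$ (Section~\ref{sec:restriction}), the hypothesis of the lemma amounts to the family of ``rectangle'' inequalities $\mu\big(\prod_i\Omega^{P_i}_{K_i}\big)\ge0$ for all finite $K_i\subset P_i\setminus\{e\}$, where $K_i=\emptyset$ is allowed with $\Omega^{P_i}_\emptyset=P_i$.

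The last step is to upgrade these rectangle inequalities to positivity of $\mu$ on the whole of $\B_P$; since $\Omega_J\in\B_P$ for every finite $J\subset P\setminus\{e\}$, this gives~\eqref{eq:subinvariance} in full generality. Because $pP=\bigcap_i\{x:x_i\in p_iP_i\}$, the algebra $\B_P$ is generated by the cylinders $\{x:x_i\in\Omega_i\}$, $\Omega_i\in\B_{P_i}$, so every $\Omega\in\B_P$ is a finite \emph{disjoint} union of boxes $\prod_i\Xi_i$ with $\Xi_i\in\B_{P_i}$. Applying Lemma~\ref{lem:BP-decomposition}(i) inside each factor and refining the partition, I may assume each $\Xi_i$ equals $p_iP_i$ or $p_i\Omega^{P_i}_{K_i}$; writing $\Xi_i=p_i\Xi_i'$ with $\Xi_i'\in\{P_i\}\cup\{\Omega^{P_i}_{K_i}\}$ gives $\prod_i\Xi_i=p\cdot\prod_i\Xi_i'$, where $p=(p_1,\dots,p_n)$ and $\prod_i\Xi_i'$ is one of the rectangles above (put $K_i=\emptyset$ wherever $\Xi_i'=P_i$). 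The transformation rule for $\mu$, positivity of the operators $F_p$, and the rectangle inequalities then force $\mu\big(\prod_i\Xi_i\big)=N(p)^{-\beta}F_p\,\mu\big(\prod_i\Xi_i'\big)\ge0$, and summing over the pieces yields $\mu(\Omega)\ge0$.

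The step I expect to need the most care is the identity $\Omega_J=\prod_i\Omega^{P_i}_{K_i}$, together with the attendant point that each $J_i$ must be allowed to contain $e$ for these product rectangles to arise: a box $\prod_i\Omega^{P_i}_{K_i}$ is in general \emph{not} of the form $\Omega_{J'}$ for any $J'$ that is literally a product of subsets of the $P_i\setminus\{e\}$ (one checks this already for $P=\Z^2_+$ with a single generator in each factor). The other ingredients---that a finite union of boxes is a finite disjoint union of boxes, and the bookkeeping in the last step---are routine.
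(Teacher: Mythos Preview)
Your proof is correct and follows essentially the same route as the paper: decompose every $\Omega\in\B_P$ into boxes $\prod_i\Omega_i$ with $\Omega_i\in\B_{P_i}$, apply Lemma~\ref{lem:BP-decomposition}(i) in each factor, and then use the transformation rule $\mu(p\Omega)=N(p)^{-\beta}F_p\mu(\Omega)$ together with positivity on the basic rectangles. The paper does this in three lines by pointing back to the argument for Theorem~\ref{thm:Artin}; you have simply unwound that reference and made the measure-theoretic bookkeeping explicit. Your care about needing $e\in J_i$ to obtain $\Omega_J=\prod_i\Omega^{P_i}_{K_i}$ is well placed and clarifies a point the paper's statement leaves implicit.
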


\bp As $pP=\prod_ip_iP_i$ for $p=(p_1,\dots,p_n)$, every set in $\B_P$ decomposes into the disjoint union of sets of the form $\prod_i\Omega_i$, with $\Omega_i\in\B_{P_i}$. By Lemma~\ref{lem:BP-decomposition}(i), each set $\Omega_i$ decomposes into a disjoint union of the sets $p_iP_i$ and $p_i(\cap_{q\in J_i}(P_i\setminus qP_i))$. The result now follows by the same argument as in the proof of Theorem~\ref{thm:Artin}.
\ep

Together with Corollary~\ref{cor:Wold-trace2} this lemma can in principle simplify analysis of traces satisfying~\eqref{eq:subinvariance} for direct product monoids. Namely, we have the following result.

\begin{proposition}\label{prop:Wold-trace2}
In the above setting, with $N\colon P\to(0,+\infty)$ and $\beta\in\R$ fixed, assume that for every subset $F\subset\{1,\dots,n\}$ we are given a positive trace $\tau_{F,0}$ on $A=X_e$ such that
\begin{itemize}
\item[(a)] $\tau_{F,0}$ satisfies condition~\eqref{eq:subinvariance} for $P_{F^c}$;
\item[(b)] $\tau_{F,0}$ is of infinite type with respect to $P_i$ for every $i\in F^c$;
\item[(c)] $\sum_{p\in P_F}N(p)^{-\beta}\Tr^p_{\tau_{F,0}}(1)<\infty$.
\end{itemize}
Then the trace
$$
\tau=\sum_{F\subset\{1,\dots,n\}}\sum_{p\in P_F}N(p)^{-\beta}F_p\tau_{F,0}
$$
satisfies condition~\eqref{eq:subinvariance} for $P$, and every positive trace satisfying~\eqref{eq:subinvariance} is obtained this way for uniquely defined positive traces $\tau_{F,0}$ satisfying conditions (a)-(c).
\end{proposition}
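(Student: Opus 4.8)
The plan is to reduce the statement to Corollary~\ref{cor:Wold-trace2} together with the following lemma, which I would prove separately. \emph{Fix $F\subset\{1,\dots,n\}$, write $P^c=P_{F^c}$, and let $\mu_0$ be a positive trace on $A$ with $\sum_{p\in P_F}N(p)^{-\beta}\Tr^p_{\mu_0}(1)<\infty$; set $\mu=\sum_{p\in P_F}N(p)^{-\beta}F_p\mu_0$, a positive trace by the convergence hypothesis. Then $\mu$ satisfies~\eqref{eq:subinvariance} for $P$ if and only if $\mu_0$ satisfies~\eqref{eq:subinvariance} for $P^c$, and for each $i\in F^c$ the trace $\mu$ is of infinite type with respect to $P_i$ if and only if $\mu_0$ is.} Note that $\mu$ is of finite type with respect to $P_F$ by definition, and of finite type with respect to each $P_i$, $i\in F$, by regrouping the sum as $\sum_{p\in P_i}N(p)^{-\beta}F_p\big(\sum_{q\in P_{F\setminus\{i\}}}N(q)^{-\beta}F_q\mu_0\big)$ and invoking~\eqref{eq:induction2}.

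Granting the lemma, the proposition follows. For the first assertion, given traces $\tau_{F,0}$ satisfying (a)--(c), condition (c) makes $\tau_F:=\sum_{p\in P_F}N(p)^{-\beta}F_p\tau_{F,0}$ a well-defined positive trace, and the lemma together with (a), (b) shows that $\tau_F$ satisfies~\eqref{eq:subinvariance} for $P$, is of finite type with respect to $P_i$ for $i\in F$, and of infinite type with respect to $P_i$ for $i\in F^c$; thus each $\tau_F$ has precisely the properties in the decomposition of Corollary~\ref{cor:Wold-trace2}. Since~\eqref{eq:subinvariance} is a system of linear inequalities in the trace, $\tau=\sum_F\tau_F$ satisfies it, and by the uniqueness in Corollary~\ref{cor:Wold-trace2} this is its Wold decomposition. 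Conversely, starting from a positive trace $\tau$ satisfying~\eqref{eq:subinvariance} for $P$, apply Corollary~\ref{cor:Wold-trace2} to write $\tau=\sum_F\tau_F$; each $\tau_F$ is of finite type with respect to $P_F$, so by the $P_F$-version of the definition following Corollary~\ref{cor:finite-infinite} (and Corollary~\ref{cor:finite-class}) there is a unique positive trace $\tau_{F,0}$ with $\tau_F=\sum_{p\in P_F}N(p)^{-\beta}F_p\tau_{F,0}$, for which (c) holds automatically and (a), (b) hold by the lemma. Uniqueness of the $\tau_{F,0}$ is the uniqueness in Corollary~\ref{cor:Wold-trace2} composed with that in Corollary~\ref{cor:finite-class} for $P_F$.

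To prove the lemma I would work with the $T(A)$-valued finitely additive measures of Section~\ref{sec:restriction}. A preliminary bookkeeping step, using $F_q\mu_0\le N(q)^\beta\mu_0$ for $q\in P^c$ (from~\eqref{eq:subinvariance} for $P^c$), the convergence hypothesis, and repeated use of~\eqref{eq:induction2}, shows that $F_r\mu$ is finite for all $r\in P$; hence the measures $\nu$ on $(P,\B_P)$ with $\nu(rP)=N(r)^{-\beta}F_r\mu$ and $\nu_0$ on $(P^c,\B_{P^c})$ with $\nu_0(qP^c)=N(q)^{-\beta}F_q\mu_0$ are defined. Writing $P=P_F\times P^c$, every set in $\B_P$ is a finite disjoint union of product sets $\Omega_F\times\Omega^c$ with $\Omega_F\in\B_{P_F}$, $\Omega^c\in\B_{P^c}$, and the decisive computation --- again~\eqref{eq:induction2} and the commutation of $P_F$ and $P^c$ inside $P$ --- is the factorization
\[
\nu(\Omega_F\times\Omega^c)=\Big(\sum_{p\in\Omega_F}N(p)^{-\beta}F_p\Big)\big(\nu_0(\Omega^c)\big),
\]
the series on the right converging because it is dominated by $\sum_{p\in P_F}N(p)^{-\beta}F_p\mu_0=\mu$. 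If $\nu_0\ge0$ this forces $\nu\ge0$; conversely, taking $\Omega_F=\bigcap_{q\in J}(P_F\setminus qP_F)$ for finite $J$ contained in a generating set of $P_F$, the operators $\sum_{p\in\Omega_F}N(p)^{-\beta}F_p$ increase to the identity (the index sets shrink to $\{e\}$), so $\nu(\Omega_F\times\Omega^c)\to\nu_0(\Omega^c)$ and $\nu\ge0$ forces $\nu_0\ge0$. This is the first equivalence in the lemma.

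For the second equivalence, by Corollary~\ref{cor:finite-infinite}(i) applied to $X^{(i)}$ and $P_i$, being of infinite type with respect to $P_i$ is equivalent to vanishing at $1$ of the $P_i$-Wold zeroth term, the limit in~\eqref{eq:tau0} for $P_i$. Pulling $F_{q_K}$ (with $q_K\in P_i\subset P^c$) through the sum $\sum_{p\in P_F}N(p)^{-\beta}F_p$ and passing to the limit over $J$, with dominated convergence justified by (c) and Lemma~\ref{lem:trace-continuity}(ii), gives the identity $(\mu)^{(i)}_0(1)=\sum_{p\in P_F}N(p)^{-\beta}\Tr^p_{(\mu_0)^{(i)}_0}(1)$ between the zeroth terms of $\mu$ and $\mu_0$; the right-hand side dominates its $p=e$ term $(\mu_0)^{(i)}_0(1)$ and vanishes exactly when $(\mu_0)^{(i)}_0$ does, giving the equivalence. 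I expect the main obstacle to be this lemma, and within it the factorization identity above together with the various finiteness and dominated-convergence verifications needed to make it rigorous; no single point is deep, but the bookkeeping has to be done with care, and once the factorization identity is in hand everything else is formal.
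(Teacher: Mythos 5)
Your proposal is correct and follows essentially the same route as the paper: reduce to Corollary~\ref{cor:Wold-trace2} and Corollary~\ref{cor:Wold-trace} for the monoids $P_F$, with the key step being the factorization of the $T(A)$-valued measure of $\sum_{p\in P_F}N(p)^{-\beta}F_p\mu_0$ over product sets $\Omega_F\times\Omega^c$ (the paper packages this as Lemma~\ref{lem:product} plus identity~\eqref{eq:mu2}, and uses formula~\eqref{eq:tau0} for the converse, which amounts to your limiting argument $\nu(\Omega_F\times\Omega^c)\to\nu_0(\Omega^c)$). The only points needing care are the finiteness/dominated-convergence checks you already flag (note the operators $\sum_{p\in\Omega_F}N(p)^{-\beta}F_p$ \emph{decrease} to the identity as $J$ grows, and the tail estimate for general $\Omega^c$ should be run against $F_q\mu<\infty$ rather than a bound $F_q\mu_0\le C\mu_0$, which is not yet available in that direction).
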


\bp
Given a collection of traces $\tau_{F,0}$ as in the formulation, it is easy to see that, for every $F$, the trace $\sum_{p\in P_F}N(p)^{-\beta}F_p\tau_{F,0}$ is still of infinite type with respect to $P_i$ for every $i\in F^c$ and, using Lemma~\ref{lem:product} and identity~\eqref{eq:mu2}, that it satisfies condition~\eqref{eq:subinvariance} for $P$. Hence the trace
$$
\tau=\sum_{F\subset\{1,\dots,n\}}\sum_{p\in P_F}N(p)^{-\beta}F_p\tau_{F,0}
$$
satisfies condition~\eqref{eq:subinvariance} for $P$.

Conversely, starting with a trace satisfying condition~\eqref{eq:subinvariance} for $P$, we first apply Corollary~\ref{cor:Wold-trace2} and get a decomposition $\tau=\sum_{F\subset\{1,\dots,n\}}\tau_F$. Then, by Corollary~\ref{cor:Wold-trace} applied to the monoids~$P_F$, we get uniquely defined positive traces $\tau_{F,0}$ such that
$$
\tau_F=\sum_{p\in P_F}N(p)^{-\beta}F_p\tau_{F,0}.
$$
From formula~\eqref{eq:tau0} for $\tau_{F,0}$ it is easy to see that every trace $F_p\tau_{F,0}$ ($p\in P_F$) satisfies condition~\eqref{eq:subinvariance} for $P_{F^c}$. It follows then that since $\tau_F$ is infinite with respect to $P_i$ for every $i\in F^c$, the traces $F_p\tau_{F,0}$ are infinite with respect to $P_i$ as well. Therefore conditions (a)-(c) are satisfied.

The uniqueness statement follows from Corollaries~\ref{cor:Wold-trace2} and~\ref{cor:Wold-trace}.
\ep

The key point of this proposition is that condition (b) might give strong restrictions on possible traces and be easier to understand since it involves only individual factors of $P$.

\begin{example}\label{ex:kak}
Consider $P=\Z^n_+$. Denote as usual the standard generators of $\Z^n_+$ by $e_1,\dots,e_n$ and write $F_i$ instead of $F_{e_i}$. Condition (b) in Proposition~\ref{prop:Wold-trace2} for $\tau_{F,0}$ means that $$N(e_i)^{-\beta}F_i\tau_{F,0}=\tau_{F,0}$$ for all $i\in F^c$. For every such trace condition~\eqref{eq:subinvariance} is satisfied for $\Z^{F^c}_+$, namely, the left hand side of~\eqref{eq:subinvariance} side is always zero (this is particularly transparent for the equivalent condition~\eqref{eq:subinvariance2}). We therefore conclude that if for every $F\subset\{1,\dots,n\}$ we are given a positive trace~$\tau_{F,0}$ on $A$ such that
\begin{itemize}
\item[(a$'$)] $N(e_i)^{-\beta}F_i\tau_{F,0}=\tau_{F,0}$ for every $i\in F^c$,
\item[(b$'$)] $\sum_{p\in \Z^F_+}N(p)^{-\beta}\Tr^p_{\tau_{F,0}}(1)<\infty$,
\end{itemize}
then the trace
\begin{equation}\label{eq:kak}
\tau=\sum_{F\subset\{1,\dots,n\}}\sum_{p\in \Z^F_+}N(p)^{-\beta}F_p\tau_{F,0}
\end{equation}
satisfies condition~\eqref{eq:subinvariance} (or, equivalently,~\eqref{eq:subinvariance2}) for $\Z^n_+$, and any positive trace satisfying this condition is obtained this way for uniquely defined positive traces $\tau_{F,0}$ satisfying conditions (a$'$) and (b$'$).

For finite rank product systems over $\Z^n_+$ this leads to an alternative proof of~\cite[Theorem~4.4]{kak}, which asserts that the tracial states $\tau$ as in~\eqref{eq:kak} are exactly the restrictions of the gauge-invariant KMS$_\beta$-states on $\NT(X)$. Moreover, we see that the result extends to compactly aligned product systems.
\end{example}

\bigskip

\section{Some examples and applications}\label{sec:examples}

In this section we will consider a few more examples that have been recently studied in the literature and show how our results allow us to recover them, and sometimes strengthen, in a quick unified way.

\subsection{Product systems arising from surjective local homeomorphisms}

Let $(G,P)$ be a quasi-lattice ordered group. Assume $P^{\mathrm{op}}$ acts on a compact Hausdorff space $Z$ by surjective local homeomorphisms $h_p\colon Z\to Z$, so that we have $h_{pq}=h_q\circ h_p$. For every $p\in P$ we can consider a C$^*$-correspondence $X_p$ over $A=C(Z)$ as in Example~\ref{ex:Ruelle}; in other words, $X_p$ is the correspondence associated with the topological graph $(Z,Z,\id,h_p)$. These correspondences form a product system, with the product defined by
$$
(\xi\zeta)(z)=\xi(z)\zeta(h_p(z))\ \ \text{for}\ \ \xi\in X_p,\ \ \zeta\in X_q.
$$
Since the right $C(Z)$-modules $X_p$ are finitely generated, the left actions of $C(Z)$ on them are by generalized compact operators and the product system $X$ is compactly aligned.

By Example~\ref{ex:Ruelle}, the corresponding operators $F_p\colon C(Z)^*\to C(Z)^*$ coincide with the dual Ruelle transfer operators $\LL^*_p$. We thus see that in order to understand the KMS$_\beta$-states of~$\NT(X)$ with respect to the dynamics given by a homomorphism $N\colon P\to (0,+\infty)$, and to get a complete classification of such gauge-invariant states, we need to study the representation of $P$ on $C(Z)^*$ given by $p\mapsto N(p)^{-\beta}\LL^*_p$. Spectral analysis of just one dual transfer operator is already a difficult problem in dynamical systems theory, but a few things can be said on a general basis.

\smallskip

Consider the case $P=\Z^n_+$, so $X$ is described by a family of $n$ commuting surjective local homeomorphisms on $Z$. In this case we get the following result.

\begin{proposition}\label{prop:local homeomorphisms}
Let $h_1,\dots,h_n$ be  commuting surjective local homeomorphisms of a compact Hausdorff space  $Z$ and let
 $X$ be the corresponding product system over $\Z^n_+$ as  above.  Let $\sigma$  be the dynamics \eqref{eq:dynamics1} on $\NT(X)$  determined by a homomorphism $N\colon \Z^n_+\to (0,+\infty)$. For each state $\phi$ on $\NT(X)$, let $\mu_\phi$ be  the probability measure on $Z$ such that $\phi(a)=\int_Z a\,d\mu_\phi$ for all $a\in C(Z)$. Then
\begin{itemize}
\item[(i)] for every $\beta\in \R$,  the map $\phi\mapsto \mu_\phi$ defines a one-to-one correspondence between the gauge-invariant $\sigma$-KMS$_\beta$-states on $\NT(X)$ and the probability measures $\mu$ on $Z$  such that
\begin{equation}\label{subinvariance measure1}
\prod_{i\in J}\big(1-N(e_i)^{-\beta }\LL^*_i\big)\mu\geq 0
\end{equation}
for every nonempty subset $ J\subset \{1,\dots, n\}$, where $\LL^*_i=\LL^*_{e_i}$; in particular, there are no $\sigma$-KMS$_\beta$-states unless $N(e_i)^{\beta}\ge1$ for all $i$.
\end{itemize}

Assume in addition that $N(e_i)>1$ for all $i$ and put
$$
r_i=\lim_{j\to \infty}\big( \max_{z\in Z}|h^{-j}_i(z)|\big)^{1/j},\ \ i=1,\dots,n,
$$
and
$$
\beta_c=\max_{1\le i\le n}\frac{\log r_i}{\log N(e_i)}.
$$
Then
\begin{itemize}
\item[(ii)] the number $\beta_c$ is the  abscissa of convergence of the series $\sum_{p\in \Z_+^n}N(p)^{-\beta}\|\LL^*_p\|$, and for
every $\beta>\beta_c$ all $\sigma$-KMS$_\beta$-states on $\NT(X)$ are of finite type, so there is an affine bijection between such states and the set
\[\Delta_\beta=\big\{\nu: \nu \text{ is a positive measure on } Z\text{ and } \sum_{p\in \Z^n_+}N(p)^{-\beta}\LL^*_p\nu \text{ is a probability measure}\big\};\]
\item[(iii)] at $\beta=\beta_c$ we have a phase transition, namely, there is a $\sigma$-KMS$_{\beta_c}$-state on $\NT(X)$ of infinite type.
\end{itemize}
\end{proposition}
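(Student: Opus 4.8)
The plan is to derive all three parts from the general machinery of Sections~\ref{sec:abelian},~\ref{sec:Wold} and~\ref{sec:critical-temp}, with Example~\ref{ex:Ruelle} serving as the dictionary between traces on $A=C(Z)$ and measures on $Z$. First I would record the basic identifications: the tracial states on $C(Z)$ are exactly the probability measures on $Z$, by Example~\ref{ex:Ruelle} the operator $F_{e_i}$ on $C(Z)^*$ is the dual transfer operator $\LL_i^*$, and by~\eqref{eq:induction2} we have $F_p=\LL_p^*$ for every $p\in\Z_+^n$. Part~(i) is then immediate from Corollary~\ref{cor:classfree}: condition~\eqref{eq:subinvariance2} becomes precisely~\eqref{subinvariance measure1}. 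For the last assertion, evaluating~\eqref{subinvariance measure1} with $J=\{i\}$ (equivalently, using Theorem~\ref{thm:subinvariance} for an arbitrary $\sigma$-KMS$_\beta$-state) at the constant function~$1$ gives $N(e_i)^{-\beta}(\LL_i^*\mu)(Z)\le 1$; since $h_i$ is surjective, $(\LL_i 1)(z)=|h_i^{-1}(z)|\ge 1$, hence $(\LL_i^*\mu)(Z)=\mu(\LL_i 1)\ge 1$, forcing $N(e_i)^\beta\ge1$.

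For part~(ii) the one genuinely computational point is the identity $r(\LL_i^*)=r_i$. Since $\LL_i$ is a positive operator on $C(Z)$, its norm is attained at the constant function, so $\|\LL_i^j\|=\|\LL_i^j 1\|_\infty=\max_{z\in Z}|h_i^{-j}(z)|$; the spectral radius formula then gives $r(\LL_i)=\lim_j\|\LL_i^j\|^{1/j}=r_i$, and $r(\LL_i^*)=r(\LL_i)=r_i$. I would next check that the hypotheses of Proposition~\ref{prop:phase-transition} hold here: $P=\Z_+^n$ is finitely generated, $N(p)>1$ for all $p\ne e$ because $N(e_i)>1$, $A=C(Z)$ is unital, and each $X_p$ is finitely generated. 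Hence Example~\ref{ex:free-abelian-critical} yields $\beta_c=\max_i \log r(\LL_i^*)/\log N(e_i)=\max_i \log r_i/\log N(e_i)$, and the discussion following that example identifies $\beta_c$ with the abscissa of convergence of $\sum_{p}N(p)^{-\beta}\|\LL_p^*\|$. Finally, for $\beta>\beta_c$, Corollary~\ref{cor:finite-class3} says all $\sigma$-KMS$_\beta$-states are of finite type and are in affine bijection with the positive traces $\tau_0$ on $C(Z)$ satisfying $\sum_p N(p)^{-\beta}\Tr^p_{\tau_0}(1)=1$; translating via $\Tr^p_\nu(1)=(\LL_p^*\nu)(Z)$, this set of traces is exactly $\Delta_\beta$.

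For part~(iii), surjectivity of $h_i$ gives $r_i\ge1$, so $\beta_c\ge0$ and in particular $\beta_c\ne-\infty$; thus Proposition~\ref{prop:phase-transition}(ii) produces a tracial state $\tau$ on $C(Z)$ satisfying~\eqref{eq:subinvariance} for $\beta=\beta_c$ and of infinite type. By Corollary~\ref{cor:classfree} (or Theorem~\ref{thm:general}), $\tau$ extends to a gauge-invariant $\sigma$-KMS$_{\beta_c}$-state $\phi$ on $\NT(X)$, and by Corollary~\ref{cor:finite-infinite}(i) the type of $\phi$ is determined entirely by whether the limit in~\eqref{eq:tau0} vanishes, which is a property of $\tau$ alone; since $\tau$ is of infinite type, so is $\phi$.

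I do not anticipate a real obstacle here: the proposition is essentially a specialization of results already established, and the work is bookkeeping—matching the measure-theoretic language to the trace-theoretic statements and verifying the hypotheses of Proposition~\ref{prop:phase-transition}. The only mildly technical step is the spectral-radius identity $r(\LL_i)=r_i$, which reduces to the observation that a positive operator on $C(Z)$ attains its norm at the constant function~$1$.
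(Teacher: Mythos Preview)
Your proposal is correct and follows essentially the same approach as the paper: invoke Corollary~\ref{cor:classfree} and the observation $\LL_i(1)\ge1$ for part~(i), compute $\|\LL_i^j\|=\max_z|h_i^{-j}(z)|$ to identify $r_i$ with the spectral radius and then appeal to~\eqref{eq:critical-beta}, Corollary~\ref{cor:finite-class3} and Proposition~\ref{prop:phase-transition} for parts~(ii) and~(iii). You are in fact slightly more careful than the paper in explicitly verifying the hypotheses of Proposition~\ref{prop:phase-transition} and the condition $\beta_c\ne-\infty$, and in spelling out via Corollary~\ref{cor:finite-infinite} why the resulting KMS-state inherits the infinite type of~$\tau$.
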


\begin{proof}
(i) The first statement follows from  Corollary~\ref{cor:classfree}. For the second one observe that $\LL_i(1)\ge 1$, so if $N(e_i)^{-\beta}\LL^*_i\mu\le\mu$ for some positive measure $\mu$, then we must have $N(e_i)^{-\beta}\le1$.

Turning to (ii) and (iii), note that
$$
\|\LL^j_i\|=\|\LL^j_i(1)\|=\max_{z\in Z}|h_i^{-j}(z)|.
$$
This implies that $r_i$ is the spectral radius of $\LL_i$, or equivalently, of $\LL_i^*$. Hence, by \eqref{eq:critical-beta}, the number $\beta_c$ coincides with the critical inverse temperature defined in Section~\ref{sec:critical-temp}. As was already remarked there, the critical inverse temperature coincides with the abscissa of convergence of the series $\sum_{p\in \Z_+^n}N(p)^{-\beta}\|\LL^*_p\|$. The rest of (ii) and (iii) follows from Corollary~\ref{cor:finite-class3} and Proposition~\ref{prop:phase-transition}.
\end{proof}

\begin{remark}
The number $\log r_i$ often coincides with the topological entropy of $h_i$, see~\cite{FFN}. It is also interesting to note that by the proof of \cite[Proposition~2.3]{FFN},  without any extra assumptions on $h_i$, there is a point $z_i\in Z$ such that $r_i=\lim_{j\to\infty}|h^{-j}_i(z_i)|^{1/j}$.
\end{remark}

The description of KMS$_\beta$-states in part (ii) of the above proposition has been obtained in~\cite[Theorem~6.1]{AaHR} under the additional assumptions that the homeomorphisms $h_1,\dots,h_n$ $*$-commute and the numbers $\log N(e_1)$, $\dots$, $\log N(e_n)$ are rationally independent. Note that the operators~$\LL_i^*$ were denoted by $R^{e_i}$ in op.~cit.

\smallskip

Since  $X$ satisfies all the conditions of Remark~\ref{re:cuntzalg},  the algebras $\mathcal{NO}_X$, $\OO_X$ and $\NOs(X)$  coincide and therefore the next proposition follows immediately from Corollary~\ref{cor:cuntzalg}.

\begin{proposition}\label{prop:local homeomorphisms2}
In the setting of the previous proposition, for every $\beta\in\R$, we have an affine one-to-one correspondence between the gauge-invariant $\sigma$-KMS$_\beta$-states on $\mathcal{NO}_X$ and the probability measures $\mu$ on $Z$  such that
\begin{equation*}\label{subinvariance measure2}
N(e_i)^{-\beta}\LL^*_i\mu=\mu\ \ \text{for}\ \ i=1,\dots,n.
\end{equation*}
\end{proposition}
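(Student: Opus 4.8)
The plan is to deduce the proposition from Corollary~\ref{cor:cuntzalg}, exactly as the sentence preceding its statement suggests; the only work is to check the hypotheses that let us replace $\mathcal{NO}_X$ by $\NOs(X)$ and then to translate the conditions on traces into conditions on measures. First I would verify that $X$ falls under Remark~\ref{re:cuntzalg}: the monoid $P=\Z^n_+$ is directed; each $X_p$ is a finitely generated right $C(Z)$-module, so the left action $\varphi_p$ is by generalized compact operators, i.e.\ $\varphi_p(C(Z))\subset\K(X_p)$; and $\varphi_p$ is injective because $X_p=C(Z)$ as a space with $(\varphi_p(a)\xi)(z)=a(z)\xi(z)$, so $\varphi_p(a)=0$ forces $a=0$ (evaluate on $\xi\equiv 1$). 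Hence $\mathcal{NO}_X=\OO_X=\NOs(X)$.

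Next I would apply Corollary~\ref{cor:cuntzalg} with $A=C(Z)$. It yields an affine bijection $\phi\mapsto\phi|_{C(Z)}$ between the gauge-invariant $\sigma$-KMS$_\beta$-states on $\NOs(X)$ and the tracial states $\tau$ on $C(Z)$ satisfying~\eqref{eq:subinvariance} together with $N(p)^{-\beta}\Tr^p_\tau(a)=\tau(a)$ for all $p\in P$ and all $a\in\varphi_p^{-1}(\K(X_p))=C(Z)$ (the last equality by the first paragraph). Since $C(Z)$ is abelian, every state is tracial, so its tracial states are exactly the probability measures $\mu$ on $Z$, and $\phi\mapsto\mu_\phi$ is precisely the restriction map $\phi\mapsto\phi|_{C(Z)}$. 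By Example~\ref{ex:Ruelle} the operator $F_p$ on measures is the dual Ruelle operator $\LL_p^*$ of $h_p$, and $\Tr^p_\tau(a)=(F_p\tau)(a)$; thus the displayed invariance condition becomes $N(p)^{-\beta}\LL_p^*\mu=\mu$ for all $p\in P$.

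It then remains to collapse this $P$-indexed family to the $n$ equations indexed by $e_1,\dots,e_n$ and to observe that~\eqref{eq:subinvariance} is automatic. Every $F_p\mu=\LL_p^*\mu$ is a finite measure, so~\eqref{eq:induction2} gives $F_{pq}\mu=F_pF_q\mu$; combined with $h_{pq}=h_q\circ h_p$ (equivalently $\LL_{pq}=\LL_q\circ\LL_p$), an induction on the length of a word in $e_1,\dots,e_n$ shows that $N(e_i)^{-\beta}\LL_i^*\mu=\mu$ for $i=1,\dots,n$ implies $N(p)^{-\beta}\LL_p^*\mu=\mu$ for every $p\in\Z^n_+$. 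Once this holds, for any finite $J\subset P\setminus\{e\}$ we have $q_K<\infty$ for all $\emptyset\ne K\subset J$ by directedness, so the left-hand side of~\eqref{eq:subinvariance} equals $\tau(a)\sum_{K\subset J}(-1)^{|K|}=0\ge 0$; hence~\eqref{eq:subinvariance} is satisfied for free. This is the same bookkeeping as in the corollary stated just after Corollary~\ref{cor:cuntzalg}, with generating set $S=\{e_1,\dots,e_n\}$. Assembling these steps gives the asserted affine one-to-one correspondence.

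I do not expect a genuine obstacle: the mathematical content is entirely carried by Corollary~\ref{cor:cuntzalg} and Example~\ref{ex:Ruelle}, and the only points needing care are the (routine) verification of the hypotheses of Remark~\ref{re:cuntzalg} and the elementary reduction of the invariance equations to the generators while discarding the inequalities~\eqref{eq:subinvariance}.
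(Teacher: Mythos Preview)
Your proposal is correct and follows exactly the paper's approach: the paper's proof consists of the single sentence preceding the proposition, noting that $X$ satisfies the hypotheses of Remark~\ref{re:cuntzalg} so that $\mathcal{NO}_X=\NOs(X)$, and then invoking Corollary~\ref{cor:cuntzalg}. You have simply spelled out in detail the verifications the paper leaves implicit (directedness of $\Z^n_+$, injectivity of $\varphi_p$, $\varphi_p(A)\subset\K(X_p)$) and the translation of the trace conditions into the measure equations, including the reduction to generators handled by the unnamed corollary following Corollary~\ref{cor:cuntzalg}.
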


It is worth stressing that for $n\ge 2$ it is not enough for a $\sigma$-KMS$_\beta$-state $\phi$ on $\NT(X)$ to be of infinite type to factor through $\mathcal{NO}_X$, since infiniteness only means that $\mu=\phi|_{C(Z)}$ is in the kernel of the operator $\prod^n_{i=1}(1-N(e_i)^{-\beta}\LL^*_i)$. The difference is particularly illuminating in terms of the decomposition $\phi=\sum_{F\subset\{1,\dots,n\}}\phi_F$ from Section~\ref{sec:products}: $\phi$ is infinite if and only if $\phi_{\{1,\dots,n\}}=0$, while $\phi$ factors through $\mathcal{NO}_X$ if and only if $\phi_F=0$ for all $F\ne\emptyset$. See also~\cite{kak} for a relation between the components $\phi_F$ and intermediate quotients of $\NT(X)$.

\smallskip

Let us now give an example showing that the system of inequalities in~\eqref{eq:subinvariance2} cannot be replaced by a proper subsystem, which was promised in Section~\ref{sec:abelian}.

\begin{example}\label{ex:optimal}
Consider the space $Y=\{0,1\}^\N$. Let $h\colon Y\to Y$ be the shift map and $\nu$ be the Bernoulli measure on $Y$ with weights $(1/2,1/2)$.  Then for the dual Ruelle transfer operator $\LL^*$ we have $\LL^*\nu=2\nu$.

Fix $n\ge2$ and a nonempty subset $I\subset\{1,\dots,n\}$. Denote by $Z$ the disjoint union of two copies of $Y$ and define commuting surjective local homeomorphisms $h_i\colon Z\to Z$, $1\le i\le n$, as follows. If $i\in I^c$, then $h_i$ preserves each copy of $Y$ and coincides with $h$ on each of them. If $i\in I$, then $h_i$ is the composition of the local homeomorphism as in the case $i\in I^c$ with the flip homeomorphism exchanging the two copies of $Y$. The corresponding dual Ruelle transfer operators $\LL^*_i$ leave the two-dimensional space $V$ spanned by the measures $(\nu,0)$ and $(0,\nu)$ invariant, and in the basis given by these two measures the restrictions of $\LL^*_i$ to this subspace are represented by the matrices
$$
\LL^*_i|_V=\begin{pmatrix}2 & 0\\ 0 & 2\end{pmatrix},\ \ \text{if}\ \ i\in I^c,\ \ \text{and}\ \
\LL^*_i|_V=S=\begin{pmatrix}0 & 2\\ 2 & 0\end{pmatrix},\ \ \text{if}\ \ i\in I.
$$

Now, fix $\alpha>2$ and  define a homomorphism $N\colon\Z^n_+\to(0,+\infty)$ by letting $N(e_i)=2$ for $i\in I^c$ and $N(e_i)=\alpha$ for $i\in I$. Put $\beta=1$; note that this is precisely the critical inverse temperature~$\beta_c$ if $I\ne\{1,\dots, n\}$, and $\beta>\beta_c=\log2/\log\alpha$ if $I=\{1,\dots,n\}$. Consider the measure
$$
\mu=(1-\alpha^{-1}S)^{1-|I|}(0,\nu).
$$
In the basis $\{(\nu,0),(0,\nu)\}$ of $V$ it is represented by the vector
$$
(1-4\alpha^{-2})^{1-|I|}\begin{pmatrix}1 & 2\alpha^{-1}\\ 2\alpha^{-1} & 1\end{pmatrix}^{|I|-1}\begin{pmatrix}0\\ 1\end{pmatrix},
$$
so it is positive. We claim that $\mu$ satisfies~\eqref{subinvariance measure1} for all nonempty subsets $J\subset\{1,\dots,n\}$ different from $I$ and it does not satisfy~\eqref{subinvariance measure1} for $J=I$.

Let us start with the case $J\ne I$. Consider two subcases. Assume first that $J\not\subset I$. Then, for any $j\in J\setminus I$, the operator $1-N(e_j)^{-1}\LL^*_j=1-2^{-1}\LL^*_j$ is zero on $V$, so
$$
\prod_{i\in J}(1-N(e_i)^{-1}\LL^*_i)\mu=0.
$$
Assume now that $J\subsetneq I$. Then the measure
$$
\prod_{i\in J}(1-N(e_i)^{-1}\LL^*_i)\mu=(1-\alpha^{-1}S)^{|J|}\mu=(1-\alpha^{-1}S)^{1+|J|-|I|}(0,\nu)
$$
is represented by the vector
$$
(1-4\alpha^{-2})^{1+|J|-|I|}\begin{pmatrix}1 & 2\alpha^{-1}\\ 2\alpha^{-1} & 1\end{pmatrix}^{|I|-|J|-1}\begin{pmatrix}0\\ 1\end{pmatrix},
$$
so it is positive.

On the other hand, for $J=I$ the measure
$$
\prod_{i\in I}(1-N(e_i)^{-1}\LL^*_i)\mu=(1-\alpha^{-1}S)^{|I|}\mu=(1-\alpha^{-1}S)(0,\nu)
$$
is represented by the vector
$$
\begin{pmatrix}1 & -2\alpha^{-1}\\ -2\alpha^{-1} & 1\end{pmatrix}\begin{pmatrix}0\\ 1\end{pmatrix}=
\begin{pmatrix}-2\alpha^{-1}\\ 1\end{pmatrix},
$$
so it is not positive, proving our claim.

\smallskip

If $I\ne\{1,\dots,n\}$, then, for every $\beta\le\beta_c=1$, it is actually not difficult to describe explicitly all probability measures $\mu$ on $Z$ satisfying~\eqref{subinvariance measure1} for all nonempty $J$.  The key point is that Ruelle's Perron-Frobenius theorem (see~\cite[Theorem~1.7]{Bow}) implies that if a probability measure~$\eta$ on~$Y$ satisfies $\LL^*\eta\le c\,\eta$ for some $c\le 2$, then $c=2$ and $\eta=\nu$. Therefore if a probability measure $\mu$ on $Z$ satisfies~\eqref{subinvariance measure1} for some $\beta\le1$ and $J=\{j\}$, with $j\in I^c$, then $\beta=1$ and $\mu=(\lambda\nu,(1-\lambda)\nu)$ for some $\lambda\in[0,1]$. By the above arguments, such a measure satisfies~\eqref{subinvariance measure1} for $\beta=1$ and all nonempty $J$ if and only if $(1-\alpha^{-1}S)^{|I|}\mu$ is a positive measure, that is, the vector
$$
\begin{pmatrix}1 & -2\alpha^{-1}\\ -2\alpha^{-1} & 1\end{pmatrix}^{|I|}\begin{pmatrix}\lambda\\ 1-\lambda\end{pmatrix}
$$
has nonnegative entries. A simple computation shows that this means that
$$
\Big|\lambda-\frac{1}{2}\Big|\le\frac{1}{2}\left(\frac{\alpha-2}{\alpha+2}\right)^{|I|}.
$$

Note also that Proposition~\ref{prop:local homeomorphisms}(ii) gives a description of measures satisfying~\eqref{subinvariance measure1} for all nonempty~$J$ and any fixed $\beta>\beta_c$, but it is not \emph{very} explicit.

\smallskip

This example can be modified in several ways to satisfy additional properties. For instance, we can replace $h_i$ by $h_i^2$ for all $i\in I^c$, let $N(e_i)=4$ for such $i$'s and take $\alpha=4$, thus getting $N(e_i)=4$ for all $i=1,\dots,n$. We can also take the disjoint union of $Z$ with another compact Hausdorff space $W$ equipped with $n$ commuting surjective local homeomorphisms and consider a nontrivial convex combination of the measure $\mu$ defined above with a suitable measure on~$W$ in order to get a probability measure $\eta$ on $W\sqcup Z$ such that the measures $\prod_{i\in J}(1-N(e_i)^{-1}\LL^*_i)\eta$ are positive and \emph{nonzero} for all nonempty subsets $J\subset\{1,\dots,n\}$ different from $I$, while the measure $\prod_{i\in I}(1-N(e_i)^{-1}\LL^*_i)\eta$ is not positive.
\end{example}

\subsection{Higher rank graph \texorpdfstring{C$^*$}{C*}-algebras}

Let $k\geq 1$. Suppose that  $(\Lambda,d)$ is a $k$-graph with vertex set $\Lambda^0$ in the sense of  \cite{KP}.

For  $n\in \Z_+^k$, we write $\Lambda^n=\{\lambda\in \Lambda^*: d(\lambda)=n\}$.
A  $k$-graph is \emph{finite} if $\Lambda^n$ is finite for all $n\in \Z_+^k$.
Given $v,w\in \Lambda^0$,   $v\Lambda^n w$ denotes  $\{\lambda\in \Lambda^n: r(\lambda)=v \text { and } s(\lambda)=w\}$. We say that $\Lambda$ is \textit{row finite}  if $v\Lambda^n $ is finite all $n\in \Z_+^k$ and $v \in \Lambda^0$.
The $k$-graph $\Lambda$ has   \textit{no sources} if $v\Lambda^m\neq\emptyset$ for every $v\in \Lambda^0$ and  $m\in \Z_+^k$.
For $\mu,\nu\in \Lambda$,  we write
\[\Lambda^{\min}(\mu,\nu)=\{(\xi,\eta)\in \Lambda\times \Lambda: \mu\xi=\nu \eta \text{ and } d(\mu\xi)=d(\mu)\vee d(\nu)\}.\]
We say  that $\Lambda$ is \textit{finitely aligned} if $\Lambda^{\min}(\mu,\nu)$ is finite for all $\mu,\nu\in \Lambda$.

Following \cite{RS}, for a finitely aligned $k$-graph $\Lambda$, a \textit{Toeplitz-Cuntz-Krieger $\Lambda$-family} in a C$^*$-algebra $B$ is a set  of partial isometries $\{T_\lambda: \lambda\in \Lambda\}$ such that
    \begin{enumerate}
        \item $\{T_v: v\in \Lambda^0\}$ is a collection  of mutually orthogonal projections,
        \item $T_\lambda T_\mu=T_{\lambda \mu}$ if $s(\lambda)=r(\mu)$,
        \item $T_\mu^*T_\nu=\sum_{(\xi,\eta)\in\Lambda^{\min}(\mu,\nu)} T_\xi T_\eta ^*$ for all $\mu,\nu\in \Lambda$.
    \end{enumerate}
The Toeplitz algebra $\TT C^*(\Lambda)$ is  generated by a universal Toeplitz-Cuntz-Krieger $\Lambda$-family $\{t_\lambda: \lambda\in \Lambda\}$.

Given a finitely aligned $k$-graph,  \cite[Corrolary~7.5]{RS} shows that  $\TT C^*(\Lambda)$ is the Nica-Toeplitz algebra of a compactly aligned product system $X(\Lambda)$ of C$^*$-correspondences over $\Z^k_+$ constructed as follows.
The coefficient algebra is $C_0(\Lambda^0)$. For each $n\in \Z_+^k$, the set
 $C_c(\Lambda^n)=\lsp\{\delta_\lambda:\lambda\in \Lambda^n\}$ is a right $C_0(\Lambda^0)$-module  with the module structure
$ (\xi\cdot a) (\lambda) = \xi(\lambda)a(s(\lambda))$. Then $X(\Lambda)_n$ is the completion of $ C_c(\Lambda^n)$ in the norm arising from the $C_0(\Lambda^0)$-valued inner product  $\langle \xi,\zeta\rangle(v)=\sum_{\mu\in \Lambda^n v}\overline{\xi(\mu)}\zeta(\mu)$.
Now $X(\Lambda)_n$ becomes a C$^*$-correspondence over $ C_0(\Lambda^0)$, with the left action given by $(a\cdot\xi)(\lambda) =a(r(\lambda))\xi(\lambda)$.  The multiplication in $X(\Lambda)$ is given by $\delta_\lambda \delta_\mu=\delta_{\lambda \mu}$ if $r(\mu)=s(\lambda)$ and zero otherwise
(see \cite[Proposition~3.2]{RS}). The isomorphism  $\omega:\TT C^*(\Lambda)\rightarrow \NT(X(\Lambda))$ is given by $\omega(t_\lambda)=i_{X(\Lambda)}(\delta_\lambda)$.

\begin{proposition}\label{prop:k-graph}
Let  $\Lambda$ be a finitely aligned  $k$-graph. For each $1 \leq i \leq k$, let  $A_i\in M_{\Lambda^0}(\Z_+)$ be the matrix with entries $A_i(v,w)=|v\Lambda^{e_i}w|$. Consider a homomorphism $N\colon\Z^k_+\to(0,+\infty)$ and the dynamics $\sigma$ on   $\TT C^*(\Lambda)$ defined by
$\sigma_t(t_\lambda)=N(e_j)^{it}t_\lambda$ for $\lambda\in\Lambda^{e_j}$. For every state $\phi$ on $\TT C^*(\Lambda)$ define a vector $V_{\phi}=(\phi(t_v))_{v\in\Lambda^0}$. Then
\begin{itemize}
        \item[(i)]  for every $\beta\in\R$, the map $\phi\mapsto V_\phi$ defines a one-to-one correspondence between the gauge-invariant $\sigma$-KMS$_\beta$-states on $\TT C^*(\Lambda)$ and the vectors $V\in [0,1]^{\Lambda^0}$  such that  $\|V\|_1=1$ and
\begin{equation*}\label{subinvariance vector1}
\prod_{i\in J}(1-N(e_i)^{-\beta }A_i)V\geq 0
\end{equation*}
for every nonempty subset $ J\subset \{1,\dots, k\}$.
\end{itemize}

Assume in addition that $\Lambda$ is finite, $N(e_i)>1$ for all $i$, and put
$$
\beta_c=\max_{1\le i\le n}\frac{\log r(A_i)}{\log N(e_i)},
$$
where $r(A_i)$ is the spectral radius of the matrix $A_i$. Then
\begin{itemize}
\item[(ii)]  for every $\beta>\beta_c$, there is an affine bijection between the $\sigma$-KMS$_\beta$-states on $\TT C^*(\Lambda)$ and the set
\[\Sigma_\beta=\big\{W: W\in [0,1]^{\Lambda^0} \text{  and } \big\| \sum_{n\in \Z^k_+}N(p)^{-\beta}A^n W\big\|_1=1\big\},\]
where $A^n=A^{n_1}_1\dots A^{n_k}_k$, or equivalently, $A^n(v,w)=|v\Lambda^nw|$.
\end{itemize}
\end{proposition}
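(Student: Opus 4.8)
The plan is to transport everything through the isomorphism $\omega\colon\TT C^*(\Lambda)\to\NT(X(\Lambda))$ of \cite[Corollary~7.5]{RS} and then quote the general results of Sections~\ref{sec:abelian} and~\ref{sec:critical-temp}, the only genuine computation being the identification of the operators $F_{e_i}$ with the vertex matrices $A_i$. First I would note that $\omega$ intertwines the two dynamics: on a generator $t_\lambda$ with $d(\lambda)=e_j$ we have $\omega(\sigma_t(t_\lambda))=N(e_j)^{it}i_{X(\Lambda)}(\delta_\lambda)$, which is exactly the value on $i_{X(\Lambda)}(\delta_\lambda)\in X(\Lambda)_{e_j}$ of the dynamics \eqref{eq:dynamics1}; hence pulling back along $\omega$ gives a bijection between $\sigma$-KMS$_\beta$-states on the two algebras, preserving gauge-invariance. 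Next, since $A=C_0(\Lambda^0)$ is commutative, its tracial states are its states, i.e.\ the probability Radon measures on the discrete set $\Lambda^0$; identifying a state $\tau$ with the vector $(\tau(\delta_v))_{v\in\Lambda^0}$ gives a bijection onto $\{V\in[0,1]^{\Lambda^0}:\|V\|_1=1\}$, and for $\phi=\psi\circ\omega$ one has $\phi(t_v)=(\psi|_A)(\delta_v)$, so $V_\phi$ is precisely the vector of $\phi|_{C_0(\Lambda^0)}$.

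The main step is to compute $\Tr^{e_i}_\tau$. One checks that $\langle\delta_\lambda,\delta_\mu\rangle$ equals $\delta_{s(\lambda)}$ if $\lambda=\mu$ and $0$ otherwise, and that $\xi=\sum_{\lambda\in\Lambda^{e_i}}\delta_\lambda\langle\delta_\lambda,\xi\rangle$ for $\xi\in C_c(\Lambda^{e_i})$, so $\{\delta_\lambda:\lambda\in\Lambda^{e_i}\}$ is a (possibly infinite) Parseval frame for $X(\Lambda)_{e_i}$. Since $a\cdot\delta_\lambda=a(r(\lambda))\delta_\lambda$, formula \eqref{eq:Tr Parseval frame} gives, for $a\in C_0(\Lambda^0)_+$,
\[
\Tr^{e_i}_\tau(a)=\sum_{\lambda\in\Lambda^{e_i}}\tau\big(\langle\delta_\lambda,a\cdot\delta_\lambda\rangle\big)=\sum_{\lambda\in\Lambda^{e_i}}a(r(\lambda))\,V_{s(\lambda)}=\sum_{v\in\Lambda^0}a(v)\,(A_iV)_v .
\]
Thus, under the identification of tracial functionals on $C_0(\Lambda^0)$ with vectors, $F_{e_i}$ is multiplication by $A_i$, and by the induction-in-stages identity \eqref{eq:induction2} (equivalently \eqref{eq:freeF}) $F_p$ is multiplication by $A^p=A_1^{p_1}\cdots A_k^{p_k}$, with $A^p(v,w)=|v\Lambda^pw|$ by the factorization property.

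Part (i) is then immediate: a tracial functional on the commutative algebra $C_0(\Lambda^0)$ is positive precisely when the corresponding vector has nonnegative entries, so condition \eqref{eq:subinvariance2} for $\tau$ becomes $\prod_{i\in J}(1-N(e_i)^{-\beta}A_i)V\ge0$ for all nonempty $J\subset\{1,\dots,k\}$, and Corollary~\ref{cor:classfree}, combined with the intertwining above, yields the asserted bijection $\phi\mapsto V_\phi$.

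For part (ii), when $\Lambda$ is finite the algebra $A=C_0(\Lambda^0)$ is unital, each $X(\Lambda)_n$ is finitely generated (it has the finite Parseval frame $\{\delta_\lambda:\lambda\in\Lambda^n\}$), and $\beta_c<\infty$ since $A$ is finite-dimensional and $\Z^k_+$ is finitely generated; hence Corollary~\ref{cor:finite-class3} applies and gives, for $\beta>\beta_c$, an affine bijection between the $\sigma$-KMS$_\beta$-states and the positive traces $\tau_0$ with $\sum_pN(p)^{-\beta}\Tr^p_{\tau_0}(1)=1$. Writing $\tau_0\leftrightarrow W\ge0$, we get $\Tr^p_{\tau_0}(1)=\|A^pW\|_1$, and since every $A^pW$ has nonnegative entries the sum equals $\big\|\sum_p N(p)^{-\beta}A^pW\big\|_1$; setting this equal to $1$ is exactly the condition defining $\Sigma_\beta$, and it forces $W\in[0,1]^{\Lambda^0}$. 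Finally, since $F_{e_i}$ is the weakly$^*$ continuous positive operator $A_i$ on the finite-dimensional space $T(A)$, we have $r(F_{e_i})=r(A_i)$, so formula \eqref{eq:critical-beta} of Example~\ref{ex:free-abelian-critical} gives $\beta_c=\max_i\log r(A_i)/\log N(e_i)$. I expect no real obstacle here: once $F_{e_i}=A_i$ is established everything reduces to the general theory, and the only place that demands a little care is the bookkeeping in part (i) when $\Lambda^0$ is infinite, namely matching tracial states on the nonunital $C_0(\Lambda^0)$ with $\ell^1$-probability vectors and keeping track of which induced traces $F_p\tau$ are finite.
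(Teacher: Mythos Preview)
Your proposal is correct and follows essentially the same route as the paper: identify the operators $F_p$ with the vertex matrices $A^p$ via the Parseval frame $\{\delta_\lambda:\lambda\in\Lambda^n\}$, then invoke Corollary~\ref{cor:classfree} for part~(i) and Corollary~\ref{cor:finite-class3} together with the formula~\eqref{eq:critical-beta} for part~(ii). The only cosmetic difference is that the paper computes $F_n$ directly for all $n$ rather than computing $F_{e_i}$ and extending by~\eqref{eq:freeF}, but this changes nothing of substance.
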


\begin{proof}
Working with $\NT(X(\Lambda))$ instead of $\TT C^*(\Lambda)$, let us first of all compute the operators~$F_n$ on $C_0(\Lambda^0)^*=\ell^1(\Lambda^0)$.

A straightforward computation using the formulas for the right action and the inner product for the module $X(\Lambda)_n$ shows that the set $\{\delta_\lambda:\lambda\in \Lambda^n\}$ forms a Parseval frame for $X(\Lambda)_n$.  Hence, for each $v\in\Lambda^0$, applying \eqref{eq:Tr Parseval frame} we get
\begin{align*}
\Tr_{\tau}^{n}(\delta_v)
= \sum_{\lambda\in \Lambda^n}\tau(\langle \delta_\lambda,\delta_v\cdot \delta_\lambda\rangle).%=\tau\Big( \sum_{\lambda\in %\Lambda^n}\langle \delta_\lambda,\delta_v\cdot \delta_\lambda\rangle\Big).
\end{align*}
Since $\delta_v\cdot\delta_\lambda=\delta_{v,r(\lambda)}\delta_\lambda$ and $\langle \delta_\lambda,\delta_\lambda\rangle=\delta_{s(\lambda)}$, we see that the above expression equals
\begin{align*}
\sum_{\lambda\in v \Lambda^n}\tau(\langle \delta_\lambda,\delta_\lambda\rangle)= \sum_{\lambda\in v \Lambda^n}\tau(\delta_{s(\lambda)})=\sum_{w\in \Lambda^0}|v\Lambda^nw|\tau(\delta_w).
\end{align*}
In other words, if $V=(\tau(\delta_v))_{v\in\Lambda^0}$ is the vector defined by $\tau$, then the vector defined by~$F_n\tau$ is~$A^n V$.

Similarly to Proposition~\ref{prop:local homeomorphisms}, the result now follows from Corollary~\ref{cor:classfree}, identity~\eqref{eq:critical-beta} and Corollary~\ref{cor:finite-class3}.
\end{proof}

For finite $k$-graphs part (i) of the above proposition recovers \cite[Proposition~4.3]{C} that was proved using the groupoid picture. Together with the theory developed in Section~\ref{sec:products}, leading to Example~\ref{ex:kak}, this provides an alternative route to the classification of gauge-invariant KMS-states in~\cite[Theorem~5.9]{C}. Part (ii) recovers \cite[Theorem~6.1(a)]{aHLRS}.

\smallskip

Following \cite{KP}, for a row finite $k$-graph  $\Lambda$ with no sources, we say that a Toeplitz-Cuntz-Krieger $\Lambda$-family in a C$^*$-algebra $B$  is a \textit{Cuntz-Krieger $\Lambda$-family} if it satisfies
    \begin{itemize}
     \item[(4)] $T_v = \sum_{\lambda\in v \Lambda^n}T_\lambda T_\lambda^*$  for all $v\in\Lambda^0$ and $n\in \Z^k_+$.
    \end{itemize}
The Cuntz-Krieger algebra  $C^*(\Lambda)$ is the  quotient of $\TT C^*(\Lambda)$ by the ideal $\big\langle t_{v}- \sum_{\lambda\in v\Lambda^n}t_\lambda t_\lambda^*:v\in \Lambda^0\big\rangle.$ By \cite[ Corollary~4.4]{RS}, $C^*(\Lambda)$ coincides with $\OO_{X(\Lambda)}$.
Since $\Lambda$ is row finite and has no  sources, the left action  $C_0(\Lambda)$  on each fiber is by compact operators and is injective~\cite{FR}. Thus, by Remark~\ref{re:cuntzalg}, $C^*(\Lambda)$ is the same as $\NOs(X(\Lambda))$.
By Corollary~\ref{cor:cuntzalg} we now get the following result.

\begin{proposition}\label{prop:k-graph2}
Let  $\Lambda$ be a row finite  $k$-graph with no sources, $N\colon\Z^k_+\to(0,+\infty)$ be a homomorphism and $\sigma$ be the dynamics on   $C^*(\Lambda)$ defined by $\sigma_t(t_\lambda)=N(e_j)^{it}t_\lambda$ for $\lambda\in\Lambda^{e_j}$. Then, for every $\beta\in\R$, we have an affine one-to-one correspondence between the gauge-invariant  $\sigma$-KMS$_\beta$-states on $C^*(\Lambda)$ and the vectors $V\in [0,1]^{\Lambda^0}$  such that $\|V\|_1=1$ and
\begin{equation*}
N(e_i)^{-\beta } A _i V=V\ \ \text{for}\ \  i=1,\dots,k.
\end{equation*}
\end{proposition}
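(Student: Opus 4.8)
The plan is to deduce this from Corollary~\ref{cor:cuntzalg} (more precisely, from the simplified form stated in the corollary immediately following it) applied to the product system $X(\Lambda)$ over $\Z^k_+$, combined with the matrix computation already carried out in the proof of Proposition~\ref{prop:k-graph}. First I would record the reductions supplied by the discussion preceding the statement: since $\Lambda$ is row finite and has no sources, by \cite{RS,FR} and Remark~\ref{re:cuntzalg} we have an isomorphism $C^*(\Lambda)\cong\NOs(X(\Lambda))$ intertwining the two gauge-type dynamics, so Corollary~\ref{cor:cuntzalg} is available with coefficient algebra $A=C_0(\Lambda^0)$. Moreover the left actions $\varphi_p$ are injective with $\varphi_p(A)\subset\K(X(\Lambda)_p)$ for every $p\in\Z^k_+$, hence $\varphi_p^{-1}(\K(X(\Lambda)_p))=A$, and the condition singled out by Corollary~\ref{cor:cuntzalg} becomes: $\tau$ is a tracial state on $C_0(\Lambda^0)$ satisfying \eqref{eq:subinvariance} together with $N(p)^{-\beta}\Tr^p_\tau=\tau$ on all of $A$ for every $p$.

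Next I would translate this into the language of vectors. A tracial state on the abelian algebra $C_0(\Lambda^0)$ is exactly a vector $V=(\tau(\delta_v))_{v\in\Lambda^0}\in[0,1]^{\Lambda^0}$ with $\|V\|_1=1$, and, as computed in the proof of Proposition~\ref{prop:k-graph}, the functional $F_n\tau$ corresponds to the vector $A^nV$; in particular the invariance condition at $p=e_i$ reads $N(e_i)^{-\beta}A_iV=V$. It then remains to see that imposing invariance at every $p\in\Z^k_+$ together with \eqref{eq:subinvariance} is equivalent to imposing invariance only at the generators $e_1,\dots,e_k$. This is precisely the content of the corollary following Corollary~\ref{cor:cuntzalg}: $\Z^k_+$ is directed, $\varphi_p(A)\subset\K(X(\Lambda)_p)$ for all $p$, and $\{e_1,\dots,e_k\}$ is a generating set, so invariance at the generators forces invariance at all $p$ via \eqref{eq:induction2}, and then \eqref{eq:subinvariance} holds because once each $N(q_K)^{-\beta}F_{q_K}\tau$ equals $\tau$ the alternating sum $\tau(a)+\sum_{\emptyset\ne K\subset J}(-1)^{|K|}N(q_K)^{-\beta}\Tr^{q_K}_\tau(a)=\tau(a)\sum_{K\subset J}(-1)^{|K|}$ vanishes.

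Finally I would note that the bijection $\phi\mapsto V_\phi$ is affine, being the composite of the restriction map $\phi\mapsto\phi|_A$ with the linear identification $\tau\mapsto V$. The one point that needs genuine care is the appeal to \eqref{eq:induction2}: it requires each $F_{e_i}\tau$ to be a finite trace so that $F_p=F_{e_{i_1}}\cdots F_{e_{i_\ell}}$ for $p=e_{i_1}+\cdots+e_{i_\ell}$, and this is exactly where row finiteness of $\Lambda$ enters, guaranteeing the finite Parseval frame $\{\delta_\lambda:\lambda\in v\Lambda^{e_i}\}$ and hence finite generation of $X(\Lambda)_{e_i}$; everything else is routine bookkeeping of the isomorphism $C^*(\Lambda)\cong\NOs(X(\Lambda))$ and of the identifications established in Proposition~\ref{prop:k-graph}.
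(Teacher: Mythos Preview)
Your proposal is correct and follows exactly the paper's route: identify $C^*(\Lambda)\cong\NOs(X(\Lambda))$ via Remark~\ref{re:cuntzalg}, then apply the corollary following Corollary~\ref{cor:cuntzalg} (directed $P=\Z^k_+$, left actions by compacts, generating set $\{e_1,\dots,e_k\}$) together with the computation $F_n\tau\leftrightarrow A^nV$ from Proposition~\ref{prop:k-graph}.

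One small correction to your closing remark: for $\Lambda^0$ infinite the module $X(\Lambda)_{e_i}$ is \emph{not} finitely generated, so that is not where row finiteness enters; row finiteness is already used (and correctly, in your first paragraph) to ensure $\varphi_p(A)\subset\K(X(\Lambda)_p)$. The finiteness of $F_{e_i}\tau$ needed for \eqref{eq:induction2} comes for free from the invariance hypothesis itself, since $N(e_i)^{-\beta}F_{e_i}\tau=\tau$ forces $F_{e_i}\tau=N(e_i)^{\beta}\tau$.
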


For finite $k$-graphs with no sources this recovers~\cite[Theorem~7.4]{C0}.

\bigskip

\bigskip\bigskip

\end{document}